\documentclass[11pt]{article}

\usepackage[T1]{fontenc}
\usepackage{lmodern}
\usepackage[margin=0.72in]{geometry}
\usepackage{amsfonts}
\usepackage{amsthm}
\usepackage{aliascnt}
\usepackage{graphicx,epstopdf}

\usepackage{mathrsfs}
\usepackage{mathtools}
\usepackage{bm}
\usepackage{multirow}
\usepackage{color}
\usepackage{hyperref}
\usepackage{xr-hyper} 
\usepackage{xcolor}

\newcommand{\revblue}[1]{#1} 
\newcommand{\Gint}{{\mathring{G}}}

\usepackage{caption, subcaption}%
\usepackage{array}
\usepackage{booktabs}
\usepackage{tabularx}

\usepackage{xparse} 
\usepackage{stmaryrd} 
\usepackage{rotfloat}
\usepackage{cleveref}

\usepackage{setspace}
\usepackage{enumerate}
\usepackage{amssymb}
\usepackage{longtable}
\usepackage{placeins}
\usepackage{afterpage}

\allowdisplaybreaks

\usepackage{tikz} 
\usetikzlibrary{decorations.pathreplacing,calc,arrows.meta,positioning}

\usepackage{hyperref}

\usepackage{accents}

\usepackage[most]{tcolorbox} 

\ifpdf
  \DeclareGraphicsExtensions{.eps,.pdf,.png,.jpg}
\else
  \DeclareGraphicsExtensions{.eps}
\fi

\theoremstyle{plain}
\newtheorem{theorem}{Theorem}[section]
\newaliascnt{proposition}{theorem}
\newtheorem{proposition}[proposition]{Proposition}
\aliascntresetthe{proposition}
\newaliascnt{lemma}{theorem}
\newtheorem{lemma}[lemma]{Lemma}
\aliascntresetthe{lemma}
\newaliascnt{corollary}{theorem}
\newtheorem{corollary}[corollary]{Corollary}
\aliascntresetthe{corollary}
\newaliascnt{assumption}{theorem}

\aliascntresetthe{assumption}
\theoremstyle{remark}
\newaliascnt{remark}{theorem}
\newtheorem{remark}[remark]{Remark}
\aliascntresetthe{remark}
\theoremstyle{definition}
\newaliascnt{expl}{theorem}

\aliascntresetthe{expl}
\numberwithin{equation}{section}

\crefname{theorem}{theorem}{theorems}
\Crefname{theorem}{Theorem}{Theorems}
\crefname{proposition}{proposition}{propositions}
\Crefname{proposition}{Proposition}{Propositions}
\crefname{lemma}{lemma}{lemmas}
\Crefname{lemma}{Lemma}{Lemmas}
\crefname{corollary}{corollary}{corollaries}
\Crefname{corollary}{Corollary}{Corollaries}
\crefname{assumption}{assumption}{assumptions}
\Crefname{assumption}{Assumption}{Assumptions}
\crefname{remark}{remark}{remarks}
\Crefname{remark}{Remark}{Remarks}
\crefname{expl}{example}{examples}
\Crefname{expl}{Example}{Examples}
\crefname{section}{section}{sections}
\Crefname{section}{Section}{Sections}
\crefname{subsection}{section}{sections}
\Crefname{subsection}{Section}{Sections}
\crefname{subsubsection}{section}{sections}
\Crefname{subsubsection}{Section}{Sections}
\crefname{equation}{equation}{equations}
\Crefname{equation}{Equation}{Equations}
\crefname{figure}{figure}{figures}
\Crefname{figure}{Figure}{Figures}
\crefname{table}{table}{tables}
\Crefname{table}{Table}{Tables}

\allowdisplaybreaks
\allowbreak
\makeatletter
\def\widebreve{\mathpalette\wide@breve}
\def\wide@breve#1#2{\sbox\z@{$#1#2$}%
	\mathop{\vbox{\m@th\ialign{##\crcr
				\kern0.08em\brevefill#1{0.8\wd\z@}\crcr\noalign{\nointerlineskip}%
				$\hss#1#2\hss$\crcr}}}\limits}
\def\brevefill#1#2{$\m@th\sbox\tw@{$#1($}%
	\hss\resizebox{#2}{\wd\tw@}{\rotatebox[origin=c]{90}{\upshape(}}\hss$}
\makeatletter

\newcommand{\dt}{\Delta t}
\newcommand{\dx}{\Delta x}
\newcommand{\dy}{\Delta y}

\newcommand{\email}[1]{\href{mailto:#1}{#1}}
\newenvironment{keywords}{\par\small\noindent\textbf{Keywords. }}{\par\normalsize}
\newenvironment{MSCcodes}{\par\small\noindent\textbf{MSC2020. }}{\par\normalsize}

\usepackage{xcolor}
\usepackage[normalem]{ulem} 

\newcommand{\reviewmode}{1}

\usepackage{CJKutf8}
\ifnum\reviewmode=1

\newcommand{\cmmDel}[1]{{\color{red!70!black}\sout{#1}}}

\newcommand{\cmmNote}[1]{{\color{orange!90!black}\footnotesize~[\textit{CMAME: }\begin{CJK*}{UTF8}{gbsn}#1\end{CJK*}]}}
\else

\newcommand{\cmmDel}[1]{}

\newcommand{\cmmNote}[1]{}
\fi

\date{}

\title{Invariant domain preservation for hybrid point-value and cell-average discretizations of hyperbolic equations on general meshes
	\thanks{{This work was partially supported by Science Challenge Project (No.~TZ2025007),  Shenzhen Science and Technology Program (Nos.~JCYJ20250604144300001, RCJC20221008092757098, and JCYJ20240813095709013), 
	National Natural Science Foundation of China (Nos.~12401525, 12471382, and 12171227), and 
	the Guangdong Basic and Applied Basic Research Foundation (2024A1515012329).}}}

\author{Shengrong Ding
	\thanks{School of Science, Sun Yat-Sen University, Shenzhen, Guangdong 518107, China. 
  (\email{dingshr7@mail.sysu.edu.cn}, \email{cuishm3@mail.sysu.edu.cn}).}
\and 
Shumo Cui \footnotemark[2]
\and 
R\'{e}mi Abgrall 
\thanks{Institute of Mathematics, University of Z\"{u}rich, 8057 Z\"{u}rich, Switzerland. 
	(\email{remi.abgrall@math.uzh.ch}). }
\and
Kailiang Wu 
\thanks{Department of Mathematics and Shenzhen International Center for Mathematics, Southern University of Science and Technology, Shenzhen 518055, China. 
	(\email{wukl@sustech.edu.cn}).}
}

\usepackage{amsopn}

\usepackage{todonotes}

\ifpdf
\hypersetup{
  pdftitle={Invariant domain preservation for hybrid point-value and cell-average discretizations of hyperbolic equations on general meshes},
  pdfauthor={Shengrong Ding, Shumo Cui, R\'emi Abgrall, and Kailiang Wu},
  hidelinks
}
\fi

\begin{document}

\maketitle


\begin{abstract}
This paper presents a unified invariant-domain-preserving (IDP) framework for hybrid discretizations of hyperbolic conservation laws, including active flux and PAMPA (point-average-moment polynomial-interpreted) methods, in which cell averages are updated conservatively while cell-boundary point values evolve under a possibly non-conservative operator. This framework provides a nontrivial generalization of the one-dimensional methodology in [Abgrall, Jiao, Liu, and Wu, SIAM J. Sci. Comput., 47(6):A3536--A3565, 2025] to general multidimensional polygonal meshes and broader classes of hybrid discretizations. The main computational challenge is to preserve admissibility for these two coupled sets of states under a single local stability condition, without adding evolved degrees of freedom or relying on post-update repairs.
For the point-value update, we introduce an admissibility transform based on a barrier--Legendre map for convex admissible interiors described by concave constraints, and prove that the inverse map is globally defined and Lipschitz continuous on the relevant sets. 
For the conservative cell-average update, we establish a structural obstruction theorem: the single-state continuous physical flux built from admissible boundary traces alone cannot provide a conservative IDP guarantee, for any prescribed Courant--Friedrichs--Lewy (CFL) number, when internal reconstruction values are uncontrolled. This identifies the missing local control that must be supplied by an additional IDP flux mechanism. 
To realize this mechanism explicitly, we combine cell average decompositions (CAD), geometric quasilinearization, and local \emph{a priori} scaling to construct admissible, generally discontinuous trace states before flux evaluation; the scaling preserves the cell average and remains inactive whenever the relevant trace and CAD control values are already admissible. 
Under an explicit trace-based CFL condition, with constants determined by local CAD weights and trace-state wave-speed bounds, the coupled hybrid update preserves the prescribed invariant domain. Concrete third-order schemes are developed on triangular, Cartesian, convex quadrilateral, and general convex polygonal meshes. Numerical results for scalar equations and compressible Euler flow benchmarks demonstrate the designed order of accuracy for smooth solutions and the strict preservation of physical admissibility.
\end{abstract}


\begin{keywords}
Invariant domain preservation, hybrid discretizations, continuous-trace obstruction, admissibility transforms, cell-average decompositions, hyperbolic conservation laws
\end{keywords}

\begin{MSCcodes}
35L65, 65M08, 65M12
\end{MSCcodes}

\section{Introduction}

High-order discretizations of hyperbolic conservation laws must preserve the admissible set, or invariant domain, of the continuous governing equations. Throughout this paper, we distinguish an open admissible interior $\Gint$ and its closed counterpart $G=\overline{\Gint}$. For the compressible Euler equations, for instance, $\Gint$ is characterized by strict positivity of density and pressure. Once a numerical state leaves the closed admissible set $G$, hyperbolicity may be lost and the numerical simulation may break down \cite{shu2016high,guermond2016invariant}. Invariant domain preservation is therefore a structural stability requirement for high-order methods, especially in regimes involving shocks, near-vacuum states, or strong nonlinear waves.

For classical finite volume and (continuous or discontinuous) finite element discretizations, rigorous invariant-domain-preserving (IDP) frameworks have been extensively developed \cite{zhang2010maximum,zhang2010positivity,guermond2016invariant,guermond2017invariant,guermond2018second,xu2014parametrized,zhang2011b}. For a systematic overview of high-order IDP methods for hyperbolic and related systems, see \cite{Wu2026IDP}. In this paper, we consider \emph{hybrid} schemes that dynamically couple a conservative update of cell averages with a possibly non-conservative evolution of point values located on the mesh skeleton. Representative examples include Active Flux (AF) and its recent generalizations \cite{eymann2011AF1,eymann2013AF3,abgrall2023combination,abgrall2025EulerAF,barsukow2025HOrder3}, such as the triangular-mesh AF discretization for compressible flows \cite{abgrall2025Tri}, as well as point-average-moment polynomial-interpreted (PAMPA) and virtual element method (VEM) formulations \cite{abgrall2023combination,abgrall2024Polygonal}. In these methods, conservative cell averages provide the finite-volume foundation, while shared point values on the mesh skeleton carry directional information and naturally support compact high-order reconstructions with continuous traces across cell interfaces. This continuity is one of the attractive structural features of such schemes, especially on complex geometries. At the same time, it raises a distinct IDP challenge for the conservative update: a continuous boundary trace is a single shared state, whereas an invariant-domain flux argument typically requires sufficient local one-sided control to ensure admissibility of the cell-average update.

Establishing theoretical admissibility for such hybrid methods therefore poses a distinct challenge. The Zhang--Shu framework of maximum-principle and positivity-preserving high-order schemes \cite{zhang2010maximum,zhang2010positivity,zhang2011b,shu2016high} provides a mature methodology for enforcing admissibility once a conservative high-order update has been cast into an appropriate convex form, but the hybrid setting introduces a new difficulty. Cell averages and point values are advanced as different types of unknowns while remaining coupled through the same reconstruction and flux evaluation. The cell average is a conservative quantity, whereas the skeleton point values evolve through a pointwise, generally non-conservative operator. Hence, admissibility cannot be reduced to a single convex update or to a standard finite-volume argument: one must control two coupled representations of the solution without sacrificing either conservation or the point-value evolution. This tension between continuous traces, conservative fluxes, and uncontrolled internal reconstruction values is the source of the continuous-trace obstruction characterized in \Cref{sec:avg_idp}.

Several stabilization strategies for AF and PAMPA have been developed either \emph{a posteriori} or for particular models and mesh families. Examples include monolithic convex limiting \cite{kuzmin2020monolithic,kuzmin2022bound,bolm2026invariant,abgrall2024Polygonal}, convex blending techniques closely related to monolithic convex limiting \cite{kuzmin2020monolithic,abgrall2025bound,duan2025active}, and nonlinear bound-preserving limiters for Cartesian Active Flux discretizations \cite{chudzik2021cartesian}. A systematic presentation of these property-preserving numerical frameworks can be found in \cite{kuzmin2012flux,kuzmin2024property}.  See also \cite{abgrall2026robust} for a robust triangular-mesh DG-formulation PAMPA scheme with bound preservation, oscillation elimination, and boundary treatments. These techniques are computationally effective. From the viewpoint of the conservative cell-average update, convex limiting or blending can be regarded as an alternative way of introducing an IDP numerical-flux contribution: when the limiter is active, the final conservative flux is no longer the single-state continuous physical flux, even if the high-order trace variable itself is continuous. Our goal is complementary: to provide an \emph{a priori} mechanism that identifies, prior to each hybrid update, which local quantities must be controlled so that conservative cell averages and non-conservative point values remain admissible under distinct but compatible arguments.

Motivated by the one-dimensional PAMPA formulation in \cite{abgrall2025novel}, 
we reveal a structural obstruction in the multidimensional case. We prove that admissible continuous boundary traces, used directly through the single-state physical flux while internal reconstruction values remain uncontrolled, do not by themselves provide a conservative IDP guarantee for any prescribed CFL number. In this sense, boundary continuity provides the correct trace compatibility but not the local one-sided control, or numerical dissipation, required to stabilize the cell-average update. This motivates the use of an additional IDP flux mechanism in the conservative update; in this paper, we realize it via locally admissible, generally discontinuous trace inputs together with a Riemann-type numerical flux.

The resulting strategy addresses the two components of the hybrid update via separate admissibility mechanisms and then couples them. The non-conservative point evolution is handled in a mapped space, so that transformed variables are never evaluated outside the admissible set. The conservative cell-average update is controlled through cell average decomposition (CAD), geometric quasilinearization (GQL)---originating from the positivity-preserving or bound-preserving schemes for compressible MHD in \cite{wu2017admissible,wu2018positivity,Wu2023Geometric}---and local \emph{a priori} scaling, which is adapted from \cite{zhang2010maximum,zhang2010positivity}. This scaling is conservative, cellwise, and applied prior to numerical flux evaluation. It introduces no additional evolved degrees of freedom, requires no global nonlinear solves or mesh-wide corrections, and does not rely on postprocessing limiting. It remains inactive on cells whose trace and CAD control states already satisfy the admissibility constraints.
The resulting CFL condition depends only on local CAD weights and trace-state wave-speed bounds, making the admissibility requirements explicit rather than hidden in implementation parameters.

The main contributions of this work are summarized as follows:
\begin{itemize}
    \item We formulate an \emph{a priori} admissibility principle for hybrid point-value/cell-average schemes, in which conservative cell averages and point states are controlled by distinct but compatible mechanisms in a coupled update. For a forward Euler stage, an explicit CFL condition ensures preservation of both components of the hybrid admissible set. Invariant domain preservation under strong-stability-preserving (SSP) Runge--Kutta methods then follows via convex-stage embedding; see \Cref{thm:hybrid_stage_IDP} and \Cref{cor:ssp_hybrid_stage}.

    \item We construct a model-independent barrier--Legendre map for convex admissible interiors defined by finitely many concave constraints. Its inverse is characterized by a strictly convex minimization problem, is globally defined and Lipschitz continuous on the transformed space; see \Cref{thm:Psi_constructible} in Appendix~\ref{sec:proof_Psi_constructible}.

    \item We formulate a multidimensional point-value evolution in transformed variables. Directional derivatives are evaluated using admissible current point states, without probing the transform outside the admissible set. We establish consistency for smooth solutions and mapped-back point admissibility for both the exact inverse and practical admissibility-preserving maps; see \Cref{sec:pt_uncond} and \Cref{cor:hybrid_stage_practical}.

    \item We demonstrate that, in the multidimensional hybrid reconstruction setting, continuous traces used through the single-state physical flux alone cannot ensure conservative invariant-domain updates under a uniform CFL condition; see \Cref{thm:necessity_discont}. This obstruction already manifests for scalar linear advection. Our realization of the required IDP flux mechanism employs numerical fluxes with local discontinuous admissible trace states produced by CAD-compatible \emph{a priori} scaling; its relation to convex limiting and flux blending is clarified in \Cref{rem:convex_limiting_trace}. 
The scaling preserves the cell average, acts as the identity on already admissible CAD data, and yields an explicit CFL condition whose constants depend only on local CAD weights and trace-state wave-speed bounds; see \Cref{thm:B}. This yields constructive third-order schemes on triangular, Cartesian, convex quadrilateral, and general convex polygonal meshes; see \Cref{sec:geom}.
\end{itemize}

The methodology developed in this paper decouples the admissibility requirements for point values and cell averages, and then combines them into a unified hybrid IDP framework. It also identifies the limitation of continuous traces in the conservative update and provides a local admissible-trace construction with explicit CFL constants. The remainder of the paper is organized as follows. Section~\ref{sec:setting} presents the abstract hybrid framework and the main admissibility results. Section~\ref{sec:pt_uncond} develops the transformed point-value operator. Section~\ref{sec:avg_idp} investigates the conservative cell-average update and the continuous-trace obstruction. Section~\ref{sec:geom} details constructive CAD realizations. Section~\ref{sec:numerics} presents numerical benchmarks, and the appendices contain the constructive transform theorem, implementation details, and several technical proofs.

\section{Framework and main results}
\label{sec:setting}

This section outlines the mathematical setting and presents the main hybrid IDP theorems. We first specify the model-dependent ingredients: the invariant domain, the one-dimensional invariant-domain flux condition, and the wave-speed bound. We then state the mesh-dependent CAD and GQL assumptions, introduce the point transform and the conservative update, and combine them in a coupled hybrid theorem for a single forward Euler stage, which forms the building block of strong-stability-preserving (SSP) Runge--Kutta schemes. Throughout this section, $\Gint$ denotes the admissible interior used for exact point transforms, while the closed set $G=\overline{\Gint}$ is used for conservative cell-average admissibility.

We consider the $d$-dimensional system of hyperbolic conservation laws
\begin{equation}\label{eq:cl}
	\partial_t u + \nabla\cdot F(u)=0, \qquad (x,t)\in \Omega\times \mathbb{R}_+,
\end{equation}
where $t$ denotes time, $x := (x_1,\ldots,x_d)^\top$ is the coordinate vector, 
$\Omega \subseteq \mathbb{R}^d$ is the spatial domain, $u:\Omega\times\mathbb{R}_+\to\mathbb{R}^m$ denotes the vector of conservative variables, and $F(u)=(f_1(u),\ldots,f_d(u))^\top\in(\mathbb{R}^m)^d$ represents the flux function with $\nabla\cdot F(u):=\sum_{i=1}^d\partial_{x_i} f_i(u)$.
Let $A_i(u) \coloneqq Df_i(u) \in \mathbb{R}^{m \times m}$ denote the Jacobian matrices of the flux components $f_i(u)$.
Throughout the paper, we focus on $d=2$ for geometric constructions on convex polygonal meshes, while the theoretical framework remains dimension-independent.

A fundamental requirement for physical realizability and numerical stability of the nonlinear hyperbolic system \eqref{eq:cl} is the preservation of an admissible set. Two representative benchmark models are considered throughout this paper and introduced here in the open/closed notation used later. For a scalar conservation law,
\begin{equation}\label{G:Scalar}
	\Gint_{\mathrm{sc}}=(U_{\min},U_{\max}),\qquad G_{\mathrm{sc}}=\overline{\Gint_{\mathrm{sc}}}=[U_{\min},U_{\max}],
\end{equation}
where $U_{\min}=\min_x u_0(x)$ and $U_{\max}=\max_x u_0(x)$, see \cite{zhang2010maximum}. For the compressible Euler equations, with conservative variables $u=(\rho,\rho v,E)^\top$, velocity $v\in \mathbb{R}^d$, and pressure $p=(\gamma-1)\bigl(E-\frac{1}{2}\rho|v|^2\bigr)$,
\begin{equation}\label{G:Euler}
	\Gint_{\mathrm{Euler}}=\left\{u=(\rho,\rho v,E)^\top:\ \rho(u)>0,\ \mathcal E(u):=E-\frac{1}{2}\rho|v|^2>0\right\},\quad G_{\mathrm{Euler}}=\overline{\Gint_{\mathrm{Euler}}},
\end{equation}
where $\Gint_{\mathrm{Euler}}$ is convex because $\mathcal E$ is concave in $u$ on $\{\rho>0\}$, see \cite{zhang2010positivity}. In the general framework below, these two model cases are denoted uniformly by $\Gint$ and $G$.

We consider hybrid schemes that evolve \emph{cell averages} and \emph{cell-boundary point values} through two coupled operators: a conservative flux-divergence update for cell averages and a non-conservative evolution operator for point values.

\subsection{Geometry and degrees of freedom}
\label{sec:geom_dofs}
Let $\mathcal{T}_h$ be a conforming partition of $\Omega\subset\mathbb{R}^2$ into open, non-overlapping \emph{convex} polygonal cells $K$.
We denote by $\mathcal{E}_h$ the set of all edges in the mesh $\mathcal{T}_h$.
We denote by $|K|$ the area and by $h_K$ the diameter of $K$, and set
$h:=\max_{K\in\mathcal{T}_h} h_K$.
Let $\mathcal{E}_K$ be the set of edges bounding $K$. For each $e\in\mathcal{E}_K$, denote by $|e|$ the length of $e$ and by $n_{K,e}$ the outward unit normal to $e$ relative to $K$. For an edge $e$ shared by two adjacent cells (i.e., $e = \partial K \cap \partial K_e$), we always have $n_{K_e,e} = -n_{K,e}$.

Hybrid methods advance two families of unknowns. For each $K\in\mathcal{T}_h$, the cell average $\bar{u}_K(t)\in\mathbb{R}^m$ approximates
\[
\bar{u}_K(t)\approx \frac{1}{|K|}\int_K u(x,t)\,\textrm{d}x.
\]
On the mesh skeleton, a finite set $\Sigma_h$ of evolution nodes is defined, where each node $\sigma\in\Sigma_h$ has coordinate $x_\sigma$ and carries a point value $u_\sigma(t)$ approximating $u(x_\sigma,t)$. For a given cell $K$, let $\Sigma_K:=\{\sigma\in\Sigma_h:\ x_\sigma\in\partial K\}$.

The hybrid state space is
\[
\mathcal{V}_{\mathrm{avg}}
:=\prod_{K\in\mathcal{T}_h}\mathbb{R}^m,\qquad
\mathcal{V}_{\mathrm{pt}}
:=\prod_{\sigma\in\Sigma_h}\mathbb{R}^m,\qquad
\mathcal{V}_h := \mathcal{V}_{\mathrm{avg}}\times\mathcal{V}_{\mathrm{pt}}.
\]
A discrete hybrid state is denoted by $u_h=(\bar{u}_h,u_\Sigma)$ with
$\bar{u}_h=\{\bar{u}_K\}_{K\in\mathcal{T}_h} \in \mathcal{V}_{\mathrm{avg}}$ and $u_\Sigma=\{u_\sigma\}_{\sigma\in\Sigma_h} \in \mathcal{V}_{\mathrm{pt}}$.
Given sets $G_{\mathrm{avg}},G_{\mathrm{pt}}\subset\mathbb{R}^m$, we define the discrete admissible set
\begin{equation}\label{eq:discreteGh_general}
	G_h(G_{\mathrm{avg}},G_{\mathrm{pt}})
	:=\Bigl\{u_h\in\mathcal{V}_h:\ \bar{u}_K\in G_{\mathrm{avg}} ~~ \forall K\in\mathcal{T}_h, ~~~ 
	u_\sigma\in G_{\mathrm{pt}} ~~ \forall\sigma\in\Sigma_h\Bigr\}.
\end{equation}
The subsequent analysis accommodates different admissible domains for the conservative and point-value updates: the conservative update is analyzed on the admissible set enforced by the limiter $\Pi_K$, whereas the point-value update takes values either in $\Gint$ under the exact inverse or in the admissible range of a practical admissibility-preserving map.
A hybrid scheme is called \emph{invariant-domain-preserving (IDP)} if
$u_h^0\in G_h(G_{\mathrm{avg}},G_{\mathrm{pt}}) \Rightarrow u_h^n\in G_h(G_{\mathrm{avg}},G_{\mathrm{pt}})$ for all time levels $n$.

Each cell is equipped with a local reconstruction space $\mathbb{W}_K$ (e.g., $(\mathbb{P}^k(K))^m$ or a richer Active Flux/PAMPA approximation space). 
The reconstruction operator acts on all hybrid degrees of freedom: its domain is the hybrid state space $\mathcal V_h=\mathcal V_{\mathrm{avg}}\times\mathcal V_{\mathrm{pt}}$ introduced above, the same space on which the discrete admissible set is defined in \eqref{eq:discreteGh_general}. 
We denote by
\[
\mathcal{R}_K:\mathcal{V}_h\longrightarrow \mathbb{W}_K,\qquad
u_K := (\mathcal{R}_K u_h)|_K,
\]
a (possibly overdetermined) reconstruction operator satisfying the compatibility
constraints
\begin{subequations}\label{eq:recon_constraints}
	\begin{align}
		\frac{1}{|K|}\int_K u_K(x)\,\textrm{d}x &= \bar{u}_K, \label{eq:recon_avg}\\
		u_K(x_\sigma) &= u_\sigma,\qquad \forall \sigma\in\Sigma_K. \label{eq:recon_pt}
	\end{align}
\end{subequations}
Only the compatibility conditions \eqref{eq:recon_constraints} and the smooth-data accuracy of $\mathcal{R}_K$ are required in the subsequent analysis.

For each edge \(e\in\mathcal E_K\), we fix a positive quadrature rule
\begin{equation}\label{eq:edge_quad}
 \frac{1}{|e|}\int_e \varphi(x)\,\mathrm ds
 \approx \sum_{\nu=1}^{N_q}\omega_{e,\nu}\,\varphi(x_{e,\nu}),
 \qquad \omega_{e,\nu}>0,\quad \sum_{\nu=1}^{N_q}\omega_{e,\nu}=1,
\end{equation}
where \(x_{e,\nu}\in e\) denote the quadrature nodes on edge \(e\),
\(\omega_{e,\nu}\) are the corresponding weights, and \(N_q\) is chosen
such that the quadrature is exact for the trace on \(e\) of every
\(\varphi\in \mathbb W_K\), componentwise. We assume $x_{e,\nu}\in\{x_\sigma\}_{\sigma\in\Sigma_h}$.

\subsection{Invariant domains and the GQL half-space representation}
\label{sec:GQL}

We define the open admissible interior
\begin{equation}\label{eq:G_open_strict}
	\Gint := \Bigl\{u\in\mathbb{R}^m:\ g_j(u)>0,\ \forall j\in \mathcal{J}\Bigr\},
\end{equation}
and its closed counterpart
\begin{equation}\label{eq:G_def_g}
	G := \overline{\Gint}
	= \Bigl\{u\in\mathbb{R}^m:\ g_j(u)\ge 0,\ \forall j\in \mathcal{J}\Bigr\}.
\end{equation}
The point-transform formulation is established on $\Gint$, the conservative and limiter conditions on $G$, and the floating-point relaxations are deferred to Appendix~\ref{sec:IDP_imple}. Each $g_j$ is assumed to be concave and continuously differentiable on an open domain $U_j\subset\mathbb{R}^m$ containing $G$ together with its active boundary $S_j$ defined below.

A key technical challenge in preserving invariant domains is the possible nonlinearity
of the constraints \eqref{eq:G_def_g}, such as the positivity of internal energy in \eqref{G:Euler} for compressible Euler flows.
To address this, we employ the \emph{geometric quasilinearization (GQL)} framework of Wu and Shu \cite{Wu2023Geometric}, which converts the nonlinear admissibility constraints into an equivalent family of linear constraints.

For each constraint $j\in\mathcal{J}$, we set
$G_j:=\{u:\ g_j(u)\ge 0\}$ and denote
\[
S_j := \partial G\cap \partial G_j,
\]
i.e., the portion of $\partial G$ where the $j$-th constraint is active.
For any $u_j^\ast\in S_j$, define the normal
\[
n_j^\ast := \nabla g_j(u_j^\ast).
\]
We assume the non-degeneracy condition $n_j^\ast \neq 0$ (which is naturally satisfied for concave constraints $g_j$ provided $\text{int}(G)\neq \emptyset$), ensuring that each linearized constraint defines a proper supporting half-space.
The GQL half-space representation of $G$ is
\begin{equation}\label{eq:G_star}
	G^\ast
	:=\Bigl\{u\in\mathbb{R}^m:\ (u-u_j^\ast)\cdot n_j^\ast \ge 0,\ 
	\forall u_j^\ast\in S_j,\ \forall j\in\mathcal{J}\Bigr\}.
\end{equation}

\begin{proposition}[GQL representation \cite{Wu2023Geometric}]\label{prop:GQL}
	Assume each $g_j$ in \eqref{eq:G_def_g} is concave, $C^1$ on the open domain $U_j$ containing $G\cup S_j$, and satisfies the non-degeneracy condition on $S_j$. Then the convex set $G$ is equivalent to its GQL representation:
	\[
	G = G^\ast.
	\]
\end{proposition}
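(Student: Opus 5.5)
We prove the two inclusions $G\subseteq G^\ast$ and $G^\ast\subseteq G$ separately, using only concavity, the $C^1$ regularity of each $g_j$ on $U_j$, and the non-degeneracy of $n_j^\ast=\nabla g_j(u_j^\ast)$ on $S_j$.

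\emph{Step 1: $G\subseteq G^\ast$.} Fix $u\in G$, an index $j\in\mathcal J$, and a point $u_j^\ast\in S_j$. Since $S_j\subset\partial G\cap\partial G_j$ and $g_j$ is continuous, we have $g_j(u_j^\ast)=0$: indeed $u_j^\ast\in G\subset G_j$ gives $g_j\ge0$, and $u_j^\ast\in\partial G_j$ rules out $g_j>0$, since that would place $u_j^\ast$ in the open set $\{g_j>0\}\subset\operatorname{int}G_j$. Concavity and differentiability give the supergradient inequality
\[
g_j(u)\le g_j(u_j^\ast)+\nabla g_j(u_j^\ast)\cdot(u-u_j^\ast).
\]
Since $g_j(u)\ge0$ and $g_j(u_j^\ast)=0$, this yields $(u-u_j^\ast)\cdot n_j^\ast\ge0$. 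One technical point needs care: the segment joining $u$ and $u_j^\ast$ lies in the convex set $G\subset U_j$, so the concavity inequality holds along it. This suffices because we only need $g_j$ to be concave on a convex set containing both points.

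\emph{Step 2: $G^\ast\subseteq G$.} We argue by contradiction. Suppose $w\in G^\ast\setminus G$. Pick an interior point $u_0\in\Gint$, which exists by the standing assumption $\operatorname{int}G\ne\emptyset$. Consider the segment $u(\theta)=(1-\theta)u_0+\theta w$ and set $\theta^\ast=\sup\{\theta\in[0,1]:u(\theta)\in G\}$. Since $G$ is closed and $w\notin G$, we get $\theta^\ast<1$ and $u^\ast:=u(\theta^\ast)\in\partial G$. Hence some constraint is active there: there is $j$ with $g_j(u^\ast)=0$. We then check that $u^\ast\in S_j$, i.e.\ $u^\ast\in\partial G_j$. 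This holds because $g_j(u^\ast)=0$, $n_j^\ast\neq0$, and the gradient condition lets $g_j$ take negative values arbitrarily near $u^\ast$.

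Next we compare the values of the linear functional $\ell(v)=(v-u^\ast)\cdot n_j^\ast$ at the two ends of the segment. By Step 1 applied to $u_0\in G$, we have $\ell(u_0)\ge0$, and in fact strictly: $g_j(u_0)>0$ and the supergradient inequality give $\ell(u_0)\ge g_j(u_0)>0$. Since $w\in G^\ast$, we also have $\ell(w)\ge0$. The point $u^\ast$ lies strictly between $u_0$ and $w$ on the line, and $\ell$ is affine with $\ell(u^\ast)=0$, so
\[
\ell(w)=-\tfrac{1-\theta^\ast}{\theta^\ast}\,\ell(u_0)<0,
\]
a contradiction. (Note that $\theta^\ast>0$ because $u_0$ is interior.) Therefore $G^\ast\subseteq G$.

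\emph{Main obstacle.} The delicate point is Step 2's identification of the exit point $u^\ast$ as a point of some $S_j$, which requires care with boundaries. Equivalently, we must show that every $u\in\partial G$ satisfies $g_j(u)=0$ for some $j$ with $u\in\partial G_j$. I would first prove that $\partial G\subseteq\bigcup_j\{g_j=0\}$, using continuity of the finitely many $g_j$: if all $g_j(u)>0$, then a whole neighborhood of $u$ lies in $G$. Then, for any $j$ with $g_j(u^\ast)=0$, I would show $u^\ast\in\partial G_j$ via $n_j^\ast\ne0$: moving from $u^\ast$ in the direction $-n_j^\ast$ makes $g_j$ negative to first order. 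If finiteness of $\mathcal J$ is not available, this step would need a uniform continuity argument instead.
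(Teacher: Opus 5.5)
The paper gives no proof of this proposition; it imports it from \cite{Wu2023Geometric}. Your argument therefore stands as a self-contained verification, and it takes the natural route. For $G\subseteq G^\ast$ you use the supergradient inequality at active boundary points, restricted to segments inside the convex set $G\subset U_j$. For $G^\ast\subseteq G$ you use an exit-point argument along a segment from an interior point $u_0\in\Gint$. The details are right: $\theta^\ast\in(0,1)$, $u^\ast\in\partial G$, $\partial G\subseteq\bigcup_j\{g_j=0\}$ (the paper uses finitely many constraints), and the affine identity $\ell(w)=-\frac{1-\theta^\ast}{\theta^\ast}\ell(u_0)$.

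One step is circular as ordered and needs reordering. To show the exit point $u^\ast$ lies in $S_j$, you invoke $n_j^\ast=\nabla g_j(u^\ast)\neq0$. The non-degeneracy hypothesis, however, is granted only on $S_j$, which is exactly what you are trying to establish. The same circularity appears in ``by Step 1 applied to $u_0$''. The fix uses an inequality you already wrote down.

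\begin{enumerate}[(i)]
\item Since $u^\ast,u_0\in G$ and $G$ is convex, the supergradient inequality along $[u^\ast,u_0]$ gives $\nabla g_j(u^\ast)\cdot(u_0-u^\ast)\ge g_j(u_0)-g_j(u^\ast)=g_j(u_0)>0$. This needs only concavity and differentiability at $u^\ast\in G\subset U_j$, not $u^\ast\in S_j$.
\item Hence $\nabla g_j(u^\ast)\neq0$ automatically. This is the content of the paper's parenthetical remark that non-degeneracy holds whenever $\operatorname{int}(G)\neq\emptyset$.
\item Then $g_j\bigl(u^\ast-t\nabla g_j(u^\ast)\bigr)=-t\|\nabla g_j(u^\ast)\|_2^2+o(t)<0$ for small $t>0$. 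The points stay in $U_j$ since $U_j$ is open, so $u^\ast\in\partial G_j$.
\item Therefore $u^\ast\in S_j$, and only now is $\ell(w)\ge0$ from $w\in G^\ast$ legitimately available.
\end{enumerate}

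With this reordering, the proof is complete. You also correctly identify $\operatorname{int}(G)\neq\emptyset$ as the hypothesis that supplies $u_0$, and that hypothesis is genuinely needed.
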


For the compressible Euler admissible set \eqref{G:Euler}, the corresponding closed admissible set admits the following half-space representation.

\begin{proposition}[GQL representation for the Euler system \cite{Wu2023Geometric}]
	For the compressible Euler equations, the closed admissible set $G$ corresponding to \eqref{G:Euler} has the GQL representation
	\begin{equation} \label{G*:Euler}
		G^* = \{u = (\rho, \rho v, E)^\top : (u - u_j^*) \cdot n_j^* \ge 0 \quad \forall u_j^* \in {S}_j, \ j=1, 2\}
	\end{equation}
	where the generating sets ${S}_1$ and ${S}_2$ and their associated normals are given as follows. 
	For $j=1$ (the density constraint), one may take ${S}_1=\{u_1^*=\bm 0\in\mathbb R^m\}$ and $n_1^*=e_1$, the first standard basis vector in $\mathbb R^m$.
	For $j=2$ (the pressure constraint), the set is
	\[
	{S}_2 = \left\{ u_2^* = \left(\rho^*, \rho^* v^*, \frac{1}{2}\rho^*|v^*|^2\right)^\top : \rho^* > 0, v^* \in \mathbb{R}^d \right\},
	\]
    with the corresponding normals $n_2^* = \left( \frac{1}{2}|v^*|^2, -v^*, 1 \right)^\top$.
	
	Equivalently, $G^*$ can be explicitly written in the form 
	\begin{equation} \label{G*:Euler2}
		G^* = \{u = (\rho, \rho v, E)^\top : {u} \cdot n_1^* \ge 0, ~  (u - u^*) \cdot {n}^* \ge 0 ~ \forall u^* \in S \}
	\end{equation}
	with $S=S_2$ and $n^* = n_2^*$.
\end{proposition}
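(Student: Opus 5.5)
We prove the Euler GQL representation in two steps. First we check that the sets $S_1$, $S_2$ with the stated normals are valid instances of the general construction in \Cref{prop:GQL}, so that $G=G^\ast$ follows from it. Then we check directly that \eqref{G*:Euler} and \eqref{G*:Euler2} define the same set. Since the density constraint $g_1(u)=\rho$ is linear, it is unaffected by linearization. The pressure constraint $g_2=\mathcal E$ is concave on $\{\rho>0\}$ but not $C^1$ at $\rho=0$. For this reason we would not lean fully on \Cref{prop:GQL}; we would instead prove both inclusions $G\subseteq G^\ast$ and $G^\ast\subseteq G$ by hand, using \Cref{prop:GQL} only as a guide.

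\textbf{Computing the half-space inequality.} For $u_2^\ast=(\rho^\ast,\rho^\ast v^\ast,\tfrac12\rho^\ast|v^\ast|^2)$ we have $\mathcal E(u_2^\ast)=0$, so $u_2^\ast$ lies on the pressure-active boundary. The gradient of $\mathcal E(u)=E-|m|^2/(2\rho)$ with $m=\rho v$ is $(\tfrac12|v|^2,-v,1)$, which at $u_2^\ast$ equals the stated $n_2^\ast$. Next we note that $u_2^\ast\cdot n_2^\ast=\tfrac12\rho^\ast|v^\ast|^2-\rho^\ast|v^\ast|^2+\tfrac12\rho^\ast|v^\ast|^2=0$. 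Hence the constraint reduces to $u\cdot n^\ast\ge0$, that is,
\[
\tfrac12\rho|v^\ast|^2-m\cdot v^\ast+E\ge0 .
\]
This inequality does not depend on $\rho^\ast$, which immediately gives the equivalence of \eqref{G*:Euler} and \eqref{G*:Euler2}. For the density constraint, $u\cdot e_1=\rho\ge0$.

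\textbf{The inclusion $G\subseteq G^\ast$.} If $\rho>0$ and $\mathcal E\ge0$, we complete the square:
\[
\tfrac12\rho|v^\ast|^2-m\cdot v^\ast+E=\tfrac{\rho}{2}\,|v^\ast-m/\rho|^2+\mathcal E(u)\ge0 .
\]
Since $G$ is the closure of $\Gint$ and every defining inequality of $G^\ast$ is closed, the inclusion extends to all of $G$.

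\textbf{The inclusion $G^\ast\subseteq G$.} Take $u\in G^\ast$. If $\rho>0$, we choose $v^\ast=m/\rho$, and the inequality becomes $\mathcal E(u)\ge0$, so $u\in G$. If $\rho=0$, the inequality reads $E-m\cdot v^\ast\ge0$ for every $v^\ast\in\mathbb R^d$. This forces $m=0$ and $E\ge0$. We then have to decide whether such a $u=(0,0,E)$ with $E\ge0$ belongs to $G$, and this boundary case is the main obstacle. The points $(\epsilon,0,E+\epsilon)$ lie in $\Gint$ and converge to $(0,0,E)$ as $\epsilon\to0$, so $u\in\overline{\Gint}=G$. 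Here $G$ has to be understood as the closure $\overline{\Gint}$, not as the pointwise constraint set, because $\mathcal E$ is undefined at $\rho=0$. Finally we note that the closure of $\Gint$ contains no points with $\rho=0$ and $m\neq0$, so the two descriptions agree there as well, which completes the proof.
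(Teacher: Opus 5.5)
Your proof is correct, and it takes a different route from the paper. The paper gives no argument for this proposition: it quotes the result from \cite{Wu2023Geometric} as an instance of the abstract \Cref{prop:GQL}, with normals $n_j^\ast=\nabla g_j(u_j^\ast)$ taken at active boundary points.

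You verify both inclusions directly. You note that $u_2^\ast\cdot n_2^\ast=0$, use the completed square $\tfrac{\rho}{2}|v^\ast-m/\rho|^2+\mathcal E(u)$ plus closedness of $G^\ast$ for $G\subseteq G^\ast$, choose $v^\ast=m/\rho$ when $\rho>0$, and use the limit $(\epsilon,0,E+\epsilon)\to(0,0,E)$ when $\rho=0$. This is self-contained and more rigorous than leaning on \Cref{prop:GQL}. As you observe, $\mathcal E$ is not even defined at $\rho=0$, so the hypothesis that $g_2$ be $C^1$ on an open set containing $G\cup S_2$ fails at the points $(0,0,E)$, $E\ge0$, which do lie in $G=\overline{\Gint}$. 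Your computation also shows why one may take $S_1=\{\bm 0\}$: every point of the density-active face gives the same half-space $\rho\ge0$. What the cited route buys is uniformity: one abstract statement covers any convex set cut out by smooth concave constraints, and the normals come out automatically as constraint gradients.

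Two small presentation points:
\begin{itemize}
\item The equivalence of \eqref{G*:Euler} and \eqref{G*:Euler2} follows at once from $u_1^\ast=\bm 0$, $S=S_2$, $n^\ast=n_2^\ast$. The $\rho^\ast$-independence is what lets the pressure constraints be indexed by $v^\ast$ alone and written as $u\cdot n^\ast\ge0$; it is not needed for the equivalence itself.
\item You insist that $G$ means $\overline{\Gint}$. In the case $\rho>0$ of $G^\ast\subseteq G$ you should therefore say explicitly that $\{\rho>0,\ \mathcal E\ge0\}\subseteq\overline{\Gint}$, which follows from the same perturbation $E\mapsto E+\epsilon$.
\end{itemize}
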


\subsection{Admissibility transform}
\label{sec:Psi}
We denote the admissibility transform and its inverse by
\begin{equation}\label{eq:point_update_w}
	\Psi:\Gint\longrightarrow \mathbb{R}^m,\qquad u\longmapsto w=\Psi(u),
	\qquad
	\Psi^{-1}:\mathbb{R}^m\longrightarrow \Gint,\qquad w\longmapsto u=\Psi^{-1}(w).
\end{equation}

\begin{itemize}
	\item (A$\Psi$1) \emph{Global bijection.}
	$\Psi:\Gint\to\mathbb{R}^m$ is bijective, and its inverse $\Psi^{-1}:\mathbb{R}^m\to \Gint$ is globally defined.
	\item (A$\Psi$2) \emph{Regularity.} $\Psi$ is $C^1$ on $\Gint$ and
	$D\Psi(u)$ is invertible for all $u\in \Gint$.
	Moreover, $\Psi$ and $\Psi^{-1}$ are locally Lipschitz continuous.
	\item (A$\Psi$3) \emph{Compatibility with the PDE.} For any smooth solution
	$u(x,t)\in \Gint$ of \eqref{eq:cl}, the transformed variable $w=\Psi(u)$ satisfies the
	equivalent quasilinear non-conservative form
	\begin{equation}\label{eq:w_quasilinear}
		\partial_t w + \sum_{i=1}^d J_i(u)\,\partial_{x_i}w = 0,
		\qquad
		J_i(u):= D\Psi(u)\,A_i(u)\,\bigl(D\Psi(u)\bigr)^{-1}.
	\end{equation}
\end{itemize}
\begin{remark}[Constructive admissibility transform]\label{rem:Psi_constructible_appendix}
For convex admissible interiors defined by finitely many concave constraints, Appendix~\ref{sec:proof_Psi_constructible} constructs a barrier--Legendre admissibility transform satisfying (A$\Psi$1)--(A$\Psi$3), along with the variational characterization of the inverse map and a global Lipschitz bound; see \Cref{thm:Psi_constructible}.
\end{remark}

\begin{remark}[Exact and practical point maps]\label{psi1}
The exact theoretical formulation relies on the barrier--Legendre map of \Cref{thm:Psi_constructible} and its exact inverse $\Psi^{-1}:\mathbb R^m\to \Gint$. In practical implementations, a forward map $\mathcal T:U_{\mathrm{fwd}}\to\mathbb R^m$ is paired with a globally defined admissibility-preserving map $\mathcal B:\mathbb R^m\to G_{\mathrm{pt}}$, where the convex point-value admissible target satisfies $G_{\mathrm{pt}}\subseteq U_{\mathrm{fwd}}$. Explicit pairs for scalar conservation laws and the compressible Euler equations are listed in Appendix~\ref{sec:transform_pairs} and summarized in \Cref{tab:transform-summary}.
\end{remark}

\subsection{Hybrid update operator}
\label{sec:abstract_operator}
The abstract hybrid update stage is formulated componentwise as follows.  The point update is performed in the transformed variables and then mapped back via an admissibility-preserving map $\mathcal B$, whereas the cell-average update remains conservative:
\begin{equation}\label{eq:hybrid_operator}
	\begin{cases}
		u_\sigma^{n+1} = \mathcal{L}_{\mathrm{pt}}(u_h^n)_\sigma, & \forall\sigma\in\Sigma_h,\\[2mm]
		\bar{u}_K^{n+1} = \mathcal{L}_{\mathrm{avg}}(u_h^n)_K, & \forall K\in\mathcal{T}_h,
	\end{cases}
	\quad
	u_h^{n+1}=\mathcal{L}_h(u_h^n):=
	\bigl(\mathcal{L}_{\mathrm{avg}}(u_h^n),\mathcal{L}_{\mathrm{pt}}(u_h^n)\bigr).
\end{equation}
\begin{equation}\label{eq:pt_update_w}
	w_\sigma^{n+1} = w_\sigma^n + \Delta t\,\mathcal{D}^{\mathrm{pt}}_\sigma(u_h^n),
	\qquad u_\sigma^{n+1} := \mathcal B(w_\sigma^{n+1}).
\end{equation}

In \eqref{eq:admissible_recon}, the coefficients $\lambda_{K,e}$ denote the positive boundary weights of a cell average decomposition for the cell $K$. They are introduced abstractly in Assumption~(A3), specifically in \eqref{eq:CAD}--\eqref{eq:edge_ratio}, and their concrete third-order realizations are detailed in Section~\ref{sec:geom}. Thus, $\lambda_{K,e}$ is a fixed CAD boundary weight associated with the edge $e$ of $K$.

A reconstruction $\hat{u}_K^n\in\mathbb{W}_K$ is said to be \emph{$G$-admissible} if
\begin{subequations}\label{eq:admissible_recon}
\begin{equation}\label{eq:915}
	\hat{u}_K^n(x_{e,\nu})\in G,\ \forall e\in\mathcal{E}_K,\ \forall \nu,
\end{equation}
and when $\sum_{e\in\mathcal{E}_K} \lambda_{K,e} < 1$,
\begin{equation}\label{eq:921}
	\hat{u}_K^* := \dfrac{\bar{u}_K^n-\sum_{e\in\mathcal{E}_K}\lambda_{K,e}\bar{\hat u}_{K,e}^n}{1-\sum_{e\in\mathcal{E}_K} \lambda_{K,e}} 
	= 
	\dfrac{\bar{u}_K^n-\sum_{e\in\mathcal{E}_K}\lambda_{K,e}\sum_{\nu=1}^{N_q} \omega_{e,\nu}\hat{u}_K^n(x_{e,\nu})}{1-\sum_{e\in\mathcal{E}_K} \lambda_{K,e}} 
	\in G,
\end{equation}
\end{subequations}
where $\bar{\hat u}_{K,e}^n:=\frac{1}{|e|}\int_e \hat{u}^n_K \, \textrm{d}s$ denotes the average of $\hat{u}^n_K$ over the edge $e$ of cell $K$. These are the same edge weights that appear in the CAD identity \eqref{eq:gcd_point}.

\begin{remark}[Interpretation of the auxiliary CAD state $\hat u_K^*$]
\label{rem:aux_cad_state}
The quantity $\hat u_K^*$ in \eqref{eq:921} is an auxiliary CAD control state. In general, it cannot be interpreted as a point value of the reconstruction on $K$. Such an interpretation is valid only in the special case where the CAD is exact on the full reconstruction space $\mathbb W_K$ and the residual interior contribution in the CAD is represented by a single point evaluation. For example, on a triangular element the identity
\[
\bar\phi_K = \frac16\sum_{e\in\mathcal E_K}\bar\phi_e + \frac12\,\phi(x_K^c),
\qquad \forall\phi\in\mathbb P^2(K),
\]
with exact edge averages and barycenter $x_K^c$, yields $\hat u_K^*=\hat u_K^n(x_K^c)$ when $\mathbb W_K=\mathbb P^2(K)$.

The situation is different when the CAD is required only on $(\mathbb P^k(K))^m$ and $(\mathbb P^k(K))^m\subsetneq\mathbb W_K$. In that case, \eqref{eq:921} defines a linear functional on the available data, not necessarily an evaluation functional on $\mathbb W_K$. To see this, take the reference triangle $K=\{(x,y):x\ge0,\ y\ge0,\ x+y\le1\}$, use the same $\mathbb P^2$ CAD, and choose $\mathbb W_K=\mathbb P^3(K)$. The first moments force any point representing the residual functional to be $x_K^c=(1/3,1/3)$; however, for $\phi(x,y)=x^3$,
\[
2\left(\bar\phi_K-\frac16\sum_{e\in\mathcal E_K}\bar\phi_e\right)=\frac1{30}\neq \frac1{27}=\phi(x_K^c).
\]
Thus, the reduced CAD does not define a point value on the enriched space. This distinction has no adverse impact on the IDP limiter or on the accuracy-preservation argument: the conservative analysis uses only the condition $\hat u_K^*\in G$, and the high-order perturbation estimate for the reduced-CAD alternative limiter is proved in \Cref{lem:PiK_props2}(iii) and summarized in \Cref{cor:limiter_small_smooth}.
\end{remark}

For a shared edge $e = \partial K \cap \partial K_e$, the interior and exterior traces at the quadrature nodes are defined by:
\[
\hat{u}_{K,e,\nu}^n:=\hat{u}_K^n(x_{e,\nu}),\qquad \hat{u}_{K_e,e,\nu}^n:=\hat{u}_{K_e}^n(x_{e,\nu}),\qquad e=K\cap K_e.
\]
The conservative forward Euler update is
\begin{equation}\label{eq:avg_update_FE}
	\bar{u}_K^{n+1}
	= \bar{u}_K^n
	- \frac{\Delta t}{|K|}\sum_{e\in\mathcal{E}_K} |e|\sum_{\nu=1}^{N_q}
	\omega_{e,\nu}\,
	\widehat{F}\bigl(\hat{u}_{K,e,\nu}^n,\hat{u}_{K_e,e,\nu}^n;\,n_{K,e}\bigr),
\end{equation}
where $K_e$ denotes the neighboring cell across $e$ (or represents a boundary state if $e\subset\partial\Omega$).

\subsection{Standing assumptions}
\label{sec:assumptions}

We employ the following modular assumptions. Assumptions (A1)--(A3) concern the mesh and CAD geometry, (A4) is needed only for the exact point-transform result, and (A5) is the conservative flux admissibility condition. Boundary conditions are handled through prescribed boundary data, rather than through a separate structural assumption: they supply the exterior traces and boundary directional derivatives required by the point-value operator. The periodic, inflow and exact-data, outflow, and reflective treatments used in the numerical simulations are detailed in Appendix~\ref{sec:boundary_closure}.

\begin{itemize}
	\item(A1) \emph{Mesh regularity and positive quadrature.}
	The mesh $\mathcal{T}_h$ is conforming and uniformly shape-regular;
	the number of edges per cell is uniformly bounded.
	All edge quadrature rules \eqref{eq:edge_quad} have strictly positive weights.
	
	\item(A2) \emph{Convex invariant domain and GQL.}
	The invariant domain $G\subset\mathbb{R}^m$ is convex and admits the description \eqref{eq:G_def_g} with concave, non-degenerate $C^1$ constraints; hence $G=G^\ast$ holds in the sense of \Cref{prop:GQL}.
	
\item(A3) \emph{Feasible cell average decomposition (CAD).}
	Let $\mathbb{V}_K \in \{(\mathbb{P}^{k}(K))^m, \mathbb{W}_K\}$. For each cell $K$, there exist boundary weights $\{\lambda_{K,e}\}_{e\in\mathcal{E}_K}$, internal weights $\{\beta_{K,s}\}_{s=1}^{S_K}$, and internal nodes $\{x^*_{K,s}\}_{s=1}^{S_K}\subset \overline{K}$ such that
	\begin{equation}\label{eq:CAD}
		\bar{\phi}_K
		:=\frac{1}{|K|}\int_K \phi(x)\,\textrm{d}x
		= \sum_{e\in\mathcal{E}_K}\lambda_{K,e}\,\bar{\phi}_{K,e}
		+ \sum_{s=1}^{S_K}\beta_{K,s}\,\phi(x_{K,s}^\ast),
		\qquad \forall \phi\in \mathbb{V}_K,
	\end{equation}
	where $\bar{\phi}_{K,e}:=\frac{1}{|e|}\int_e \phi\,\textrm{d}s$. The weights satisfy
	\[
	\lambda_{K,e}>0,\quad \beta_{K,s}>0,\qquad
	\sum_{e\in\mathcal{E}_K}\lambda_{K,e}+\sum_{s=1}^{S_K}\beta_{K,s}=1.
	\]
	Assume that the edge quadrature rule \eqref{eq:edge_quad} is exact for traces of
functions in \(\mathbb W_K\) on each edge, with exactness understood
componentwise for vector-valued functions. 
In particular, it is exact for
traces of functions in \(\mathbb V_K\subseteq \mathbb W_K\). 
    Then
	\begin{equation}\label{eq:gcd_point}
		\bar{\phi}_K
		=
		\sum_{e\in\mathcal{E}_K}\lambda_{K,e}\sum_{\nu=1}^{N_q}\omega_{e,\nu}\,\phi(x_{e,\nu})
		+\sum_{s=1}^{S_K}\beta_{K,s}\,\phi(x_{K,s}^*),
		\qquad \forall \phi\in \mathbb{V}_K.
	\end{equation}
We define the edge ratio parameter
\begin{equation}\label{eq:edge_ratio}
	\mu_K := \min_{e\in\mathcal{E}_K} \frac{\lambda_{K,e}\,|K|}{|e|}>0,
	\qquad
	\mu := \min_{K\in\mathcal{T}_h} \mu_K > 0.
\end{equation}

		\item(A4) \emph{Exact point evolution in $w$-space.}
		The mapping $\Psi$ satisfies (A$\Psi$1)--(A$\Psi$3), the point update is performed in the $w$-variables as in \eqref{eq:pt_update_w}, and the point states are recovered through the exact inverse $\Psi^{-1}:\mathbb R^m\to \Gint$.

	\item(A5) \emph{Weak Lax--Friedrichs splitting.}
		Under the GQL framework, the weak half-space formulation of the Lax--Friedrichs splitting principle is assumed to hold; see \cite{Wu2023Geometric}.
		For every state $u\in G$, every constraint $j\in\mathcal{J}$, and every active boundary state $u_j^\ast\in S_j$, there exists a wave-speed bound $\tilde\alpha(u,n)\ge 0$ and a vector function $\zeta(u_j^\ast)\in\mathbb{R}^d$ such that for any unit vector $n\in\mathbb{R}^d$,
	\begin{equation}\label{eq:weakLF2}
		\alpha\,(u-u_j^\ast)\cdot n_j^\ast \pm \big(F(u)\cdot n\big)\cdot n_j^\ast
		\ \ge\ \mp \zeta(u_j^\ast)\cdot n
		\qquad \forall \alpha\ge \tilde\alpha(u,n).
	\end{equation}
\end{itemize}

For boundary edges, the exterior trace states and the directional derivatives required by the point-value update are supplied by the
prescribed boundary conditions. In the analysis presented below, these point-operator boundary treatments are required to be consistent with the physical boundary data and locally Lipschitz continuous with respect to the interior trace and gradient data. The exterior states entering the conservative numerical flux are specified separately by the physical boundary conditions, or by admissible approximations thereof, and are required to belong to \(G\). Appendix~\ref{sec:boundary_closure} describes this decoupling and verifies these properties for the treatments used in the numerical simulations.

\begin{proposition}[Verification of Assumption~(A5) for the scalar invariant domain]\label{prop:A5_scalar}
	For scalar conservation laws with $G_{\mathrm{sc}}=[U_{\min}, U_{\max}]$, Assumption~(A5) holds for all $u \in G_{\mathrm{sc}}$ and unit normals $n$ with
    \begin{equation}\label{eq:Roe_speed}
		\tilde{\alpha}(u, n) = \max\{a(u,n,u_1^*), a(u,n,u_2^*)\}
        \qquad 
        \zeta(u_j^*) = -n_j^* F(u_j^*)
	\end{equation}
    with
    \[
		a(u,n,u_j^*)=
		\begin{cases} 
			\left| \frac{\left(F(u) - F(u_j^*)\right) \cdot n}{u - u_j^*} \right|, & \text{if } u \neq u_j^*, \\[2mm]
			|DF(u_j^*) \cdot n|, & \text{if } u = u_j^*,
		\end{cases}
	\]
	where $u_1^* = U_{\min}$ and $u_2^* = U_{\max}$ are the boundary states of $\partial G_{\mathrm{sc}}$ with state-space inward normals $n_1^* = 1$ and $n_2^* = -1$, respectively. 
\end{proposition}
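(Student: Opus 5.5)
We verify \eqref{eq:weakLF2} directly in the scalar case ($m=1$), one boundary state at a time. For scalar laws the GQL half-spaces are trivial: $S_1=\{U_{\min}\}$ with $n_1^*=1$, and $S_2=\{U_{\max}\}$ with $n_2^*=-1$. Hence $(u-u_j^*)\cdot n_j^*=|u-u_j^*|\ge 0$ for every $u\in G_{\mathrm{sc}}$. With the proposed choice $\zeta(u_j^*)=-n_j^*F(u_j^*)\in\mathbb R^d$, the right-hand side of \eqref{eq:weakLF2} is
\[
\mp\zeta(u_j^*)\cdot n=\pm n_j^*\,F(u_j^*)\cdot n .
\]
Moving it to the left, the inequality to prove becomes
\[
\alpha\,(u-u_j^*)\,n_j^* \pm n_j^*\bigl(F(u)-F(u_j^*)\bigr)\cdot n\ \ge 0
\qquad \forall \alpha\ge\tilde\alpha(u,n),
\]
for each $j\in\{1,2\}$ and each choice of sign.

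\textbf{Case $u\neq u_j^*$.} Write $(F(u)-F(u_j^*))\cdot n = s\,(u-u_j^*)$ with $s$ the secant slope, so that $|s|=a(u,n,u_j^*)\le\tilde\alpha(u,n)\le\alpha$. The left-hand side then factors as $(\alpha\pm s)\,(u-u_j^*)n_j^*$. The first factor $\alpha\pm s\ge\alpha-|s|\ge0$. The second factor $(u-u_j^*)n_j^*=|u-u_j^*|\ge0$ because $u\in[U_{\min},U_{\max}]$. Their product is therefore nonnegative.

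\textbf{Case $u=u_j^*$.} Both terms vanish identically, so the inequality holds with equality. The derivative value $|DF(u_j^*)\cdot n|$ in the definition of $a$ is only needed so that $\tilde\alpha$ is well defined (and continuous when $F\in C^1$). It plays no role in the inequality itself.

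Taking the maximum over $j=1,2$ in $\tilde\alpha$ makes a single bound serve both constraints simultaneously. This completes the verification.

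The only point needing care is sign bookkeeping: the sign convention of $\zeta$ must be matched to the $\pm/\mp$ pairing in \eqref{eq:weakLF2} so that the $F(u_j^*)$ terms cancel into a difference. Once that is done, the result reduces to the elementary factorization above, and I expect no genuine obstacle.
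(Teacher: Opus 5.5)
Your proof is correct and follows essentially the same route as the paper: after substituting $\zeta(u_j^*)=-n_j^*F(u_j^*)$, the paper uses $(u-u_j^*)n_j^*=|u-u_j^*|$ together with $\mp n_j^*\bigl(F(u)-F(u_j^*)\bigr)\cdot n\le\bigl|\bigl(F(u)-F(u_j^*)\bigr)\cdot n\bigr|\le\alpha|u-u_j^*|$. This is equivalent to your secant-slope factorization $(\alpha\pm s)\,|u-u_j^*|\ge 0$, and the case $u=u_j^*$ is trivial in both versions.
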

\begin{proof}
	The proof is provided in Appendix~\ref{sec:weakLF_scalar}.
\end{proof}

By the mean-value theorem, $a(u,n,u_j^*)$ defined in \eqref{eq:Roe_speed} corresponds exactly to the magnitude of the projected flux Jacobian at some intermediate state $u_{j,\xi}$ within the segment between $u$ and $u_j^*$:
\[
a(u,n,u_j^*) = |DF(u_{j,\xi}) \cdot n|= |A_n(u_{j,\xi})| = \bigg|\sum_{i=1}^d n_i A_i(u_{j,\xi})\bigg|, \quad n=(n_1,\cdots,n_d).
\]
Consequently, $\tilde{\alpha}(u,n)=\max\{|A_n(u_{1,\xi})|,|A_n(u_{2,\xi})|\}$ is naturally bounded by the maximum local characteristic speed.

\begin{proposition}[Verification of Assumption~(A5) for the Euler invariant domain]\label{prop:A5_Euler}
	For the compressible Euler equations with $\Gint_{\mathrm{Euler}}$, which represents the admissible set defined by the strict positivity of density and pressure, Assumption~(A5) holds for all interior states $u \in \Gint_{\mathrm{Euler}}$ and unit normals $n$ with
	\begin{equation}\label{eq:Euler_speed}
	\tilde\alpha(u,n)=|v\cdot n|+c_s,\qquad c_s=\sqrt{\gamma p/\rho},
	\qquad \zeta\equiv \bm0.
	\end{equation}
\end{proposition}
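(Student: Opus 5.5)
We verify (A5) for the Euler invariant domain with $\zeta\equiv 0$ and $\alpha\ge|v\cdot n|+c_s$. With $\zeta=0$ the requirement reads
\[
\alpha\,(u-u_j^\ast)\cdot n_j^\ast \pm (F(u)\cdot n)\cdot n_j^\ast \ge 0,
\]
and we treat the two constraints of \eqref{G*:Euler} separately. For $j=1$ we have $u_1^\ast=\bm 0$ and $n_1^\ast=e_1$. The left side becomes $\alpha\rho\pm\rho\,(v\cdot n)=\rho(\alpha\pm v\cdot n)$. This is nonnegative because $\rho>0$ and $\alpha\ge|v\cdot n|$.

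For $j=2$, fix $u_2^\ast$ with parameters $\rho^\ast>0$ and $v^\ast$, and normal $n^\ast=(\tfrac12|v^\ast|^2,-v^\ast,1)^\top$.

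\textbf{Step 1 (the state term).} A direct computation gives $u_2^\ast\cdot n^\ast=0$, so $(u-u_2^\ast)\cdot n^\ast=u\cdot n^\ast$. Completing the square yields
\[
u\cdot n^\ast=\tfrac{\rho}{2}|v-v^\ast|^2+\frac{p}{\gamma-1}.
\]

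\textbf{Step 2 (the flux term).} Write $F(u)\cdot n=(\rho v_n,\ \rho v v_n+pn,\ (E+p)v_n)^\top$ with $v_n=v\cdot n$. Contracting with $n^\ast$ and completing the same square gives
\[
(F(u)\cdot n)\cdot n^\ast=v_n\,(u\cdot n^\ast)+p\,(v-v^\ast)\cdot n .
\]

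\textbf{Step 3 (combining).} Set $\theta:=(v-v^\ast)\cdot n$. The quantity to bound is then
\[
(\alpha\pm v_n)\Bigl(\tfrac{\rho}{2}|v-v^\ast|^2+\tfrac{p}{\gamma-1}\Bigr)\pm p\,\theta .
\]
Since $\alpha\pm v_n\ge c_s\ge 0$ and $|v-v^\ast|^2\ge\theta^2$, this is at least
\[
c_s\Bigl(\tfrac{\rho}{2}\theta^2+\tfrac{p}{\gamma-1}\Bigr)-p|\theta| .
\]
This is a quadratic in $|\theta|$. It is nonnegative precisely when its discriminant condition holds:
\[
p^2\le 4\cdot\tfrac{c_s\rho}{2}\cdot\tfrac{c_s p}{\gamma-1}, \quad\text{i.e.}\quad p\le\frac{2\rho c_s^2}{\gamma-1}=\frac{2\gamma p}{\gamma-1}.
\]
The last inequality is $(\gamma-1)\le 2\gamma$, which holds for every $\gamma>1$. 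So the expression is nonnegative for all $\theta$.

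The main care point is the algebra in Step 2, namely confirming that the flux contraction splits exactly as $v_n(u\cdot n^\ast)+p\theta$. We would check it through the energy component: $(E+p)v_n$ minus the momentum and density contributions reduces to $v_n(u\cdot n^\ast)+p\,v_n-p\,v^\ast\cdot n$. Once that identity is in hand, Step 3 is elementary.

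Finally, the case $u\in\Gint_{\mathrm{Euler}}$ guarantees $\rho>0$ and $p>0$, which is exactly what Steps 1–3 require.
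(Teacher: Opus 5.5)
Your proof is correct and follows essentially the same route as the paper: the same completed-square identities $u\cdot n^\ast=\frac{\rho}{2}|v-v^\ast|^2+\frac{p}{\gamma-1}$ and $(F(u)\cdot n)\cdot n^\ast=v_n\,(u\cdot n^\ast)+p\,(v-v^\ast)\cdot n$, followed by the lower bound $\alpha\pm v_n\ge c_s$. The only cosmetic difference is the final step. The paper bounds $|(v-v^\ast)\cdot n|\le|v-v^\ast|$ and applies AM--GM in $|v-v^\ast|$, whereas you shrink $|v-v^\ast|^2$ to $\theta^2$ and use the discriminant condition, which amounts to the same inequality.
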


\begin{proof}
	A direct half-space verification is provided in Appendix~\ref{sec:weakLF_Euler}.
\end{proof}

	\Cref{prop:A5_Euler} does not cover Tadmor's minimum principle for the specific entropy \cite{Tadmor1986}. If the compressible Euler invariant domain is strengthened by adding a lower bound on the specific entropy, Assumption~(A5) must be re-verified for the enlarged set of constraints. In general, this requires a larger wave-speed estimate $\tilde\alpha(u,n)$ and a modified boundary-correction function $\zeta$ in \eqref{eq:weakLF2}.

\subsection{Main theorems}
\label{sec:main_theorems}

The following main theorems are stated before the detailed proofs to highlight the structure of the hybrid argument. The conservative cell-average theorem provides the IDP CFL condition, while the point update is closed either by the global inverse or by an admissibility-preserving practical map.  The coupled theorem combines these mechanisms to yield the IDP property without post-processing corrections.

\begin{theorem}[IDP property for the hybrid forward Euler update]\label{thm:hybrid_stage_IDP}
Assume that (A1)--(A5) hold and that the prescribed boundary conditions provide the exterior states and boundary directional derivatives required by the point-value operator, as specified in Appendix~\ref{sec:boundary_closure}. Let $u_h^n\in G_h(G,\Gint)$, let $u_K^n=\mathcal R_K(u_h^n)$ be the associated reconstruction, and let $\hat u_K^n=\Pi_K(u_K^n;\bar u_K^n)$ satisfy \eqref{eq:admissible_recon}. Consider one forward Euler stage of the hybrid update defined by \eqref{eq:hybrid_operator} and \eqref{eq:pt_update_w}:
\[
 u_h^{n+1}=\mathcal L_h(u_h^n)=\bigl(\mathcal L_{\mathrm{avg}}(u_h^n),\,\mathcal L_{\mathrm{pt}}(u_h^n)\bigr),
\]
with the point update computed via the exact inverse $\mathcal B=\Psi^{-1}$. If the CFL condition \eqref{eq:CFL_abstract} holds, then
\[
 \bar u_K^{n+1}\in G \quad \forall K\in\mathcal T_h,
 \qquad
 u_\sigma^{n+1}\in \Gint \quad \forall \sigma\in\Sigma_h,
\]
and hence 
$
 u_h^{n+1}\in G_h(G,\Gint).
$ 
\end{theorem}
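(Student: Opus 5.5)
The proof splits along the two components of $\mathcal L_h$ in \eqref{eq:hybrid_operator}, which are handled separately and then combined; no coupling argument beyond the shared input $u_h^n$ is required.

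\emph{Point values.} This part is immediate from (A4). Whatever the value of $w_\sigma^{n+1}=w_\sigma^n+\Delta t\,\mathcal D^{\mathrm{pt}}_\sigma(u_h^n)\in\mathbb R^m$, the exact inverse $\Psi^{-1}:\mathbb R^m\to\Gint$ is globally defined by (A$\Psi$1). Hence $u_\sigma^{n+1}=\Psi^{-1}(w_\sigma^{n+1})\in\Gint$ with no time-step restriction.

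The only thing to check is that $\mathcal D^{\mathrm{pt}}_\sigma(u_h^n)$ is well defined and finite. Its inputs are $w_\sigma^n=\Psi(u_\sigma^n)$ and $J_i(u)$ evaluated at admissible states. These make sense because $u_\sigma^n\in\Gint$ by hypothesis, $D\Psi$ is invertible on $\Gint$ by (A$\Psi$2), and the boundary directional derivatives are supplied by the prescribed boundary closure.

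\emph{Cell averages.} This is the substantive part. The plan is the Zhang--Shu/GQL convex-decomposition argument.

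\begin{enumerate}
\item Using the CAD identity \eqref{eq:gcd_point} with the admissible reconstruction $\hat u_K^n$, write
\[
\bar u_K^n=\sum_e\lambda_{K,e}\sum_\nu\omega_{e,\nu}\hat u_{K,e,\nu}^n+\Bigl(1-\sum_e\lambda_{K,e}\Bigr)\hat u_K^*.
\]
By definition \eqref{eq:921}, this holds for any $\hat u_K^n$ once the interior part is identified with $\hat u_K^*$.

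\item Insert this decomposition into \eqref{eq:avg_update_FE}. Use the consistency $\widehat F(u,u;n)=F(u)\cdot n$ together with the divergence identity $\sum_e|e|\,n_{K,e}=0$ to write $\bar u_K^{n+1}$ as the weighted sum
\[
\bar u_K^{n+1}=\sum_{e,\nu}\lambda_{K,e}\omega_{e,\nu}\,H_{e,\nu}+\Bigl(1-\sum_e\lambda_{K,e}\Bigr)\hat u_K^*,
\]
where
\[
H_{e,\nu}=\hat u_{K,e,\nu}^n-\frac{\Delta t|e|}{\lambda_{K,e}|K|}\bigl[\widehat F(\hat u_{K,e,\nu}^n,\hat u_{K_e,e,\nu}^n;n_{K,e})-F(\hat u_{K,e,\nu}^n)\cdot n_{K,e}\bigr].
\]

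\item Verify admissibility with the GQL representation. By \Cref{prop:GQL}, $G=G^*$, so it suffices to show
\[
(\bar u_K^{n+1}-u_j^*)\cdot n_j^*\ge0\quad\text{for every } u_j^*\in S_j.
\]
Take the numerical flux to be the Lax--Friedrichs (Rusanov) flux with $\alpha\ge\max\tilde\alpha$ over the trace states. Expand $H_{e,\nu}$ into the one-sided contributions of $\hat u_{K,e,\nu}^n$ and $\hat u_{K_e,e,\nu}^n$. Both trace states lie in $G$ by \eqref{eq:915} and by the admissibility of the neighbour reconstruction or the boundary state.

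\item Apply (A5) in the form \eqref{eq:weakLF2} to each trace state. Each application produces a nonnegative term plus a remainder $\zeta(u_j^*)\cdot n_{K,e}$. These remainders are summed with weights $|e|$, and they cancel by $\sum_e|e|\,n_{K,e}=0$.

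\item The leftover coefficients of $\hat u_{K,e,\nu}^n$ are of the form $1-\Delta t\,\alpha|e|/(\lambda_{K,e}|K|)$. These are nonnegative precisely under a condition of type $\Delta t\,\alpha\le\mu_K$ with $\mu_K$ from \eqref{eq:edge_ratio}, i.e.\ the CFL condition \eqref{eq:CFL_abstract}. The interior term $(\hat u_K^*-u_j^*)\cdot n_j^*\ge0$ follows from \eqref{eq:921}, or is absent when $\sum_e\lambda_{K,e}=1$.
\end{enumerate}

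\emph{Main obstacle.} I expect the hardest step to be the bookkeeping in steps 3--5. The pairing of the interior trace $\hat u_{K,e,\nu}^n$ against $F(\hat u_{K,e,\nu}^n)\cdot n$ must be arranged so that, for each half-space and with the correct sign choice $\pm$, every term is either nonnegative via (A5) or part of a telescoping $\zeta$ sum. This requires the $\zeta$ contributions to carry uniform edge weights. If the weights prevent that cancellation, I would instead apply (A5) with the flux difference measured relative to $F(u_j^*)\cdot n$, using the closed-polygon identity once more.

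Combining the two parts gives $u_h^{n+1}\in G_h(G,\Gint)$.
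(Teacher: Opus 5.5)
Your point-value argument is correct and is the paper's. Your cell-average skeleton is also the right one: CAD split, Lax--Friedrichs splitting, (A5) on every trace, polygon closure. The node-wise condition $\Delta t\,\alpha_{K,e,\nu}|e|\le\lambda_{K,e}|K|$ you end up needing is exactly the one the paper's proof uses.

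\textbf{The gap is Step~2.} The representation of $\bar u_K^{n+1}$ through your $H_{e,\nu}$ is not an identity. Replacing $\widehat F$ by $\widehat F-F(\hat u^n_{K,e,\nu})\cdot n_{K,e}$ in every bracket is legitimate only if
\[
\sum_{e\in\mathcal E_K}|e|\sum_{\nu=1}^{N_q}\omega_{e,\nu}\,F(\hat u^n_{K,e,\nu})\cdot n_{K,e}=0 .
\]
The identity $\sum_e|e|\,n_{K,e}=0$ gives this only when one and the same state sits at every boundary node. That is how the add-and-subtract trick works for the first-order scheme with the constant state $\bar u_K$. Here it fails in general. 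For linear advection $F(u)=\bm a u$, quadrature exactness turns the sum into $\int_K\bm a\cdot\nabla\hat u^n_K\,\mathrm dx$. For $\hat u^n_K=x$ on the unit square with $\bm a=(1,0)$, this equals $1$.

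There is a symptom of the error. With $\kappa=\Delta t|e|/(\lambda_{K,e}|K|)$ and $\alpha=\alpha_{K,e,\nu}$, your $H_{e,\nu}$ equals $(1-\kappa\alpha)\hat u^n_{K,e,\nu}+\kappa\alpha\,u^{\mathrm{LF},+}(\hat u^n_{K,e,\nu},\hat u^n_{K_e,e,\nu};n_{K,e})$. By \Cref{cor:idp_flux}, each $H_{e,\nu}$ would then lie in $G$ on its own. The $\zeta$-terms would cancel node by node, and the closure identity would play no role in Steps~3--5.

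\textbf{The repair is the paper's argument, and it inserts nothing.} Set $c_{K,e}=\lambda_{K,e}/|e|$ and write
\[
\bar u_K^{n+1}=\sum_e|e|\sum_\nu\omega_{e,\nu}\bigl(U_{K,e,\nu}+U_{K_e,e,\nu}\bigr)+\bigl(1-\sum_e\lambda_{K,e}\bigr)\hat u_K^*,
\]
where
\[
U_{K,e,\nu}=\bigl(c_{K,e}-\tfrac{\alpha\Delta t}{2|K|}\bigr)\hat u^n_{K,e,\nu}-\tfrac{\Delta t}{2|K|}F(\hat u^n_{K,e,\nu})\cdot n_{K,e},
\qquad
U_{K_e,e,\nu}=\tfrac{\alpha\Delta t}{2|K|}\hat u^n_{K_e,e,\nu}-\tfrac{\Delta t}{2|K|}F(\hat u^n_{K_e,e,\nu})\cdot n_{K,e}.
\]
The CFL condition lets you discard the excess term $\bigl(c_{K,e}-\tfrac{\alpha\Delta t}{|K|}\bigr)(\hat u^n_{K,e,\nu}-u_j^*)\cdot n_j^*\ge 0$. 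Then apply (A5) with the lower sign to both traces. Each leaves the remainder $\tfrac{\Delta t}{2|K|}\zeta(u_j^*)\cdot n_{K,e}$.

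Because $\zeta(u_j^*)$ does not depend on the trace state, the total $\tfrac{\Delta t}{|K|}\zeta(u_j^*)\cdot\sum_e|e|\,n_{K,e}$ vanishes. The closure identity can absorb this state-independent vector, but it cannot absorb the state-dependent $F(\hat u^n_{K,e,\nu})$.

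Admissibility is then a property of the sum only. In the scalar case, where $\zeta\neq0$, a single node term $(U_{K,e,\nu}+U_{K_e,e,\nu}-c_{K,e}u_j^*)\cdot n_j^*$ can be negative. An example is an inflow edge for linear advection with both traces equal to $U_{\max}=1$. Your closing fallback remark points in this direction, but it must replace Step~2 rather than supplement it.
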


\begin{theorem}[Cell-average admissibility]\label{thm:B}
Assume that (A1), (A2), (A3), and (A5) hold. Let $\hat{u}_K^n=\Pi_K(u_K^n;\bar{u}_K^n)$ be a $G$-admissible reconstruction satisfying \eqref{eq:admissible_recon}. For the conservative forward Euler update \eqref{eq:avg_update_FE} employing the Lax--Friedrichs flux \eqref{eq:LF_flux}, there exists an explicit CFL condition of the form
\begin{equation}\label{eq:CFL_abstract}
	\Delta t \le \min_{K\in\mathcal{T}_h} \frac{\mu_K}{2\,\alpha_K^n},
	\qquad
	\alpha_K^n := \max_{e\in\mathcal{E}_K}\max_{\nu=1,\ldots,N_q}
	\tilde{\alpha}\bigl(\hat{u}_{K,e,\nu}^n,\,n_{K,e}\bigr),
\end{equation}
such that the updated cell averages satisfy
\[
\bar{u}_K^{n+1} \in G,\qquad \forall K\in\mathcal{T}_h.
\]
In particular, if each boundary edge $e\subset\partial\Omega$ is assigned an exterior state satisfying $\hat{u}_{K_e,e,\nu}^n\in G$, the boundary cell averages also satisfy $\bar{u}_K^{n+1}\in G$ under the same CFL condition \eqref{eq:CFL_abstract}.
\end{theorem}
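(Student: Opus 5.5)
We follow the Zhang--Shu convex-decomposition argument in the GQL form of \cite{Wu2023Geometric}. By (A2) and \Cref{prop:GQL}, it suffices to fix $j\in\mathcal J$ and $u_j^\ast\in S_j$ and show $(\bar u_K^{n+1}-u_j^\ast)\cdot n_j^\ast\ge 0$. We take the Lax--Friedrichs flux in the standard form
\[
\widehat F(u^-,u^+;n)=\tfrac12\bigl(F(u^-)\cdot n+F(u^+)\cdot n\bigr)-\tfrac{\alpha}{2}(u^+-u^-),
\]
with $\alpha\ge\max\{\tilde\alpha(u^-,n),\tilde\alpha(u^+,-n)\}$. Both traces lie in $G$: the interior one by \eqref{eq:915}, the neighbour's by its own admissibility, and boundary states by hypothesis. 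The coefficient $\alpha$ is bounded by the $\alpha_K^n$-type quantities in \eqref{eq:CFL_abstract}, interpreted as the maximum over both sides of each edge.

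\textbf{Step 1: split the cell average.} Since $\hat u_K^n$ preserves the cell average, the quadrature form of the CAD, combined with the definition \eqref{eq:921} of $\hat u_K^*$, gives
\[
\bar u_K^n=\sum_{e}\lambda_{K,e}\sum_\nu\omega_{e,\nu}\hat u^n_{K,e,\nu}+\Bigl(1-\sum_e\lambda_{K,e}\Bigr)\hat u_K^* .
\]
This identity is an algebraic rearrangement of \eqref{eq:921}, so it needs no exactness of the CAD on $\hat u_K^n$. When $\sum_e\lambda_{K,e}=1$ the last term is absent.

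\textbf{Step 2: split the flux.} Write each flux as
\[
\tfrac12\bigl[F(\hat u^n_{K,e,\nu})\cdot n+\alpha\hat u^n_{K,e,\nu}\bigr]+\tfrac12\bigl[F(\hat u^n_{K_e,e,\nu})\cdot n-\alpha\hat u^n_{K_e,e,\nu}\bigr].
\]
Use $\sum_e|e|n_{K,e}=0$, i.e. $\sum_e|e|\sum_\nu\omega_{e,\nu}\,\zeta(u_j^\ast)\cdot n_{K,e}=0$, so that the $\zeta$ terms telescope away. Dotting the update with $n_j^\ast$ and subtracting $u_j^\ast$ then yields
\[
(\bar u_K^{n+1}-u_j^\ast)\cdot n_j^\ast=\Bigl(1-\sum_e\lambda_{K,e}\Bigr)(\hat u_K^*-u_j^\ast)\cdot n_j^\ast+\sum_e\sum_\nu\omega_{e,\nu}\bigl[T^{\mathrm{int}}_{e,\nu}+T^{\mathrm{ext}}_{e,\nu}\bigr],
\]
where
\[
T^{\mathrm{int}}_{e,\nu}=\Bigl(\lambda_{K,e}-\tfrac{\Delta t|e|\alpha}{2|K|}\Bigr)(\hat u^n_{K,e,\nu}-u_j^\ast)\cdot n_j^\ast-\tfrac{\Delta t|e|}{2|K|}\bigl[(F(\hat u^n_{K,e,\nu})\cdot n)\cdot n_j^\ast+\zeta\cdot n\bigr]
\]
and
\[
T^{\mathrm{ext}}_{e,\nu}=\tfrac{\Delta t|e|}{2|K|}\Bigl[\alpha(\hat u^n_{K_e,e,\nu}-u_j^\ast)\cdot n_j^\ast-(F(\hat u^n_{K_e,e,\nu})\cdot n)\cdot n_j^\ast-\zeta\cdot n\Bigr].
\]

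\textbf{Step 3: sign of each term.} The $\hat u_K^*$ term is nonnegative because $\hat u_K^*\in G=G^\ast$. The exterior term is nonnegative by \eqref{eq:weakLF2} with the $+$ sign and normal $-n$, using $\alpha\ge\tilde\alpha(\hat u^n_{K_e,e,\nu},-n)$. For the interior term, the CFL condition \eqref{eq:CFL_abstract} gives $\Delta t\,\alpha\,|e|/|K|\le\lambda_{K,e}$, hence $\lambda_{K,e}\ge\tfrac{\Delta t|e|}{|K|}\alpha$. So the interior coefficient is at least $\tfrac{\Delta t|e|}{2|K|}\alpha$, plus a nonnegative surplus multiplying $(\hat u^n_{K,e,\nu}-u_j^\ast)\cdot n_j^\ast\ge0$. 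The remaining part is $\tfrac{\Delta t|e|}{2|K|}$ times the left side of \eqref{eq:weakLF2} with the $-$ sign, which is nonnegative. Boundary edges are handled identically, since their exterior states lie in $G$.

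\textbf{Main obstacle.} The delicate point is the bookkeeping of $\zeta$. The telescoping identity requires $\zeta$ to depend only on $u_j^\ast$, and the factor $1/2$ must be distributed consistently between the interior and exterior halves. The second subtlety is the choice of $\alpha$ on shared edges: it must dominate the wave-speed bounds of both traces, which is why the CFL condition involves the maximum over neighbouring cells.
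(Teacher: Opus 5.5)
Your proof is correct and is essentially the paper's argument. It uses the CAD split through $\hat u_K^*$, the Lax--Friedrichs splitting into interior and exterior halves, (A5) applied to each half, and cancellation of the $\zeta$ terms via $\sum_e|e|\,n_{K,e}=0$; folding $\zeta$ into each term up front, or invoking (A5) at $-n$ with the $+$ sign, is only cosmetic. Reading $\alpha_K^n$ as a maximum over both traces of each edge is exactly what the paper's step $c_1\ge c_2$ needs, since that step uses the two-sided $\alpha_{K,e,\nu}$, as in \eqref{eq:CFL-indicator}. One small correction: in the degenerate case $\sum_e\lambda_{K,e}=1$, Step 1 is not a rearrangement of \eqref{eq:921}; it needs the CAD identity applied to $\hat u_K^n$, just as in the paper.
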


\begin{remark}[Sufficient nature of the CFL condition]\label{rem:CFL_stage_conditional} 
The CFL condition \eqref{eq:CFL_abstract} is a sufficient criterion for the conservative forward Euler stage. It ensures that each CAD sub-state in \eqref{eq:uK_LF_decomp} remains in $G$, so their convex combination $\bar{u}_K^{n+1}$ also lies in $G$.
\end{remark}

\begin{corollary}[IDP property for SSP Runge--Kutta methods]\label{cor:ssp_hybrid_stage}
Under the hypotheses of \Cref{thm:hybrid_stage_IDP} or \Cref{cor:hybrid_stage_practical}, consider an SSP Runge--Kutta method whose stages are convex combinations of forward Euler steps with admissible substep sizes and with the update structure \eqref{eq:hybrid_operator}--\eqref{eq:pt_update_w}. Then every stage preserves the corresponding hybrid admissible set. In particular, the classical third-order SSP Runge--Kutta method satisfies the same conclusion under the forward-Euler conservative CFL condition.
\end{corollary}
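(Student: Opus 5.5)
The corollary will follow by the standard convex-combination (Shu--Osher) argument, applied separately to the two components of the hybrid state and then recombined. I will write a generic SSP Runge--Kutta stage in Shu--Osher form,
\[
u_h^{(i)}=\sum_{k=0}^{i-1}\Bigl(\alpha_{ik}\,u_h^{(k)}+\beta_{ik}\,\Delta t\,\mathcal N(u_h^{(k)})\Bigr),
\qquad \alpha_{ik}\ge 0,\ \beta_{ik}\ge0,\ \sum_{k}\alpha_{ik}=1,
\]
with $\beta_{ik}=0$ whenever $\alpha_{ik}=0$. Each nonzero term can then be written as $\alpha_{ik}\bigl(u_h^{(k)}+(\beta_{ik}/\alpha_{ik})\Delta t\,\mathcal N(u_h^{(k)})\bigr)$, which is $\alpha_{ik}$ times a forward Euler step of size $\Delta t_{ik}=(\beta_{ik}/\alpha_{ik})\Delta t$. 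The hypothesis ``admissible substep sizes'' means $\Delta t_{ik}$ satisfies \eqref{eq:CFL_abstract} evaluated at $u_h^{(k)}$. For SSP-RK3 this holds whenever $\Delta t$ itself satisfies the forward-Euler CFL condition, since $\beta_{ik}/\alpha_{ik}\le 1$.

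The proof will be by induction on the stage index. Assume $u_h^{(k)}\in G_h$ for all $k<i$.

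For the cell averages, the operator $\mathcal L_{\mathrm{avg}}$ in \eqref{eq:avg_update_FE} is an affine forward Euler update. I will apply \Cref{thm:B} at each $u_h^{(k)}$, with the limiter $\Pi_K$ reapplied to that stage's reconstruction, to obtain that each forward Euler substep lies in $G$. Since $G$ is convex by (A2), the convex combination also lies in $G$.

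For the point values, the update \eqref{eq:pt_update_w} acts in $w$-space and is mapped back. I will interpret the stage structure as taking convex combinations of the $w$-variables:
\[
w_\sigma^{(i)}=\sum_k \alpha_{ik}\bigl(w_\sigma^{(k)}+\Delta t_{ik}\,\mathcal D^{\mathrm{pt}}_\sigma(u_h^{(k)})\bigr),
\qquad u_\sigma^{(i)}=\mathcal B(w_\sigma^{(i)}),
\]
with $w_\sigma^{(k)}=\Psi(u_\sigma^{(k)})$ in the exact case. Then $w_\sigma^{(i)}\in\mathbb R^m$ automatically. Under (A4), $\Psi^{-1}$ is globally defined on $\mathbb R^m$ by (A$\Psi$1), so $u_\sigma^{(i)}\in\Gint$ with no step restriction at all. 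In the practical case of \Cref{cor:hybrid_stage_practical}, $\mathcal B:\mathbb R^m\to G_{\mathrm{pt}}$ gives the analogous conclusion. Alternatively, if the stages combine $u$-values directly, convexity of $\Gint$ (respectively $G_{\mathrm{pt}}$) gives the same result. I will note that either reading works.

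The main obstacle is a bookkeeping point. The coupled theorem (\Cref{thm:hybrid_stage_IDP}) requires the input to lie in $G_h(G,\Gint)$ and the limiter to be reapplied. I therefore need to check that each intermediate stage $u_h^{(k)}$ is a legitimate input, i.e.\ that the reconstruction and $\Pi_K$ are recomputed from it and that the wave-speed bound $\alpha_K^{(k)}$ is evaluated at that stage. This is exactly what ``admissible substep sizes'' encodes. For the classical SSP-RK3 specifically, I will conclude by stating that the CFL condition must hold with $\alpha_K$ taken as the maximum over the stages. The inductive hypothesis then closes, and the stage-wise conclusion follows.
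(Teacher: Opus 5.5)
Your proposal is correct and follows essentially the same argument as the paper. You decompose each SSP stage into forward Euler substeps and use \Cref{thm:B} together with convexity of $G$ for the cell averages; for the point values you take the convex combination in the transformed $w$-variables, so admissibility follows from the range of $\Psi^{-1}$ (or $\mathcal B$) alone. Your explicit stage-by-stage induction, with the limiter and the wave-speed bound $\alpha_K$ recomputed at each intermediate stage, makes precise what the paper compresses into the remark that the SSP coefficient of the classical third-order method equals one.
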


\begin{remark}[Positivity on open interior and vacuum avoidance]\label{rem:positivity_open}
\Cref{thm:B} is also valid for the open admissible interior $\Gint$. For the compressible Euler equations, applying \Cref{thm:B} strictly within $\Gint$ requires the wave-speed bounds $\tilde\alpha$ to remain strictly bounded away from infinity, which amounts to avoiding vacuum states ($\rho=0$ or $p=0$). In practical simulations, vacuum states are avoided by enforcing a numerical cutoff $\rho\ge\varepsilon_\rho>0$ and $p\ge\varepsilon_p>0$ as described in Appendix~\ref{sec:IDP_imple}, which ensures that wave speeds are well defined throughout the domain.
\end{remark}


\section{Point-value update in transformed variables: operator construction and mapped admissibility}\label{sec:pt_uncond}

This section develops the non-conservative point-value update in transformed variables, establishes its invariant-domain-preserving (IDP) property, and verifies its consistency for smooth solutions.

\subsection{Transformed non-conservative formulation}\label{sec:pt_transformed}

Recall the hyperbolic system \eqref{eq:cl} and the admissibility transform $w=\Psi(u)$ introduced in \Cref{sec:Psi}. Under property~(A$\Psi$3), the transformed variable $w$ satisfies the non-conservative system \eqref{eq:w_quasilinear} for smooth solutions taking values in $\Gint$. 
For any direction $q:=(q_1,\ldots, q_d)\in\mathbb{R}^d$, we have
\begin{equation}\label{eq:Aq}
	J_q(u) := \sum_{i=1}^d q_i\,J_i(u),
	\quad 
	A_q(u):=\sum_{i=1}^d q_i\,A_i(u),
	\quad
	J_q(u)=D\Psi(u)\,A_q(u)\,\bigl(D\Psi(u)\bigr)^{-1}.
\end{equation}
In two space dimensions ($d=2$), for any orthonormal basis $(n,\tau)$, we have
\[
\sum_{i=1}^2 J_i(u)\,\partial_{x_i} w
= J_n(u)\,\partial_n w + J_\tau(u)\,\partial_\tau w.
\]
This directional decomposition serves as the basis for the geometry-consistent multidimensional upwind splitting developed below.

\subsection{Multidimensional upwind splitting in transformed variables}\label{sec:pt_gmdu}

We consider $d=2$, convex polygonal cells (see \Cref{fig:Mesh1}), and the third-order discretization $k=2$.
\subsubsection{Local geometry at a mesh skeleton point}\label{sec:pt_geometry_sigma}

Let $\sigma\in\Sigma_h$ be a mesh skeleton point with coordinates $x_\sigma$.
Denote by $\mathcal{E}_\sigma$ the set of edges incident to $x_\sigma$.
For each pair $(\sigma,e)$ with $e\in\mathcal{E}_\sigma$, we specify:
\begin{itemize}
	\item a unit tangent $\tau_{\sigma,e}$ along $e$ oriented \emph{away} from $x_\sigma$;
	\item a unit normal $n_{\sigma,e}$ such that $\{n_{\sigma,e},\tau_{\sigma,e}\}$
	is a right-handed orthonormal basis.
\end{itemize}
For an interior edge $e$ shared by two cells, there exist unique cells
$K^{-}_{\sigma,e}$ and $K^{+}_{\sigma,e}$ such that $n_{\sigma,e}$ points from
$K^{-}_{\sigma,e}$ to $K^{+}_{\sigma,e}$, i.e.,
\[
n_{\sigma,e} = n_{K^{-}_{\sigma,e},e} = -\,n_{K^{+}_{\sigma,e},e}.
\]
	For a boundary edge $e\subset\partial\Omega$, the prescribed boundary conditions provide the exterior trace and the required directional derivatives; explicit periodic, inflow, exact-data, outflow, and reflective formulas are detailed in Appendix~\ref{sec:boundary_closure}.

For any cell $K$ and any vertex $\sigma\in\partial K$, let $\theta_{K,\sigma}\in(0,\pi)$
be the interior angle of $K$ at $\sigma$ (convexity ensures positivity).
For any vertex $\sigma$ and any incident edge $e\in\mathcal{E}_\sigma$, define
\[
\theta_{\sigma,e} :=
\begin{cases}
	\frac12\bigl(\theta_{K^-_{\sigma,e},\sigma}+\theta_{K^+_{\sigma,e},\sigma}\bigr), & \text{if $e$ is an interior edge},\\[0.4em]
	\frac12\theta_{K^-_{\sigma,e},\sigma}, & \text{if $e\subset\partial\Omega$},
\end{cases}
\]
and 
\[
\Theta_\sigma := \sum_{e\in\mathcal{E}_\sigma}\theta_{\sigma,e},
\qquad
\omega_{\sigma,e} := \frac{\theta_{\sigma,e}}{\Theta_\sigma}.
\]
By construction, $\omega_{\sigma,e}\ge 0$ and $\sum_{e\in\mathcal{E}_\sigma}\omega_{\sigma,e}=1$ for every vertex $\sigma$.

\begin{remark}[Mid-edge points]\label{rem:pt_midpoint_weight}
	If $\sigma$ is the midpoint of an interior edge $e$, then $\mathcal{E}_\sigma=\{e\}$ and we set $\omega_{\sigma,e}=1$. For the third-order discretizations employed here, the edge trace and its tangential derivative are independent of which adjacent cell is selected on $e$, so no auxiliary cell is required in \eqref{eq:pt_midpoint_scheme}.
\end{remark}

\begin{figure}[!htb]
	\centering
	\begin{subfigure}{0.48\textwidth}
		\begin{center}
			\begin{tikzpicture}[scale=1.6]
				\draw [ultra thick] (0,0) -- (1.5,0.2);
				\draw [ thick] (1.5,0.2) -- (0.4,1.5) -- (0,0);
				\draw [ thick] (0,0) -- (-1.2,0.8) -- (0.4,1.5);
				\draw [ thick] (0,0) -- (-1.0,-0.9) -- (-1.2,0.8);
				\draw [ thick] (0,0) -- (0.5,-1.3) -- (-1.0,-0.9);
				\draw [ thick] (1.5,0.2) -- (0.5,-1.3);

				\filldraw [blue] (0,0) circle (1pt);
				\filldraw [blue] (0.75,0.1) circle (1pt);	
				\filldraw [blue] (1.5,0.2) circle (1pt);	
				\filldraw [blue] (0.4,1.5) circle (1pt); %
				\filldraw [blue] (0.95,0.85) circle (1pt);
				\filldraw [blue] (0.2,0.75) circle (1pt); 
				\filldraw [blue] (-1.2,0.8) circle (1pt); %
				\filldraw [blue] (-0.6,0.4) circle (1pt);
				\filldraw [blue] (-0.4,1.15) circle (1pt);
				\filldraw [blue] (-1.0,-0.9) circle (1pt); %
				\filldraw [blue] (-0.5,-0.45) circle (1pt); 
				\filldraw [blue] (-1.1,-0.05) circle (1pt); 
				\filldraw [blue] (0.5,-1.3) circle (1pt); %
				\filldraw [blue] (0.25,-0.65) circle (1pt); 
				\filldraw [blue] (-0.25,-1.1) circle (1pt); 
				\filldraw [blue] (1,-0.55) circle (1pt); 
				
				\draw[ultra thick, densely dotted, black] (0,0) -- (0.42,0.36);
				\draw[ultra thick, densely dotted, black] (0,0) -- (0.48,-0.276);
				\draw[ultra thick, black, -] (0.18,0) arc (0:45:0.18);
				\draw[ultra thick, black, -] (0.18,0) arc (0:-35:0.18);
				\draw [black] (0.42,0.36) node[right] {$\theta_2$};
				\draw [black] (0.48,-0.276) node[right] {$\theta_1$};
				
				\draw [black] (1.1,0.14) node[below] {$e$};			
				\draw [black] (0.5,1.2) node[below] {$K^-_{\sigma,e}$};
				\draw [black] (0.55,-0.9) node[above] {$K^+_{\sigma,e}$};

				\draw [black] (0.1,0.2) node[left] {$x_\sigma$};
				
				\draw [black] (-0.8,-0.2) node[above] {$K^{\mathrm{ext}}_{\sigma,e}$};
				
				\draw[ultra thick, black, ->] (0.85,0.25) -- (1.225,0.3);
				\draw [black] (1,0.32) node[above, rotate=5] {$\tau_{\sigma,e}$};
			\end{tikzpicture}
		\end{center}
		\caption{Triangular mesh.}
		\label{fig:Mesh1_a}
	\end{subfigure} 
	\quad
	\begin{subfigure}{0.48\textwidth}
		\begin{center}
			\raisebox{0.5cm}{
				\begin{tikzpicture}[scale=1.6]
					\draw [ultra thick] (0,0) -- (1.2,0);
					\draw [ thick] (1.2,0) -- (1.5,0.8) -- (0,1) -- (0,0);
					\draw [ thick] (0,0) -- (-1.2,-0.2) -- (-1,0.9) -- (0,1);
					\draw [ thick] (0,0) -- (0,-1) -- (-1,-1.2) -- (-1.2,-0.2);
					\draw [ thick] (0,-1) -- (1.2,-1.2) -- (1.2,0);
					
					\filldraw [blue] (0,0) circle (1pt);
					\filldraw [blue] (1.2,0) circle (1pt);	
					\filldraw [blue] (0.6,0) circle (1pt);	
					\filldraw [blue] (-1.2,-0.2) circle (1pt);	
					\filldraw [blue] (-0.6,-0.1) circle (1pt);	
					\filldraw [blue] (0,0.5) circle (1pt);
					\filldraw [blue] (1.35,0.4) circle (1pt);
					\filldraw [blue] (-1.1,0.35) circle (1pt);
					\filldraw [blue] (1.5,0.8) circle (1pt);
					\filldraw [blue] (0.75,0.9) circle (1pt);
					\filldraw [blue] (0,1) circle (1pt);
					\filldraw [blue] (-0.5,0.95) circle (1pt);
					\filldraw [blue] (-1,0.9) circle (1pt);
					\filldraw [blue] (0,-0.5) circle (1pt);
					\filldraw [blue] (1.2,-0.5) circle (1pt);
					\filldraw [blue] (-1.1,-0.7) circle (1pt);
					\filldraw [blue] (1.2,-1.2) circle (1pt);
					\filldraw [blue] (0.6,-1.1) circle (1pt);
					\filldraw [blue] (0,-1) circle (1pt);
					\filldraw [blue] (-0.5,-1.1) circle (1pt);
					\filldraw [blue] (-1,-1.2) circle (1pt);

					\draw[ultra thick, densely dotted, black] (0,0) -- (0.4, 0.4);
					\draw[ultra thick, densely dotted, black] (0,0) -- (0.4,-0.4);
					\draw[ultra thick, black, -] (0.15,0) arc (0:45:0.15);
					\draw[ultra thick, black, -] (0.15,0) arc (0:-45:0.15);
					\draw [black] (0.4, 0.4) node[right] {$\theta_2$};
					\draw [black] (0.4, -0.4) node[right] {$\theta_1$};
					
					\draw [black] (0.9,0) node[below] {$e$};			
					\draw [black] (0.3,0.9) node[below] {$K^-_{\sigma,e}$};
					\draw [black] (0.3,-1) node[above] {$K^+_{\sigma,e}$};
					
					\draw [black] (0,0.1) node[left] {$x_\sigma$};
					
					\draw [black] (-0.7,0.5) node[above] {$K^{\mathrm{ext}}_{\sigma,e}$};
					
					\draw[ultra thick, black, ->] (0.75,0.15) -- (1.1,0.15);
					\draw [black] (1,0.2) node[above] {${\bm \tau}_{\sigma,e}$};
				\end{tikzpicture}
			}
		\end{center}
		\caption{Quadrilateral mesh.}
		\label{fig:Mesh1_b}
	\end{subfigure} 
	
	\begin{subfigure}{0.48\textwidth}
		\begin{center}
			\begin{tikzpicture}[scale=1.6]
				\coordinate (P1) at (0,0);
				\coordinate (P2) at (0.8,-0.4);
				\coordinate (P3) at (1.4,0.2);
				\coordinate (P4) at (1,1);
				\coordinate (P5) at (0,0.8);
				\coordinate (P6) at (-1.0,1);
				\coordinate (P7) at (-1.4,0.2);
				\coordinate (P8) at (-0.8,-0.4);
				\coordinate (P9) at (-0.5,-1.2);
				\coordinate (P10) at (0.5,-1.2);
				\coordinate (P1_2) at ($(P1)!0.5!(P2)$);
				\coordinate (P2_3) at ($(P2)!0.5!(P3)$);
				\coordinate (P3_4) at ($(P3)!0.5!(P4)$);
				\coordinate (P4_5) at ($(P4)!0.5!(P5)$);
				\coordinate (P1_5) at ($(P1)!0.5!(P5)$);
				\coordinate (P5_6) at ($(P5)!0.5!(P6)$);
				\coordinate (P6_7) at ($(P6)!0.5!(P7)$);
				\coordinate (P7_8) at ($(P7)!0.5!(P8)$);
				\coordinate (P1_8) at ($(P1)!0.5!(P8)$);
				\coordinate (P8_9) at ($(P8)!0.5!(P9)$);
				\coordinate (P9_10) at ($(P9)!0.5!(P10)$);
				\coordinate (P10_2) at ($(P10)!0.5!(P2)$);
				
				\draw [ultra thick]  (P1) -- (P2);
				\draw [ thick]  (P2) -- (P3) -- (P4) -- (P5) -- (P1);
				\draw [ thick]  (P5) -- (P6) -- (P7) -- (P8) -- (P1);
				\draw [ thick]  (P8) -- (P9) -- (P10) -- (P2);
				
				\filldraw [blue] (P1) circle (1pt);
				\filldraw [blue] (P2) circle (1pt);
				\filldraw [blue] (P3) circle (1pt);
				\filldraw [blue] (P4) circle (1pt);
				\filldraw [blue] (P5) circle (1pt);
				\filldraw [blue] (P6) circle (1pt);
				\filldraw [blue] (P7) circle (1pt);
				\filldraw [blue] (P8) circle (1pt);
				\filldraw [blue] (P9) circle (1pt);
				\filldraw [blue] (P10) circle (1pt);
				\filldraw [blue] (P1_2) circle (1pt);
				\filldraw [blue] (P2_3) circle (1pt);
				\filldraw [blue] (P3_4) circle (1pt);
				\filldraw [blue] (P4_5) circle (1pt);
				\filldraw [blue] (P1_5) circle (1pt);
				\filldraw [blue] (P5_6) circle (1pt);
				\filldraw [blue] (P6_7) circle (1pt);
				\filldraw [blue] (P7_8) circle (1pt);
				\filldraw [blue] (P1_8) circle (1pt);
				\filldraw [blue] (P8_9) circle (1pt);
				\filldraw [blue] (P9_10) circle (1pt);
				\filldraw [blue] (P10_2) circle (1pt);

				\draw[ultra thick, densely dotted, black] (0,0) -- (0.6,0.28);
				\draw[ultra thick, densely dotted, black] (0,0) -- (0,-0.6);
				\draw[ultra thick, black, -] (0.18,0) arc (0:30:0.18);
				\draw[ultra thick, black, -] (0.18,0) arc (0:-95:0.18);
				\draw [black] (0.6,0.3) node[right] {$\theta_2$};
				\draw [black] (0,-0.6) node[right] {$\theta_1$};
				
				\draw [black] (0.5,-0.3) node[below] {$e$};			
				\draw [black] (0.8,0.9) node[below] {$K^-_{\sigma,e}$};
				\draw [black] (0.3,-1.2) node[above] {$K^+_{\sigma,e}$};
				
				\draw [black] (0,0.1) node[left] {$x_\sigma$};
				
				\draw [black] (-0.8,0.5) node[above] {$K^{\mathrm{ext}}_{\sigma,e}$};
				
				\draw[ultra thick, black, ->] (0.55,-0.1) -- (0.85,-0.25);
				\draw [black] (0.8,-0.17) node[above, rotate=-10] {${\bm \tau}_{\sigma,e}$};
			\end{tikzpicture}
		\end{center}
		\caption{Pentagonal mesh.}
		\label{fig:Mesh1_c}
	\end{subfigure} 
	\quad
	\begin{subfigure}{0.48\textwidth}
		\begin{center}
			\begin{tikzpicture}[scale=1.6]
				\coordinate (P1) at (0,0);
				\coordinate (P2) at (0.8,0);
				\coordinate (P3) at (1.2,0.6);
				\coordinate (P4) at (0.8,1.2);
				\coordinate (P5) at (0,1.2);
				\coordinate (P6) at (-0.4,0.6);
				\coordinate (P7) at (-1.1,0.7);
				\coordinate (P8) at (-1.5,0.2);
				\coordinate (P9) at (-1.2,-0.4);
				\coordinate (P10) at (-0.5,-0.5);
				\coordinate (P11) at (-0.1,-1.2);
				\coordinate (P12) at (0.7,-1.3);
				\coordinate (P13) at (1.2,-0.7);
				\coordinate (P1_2) at ($(P1)!0.5!(P2)$);
				\coordinate (P2_3) at ($(P2)!0.5!(P3)$);
				\coordinate (P3_4) at ($(P3)!0.5!(P4)$);
				\coordinate (P4_5) at ($(P4)!0.5!(P5)$);
				\coordinate (P5_6) at ($(P5)!0.5!(P6)$);
				\coordinate (P1_6) at ($(P1)!0.5!(P6)$);
				\coordinate (P6_7) at ($(P6)!0.5!(P7)$);
				\coordinate (P7_8) at ($(P7)!0.5!(P8)$);
				\coordinate (P8_9) at ($(P8)!0.5!(P9)$);
				\coordinate (P9_10) at ($(P9)!0.5!(P10)$);
				\coordinate (P1_10) at ($(P1)!0.5!(P10)$);
				\coordinate (P10_11) at ($(P10)!0.5!(P11)$);
				\coordinate (P11_12) at ($(P11)!0.5!(P12)$);
				\coordinate (P12_13) at ($(P12)!0.5!(P13)$);
				\coordinate (P13_2) at ($(P13)!0.5!(P2)$);
				
				\draw [ultra thick]  (P1) -- (P2);
				\draw [ thick]  (P2) -- (P3) -- (P4) -- (P5) -- (P6) -- (P1);
				\draw [ thick]  (P6) -- (P7) -- (P8) -- (P9) -- (P10) -- (P1);
				\draw [ thick]  (P10) -- (P11) -- (P12) -- (P13) -- (P2);
				
				\filldraw [blue] (P1) circle (1pt);
				\filldraw [blue] (P2) circle (1pt);
				\filldraw [blue] (P3) circle (1pt);
				\filldraw [blue] (P4) circle (1pt);
				\filldraw [blue] (P5) circle (1pt);
				\filldraw [blue] (P6) circle (1pt);
				\filldraw [blue] (P7) circle (1pt);
				\filldraw [blue] (P8) circle (1pt);
				\filldraw [blue] (P9) circle (1pt);
				\filldraw [blue] (P10) circle (1pt);
				\filldraw [blue] (P1_2) circle (1pt);
				\filldraw [blue] (P2_3) circle (1pt);
				\filldraw [blue] (P3_4) circle (1pt);
				\filldraw [blue] (P4_5) circle (1pt);
				\filldraw [blue] (P5_6) circle (1pt);
				\filldraw [blue] (P1_6) circle (1pt);
				\filldraw [blue] (P6_7) circle (1pt);
				\filldraw [blue] (P7_8) circle (1pt);
				\filldraw [blue] (P8_9) circle (1pt);
				\filldraw [blue] (P9_10) circle (1pt);
				\filldraw [blue] (P1_10) circle (1pt);
				\filldraw [blue] (P10_11) circle (1pt);
				\filldraw [blue] (P11_12) circle (1pt);
				\filldraw [blue] (P12_13) circle (1pt);
				\filldraw [blue] (P13_2) circle (1pt);

				\draw[ultra thick, densely dotted, black] (0,0) -- (0.3,0.7);
				\draw[ultra thick, densely dotted, black] (0,0) -- (0.2,-0.6);
				\draw[ultra thick, black, -] (0.18,0) arc (0:70:0.18);
				\draw[ultra thick, black, -] (0.18,0) arc (0:-75:0.18);
				\draw [black] (0.3,0.7) node[right] {$\theta_2$};
				\draw [black] (0.2,-0.6) node[right] {$\theta_1$};
				
				\draw [black] (0.6,0) node[below] {$e$};			
				\draw [black] (0.8,1) node[left] {$K^-_{\sigma,e}$};
				\draw [black] (0.5,-1.3) node[above] {$K^+_{\sigma,e}$};
				
				\draw [black] (-0.1,0) node[left] {$x_\sigma$};
				
				\draw [black] (-0.9,0.2) node[above] {$K^{\mathrm{ext}}_{\sigma,e}$};
				
				\draw[ultra thick, black, ->] (0.45,0.15) -- (0.8,0.15);
				\draw [black] (0.7,0.2) node[above] {${\bm \tau}_{\sigma,e}$};
			\end{tikzpicture}
		\end{center}
		\caption{Hexagonal mesh.}
		\label{fig:Mesh1_d}
	\end{subfigure}   
	\caption{Representative local edge geometry on polygonal meshes. Blue dots denote evolution points.}
	\label{fig:Mesh1}
\end{figure}

\subsubsection{Lax--Friedrichs splitting matrices in transformed space}\label{sec:pt_lf}

For any $\sigma\in\Sigma_h$ with $u_\sigma\in G$, we choose $\alpha_\sigma>0$ such that
\begin{equation}\label{eq:pt_alpha_sigma}
	\alpha_\sigma \ \ge\
	\max_{e\in\mathcal{E}_\sigma}
	\max\Big\{\varrho \big(J_{n_{\sigma,e}}(u_\sigma)\big),\ \varrho\big(J_{\tau_{\sigma,e}}(u_\sigma)\big)\Big\},
\end{equation}
where $\varrho(A)$ denotes the spectral radius of a matrix $A$. We define the Lax--Friedrichs splitting matrices by
\begin{equation}\label{eq:pt_split}
	J_{n_{\sigma,e}}^{\pm}(u_\sigma):=\frac12\Big(J_{n_{\sigma,e}}(u_\sigma)\pm \alpha_\sigma I\Big),
	\qquad
	J_{\tau_{\sigma,e}}^{\pm}(u_\sigma):=\frac12\Big(J_{\tau_{\sigma,e}}(u_\sigma)\pm \alpha_\sigma I\Big).
\end{equation}

\begin{lemma}[Characteristic sign structure]\label{lem:pt_char_sign}
	Assume local hyperbolicity on $G$. If $\alpha_\sigma\ge \varrho(J_q(u_\sigma))$ for some
	direction $q$, then for any eigendecomposition $J_q(u_\sigma)=R\Lambda R^{-1}$ (where $R$ is the right eigenvector matrix and $\Lambda$ is the diagonal eigenvalue matrix), it holds that
	$J_q^\pm(u_\sigma)=R\Lambda^\pm R^{-1}$ with $\Lambda^\pm=\frac12(\Lambda\pm\alpha_\sigma I)$.
	Consequently,
	$\Lambda^+$ has nonnegative diagonal entries and $\Lambda^-$ has nonpositive diagonal entries.
\end{lemma}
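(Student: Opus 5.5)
The plan is to reduce everything to the eigendecomposition of $J_q(u_\sigma)$. First we record why such a decomposition with real diagonal $\Lambda$ exists. By \eqref{eq:Aq}, $J_q(u_\sigma)=D\Psi(u_\sigma)\,A_q(u_\sigma)\,(D\Psi(u_\sigma))^{-1}$, and $D\Psi(u_\sigma)$ is invertible by (A$\Psi$2). Hence $J_q(u_\sigma)$ is similar to $A_q(u_\sigma)$. Local hyperbolicity on $G$ means that $A_q(u_\sigma)$ is real diagonalizable, $A_q=\tilde R\Lambda\tilde R^{-1}$ with real $\Lambda$. Therefore $J_q(u_\sigma)=R\Lambda R^{-1}$ with $R:=D\Psi(u_\sigma)\tilde R$, and $J_q$ has the same real spectrum as $A_q$. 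In particular $\varrho(J_q(u_\sigma))=\max_k|\lambda_k|$. Conversely, the statement allows \emph{any} eigendecomposition $R\Lambda R^{-1}$, and since $J_q$ is diagonalizable with real spectrum, every such $\Lambda$ has real entries.

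Next comes the algebraic identity. Because $RIR^{-1}=I$,
\[
J_q^\pm(u_\sigma)=\tfrac12\bigl(R\Lambda R^{-1}\pm\alpha_\sigma RIR^{-1}\bigr)=R\,\tfrac12(\Lambda\pm\alpha_\sigma I)\,R^{-1}=R\Lambda^\pm R^{-1},
\]
which uses only the definition \eqref{eq:pt_split}. Note that \eqref{eq:pt_split} is written for $q\in\{n_{\sigma,e},\tau_{\sigma,e}\}$, but the same formula defines $J_q^\pm$ for general $q$.

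For the sign claim, the diagonal entries of $\Lambda^\pm$ are $\tfrac12(\lambda_k\pm\alpha_\sigma)$. Since $|\lambda_k|\le\varrho(J_q(u_\sigma))\le\alpha_\sigma$, we get $\lambda_k+\alpha_\sigma\ge0$ and $\lambda_k-\alpha_\sigma\le0$.

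The only delicate point is the first step: the statement needs the eigenvalues to be real and $J_q$ to be diagonalizable, so that the sign comparison makes sense. I would justify both through the similarity to $A_q$, together with the standing hyperbolicity assumption and the invertibility of $D\Psi$ given by (A$\Psi$2). I would also note that $u_\sigma\in\Gint$ is needed for $D\Psi(u_\sigma)$ to be defined, even though the lemma is phrased with $G$.
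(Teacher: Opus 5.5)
Your proposal is correct and follows the paper's argument. The paper's proof consists only of noting that $|\lambda_k|\le\alpha_\sigma$ makes $(\lambda_k+\alpha_\sigma)/2\ge 0$ and $(\lambda_k-\alpha_\sigma)/2\le 0$. Your extra steps, the identity $J_q^\pm=R\Lambda^\pm R^{-1}$ from $RIR^{-1}=I$ and the real diagonalizability of $J_q(u_\sigma)$ via its similarity to $A_q(u_\sigma)$ through the invertible $D\Psi(u_\sigma)$, make explicit what the paper leaves implicit; your side remark that $u_\sigma\in\Gint$ (or in the forward domain of a practical map $\mathcal T$) is needed for $J_q(u_\sigma)$ to be defined is also fair.
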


\begin{proof}
	Since $|\lambda_k|\le\alpha_\sigma$ for all eigenvalues $\lambda_k$ of $J_q(u_\sigma)$, it holds that
	$(\lambda_k+\alpha_\sigma)/2\ge 0$ and $(\lambda_k-\alpha_\sigma)/2\le 0$.
\end{proof}

\subsubsection{Directional derivatives without evaluating the transform outside admissible set}\label{sec:pt_derivatives}

High-order reconstructions in $u$ may generate intermediate internal values that
are not necessarily in $G$. To keep the point operator \emph{intrinsic}, we do \emph{not}
reconstruct a polynomial in $w$ by evaluating $\Psi$ at such intermediate values.
Instead, we compute transformed directional derivatives via the chain rule evaluated at the \emph{admissible} point state $u_\sigma$.

Let $u_h\in\mathcal{V}_h$ and let $u_K:=\mathcal{R}_K(u_h)\in\mathbb{W}_K$ be the
reconstruction satisfying \eqref{eq:recon_constraints}. Assume that $u_K$ is differentiable
at $x_\sigma$. In the exact smooth setting, we define the directional derivative of the transformed variable $w$ at $x_\sigma$ along a unit direction $q \in \mathbb{R}^2$ by
\begin{equation}\label{eq:pt_chain_rule}
	\partial_q w_{K,\sigma}
	\ :=\
	D\Psi(u_\sigma)\,\Big(q\cdot \nabla u_K(x_\sigma)\Big).
\end{equation}
For a practical forward map $\mathcal T:U_{\mathrm{fwd}}\to\mathbb R^m$ that is differentiable at the current state $u_\sigma$, the definition carries over by replacing $D\Psi(u_\sigma)$ with $D\mathcal T(u_\sigma)$.

\begin{proposition}[Intrinsic evaluation of transformed directional derivatives]\label{prop:pt_intrinsic_derivative}
Assume either the exact smooth-transform setting with $u_\sigma\in \Gint$, or a practical forward map $\mathcal T:U_{\mathrm{fwd}}\to\mathbb R^m$ that is differentiable at $u_\sigma\in U_{\mathrm{fwd}}$. Then the intrinsic directional derivative defined by \eqref{eq:pt_chain_rule} in the exact case, and by the corresponding formula with $D\mathcal T(u_\sigma)$ in the practical case, is well-defined and depends only on the forward-domain state $u_\sigma$ and the reconstructed gradient $\nabla u_K(x_\sigma)$. In particular, no evaluation of the forward transform or its derivative at intermediate reconstruction values outside the forward domain is required.
\end{proposition}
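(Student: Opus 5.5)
The plan is to read the claim off the definition \eqref{eq:pt_chain_rule}, treating the exact and practical settings in parallel. The vector $q\cdot\nabla u_K(x_\sigma)$ and the Jacobian $D\Psi(u_\sigma)$ (or $D\mathcal T(u_\sigma)$) are handled separately and then multiplied.

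\emph{The reconstructed gradient.} Since $u_K=\mathcal R_K(u_h)\in\mathbb W_K$ is assumed differentiable at $x_\sigma$, the vector $q\cdot\nabla u_K(x_\sigma)=\sum_{i}q_i\,\partial_{x_i}u_K(x_\sigma)\in\mathbb R^m$ is well defined. It is a linear function of the fixed reconstruction data and does not use $\Psi$ at all. Although $u_K$ may leave $G$ at interior points, this quantity is only a derivative of a polynomial-type function in $u$-space, so no admissibility of $u_K(x)$ for $x\neq x_\sigma$ is needed.

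\emph{The Jacobian factor.} In the exact setting, $u_\sigma\in\Gint$ and (A$\Psi$2) states that $\Psi$ is $C^1$ on $\Gint$. Hence $D\Psi(u_\sigma)$ exists and is an invertible $m\times m$ matrix depending only on $u_\sigma$. In the practical setting, $D\mathcal T(u_\sigma)$ exists by the assumed differentiability of $\mathcal T$ at $u_\sigma\in U_{\mathrm{fwd}}$. Because $U_{\mathrm{fwd}}$ is the domain of $\mathcal T$ and $u_\sigma$ lies in it, there is an open neighbourhood of $u_\sigma$ on which $\mathcal T$ is defined, so the derivative is meaningful. If $U_{\mathrm{fwd}}$ is not open, this should be read as the differentiability hypothesis itself.

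\emph{Conclusion.} The product of an $m\times m$ matrix and a vector in $\mathbb R^m$ is well defined. It depends only on the pair $\bigl(u_\sigma,\nabla u_K(x_\sigma)\bigr)$ and the fixed direction $q$. The only state at which the forward transform or its derivative is evaluated is $u_\sigma$, which lies in $\Gint$ or $U_{\mathrm{fwd}}$ by hypothesis. By \eqref{eq:recon_pt}, $u_\sigma=u_K(x_\sigma)$ is exactly the point state, not an intermediate reconstruction value. In particular, no $\Psi(u_K(x))$ with $x\neq x_\sigma$ appears, which gives the final claim.

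The only step needing any care is making precise what ``depends only on'' means. It is a statement about the functional form of \eqref{eq:pt_chain_rule}: it is the composition of the map $u\mapsto D\Psi(u)$, evaluated on $\Gint$ only, with linear evaluation of the gradient. There is no real obstacle here; the proposition is essentially definitional.
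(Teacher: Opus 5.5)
Your proposal is correct and follows the paper's argument: the transform enters \eqref{eq:pt_chain_rule} only through $D\Psi(u_\sigma)$ (or $D\mathcal T(u_\sigma)$) evaluated at the admissible point state, multiplied by the reconstructed gradient $\nabla u_K(x_\sigma)$, so no intermediate reconstruction value is ever passed to the transform. One small quibble: $u_\sigma\in U_{\mathrm{fwd}}$ does not by itself give an open neighbourhood of $u_\sigma$ inside $U_{\mathrm{fwd}}$ (the affine map \eqref{u:scalar} is defined only on the closed interval $G$), so, as your own hedge indicates, you should rely directly on the assumed differentiability of $\mathcal T$ at $u_\sigma$.
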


\begin{proof}
In both cases, the transform enters only through its derivative evaluated at the admissible point state $u_\sigma$. The formula therefore requires only the forward-domain value $u_\sigma$ and the reconstructed gradient $\nabla u_K(x_\sigma)$ at $x_\sigma$, and never probes intermediate reconstruction values.
\end{proof}


\begin{proposition}[Consistency of the intrinsic directional derivative in $w$]\label{prop:A3862}
	Let $K$ be a cell and let $u$ be a smooth solution in a neighborhood of $K$. Let $u_h\in\mathcal V_h$ and let $u_K:=\mathcal R_K(u_h)\in\mathbb W_K$ be the reconstruction employed in \eqref{eq:pt_chain_rule}. Assume that:
    \begin{enumerate}[(i)]
        \item For a fixed integer $p\ge 1$, the reconstruction $u_K$ satisfies
        	\begin{equation}\label{eq:recon_smooth_bounds_prop}
        		\|u_K-u\|_{L^\infty(\bar{K})} \le C\,h^{p+1},
        		\qquad
        		\|\nabla u_K-\nabla u\|_{L^\infty(\bar{K})} \le C\,h^{p},
        	\end{equation}
        	where $C>0$ is independent of the mesh size $h$. 
        \item The exact solution values and the point states under consideration remain in a convex open set $G_\delta$ with \revblue{$\overline{G_\delta}$ compact, $\overline{G_\delta}\subset \Gint$, and $\operatorname{dist}(\overline{G_\delta},\partial G)>0$}, and $\Psi\in C^2(\overline{G_\delta})$.
    \end{enumerate}
    Setting $w:=\Psi(u)$, for any $\sigma \in  \Sigma_K$ and any unit direction $q\in\mathbb R^2$, 
	there exists a constant \revblue{$C'=C'(G_\delta,\Psi,u,C)>0$}, independent of $h$ \revblue{but possibly dependent on the compact interior set $G_\delta$, and hence on its distance from $\partial G$}, such that
	\begin{equation}\label{eq:chain_rule_consistency_estimate}
		\big\|\partial_q w_{K,\sigma} - q\cdot\nabla w(x_\sigma)\big\|_2 \le C'\,h^{p}.
	\end{equation}
\end{proposition}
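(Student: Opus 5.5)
The argument is a two-term triangle inequality around the chain rule. For the smooth solution we have
\[
q\cdot\nabla w(x_\sigma)=D\Psi\bigl(u(x_\sigma)\bigr)\bigl(q\cdot\nabla u(x_\sigma)\bigr),
\]
while by \eqref{eq:pt_chain_rule}
\[
\partial_q w_{K,\sigma}=D\Psi(u_\sigma)\bigl(q\cdot\nabla u_K(x_\sigma)\bigr).
\]
Adding and subtracting $D\Psi(u_\sigma)\bigl(q\cdot\nabla u(x_\sigma)\bigr)$ splits the error into
\[
E_1:=D\Psi(u_\sigma)\,q\cdot\bigl(\nabla u_K(x_\sigma)-\nabla u(x_\sigma)\bigr),
\qquad
E_2:=\bigl(D\Psi(u_\sigma)-D\Psi(u(x_\sigma))\bigr)\bigl(q\cdot\nabla u(x_\sigma)\bigr).
\]
Each term will be bounded separately.

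\emph{Bounding $E_1$.} Since $\Psi\in C^2(\overline{G_\delta})$ and $\overline{G_\delta}$ is compact, we set $M_1:=\sup_{\overline{G_\delta}}\|D\Psi\|<\infty$. Because $u_\sigma\in G_\delta$ by assumption (ii), $\|D\Psi(u_\sigma)\|\le M_1$. Using $|q|=1$, $x_\sigma\in\partial K\subset\bar K$, and the gradient bound in \eqref{eq:recon_smooth_bounds_prop}, we get $\|E_1\|_2\le M_1 C h^{p}$.

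\emph{Bounding $E_2$.} Let $M_2:=\sup_{\overline{G_\delta}}\|D^2\Psi\|$. The segment joining $u_\sigma$ and $u(x_\sigma)$ lies in $G_\delta$, because both endpoints do and $G_\delta$ is convex. The mean-value inequality along this segment gives
\[
\|D\Psi(u_\sigma)-D\Psi(u(x_\sigma))\|\le M_2\,|u_\sigma-u(x_\sigma)|.
\]
By the compatibility condition \eqref{eq:recon_pt}, $u_\sigma=u_K(x_\sigma)$, so the value bound in \eqref{eq:recon_smooth_bounds_prop} yields $|u_\sigma-u(x_\sigma)|\le Ch^{p+1}$. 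The exact gradient satisfies $|\nabla u(x_\sigma)|\le\|\nabla u\|_{L^\infty}$, which is bounded independently of $h$ by smoothness of $u$ near $K$. Hence $\|E_2\|_2\le M_2\|\nabla u\|_{L^\infty}Ch^{p+1}$, and this is $\le C''h^{p}$ once $h\le 1$ (or after absorbing $h_{\max}$ into the constant).

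Summing the two bounds gives \eqref{eq:chain_rule_consistency_estimate} with
\[
C'=C\bigl(M_1+M_2\|\nabla u\|_{L^\infty}\bigr).
\]
This constant depends on $G_\delta$ through $M_1$ and $M_2$, which can blow up as $G_\delta$ approaches $\partial G$ because the barrier transform is singular there; that is the dependence noted in the statement.

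The main point requiring care is the mean-value step in $E_2$. It relies on the convexity of $G_\delta$ together with the assumption that both $u_\sigma$ and $u(x_\sigma)$ lie in $G_\delta$, so that $D^2\Psi$ is only ever evaluated inside $\overline{G_\delta}$ and never near $\partial G$. Everything else follows directly from \eqref{eq:recon_pt}, \eqref{eq:recon_smooth_bounds_prop}, and compactness.
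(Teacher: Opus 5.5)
Your proof is correct and is essentially the paper's argument: the same add-and-subtract around the chain rule, bounded by the Lipschitz continuity of $D\Psi$ on the convex compact set $\overline{G_\delta}$ together with the two reconstruction estimates and $u_\sigma=u_K(x_\sigma)$. The only difference is the pivot term. You insert $D\Psi(u_\sigma)\bigl(q\cdot\nabla u(x_\sigma)\bigr)$, whereas the paper inserts $D\Psi(u(x_\sigma))\bigl(q\cdot\nabla u_K(x_\sigma)\bigr)$. Your choice pairs the $D\Psi$-difference with the exact gradient, which spares you the paper's extra step of bounding $\nabla u_K$ uniformly for sufficiently small $h$.
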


\begin{proof}
	By the chain rule for the exact smooth solution $w=\Psi(u)$,
	\[
		q\cdot\nabla w(x_\sigma)=D\Psi\bigl(u(x_\sigma)\bigr)\bigl(q\cdot\nabla u(x_\sigma)\bigr).
	\]
	Adding and subtracting $D\Psi\bigl(u(x_\sigma)\bigr)\bigl(q\cdot\nabla u_K(x_\sigma)\bigr)$ yields
	\begin{align*}
		\partial_q w_{K,\sigma} - q\cdot\nabla w(x_\sigma)
		=&
		\Bigl(D\Psi(u_\sigma)-D\Psi\bigl(u(x_\sigma)\bigr)\Bigr)\bigl(q\cdot\nabla u_K(x_\sigma)\bigr)
		+
		D\Psi\bigl(u(x_\sigma)\bigr)\bigl(q\cdot(\nabla u_K-\nabla u)(x_\sigma)\bigr).
	\end{align*}
	The second term on the right-hand side is bounded via \eqref{eq:recon_smooth_bounds_prop}:
	\[
		\Big\|D\Psi\bigl(u(x_\sigma)\bigr)\bigl(q\cdot(\nabla u_K-\nabla u)(x_\sigma)\bigr)\Big\|_2
		\le \|D\Psi\|_{C^0(\overline{G}_\delta)}\,\|\nabla u_K-\nabla u\|_{L^\infty(\bar{K})}
		\le C_2 h^p.
	\]
    For the first term, the convexity of $G_\delta$ and the compactness of $\overline{G_\delta}$ guarantee that $D\Psi$ is Lipschitz continuous on $\overline{G_\delta}$. Thus, there exists a constant \revblue{$L=L(G_\delta,\Psi)>0$}, independent of $h$, such that
	\[
		\|D\Psi(u_\sigma)-D\Psi(u(x_\sigma))\|_2 \le L\,\|u_\sigma-u(x_\sigma)\|_2.
	\]
	By the reconstruction constraint \eqref{eq:recon_pt}, $u_\sigma=u_K(x_\sigma)$ for any $\sigma \in \Sigma_K$, which implies
	\[
		\|u_\sigma-u(x_\sigma)\|_2
		=\|u_K(x_\sigma)-u(x_\sigma)\|_2
		\le \|u_K-u\|_{L^\infty(\bar{K})}
		\le Ch^{p+1}.
	\]
	Since $u$ is smooth, $\|\nabla u\|_{L^\infty(\bar{K})}$ is uniformly bounded, and \eqref{eq:recon_smooth_bounds_prop} implies that $\|q\cdot\nabla u_K\|_{L^\infty(\bar{K})}\le C''$ for sufficiently small $h$. Therefore,
	\[
		\Big\|\bigl(D\Psi(u_\sigma)-D\Psi(u(x_\sigma))\bigr)\bigl(q\cdot\nabla u_K(x_\sigma)\bigr)\Big\|_2
		\le L\,\|u_\sigma-u(x_\sigma)\|_2\,\|q\cdot\nabla u_K\|_{L^\infty(\bar{K})}
		\le C_1 h^{p+1}.
	\]
	Combining the two estimates yields
	\[
		\|\partial_q w_{K,\sigma} - q\cdot\nabla w(x_\sigma)\|_2
		\le C_2 h^p + C_1 h^{p+1}
		\le C' h^p,
	\]
	for sufficiently small $h$, which completes the proof of \eqref{eq:chain_rule_consistency_estimate}.
\end{proof}

\subsubsection{Definition of the semi-discrete point operator}\label{sec:pt_operator}

\smallskip\noindent\textbf{Extension element for tangential upwinding at vertices.}
If $\sigma$ is a \emph{vertex} and $e\in\mathcal{E}_\sigma$, we define the extension point
$x_{\sigma,e}^{\mathrm{ext}}:=x_\sigma-\varepsilon\,\tau_{\sigma,e}$ for a sufficiently small $\varepsilon>0$.
By mesh conformity, there exists a unique cell containing $x_{\sigma,e}^{\mathrm{ext}}$; we denote this cell by
$K^{\mathrm{ext}}_{\sigma,e}$ (see \Cref{fig:Mesh1}).
If $x_{\sigma,e}^{\mathrm{ext}}$ lies on a mesh edge, it belongs to two adjacent cells. For reconstructions whose edge trace is independent of the adjacent cell choice, the tangential derivative $\partial_{\tau} w$ at the vertex depends only on that common edge trace and is therefore identical for both choices. Hence, any deterministic tie-breaking rule yields the same update.

\smallskip\noindent\textbf{Vertex-point operator.}
For a vertex $\sigma$, we define the semi-discrete vertex operator by
\begin{align}\label{eq:pt_rhs_vertex}
	\big(\mathcal{D}^{\mathrm{pt}}(u_h)\big)_\sigma
	:= -\sum_{e\in\mathcal{E}_\sigma}\omega_{\sigma,e}\Big(
	&J_{n_{\sigma,e}}^{+}(u_\sigma)\,\partial_{n_{\sigma,e}} w_{K^{-}_{\sigma,e},\sigma}
	+J_{n_{\sigma,e}}^{-}(u_\sigma)\,\partial_{n_{\sigma,e}} w_{K^{+}_{\sigma,e},\sigma}
	\nonumber\\
	+&J_{\tau_{\sigma,e}}^{-}(u_\sigma)\,\partial_{\tau_{\sigma,e}} w_{K^{-}_{\sigma,e},\sigma}
	+J_{\tau_{\sigma,e}}^{+}(u_\sigma)\,\partial_{\tau_{\sigma,e}} w_{K^{\mathrm{ext}}_{\sigma,e},\sigma}
	\Big),
\end{align}
where the directional derivatives are defined via \eqref{eq:pt_chain_rule} and averaged using the nonnegative angular weights $\omega_{\sigma,e}$ from \Cref{sec:pt_geometry_sigma}.

\smallskip\noindent\textbf{Mid-edge-point operator.}
If $\sigma$ is the midpoint of an interior edge $e$, the operator reduces to
\begin{align}\label{eq:pt_midpoint_scheme}
	\big(\mathcal{D}^{\mathrm{pt}}(u_h)\big)_\sigma
	:= -\Big(
	&J_{n_{\sigma,e}}^{+}(u_\sigma)\,\partial_{n_{\sigma,e}} w_{K^{-}_{\sigma,e},\sigma}
	+J_{n_{\sigma,e}}^{-}(u_\sigma)\,\partial_{n_{\sigma,e}} w_{K^{+}_{\sigma,e},\sigma}
	+J_{\tau_{\sigma,e}}(u_\sigma)\,\partial_{\tau_{\sigma,e}} w_{K^{-}_{\sigma,e},\sigma}
	\Big),
\end{align}
which follows from $J_\tau=J_\tau^++J_\tau^-$ and the tangential continuity along the edge.

\begin{proposition}[Well-definedness and admissibility of point-stage updates]\label{thm:A}
Consider an explicit point stage of the form
\[
w_\sigma^{n+1}=w_\sigma^n+\Delta t\,\big(\mathcal D^{\mathrm{pt}}(u_h^n)\big)_\sigma,
\qquad
u_\sigma^{n+1}=\mathcal B(w_\sigma^{n+1}),
\qquad \sigma\in\Sigma_h,
\]
where $\mathcal B:\mathbb R^m\to G_{\mathrm{pt}}$ is a globally defined admissibility-preserving inverse map with prescribed range $G_{\mathrm{pt}}$.
Assume either
\begin{itemize}
\item[(E)] \revblue{the exact smooth setting (A$\Psi$1)--(A$\Psi$3), with the prescribed boundary conditions supplying the required exterior traces and boundary directional derivatives, $\mathcal B=\Psi^{-1}$, and $G_{\mathrm{pt}}=\Gint$;} or
\item[(P)] a practical forward map and admissibility-preserving map setting in which the directional derivatives entering \eqref{eq:pt_rhs_vertex}--\eqref{eq:pt_midpoint_scheme} are well-defined at the current point states, the intermediate values $w_\sigma^{n+1}$ are finite, and the chosen admissibility-preserving map $\mathcal B$ has its range contained in $G_{\mathrm{pt}}$.
\end{itemize}
Then, in case \emph{(E)}, the point operator \eqref{eq:pt_rhs_vertex}--\eqref{eq:pt_midpoint_scheme} is well-defined at every skeleton point $\sigma\in\Sigma_h$. Moreover, in both cases,
\[
w_\sigma^{n+1}\in\mathbb R^m
\quad\text{and}\quad
u_\sigma^{n+1}\in G_{\mathrm{pt}}
\qquad \forall \sigma\in\Sigma_h.
\]
\end{proposition}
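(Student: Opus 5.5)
The argument is structural rather than quantitative: no CFL restriction enters, because admissibility is delivered entirely by the range of $\mathcal B$. I would split it into well-definedness of $\mathcal D^{\mathrm{pt}}$ in case (E), finiteness of $w_\sigma^{n+1}$, and membership $u_\sigma^{n+1}\in G_{\mathrm{pt}}$.

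\emph{Well-definedness in case (E).} Since $u_h^n\in G_h(\cdot,\Gint)$, every current point state satisfies $u_\sigma^n\in\Gint$.
\begin{enumerate}[(i)]
\item By (A$\Psi$2), $D\Psi(u_\sigma^n)$ exists and is invertible. Hence $J_q(u_\sigma^n)=D\Psi\,A_q\,(D\Psi)^{-1}$ in \eqref{eq:Aq} is well-defined for every direction $q$.
\item $J_q(u_\sigma^n)$ is similar to $A_q(u_\sigma^n)$, so it has the same real spectrum by local hyperbolicity. A finite $\alpha_\sigma$ satisfying \eqref{eq:pt_alpha_sigma} therefore exists, being a maximum over the finitely many edges in $\mathcal E_\sigma$. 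The splitting matrices \eqref{eq:pt_split} are then defined, and \Cref{lem:pt_char_sign} applies.
\item The transformed derivatives $\partial_q w_{K,\sigma}$ require only $D\Psi(u_\sigma^n)$ and $\nabla u_K(x_\sigma)$, by \Cref{prop:pt_intrinsic_derivative}. Each $u_K=\mathcal R_K(u_h^n)$ lies in the finite-dimensional (polynomial-type) space $\mathbb W_K$, so it is differentiable on $\overline K$ and its gradient at $x_\sigma$ is finite.
\item The cells entering \eqref{eq:pt_rhs_vertex} are $K^\pm_{\sigma,e}$ and $K^{\mathrm{ext}}_{\sigma,e}$. They exist by mesh conformity and, for $K^{\mathrm{ext}}_{\sigma,e}$, the tie-breaking remark. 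On boundary edges, the missing data are supplied by the prescribed boundary closure.
\item The weights $\omega_{\sigma,e}$ are finite and nonnegative, and $\Theta_\sigma>0$ because the interior angles are positive by convexity. For mid-edge points, \eqref{eq:pt_midpoint_scheme} uses only $K^\pm_{\sigma,e}$, with the tangential trace independent of the cell choice (\Cref{rem:pt_midpoint_weight}).
\end{enumerate}
Consequently, $\mathcal D^{\mathrm{pt}}(u_h^n)_\sigma$ is a finite linear combination of finite vectors, and hence lies in $\mathbb R^m$.

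\emph{Finiteness of the intermediate state.} In case (E), $w_\sigma^n=\Psi(u_\sigma^n)\in\mathbb R^m$ by (A$\Psi$1). Together with the previous step, this gives $w_\sigma^{n+1}=w_\sigma^n+\Delta t\,\mathcal D^{\mathrm{pt}}_\sigma\in\mathbb R^m$ for any finite $\Delta t$. In case (P), finiteness of $w_\sigma^{n+1}$ is part of the hypothesis, as is well-definedness of the derivatives.

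\emph{Admissibility.} In case (E), $\mathcal B=\Psi^{-1}$ is globally defined on $\mathbb R^m$ with values in $\Gint$ by (A$\Psi$1), so $u_\sigma^{n+1}=\Psi^{-1}(w_\sigma^{n+1})\in\Gint=G_{\mathrm{pt}}$. In case (P), $\mathcal B$ is defined on all of $\mathbb R^m$ with range in $G_{\mathrm{pt}}$ by assumption, which gives the same conclusion.

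\emph{Main obstacle.} The only delicate point is step (ii): verifying that $\alpha_\sigma$ is finite, which requires the spectral radii of $J_q(u_\sigma^n)$ to be finite. This rests on $u_\sigma^n$ lying strictly inside $\Gint$, where $D\Psi$ is invertible and $A_q$ is diagonalizable with real eigenvalues. The exact-setting hypothesis $u_\sigma^n\in\Gint$, rather than merely $u_\sigma^n\in G$, is exactly what guarantees this.
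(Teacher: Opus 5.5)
Your proposal is correct and follows the paper's proof. In case (E), well-definedness comes from \Cref{prop:pt_intrinsic_derivative}: the derivatives need only $D\Psi(u_\sigma^n)$, the reconstructed gradients, and the boundary-supplied data. Finiteness of $w_\sigma^{n+1}$ follows in case (E) and is assumed in case (P), and admissibility comes purely from the range of $\mathcal B$. Your steps (i)--(v) just spell out details the paper leaves implicit.

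One small correction to your ``main obstacle'' remark. Once $D\Psi(u_\sigma^n)$ is invertible and $A_q(u_\sigma^n)$ exists, the spectral radius of $J_q(u_\sigma^n)$ is automatically finite. Real diagonalizability therefore matters only for the sign structure in \Cref{lem:pt_char_sign}, not for the existence of a finite $\alpha_\sigma$.
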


\begin{proof}
In case \emph{(E)}, \Cref{prop:pt_intrinsic_derivative} establishes that each directional derivative entering \eqref{eq:pt_rhs_vertex}--\eqref{eq:pt_midpoint_scheme} is well-defined because it requires only admissible point states, reconstructed gradients, and \revblue{the exterior trace and derivative data provided by the prescribed boundary conditions}. Hence, the explicit point operator $\big(\mathcal{D}^{\mathrm{pt}}(u_h^n)\big)_\sigma$ is a finite vector in $\mathbb R^m$, which implies that $w_\sigma^{n+1}\in\mathbb R^m$. In case \emph{(P)}, finiteness of $w_\sigma^{n+1}$ is assumed directly. In both cases, the conclusion for the mapped-back state follows directly from the range property of the admissibility-preserving map:
\[
u_\sigma^{n+1}=\mathcal B(w_\sigma^{n+1})\in G_{\mathrm{pt}}
\qquad \forall \sigma\in\Sigma_h,
\]
which completes the proof.
\end{proof}

\begin{corollary}[Practical admissibility-preserving maps for the point stage]\label{cor:pt_practical}
Let $\mathcal T:U_{\mathrm{fwd}}\to\mathbb R^m$ be a given forward map and let $\mathcal B:\mathbb R^m\to G_{\mathrm{pt}}$ be a globally defined admissibility-preserving inverse map with convex range $G_{\mathrm{pt}}\subseteq U_{\mathrm{fwd}}$. If all current point states lie in $U_{\mathrm{fwd}}$ and the resulting transformed stage values $w_\sigma^{n+1}$ are finite, the mapped-back point states satisfy $u_\sigma^{n+1}=\mathcal B(w_\sigma^{n+1})\in G_{\mathrm{pt}}$ for all skeleton points $\sigma\in\Sigma_h$. Explicit practical forward--inverse map pairs for scalar conservation laws and compressible Euler equations are detailed in Appendix~\ref{sec:transform_pairs}.
\end{corollary}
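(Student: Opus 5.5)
The plan is to reduce \Cref{cor:pt_practical} to case~(P) of \Cref{thm:A}. That proposition already proves the range property $u_\sigma^{n+1}=\mathcal B(w_\sigma^{n+1})\in G_{\mathrm{pt}}$ once its hypotheses hold. So the only task is to check that the assumptions of the corollary imply the three requirements of case~(P): the directional derivatives are well defined at the current point states, the stage values $w_\sigma^{n+1}$ are finite, and $\mathcal B$ maps into $G_{\mathrm{pt}}$.

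\emph{Finiteness and range.} The second and third requirements are assumed outright in the corollary: $w_\sigma^{n+1}$ is finite, and $\mathcal B:\mathbb R^m\to G_{\mathrm{pt}}$ is globally defined. Since $\mathcal B$ is defined on all of $\mathbb R^m$, $\mathcal B(w_\sigma^{n+1})$ makes sense without any further restriction on $w_\sigma^{n+1}$.

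\emph{Well-definedness of the derivatives.} For the first requirement we use \Cref{prop:pt_intrinsic_derivative} in its practical form. Every current point state $u_\sigma$ lies in $U_{\mathrm{fwd}}$, and the forward map $\mathcal T$ is differentiable there (this is the standing assumption of the practical setting in \Cref{sec:pt_derivatives}). Hence each quantity $D\mathcal T(u_\sigma)\,(q\cdot\nabla u_K(x_\sigma))$ entering \eqref{eq:pt_rhs_vertex}--\eqref{eq:pt_midpoint_scheme} is well defined, including the extension-cell and boundary-supplied derivatives. The splitting matrices $J^\pm$ are evaluated only at $u_\sigma$, so they are also well defined.

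\emph{Conclusion and inductive use.} Invoking \Cref{thm:A}(P) then gives $u_\sigma^{n+1}\in G_{\mathrm{pt}}$ for every $\sigma\in\Sigma_h$. For repeated stages we note that $G_{\mathrm{pt}}\subseteq U_{\mathrm{fwd}}$, so the new point states again lie in the forward domain. Under SSP convex combinations the point states stay in $U_{\mathrm{fwd}}$ because $G_{\mathrm{pt}}$ is convex. Thus the hypothesis ``current point states lie in $U_{\mathrm{fwd}}$'' propagates from stage to stage.

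The only step needing care, and the main obstacle, is the well-definedness check. One must make sure that no ingredient of the operator evaluates $\mathcal T$ or its derivative away from $u_\sigma$. This is exactly the content of \Cref{prop:pt_intrinsic_derivative}, so it is a bookkeeping step rather than a genuine difficulty.
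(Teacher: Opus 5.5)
Your proposal is correct and is essentially the paper's argument: the corollary is stated as a direct consequence of case~(P) of \Cref{thm:A}, whose proof uses only that $\mathcal B$ maps all of $\mathbb R^m$ into $G_{\mathrm{pt}}$, applied to the finite vector $w_\sigma^{n+1}$. That range property is the whole content, not the well-definedness of the derivatives. So your ``main obstacle'' step is unnecessary, and it also leans on differentiability of $\mathcal T$ at $u_\sigma$, which the corollary does not state. Your SSP propagation remark is likewise not needed; note too that the paper forms SSP combinations in $w$-space and relies on the codomain of $\mathcal B$, not on convexity of $G_{\mathrm{pt}}$.
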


\begin{proposition}[Consistency of the transformed point operator on smooth solutions]\label{prop:pt_operator_consistency}
Let $u$ be a smooth solution of \eqref{eq:cl} in a neighborhood of a skeleton point $x_\sigma$, let $w=\Psi(u)$, and assume that $u(x,t)$ remains in a compact subset of $\Gint$ so that all Jacobians $D\Psi$ and $D\Psi^{-1}$ involved are uniformly bounded. Define the hybrid data associated with the exact solution by
\[
\bar u_K=\frac1{|K|}\int_K u(x,t)\,\mathrm dx,
\qquad
u_\sigma=u(x_\sigma,t),
\]
and let each reconstruction entering \eqref{eq:pt_rhs_vertex}--\eqref{eq:pt_midpoint_scheme} satisfy the smooth approximation bounds \eqref{eq:recon_smooth_bounds_prop} at order $p\ge 1$. \revblue{At boundary points, assume in addition that the boundary treatment supplies exterior directional derivatives that are $O(h^p)$ consistent.} Then the semi-discrete point operator is consistent with the transformed PDE in the sense that
\begin{equation}\label{eq:pt_operator_consistency_est}
\bigl\|(\mathcal D^{\mathrm{pt}}(u_h))_\sigma + \sum_{i=1}^2 J_i\bigl(u(x_\sigma,t)\bigr)\,\partial_{x_i}w(x_\sigma,t)\bigr\|_2 \le C h^p,
\end{equation}
where the constant $C>0$ is independent of $h$. Equivalently, since $w$ satisfies \eqref{eq:w_quasilinear}, it holds that 
\[
\bigl\|(\mathcal D^{\mathrm{pt}}(u_h))_\sigma-\partial_t w(x_\sigma,t)\bigr\|_2\le C h^p.
\]
\end{proposition}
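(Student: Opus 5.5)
The plan is to reduce \eqref{eq:pt_operator_consistency_est} to \Cref{prop:A3862} by recombining the split matrices. Write the exact quantities at $x_\sigma$ as $u^\sharp:=u(x_\sigma,t)=u_\sigma$ and $g_q:=q\cdot\nabla w(x_\sigma,t)$. For every cell $K$ entering the operator (namely $K^\pm_{\sigma,e}$ and $K^{\mathrm{ext}}_{\sigma,e}$), decompose $\partial_q w_{K,\sigma}=g_q+r_{K,q}$. By \Cref{prop:A3862}, $\|r_{K,q}\|_2\le C'h^p$. This uses the assumption that $u$ stays in a compact subset of $\Gint$; we choose $G_\delta$ as a convex open neighborhood of that set with closure in $\Gint$, for instance a small convex hull neighborhood, which is possible since $\Gint$ is convex. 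For $K^{\mathrm{ext}}_{\sigma,e}$, the point $x_\sigma$ lies on $\partial K$ as a vertex, so the reconstruction constraint \eqref{eq:recon_pt} still gives $u_K(x_\sigma)=u_\sigma$. At boundary points, the $O(h^p)$ assumption on the supplied exterior derivatives plays the same role.

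Substituting these decompositions into \eqref{eq:pt_rhs_vertex} splits the operator into a principal part and a remainder. The principal part is
\[
-\sum_{e\in\mathcal E_\sigma}\omega_{\sigma,e}\Bigl(\bigl(J^+_{n_{\sigma,e}}+J^-_{n_{\sigma,e}}\bigr)(u^\sharp)\,g_{n_{\sigma,e}}+\bigl(J^-_{\tau_{\sigma,e}}+J^+_{\tau_{\sigma,e}}\bigr)(u^\sharp)\,g_{\tau_{\sigma,e}}\Bigr).
\]
Since $J^+_q+J^-_q=J_q$ by \eqref{eq:pt_split}, this simplifies to
\[
-\sum_{e}\omega_{\sigma,e}\bigl(J_{n_{\sigma,e}}(u^\sharp)g_{n_{\sigma,e}}+J_{\tau_{\sigma,e}}(u^\sharp)g_{\tau_{\sigma,e}}\bigr).
\]
The directional identity after \eqref{eq:Aq} shows that each bracket equals $\sum_i J_i(u^\sharp)\partial_{x_i}w(x_\sigma,t)$, independently of $e$. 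Because $\sum_e\omega_{\sigma,e}=1$, the principal part is exactly $-\sum_iJ_i\partial_{x_i}w$. The mid-edge operator \eqref{eq:pt_midpoint_scheme} is handled the same way with $\omega=1$, using $J_\tau$ directly.

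The remainder is a finite sum, since the number of edges per vertex is bounded by (A1). Each term has the form $\omega_{\sigma,e}J^\pm_q(u^\sharp)\,r_{K,q}$. It remains to bound $\|J^\pm_q(u^\sharp)\|_2\le \tfrac12(\|J_q(u^\sharp)\|_2+\alpha_\sigma)$ uniformly in $h$. The bound on $\|J_q\|$ follows from $J_q=D\Psi A_q(D\Psi)^{-1}$, with $D\Psi$, $(D\Psi)^{-1}$, and $A_q$ continuous on the compact set. The bound on $\alpha_\sigma$ needs the standing choice that $\alpha_\sigma$ is a fixed continuous function of $u_\sigma$, for example the spectral-radius bound in \eqref{eq:pt_alpha_sigma}, so that it is bounded on compacts. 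Together these give $\|\text{remainder}\|_2\le Ch^p$. The equivalent form then follows from \eqref{eq:w_quasilinear}.

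The main obstacle I expect is not the algebra but verifying the hypotheses of \Cref{prop:A3862} uniformly over all cells used, including the extension cell. This requires the $L^\infty$ bounds \eqref{eq:recon_smooth_bounds_prop} to hold on $\overline{K^{\mathrm{ext}}_{\sigma,e}}$ including at $x_\sigma$. It also requires the constants $C'$ to be uniform, which I would secure by fixing one $G_\delta$ for all $\sigma$ and all $h$ small. A further point is that the point states $u_\sigma$ equal the exact values, so they lie in $G_\delta$ automatically.
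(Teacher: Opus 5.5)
Your proposal is correct and follows essentially the same route as the paper: you apply \Cref{prop:A3862} to each one-sided directional derivative (including the extension cell and the boundary data), recombine $J_q^++J_q^-=J_q$, use the orthonormal-frame identity $J_n\partial_n w+J_\tau\partial_\tau w=\sum_i J_i\partial_{x_i}w$, and average with angular weights summing to one. Your explicit uniform bound on $\|J_q^\pm(u_\sigma)\|_2$, which needs $\alpha_\sigma$ to be bounded on compacts, spells out a step the paper leaves inside its $O(h^p)$ notation; also, since the weights $\omega_{\sigma,e}$ form a convex combination, you do not need a bound on the number of edges per vertex.
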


\begin{proof}
We distinguish mid-edge points and vertices. If $\sigma$ is the midpoint of an interior edge $e$, the smooth exact solution has a unique gradient at $x_\sigma$, so the underlying exact directional derivatives agree across adjacent elements. By \Cref{prop:A3862},
\[
\partial_{n_{\sigma,e}} w_{K^{\pm}_{\sigma,e},\sigma}=\partial_{n_{\sigma,e}}w(x_\sigma,t)+O(h^p),
\qquad
\partial_{\tau_{\sigma,e}} w_{K^{-}_{\sigma,e},\sigma}=\partial_{\tau_{\sigma,e}}w(x_\sigma,t)+O(h^p).
\]
Substituting these estimates into \eqref{eq:pt_midpoint_scheme} yields
\[
(\mathcal D^{\mathrm{pt}}(u_h))_\sigma
=-J_{n_{\sigma,e}}\bigl(u(x_\sigma,t)\bigr)\,\partial_{n_{\sigma,e}}w(x_\sigma,t)
-J_{\tau_{\sigma,e}}\bigl(u(x_\sigma,t)\bigr)\,\partial_{\tau_{\sigma,e}}w(x_\sigma,t)
+O(h^p).
\]
Since $\{n_{\sigma,e},\tau_{\sigma,e}\}$ is an orthonormal basis, the directional decomposition satisfies
\[
J_{n_{\sigma,e}}\partial_{n_{\sigma,e}}w+J_{\tau_{\sigma,e}}\partial_{\tau_{\sigma,e}}w
=J_1\,\partial_{x_1}w+J_2\,\partial_{x_2}w,
\]
which proves \eqref{eq:pt_operator_consistency_est} for mid-edge points.

If $\sigma$ is a vertex, we write the contribution of an incident edge $e\in\mathcal E_\sigma$ in \eqref{eq:pt_rhs_vertex} as
\[
\begin{aligned}
\mathcal G_{\sigma,e}^h:=\;J_{n_{\sigma,e}}^{+}(u_\sigma)\,\partial_{n_{\sigma,e}} w_{K^{-}_{\sigma,e},\sigma}
+J_{n_{\sigma,e}}^{-}(u_\sigma)\,\partial_{n_{\sigma,e}} w_{K^{+}_{\sigma,e},\sigma}
+J_{\tau_{\sigma,e}}^{-}(u_\sigma)\,\partial_{\tau_{\sigma,e}} w_{K^{-}_{\sigma,e},\sigma}
+J_{\tau_{\sigma,e}}^{+}(u_\sigma)\,\partial_{\tau_{\sigma,e}} w_{K^{\mathrm{ext}}_{\sigma,e},\sigma}.
\end{aligned}
\]
For a smooth exact solution, all exact directional derivatives at $x_\sigma$ coincide across incident cells; at boundary edges the same holds up to $O(h^p)$ by the assumed boundary consistency. Applying \Cref{prop:A3862} to each directional derivative therefore yields
\[
\mathcal G_{\sigma,e}^h
=J_{n_{\sigma,e}}\bigl(u(x_\sigma,t)\bigr)\,\partial_{n_{\sigma,e}}w(x_\sigma,t)
+J_{\tau_{\sigma,e}}\bigl(u(x_\sigma,t)\bigr)\,\partial_{\tau_{\sigma,e}}w(x_\sigma,t)
+O(h^p).
\]
The same orthonormal-basis identity implies
\[
J_{n_{\sigma,e}}\partial_{n_{\sigma,e}}w+J_{\tau_{\sigma,e}}\partial_{\tau_{\sigma,e}}w
=J_1\,\partial_{x_1}w+J_2\,\partial_{x_2}w,
\]
independently of the incident edge $e$. Averaging with the nonnegative angular weights from \Cref{sec:pt_geometry_sigma} and using $\sum_{e\in\mathcal E_\sigma}\omega_{\sigma,e}=1$, it follows that
\[
(\mathcal D^{\mathrm{pt}}(u_h))_\sigma
=-\sum_{e\in\mathcal E_\sigma}\omega_{\sigma,e}\,\mathcal G_{\sigma,e}^h
= -\sum_{i=1}^2 J_i\bigl(u(x_\sigma,t)\bigr)\,\partial_{x_i}w(x_\sigma,t)+O(h^p),
\]
which establishes \eqref{eq:pt_operator_consistency_est}. The equivalent estimate with $\partial_t w$ follows from \eqref{eq:w_quasilinear}, which completes the proof.
\end{proof}

\subsection{\texorpdfstring{Point-side boundary data and admissible flux inputs}{Point-side boundary data and admissible flux inputs}}
\label{sec:pt_interface}

For each cell $K\in\mathcal{T}_h$, we define the prepared reconstruction operator by
\[
\bigl(\widehat{\mathcal{R}}_h u_h\bigr)|_K := (\Pi_K\circ \mathcal{R}_K)(u_h),
\qquad \Pi_K\in\{\Pi_K^\texttt{c},\Pi_K^\texttt{a}\}.
\]

\begin{proposition}[Admissibility of point updates and flux inputs]\label{prop:closed_interface}
	Consider either of the following two settings:
	\begin{itemize}
		\item[(E)] \emph{Exact smooth setting.} \revblue{Assume (A1)--(A5), (A$\Psi$1)--(A$\Psi$3), and boundary conditions of the type specified in Appendix~\ref{sec:boundary_closure};} and let $u_h\in G_h(G,\Gint)$.
		\item[(P)] \emph{Practical forward map and admissibility-preserving map setting.} \revblue{Assume (A1), (A2), (A3), and (A5), together with boundary conditions of the type specified in Appendix~\ref{sec:boundary_closure}.} Let $\mathcal T:U_{\mathrm{fwd}}\to\mathbb R^m$ be a given forward map defined on a point-admissible forward region $U_{\mathrm{fwd}}$, let $\mathcal B:\mathbb R^m\to G_{\mathrm{pt}}$ be a globally defined admissibility-preserving inverse map with convex range $G_{\mathrm{pt}}\subseteq U_{\mathrm{fwd}}$, and let $u_h\in G_h(G,G_{\mathrm{pt}})$. Suppose in addition that all current point states lie in $U_{\mathrm{fwd}}$ and that the transformed stage values produced by the point-update operator are finite.
	\end{itemize}
	Then the following hold:
	\begin{enumerate}[(i)]
		\item \emph{Point admissibility.}
		After one point stage and the admissibility-preserving mapping, the point values remain in the relevant target set:
		\[
			u_\sigma^+\in
			\begin{cases}
			\Gint, & \text{in case (E)},\\
			G_{\mathrm{pt}}, & \text{in case (P)},
			\end{cases}
			\qquad \forall \sigma\in\Sigma_h.
		\]

		\item \emph{Flux-input admissibility.}
		The prepared reconstruction $\widehat{\mathcal R}_h(u_h)$ is $G$-admissible in the sense of \eqref{eq:admissible_recon}. In particular, every trace state entering the numerical flux $\widehat{F}$ in \eqref{eq:avg_update_FE} belongs to $G$.
	\end{enumerate}
\end{proposition}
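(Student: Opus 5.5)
The two claims are essentially independent, and I would prove them separately.

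\emph{Part (i).} This is a direct corollary of \Cref{thm:A}. In case (E), the hypotheses (A$\Psi$1)--(A$\Psi$3) and the boundary closure of Appendix~\ref{sec:boundary_closure} are exactly alternative (E) of \Cref{thm:A}, with $\mathcal B=\Psi^{-1}$ and $G_{\mathrm{pt}}=\Gint$. Since $u_h\in G_h(G,\Gint)$, every $u_\sigma$ lies in $\Gint$. By \Cref{prop:pt_intrinsic_derivative}, each directional derivative \eqref{eq:pt_chain_rule} is then well defined: it involves only $D\Psi(u_\sigma)$ and the reconstructed gradient. The admissibility of $u_\sigma$ also makes the splitting matrices \eqref{eq:pt_split} well defined, because $\alpha_\sigma$ is finite. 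Hence $w_\sigma^+\in\mathbb R^m$ and $u_\sigma^+=\Psi^{-1}(w_\sigma^+)\in\Gint$.

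In case (P), the standing assumptions are that all current point states lie in $U_{\mathrm{fwd}}$ and the stage values are finite. These are precisely the hypotheses of \Cref{cor:pt_practical}, which gives $u_\sigma^+=\mathcal B(w_\sigma^+)\in G_{\mathrm{pt}}$. No CFL condition is needed here, because admissibility comes entirely from the range of the inverse map.

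\emph{Part (ii).} Here I must show that $\hat u_K=\Pi_K(u_K;\bar u_K)$ satisfies \eqref{eq:915} and \eqref{eq:921} for each $\Pi_K\in\{\Pi_K^{\texttt c},\Pi_K^{\texttt a}\}$. The plan is a Zhang--Shu-type argument on the cell-average-preserving scaling
\[
\hat u_K=\bar u_K+\theta_K\,(u_K-\bar u_K),\qquad \theta_K\in[0,1].
\]
The key input is that $\bar u_K\in G$, which holds in both cases because $u_h\in G_h(G,\cdot)$. Every control value is an affine function of $u_K$ that reproduces $\bar u_K$ when $u_K\equiv\bar u_K$. These control values are the trace values $u_K(x_{e,\nu})$ and the CAD state $\hat u_K^*$ in \eqref{eq:921}. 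For $\hat u_K^*$ this uses $\sum_e\lambda_{K,e}\sum_\nu\omega_{e,\nu}=\sum_e\lambda_{K,e}$. After scaling, each control value therefore becomes $\bar u_K+\theta_K(v-\bar u_K)$, where $v$ is the corresponding unscaled value.

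By the GQL representation (\Cref{prop:GQL} with (A2)), membership in $G$ is a family of linear inequalities $(u-u_j^*)\cdot n_j^*\ge0$. Each inequality is affine in $\theta_K$ along the segment from $\bar u_K\in G$ to $v$. The admissible set of $\theta$ is thus a closed interval containing $0$. For $\Pi_K^{\texttt c}$ I would take $\theta_K$ to be the largest value in the intersection of these intervals over all control values, equivalently the concave-constraint line search $g_j(\bar u_K+\theta(v-\bar u_K))\ge0$. Convexity of $G$ then gives admissibility of every scaled control value. In particular, every interior trace $\hat u_{K,e,\nu}$ entering $\widehat F$ lies in $G$.

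The exterior traces are handled separately. On interior edges they are traces of the neighbouring prepared reconstructions, which lie in $G$ by the same argument. On boundary edges they lie in $G$ by the boundary-closure requirement stated after (A5).

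\emph{Main obstacle.} The delicate point is the alternative limiter $\Pi_K^{\texttt a}$, where the CAD is only exact on $(\mathbb P^k)^m\subsetneq\mathbb W_K$. There $\hat u_K^*$ is not a point value of $u_K$ (\Cref{rem:aux_cad_state}), so one cannot argue through values of the reconstruction at the point $x_K^c$. My plan is to use only the affine-functional structure described above. Since $\hat u_K^*$ is defined directly from $\bar u_K$ and the traces, it is affine in $\theta_K$ with value $\bar u_K$ at $\theta_K=0$, and the same interval argument applies. The only remaining check is the degenerate case $\sum_e\lambda_{K,e}=1$, in which \eqref{eq:921} is void and only \eqref{eq:915} needs to be verified.
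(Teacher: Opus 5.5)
Your part (i) is the paper's argument: \Cref{thm:A} in case (E) and \Cref{cor:pt_practical} in case (P). For part (ii) the paper only invokes the construction of $\Pi_K$, i.e.\ \Cref{lem:PiK_props1}(ii) and \Cref{lem:PiK_props2}(ii). Your affine-in-$\theta$ argument for $\Pi_K^{\texttt{a}}$ is exactly the proof of \Cref{lem:PiK_props2}(ii), and it needs only $\bar u_K\in G$, not the slack hypothesis those lemmas carry. Your handling of exterior traces is correct and more explicit than the paper: neighbouring prepared reconstructions on interior edges, and on boundary edges the standing requirement that conservative boundary flux states lie in $G$.

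There is a gap for the classic limiter. You describe $\theta_K^{\texttt{c}}$ as the largest $\theta$ for which the scaled traces \emph{and the scaled CAD state} lie in $G$, but that is the alternative limiter. By \eqref{eq:theta_def1}, $\theta_K^{\texttt{c}}$ is computed from point values on $\mathcal X_K=\mathcal X_K^{\mathrm{edg}}\cup\mathcal X_K^{\mathrm{int}}$, that is, the edge quadrature nodes and the interior CAD nodes $x_{K,s}^\ast$. The state $\hat u_K^*$ is never checked. So for $\Pi_K^{\texttt{c}}$ your argument yields \eqref{eq:915} but not \eqref{eq:921}. The missing step is the CAD identity on the full reconstruction space. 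The classic CAD is exact on $\mathbb W_K$ and $\hat u_K^{\texttt{c}}\in\mathbb W_K$, so \eqref{eq:gcd_point} gives
\[
\hat u_K^*=\frac{\bar u_K-\sum_{e\in\mathcal E_K}\lambda_{K,e}\sum_{\nu=1}^{N_q}\omega_{e,\nu}\hat u_K^{\texttt{c}}(x_{e,\nu})}{1-\sum_{e\in\mathcal E_K}\lambda_{K,e}}
=\sum_{s=1}^{S_K}\frac{\beta_{K,s}}{\sum_{r=1}^{S_K}\beta_{K,r}}\,\hat u_K^{\texttt{c}}(x_{K,s}^\ast).
\]
This is a convex combination of values that $\theta_K^{\texttt{c}}$ forces into $G$, so $\hat u_K^*\in G$ by convexity. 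This is the paper's proof of \Cref{lem:PiK_props1}(ii).

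It also shows your ``main obstacle'' is misplaced. The alternative limiter is the easy case, because it enforces the scaled CAD state directly and never needs a point-value interpretation of $\hat u_K^*$. The classic limiter is the one that depends on CAD exactness on all of $\mathbb W_K$. By \Cref{rem:aux_cad_state}, that step would fail if only the reduced $(\mathbb P^k(K))^m$ CAD were available.
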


\begin{proof}
Assertion~(i) follows directly from \Cref{thm:A} and \Cref{cor:pt_practical}. By the definition of $\Pi_K$, the prepared reconstruction $\widehat{\mathcal R}_h(u_h)$ satisfies \eqref{eq:admissible_recon}, which establishes assertion~(ii) and completes the proof.
\end{proof}

\begin{corollary}[IDP property with a practical forward map and admissibility-preserving map pair]\label{cor:hybrid_stage_practical}
\revblue{Assume (A1), (A2), (A3), and (A5), together with boundary conditions of the type specified in Appendix~\ref{sec:boundary_closure}.} Let $\mathcal T:U_{\mathrm{fwd}}\to\mathbb R^m$ be a given forward map and let $\mathcal B:\mathbb R^m\to G_{\mathrm{pt}}$ be a globally defined admissibility-preserving inverse map with convex range $G_{\mathrm{pt}}\subseteq U_{\mathrm{fwd}}$. Suppose that the current point states satisfy $u_\sigma^n\in U_{\mathrm{fwd}}$ and that the resulting practical transformed stage values $w_\sigma^{n+1}\in\mathbb R^m$ are finite. Let $u_h^n\in G_h(G,G_{\mathrm{pt}})$, let $u_K^n=\mathcal R_K(u_h^n)$ be the associated reconstruction polynomial, and let $\hat u_K^n=\Pi_K(u_K^n;\bar u_K^n)$ satisfy \eqref{eq:admissible_recon}. If the conservative CFL condition \eqref{eq:CFL_abstract} holds, one coupled forward Euler stage with the point update $u_\sigma^{n+1}=\mathcal B(w_\sigma^{n+1})$ satisfies
\[
 u_h^{n+1}\in G_h(G,G_{\mathrm{pt}}).
\]
\end{corollary}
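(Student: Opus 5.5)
The proof splits the coupled stage $u_h^{n+1}=\mathcal L_h(u_h^n)$ into its two components, as in \eqref{eq:hybrid_operator}, and treats each with the mechanism already established for it. Membership in $G_h(G,G_{\mathrm{pt}})$ is then checked componentwise via \eqref{eq:discreteGh_general}. Both the point operator \eqref{eq:pt_rhs_vertex}--\eqref{eq:pt_midpoint_scheme} and the conservative update \eqref{eq:avg_update_FE} are evaluated on the same frozen data $u_h^n$. Neither component feeds the other within the stage, so no coupling argument is needed beyond checking that the shared inputs are admissible.

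\emph{Point values.} Setting (P) of \Cref{prop:closed_interface} applies directly. Assumptions (A1), (A2), (A3), (A5) and the boundary closure of Appendix~\ref{sec:boundary_closure} are assumed, $u_h^n\in G_h(G,G_{\mathrm{pt}})$, and $u_\sigma^n\in U_{\mathrm{fwd}}$. Since $G_{\mathrm{pt}}\subseteq U_{\mathrm{fwd}}$, the latter holds automatically, so the intrinsic directional derivatives built with $D\mathcal T(u_\sigma^n)$ are well defined by \Cref{prop:pt_intrinsic_derivative}. The stage values $w_\sigma^{n+1}$ are finite by hypothesis. Part (i) of \Cref{prop:closed_interface}, or equivalently \Cref{cor:pt_practical} and case (P) of \Cref{thm:A}, then gives $u_\sigma^{n+1}=\mathcal B(w_\sigma^{n+1})\in G_{\mathrm{pt}}$ for every $\sigma\in\Sigma_h$. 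Only the range property of $\mathcal B$ is used here, and no time-step restriction arises.

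\emph{Cell averages.} Since $\bar u_K^n\in G$ and $\hat u_K^n=\Pi_K(u_K^n;\bar u_K^n)$ satisfies \eqref{eq:admissible_recon}, part (ii) of \Cref{prop:closed_interface} shows that every interior trace, every CAD control state $\hat u_K^*$, and every exterior trace entering $\widehat F$ lies in $G$. On boundary edges, admissibility of the exterior states is supplied by the boundary closure. These are exactly the hypotheses of \Cref{thm:B}, which does not involve (A4) or the point map at all. Under \eqref{eq:CFL_abstract}, \Cref{thm:B} therefore yields $\bar u_K^{n+1}\in G$ for all $K$, including boundary cells.

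\emph{Main obstacle.} The only delicate point is to confirm that \Cref{thm:B} really is independent of the choice of point map. The reconstruction $\mathcal R_K$ now uses point values in $G_{\mathrm{pt}}$ rather than $\Gint$, so this needs checking. I would first check that $\Pi_K$ only requires $\bar u_K^n\in G$ and produces traces in $G$. If $G_{\mathrm{pt}}\not\subseteq G$, those traces may differ from the stored point values, but this is harmless: $\Pi_K$ enforces admissibility regardless, and the CFL constant $\alpha_K^n$ is evaluated on the prepared traces $\hat u^n_{K,e,\nu}$, not on $u_\sigma^n$. Combining the two components then gives $u_h^{n+1}\in G_h(G,G_{\mathrm{pt}})$.
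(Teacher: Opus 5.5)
Your proposal is correct and follows the paper's proof. The paper likewise obtains $u_\sigma^{n+1}\in G_{\mathrm{pt}}$ from part (i) of \Cref{prop:closed_interface} in setting (P), obtains $\bar u_K^{n+1}\in G$ independently from \Cref{thm:B} under \eqref{eq:CFL_abstract}, and combines the two conclusions componentwise. Your ``main obstacle'' check is harmless but unnecessary: admissibility of $\hat u_K^n$ in the sense of \eqref{eq:admissible_recon} is assumed outright in the corollary, and \Cref{thm:B} never involves the point map.
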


\begin{proof}
By \Cref{prop:closed_interface}~(i), the practical point stage maps the skeleton states into $G_{\mathrm{pt}}$, i.e., $u_\sigma^{n+1}\in G_{\mathrm{pt}}$ for all $\sigma\in\Sigma_h$. Independently, \Cref{thm:B} guarantees that $\bar u_K^{n+1}\in G$ for all $K\in\mathcal T_h$ under the conservative CFL condition \eqref{eq:CFL_abstract}. Combining the two componentwise conclusions proves that $u_h^{n+1}\in G_h(G,G_{\mathrm{pt}})$, which completes the proof.
\end{proof}

\section{Cell-average update: conditional admissibility and \emph{a priori} limiters}
\label{sec:avg_idp}

\subsection{Continuous physical fluxes and the need for an IDP flux mechanism}
\label{sec:avg:necessity}

To isolate the limitations of relying solely on continuous boundary traces, we first consider the conservative update obtained by evaluating the single-state physical flux directly from the continuous boundary traces. The corresponding continuous-physical-flux forward Euler update reads
\begin{equation}\label{eq:avg_cont_FE}
	\bar u_{K}^{\mathrm{cont}}
	=
	\bar u_K^n
	-
	\frac{\Delta t}{|K|}
	\sum_{e \in \mathcal{E}_K} |e|
	\sum_{\nu=1}^{N_q} \omega_{e,\nu}\,
	\big(F(u_{K,e,\nu}^n)\cdot n_{K,e}\big),
\end{equation}
where $u_{K,e,\nu}^n$ denote the edge trace values evaluated at $x_{e,\nu}$, and the edge quadrature points $\{x_{e,\nu}\}_{\nu=1}^{N_q}$ and quadrature weights $\{\omega_{e,\nu}>0\}_{\nu=1}^{N_q}$ are given by \eqref{eq:edge_quad}.

\begin{theorem}[Continuous-trace obstruction to a uniform CFL condition guarantee]\label{thm:necessity_discont}
	Assume that there exists for some cell $K$ a feasible CAD for $\mathbb{W}_K$ on $K$ with at least one internal weight $\beta_{K,s_0}>0$.
	Consider the scalar linear advection equation
	\[
	u_t + \bm a\cdot\nabla u = 0,\qquad \bm a\in\mathbb{R}^2\setminus\{0\},
	\]
	with the invariant domain $G=[0,1]$.
	If an internal control value satisfies $u_{K,s_0}^\ast\notin G$ while
	$\bar u_K\in G$ and all boundary quadrature values belong to $G$, then there exists
	\emph{no} fixed \emph{positive} CFL number that guarantees
	$\bar u_{K}^{\mathrm{cont}}\in G$ for the continuous-flux update
	\eqref{eq:avg_cont_FE}.
\end{theorem}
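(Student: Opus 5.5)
Since the theorem is an impossibility statement, I will prove it by constructing a family of admissible-looking data on a single cell for which the continuous-flux update \eqref{eq:avg_cont_FE} leaves $G=[0,1]$ for every fixed $\Delta t>0$. For linear advection the flux is linear, $F(u)\cdot n=(\bm a\cdot n)\,u$. This lets me use the CAD identity \eqref{eq:gcd_point} on the cell $K$ to split the update into boundary and interior parts.

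\textbf{Step 1: exact rewriting of the update.} Write $\bar u_K=\sum_e\lambda_{K,e}\sum_\nu\omega_{e,\nu}u_{K,e,\nu}+\sum_s\beta_{K,s}u^\ast_{K,s}$. Set
\[
c_{K,e}:=\frac{\Delta t\,|e|\,(\bm a\cdot n_{K,e})}{|K|\,\lambda_{K,e}}.
\]
Then I obtain
\[
\bar u_K^{\mathrm{cont}}=\sum_{e}\lambda_{K,e}\sum_\nu\omega_{e,\nu}(1-c_{K,e})\,u_{K,e,\nu}+\sum_s\beta_{K,s}\,u^\ast_{K,s}.
\]
The key observation is that the interior values $u^\ast_{K,s}$ enter with coefficients $\beta_{K,s}$ that do not depend on $\Delta t$. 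The flux can only damp the boundary part; it cannot correct the interior part.

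\textbf{Step 2: construction.} Because the map from $\mathbb W_K$ to the CAD data is not surjective, I will realize the data through the internal value rather than through an arbitrary vector of numbers. I take $\phi\in\mathbb W_K$ with $\phi\ge0$ on $\partial K$ that vanishes at all edge quadrature nodes and satisfies $\phi(x^\ast_{K,s_0})\ne0$. If such a $\phi$ is not directly available, I will use the freedom $\beta_{K,s_0}>0$ together with exactness of the CAD. One candidate, for $\mathbb P^2$ on a triangle, is a bubble-type quadratic vanishing on $\partial K$ after subtracting a suitable function.

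Then I consider data $u=\kappa+\varepsilon\psi$ with the constant $\kappa\in(0,1)$ chosen so that the boundary values stay in $[0,1]$. Here $\psi$ is chosen so that $\bar\psi_K$ has the sign opposite to $\psi(x^\ast_{K,s_0})$ and the boundary quadrature values of $\psi$ are small.

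The cleaner route, which I will try first, is to push the boundary values to the edge of $G$. I choose the boundary quadrature values all equal to $0$ on inflow and outflow edges, so that the boundary term vanishes identically for every $\Delta t$. Then $\bar u_K^{\mathrm{cont}}=\sum_s\beta_{K,s}u^\ast_{K,s}=\bar u_K$. This only shows that nothing improves, so I need a sharper configuration.

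A better configuration takes boundary values equal to $0$ on outflow edges ($\bm a\cdot n_{K,e}>0$) and $1$ on inflow edges. Then the update increases the average by a quantity proportional to $\Delta t$. If $\bar u_K$ is chosen exactly $1$ while an internal control value exceeds $1$ and the others compensate, the result is $\bar u_K^{\mathrm{cont}}>1$ for every $\Delta t>0$. Here I use that $\sum_e|e|\,\bm a\cdot n_{K,e}=0$ and $\bm a\ne0$, so both inflow and outflow edges exist.

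\textbf{Step 3: the obstruction argument.} I then argue by scaling. Given any candidate CFL number $\lambda_0>0$, I exhibit data in the hypothesis class for which $\bar u_K^{\mathrm{cont}}\notin G$ at $\Delta t=\lambda_0\,\mu_K/\max|\bm a|$. Equivalently, I take the data to depend on a parameter that drives $\bar u_K$ to the boundary of $G$ with a fixed-sign $O(\Delta t)$ push outward. Since $u^\ast_{K,s_0}\notin G$ is uncontrolled, the inward-guaranteeing convex-combination structure of \Cref{thm:B} fails: the coefficient $1$ on $\bar u_K$ cannot be split into nonnegative pieces acting on admissible states.

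\textbf{Main obstacle.} The hardest part is realizability: the boundary values, the average and the internal value must come from an actual element of $\mathbb W_K$ compatible with \eqref{eq:recon_constraints}. I expect to handle this by working with the linear functionals directly. The CAD identity only constrains $\bar u_K$ through the boundary data and the $u^\ast$ values, and the theorem hypothesizes these quantities abstractly. I will therefore argue at the level of those functionals and note that for $\mathbb P^2$ on a triangle an explicit quadratic realizes the configuration.
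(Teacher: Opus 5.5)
Your final configuration is a correct proof at the same level of abstraction as the paper's. That configuration is Step 1 together with $\bar u_K=1$, value $1$ at inflow nodes and $0$ at outflow nodes. The paper likewise treats boundary values, cell average and internal values as data constrained only by the CAD identity, and simply asserts that $\mathbb W_K$ has enough internal freedom to realize them.

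The route differs from the paper's.
\begin{itemize}
\item The paper keeps $\bar u_K=1-\beta_{K,s_0}\varepsilon$ strictly inside $(0,1)$ and puts $1$ at a single inflow node. It then shows that the largest admissible $\Delta t$ is $O(\varepsilon)$, so no fixed CFL number survives $\varepsilon\to0$. This shows the failure is not an artifact of starting on $\partial G$.
\item You place $\bar u_K$ on $\partial G$, which the hypothesis $\bar u_K\in G$ allows. This yields one fixed configuration with $\bar u_K^{\mathrm{cont}}>1$ for every $\Delta t>0$, with no limiting argument.
\item Your Step 1 rewriting makes the mechanism transparent. The internal terms carry $\Delta t$-independent weights, while inflow boundary terms acquire the factor $1-c_{K,e}>1$.
\item Once this computation is written out, the Step 3 heuristic about splitting the coefficient $1$ into nonnegative pieces is unnecessary.
\end{itemize}

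Three points need fixing.

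First, the Gauss--Lobatto nodes include vertices, and a vertex shared by an inflow and an outflow edge carries a single value. So ``$1$ on inflow edges, $0$ on outflow edges'' is inconsistent. Put $1$ only at non-vertex inflow nodes (e.g.\ midpoints), as the paper does.

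Second, make the CAD step explicit. Some boundary node value is $0$, so
\[
\sum_s\beta_{K,s}u^\ast_{K,s}=1-\sum_e\lambda_{K,e}\sum_\nu\omega_{e,\nu}u_{K,e,\nu}>\sum_s\beta_{K,s}.
\]
Hence fixing the other internal values in $G$ forces $u^\ast_{K,s_0}>1$. The internal values are determined by the CAD, not chosen freely with ``the others compensating''.

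Third, your $\mathbb P^2$ remarks are wrong as stated.
\begin{itemize}
\item No nonzero quadratic vanishes on the boundary of a triangle, so the bubble idea fails.
\item In $\mathbb P^2(K)$ the six Gauss--Lobatto boundary values determine $u_K$, and the midpoint rule gives $\bar u_K=\frac13\sum_i u_K(m_i)$. So $\bar u_K=1$ forces every midpoint value to be $1$, contradicting ``$0$ on outflow edges''. The same fact shows the paper's $\varepsilon$-family is not realizable when $\mathbb W_K=\mathbb P^2(K)$.
\item A correct quadratic takes value $1$ at the three midpoints and at each vertex opposite an outflow edge, and $0$ at the remaining vertices.
\item Then $\bar u_K=1$, and Simpson's rule on each edge plus $\sum_e|e|\,n_{K,e}=0$ give
\[
\bar u_K^{\mathrm{cont}}=1+\frac{\Delta t}{6|K|}\sum_{e:\,\bm a\cdot n_{K,e}>0}|e|\,\bm a\cdot n_{K,e}>1\qquad\text{for every }\Delta t>0.
\]
\item The centroid value is $\frac{10}{9}$ or $\frac{11}{9}$. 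For any CAD, the $\beta$-weighted internal mean exceeds $1$ by the inequality above.
\end{itemize}
With this fix, your boundary-of-$G$ route covers even the minimal space $\mathbb P^2(K)$, which the paper's construction does not.
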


\begin{proof}
The detailed proof is provided in Appendix~\ref{sec:proof_cont_flux}.
\end{proof}

\begin{remark}\label{rem:necessity} 
	\Cref{thm:necessity_discont} highlights the fundamental limitation of the unmodified continuous physical flux in \eqref{eq:avg_cont_FE}. The failure does not stem from the continuity of the skeleton point values per se, but from attempting to deduce a conservative IDP update solely from admissible boundary traces while the interior CAD contribution remains unprotected. Consequently, the proposed framework first enforces the relevant CAD control states to belong to $G$ before evaluating the conservative update via an IDP numerical flux. Since this pre-flux limiting is performed cellwise, the admissible traces supplied from the two sides of an interface are, in general, distinct.
\end{remark}

\begin{remark}[Relation with convex limiting and flux blending]\label{rem:convex_limiting_trace}
	\Cref{thm:necessity_discont} addresses the direct evaluation of admissible continuous traces via a single-state physical flux, rather than through a limited or blended numerical flux. In this setting, the boundary traces alone fail to guarantee a conservative IDP update under any uniform CFL condition. 
     Within the CAD/GQL framework adopted herein, once admissibility of the internal CAD control states is enforced, the boundary flux contribution must still be evaluated through an IDP numerical-flux mechanism. 
	In the present work, this mechanism is realized explicitly: the cellwise map $\Pi_K$ prepares admissible trace inputs prior to flux evaluation, such that the two states $\hat u_{K,e,\nu}^n$ and $\hat u_{K_e,e,\nu}^n$ entering $\widehat F$ in \eqref{eq:avg_update_FE} are, in general, distinct. Convex limiting or convex blending offers an alternative realization. Even if the high-order trace variable is continuous, an active-flux-type limiter can be schematically expressed as
	\[
		\widehat F^{\rm lim}
		=
		\widehat F^{\rm IDP}
		+
		\theta\bigl(\widehat F^{\rm high}-\widehat F^{\rm IDP}\bigr),
		\qquad 0\le \theta\le 1,
	\]
	where $\widehat F^{\rm high}=F(u^{\rm cont})\cdot n$ denotes the continuous high-order physical flux and $\widehat F^{\rm IDP}$ represents a low-order or Riemann-type IDP numerical flux. When $\theta<1$, the conservative flux ceases to be a purely continuous, single-state physical flux; it incorporates the IDP numerical-flux contribution and thereby acts as a genuine interface numerical flux. Consequently, convex limiting or flux blending does not contradict the continuous-trace obstruction; instead, it introduces the requisite numerical dissipation implicitly rather than through explicitly discontinuous trace states. A similar mechanism is observed in the one-dimensional PAMPA analysis; see \cite[Section~4.2]{abgrall2025novel}.
\end{remark}


\subsection{A priori limiters}
\label{sec:avg:limiter}

For each cell $K\in\mathcal{T}_h$, we define
\[
\bar u_K^n:=\frac{1}{|K|}\int_K u_K^n\,\textrm{d}x,
\qquad
(\Pi_h u_h^n)|_K := \Pi_K u_K^n,
\qquad
\Pi_K\in\{\Pi_K^\texttt{c},\Pi_K^\texttt{a}\}.
\]

The \emph{a priori} limiters employed herein build upon the Zhang--Shu local scaling approach \cite{zhang2010maximum,zhang2011b,Zhang2017PPDGNS}, originally developed in the context of discontinuous Galerkin and finite volume methods.
In the present hybrid framework, however, the local reconstruction space \(\mathbb W_K\) can represent an enriched Active Flux or PAMPA approximation space satisfying
\[
(\mathbb P^k(K))^m \subseteq \mathbb W_K .
\]
This structural enrichment distinguishes the present accuracy analysis from the classical results in \cite{zhang2010maximum,Zhang2017PPDGNS}, which were established for standard \(\mathbb P^k\)-based discontinuous Galerkin schemes for scalar conservation laws.

\subsubsection{Classic limiter}\label{sec:2218}

Let $\{x^*_{K,s}\}_s$, $\{\lambda_{K,e}\}_e$, and $\{\beta_{K,s}\}_s$ be the CAD nodes and weights for $\mathbb{W}_K$ on $K$, and let $\mathcal{X}_K$ denote the union of boundary flux quadrature points and interior control points:
\[
\mathcal{X}_{K} := \mathcal{X}^{\mathrm{edg}}_{K} \cup \mathcal{X}^{\mathrm{int}}_{K}, \qquad
\mathcal{X}^{\mathrm{edg}}_{K} := \{x_{e,\nu}: e\in\mathcal{E}_K,\ 1\le \nu\le N_q\},\qquad
\mathcal{X}^{\mathrm{int}}_{K} := \{x_{K,s}^\ast:1\le s\le S_K\}.
\]
We define the classic limited reconstruction by
\begin{equation}\label{eq:PiK_def1}
	\hat{u}^\texttt{c}_K:=(\Pi_K^\texttt{c} u_K^n):=\bar u_K^n+\theta^\texttt{c}_K\big(u_K^n-\bar u_K^n\big),\qquad x\in K,
\end{equation}
with
\begin{equation}\label{eq:theta_def1}
	\theta_K^\texttt{c}
	:=
	\sup\Big\{
	\theta\in[0,1]:
	\bar u_K^n + \theta\left[u_K^n(x)-\bar u_K^n\right]\in G, \ \forall x\in\mathcal{X}_K
	\Big\},
\end{equation}
where $u_K^n= (\mathcal{R}_K u_h^n)|_K \in \mathbb{W}_K \supseteq (\mathbb P^k(K))^m$ satisfies \eqref{eq:recon_constraints}.

\begin{lemma}\label{lem:PiK_props1}
	Assume that $\bar u_K^n\in \Gint$ and there exists a slack parameter $s_0>0$ such that
	\[
	g_j(\bar{u}^n_K)\ge s_0 \qquad \forall j\in\mathcal{J},
	\] 
	Then the classic limiter in \eqref{eq:PiK_def1}--\eqref{eq:theta_def1} satisfies the following properties:
	\begin{itemize}
		\item(i) (\emph{Conservation}) $\frac1{|K|}\int_K \hat{u}^\texttt{c}_K \,\textrm{d}x=\bar u_K^n$.
		\item(ii) (\emph{$G$-admissibility}) $\hat{u}^\texttt{c}_K$ is $G$-admissible.
       \item(iii) (\emph{Accuracy}) Assume that the exact solution $u(\cdot,t^n)$ is smooth and takes values in $\Gint$, and $\|u^n_K - u(\cdot,t^n)\|_{L^\infty(K)} \to 0$ as $h \to 0$. Then for sufficiently small $h$, 
        \[
            \|\hat{u}^\texttt{c}_K - u^n_K\|_{L^\infty(K)} \le \frac{C}{s_0} \|u^n_K-u(\cdot,t^n)\|_{L^\infty(K)}.
        \]
	\end{itemize}
\end{lemma}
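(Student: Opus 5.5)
The plan is to verify the three properties in order, using only the reconstruction constraint \eqref{eq:recon_avg}, the CAD identity \eqref{eq:gcd_point} for $\mathbb V_K=\mathbb W_K$, the convexity of $G$ from (A2), and the concavity of the constraints $g_j$.

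\emph{(i) Conservation.} The map $\Pi_K^\texttt{c}$ is affine in $u_K^n$ for fixed $\theta$. Averaging \eqref{eq:PiK_def1} over $K$ and using \eqref{eq:recon_avg} gives
$\frac1{|K|}\int_K\hat u_K^\texttt{c}\,\mathrm dx=\bar u_K^n+\theta_K^\texttt{c}(\bar u_K^n-\bar u_K^n)=\bar u_K^n$.

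\emph{(ii) $G$-admissibility.}
\begin{itemize}
\item The admissible set in \eqref{eq:theta_def1} is nonempty, since $\theta=0$ gives $\bar u_K^n\in\Gint\subset G$.
\item It is an interval: for fixed $x$, the set of admissible $\theta$ is convex because $G$ is convex.
\item It is closed, because $G$ is closed and $\mathcal X_K$ is finite. Hence the supremum is attained.
\item Consequently $\hat u_K^\texttt{c}(x)\in G$ for every $x\in\mathcal X_K$. In particular this gives \eqref{eq:915} at all edge quadrature nodes.
\item For \eqref{eq:921}, note that $\hat u_K^\texttt{c}\in\mathbb W_K$ and has cell average $\bar u_K^n$ by (i). The CAD \eqref{eq:gcd_point}, exact on $\mathbb W_K$, then yields
\[
\hat u_K^{*}=\frac{\sum_{s}\beta_{K,s}\,\hat u_K^\texttt{c}(x_{K,s}^\ast)}{\sum_s\beta_{K,s}},
\]
using $1-\sum_e\lambda_{K,e}=\sum_s\beta_{K,s}>0$.
\item This is a convex combination of values at the interior control points, each of which lies in $G$. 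Convexity of $G$ therefore gives $\hat u_K^*\in G$.
\end{itemize}

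\emph{(iii) Accuracy.} Write $\theta=\theta_K^\texttt{c}$, so that $\hat u_K^\texttt{c}-u_K^n=(\theta-1)(u_K^n-\bar u_K^n)$. It therefore suffices to bound $1-\theta$ by $\frac{C}{s_0}\|u_K^n-u\|_{L^\infty(K)}$, and to note that $\|u_K^n-\bar u_K^n\|_{L^\infty(K)}$ is uniformly bounded. The latter holds because $u_K^n\to u$ uniformly and $u$ is bounded.

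\begin{itemize}
\item If $\theta=1$ there is nothing to prove, so assume $\theta<1$.
\item Because the supremum is attained and $\theta<1$, there exist $x\in\mathcal X_K$ and $j\in\mathcal J$ with $g_j(\bar u_K^n+\theta(u_K^n(x)-\bar u_K^n))=0$. This follows by continuity of $g_j$: otherwise $\theta$ could be increased slightly.
\item Concavity of $g_j$ along the segment from $\bar u_K^n$ to $u_K^n(x)$ gives
\[
0\ \ge\ (1-\theta)\,g_j(\bar u_K^n)+\theta\,g_j(u_K^n(x))\ \ge\ (1-\theta)s_0+\theta\,g_j(u_K^n(x)).
\]
\item Hence $g_j(u_K^n(x))<0$, and $1-\theta\le -g_j(u_K^n(x))/s_0$.
\item Since $g_j(u(x,t^n))>0$, we get $-g_j(u_K^n(x))\le g_j(u(x,t^n))-g_j(u_K^n(x))\le L_j\|u_K^n-u\|_{L^\infty(K)}$.
\end{itemize}

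The delicate step is justifying this Lipschitz bound. The range of $u(\cdot,t^n)$ lies in a compact subset of $\Gint\subset U_j$. Uniform convergence places $u_K^n(\mathcal X_K)$, for small $h$, inside a fixed compact neighborhood of that range still contained in $U_j$. On that neighborhood $g_j$ is $C^1$, hence Lipschitz with a constant independent of $h$ (working on convex neighborhoods, e.g. small balls around the range).

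Combining the estimates gives $\|\hat u_K^\texttt{c}-u_K^n\|_{L^\infty(K)}\le \frac{C}{s_0}\|u_K^n-u(\cdot,t^n)\|_{L^\infty(K)}$, where $C=\max_j L_j\cdot\sup_h\|u_K^n-\bar u_K^n\|_{L^\infty}$.
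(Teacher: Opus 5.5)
Your proof is correct and follows the paper's argument. Parts (i) and (ii) are identical; you additionally justify that the supremum defining $\theta_K^{\texttt{c}}$ is attained, which the paper uses tacitly. Part (iii) rests on the same concavity-plus-slack bound $1-\theta_K^{\texttt{c}}\le \max_{j,x}\bigl(-g_j(u_K^n(x))\bigr)_+/s_0$, followed by the Lipschitz estimate. The only difference is that you obtain this bound from an active constraint at the attained maximizer, whereas the paper exhibits the feasible lower bound $\theta^*=\min_{j,x}\theta_{j,x}$ built from the concave minorant $(1-\theta)s_0+\theta\,g_j(u_K^n(x))$.
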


\begin{proof}
	(i) Conservation follows from linearity and $\int_K(u_K^n-\bar u_K^n)\,\textrm{d}x=0$.
	
	(ii) The definition of $\theta_K^\texttt{c}$ in \eqref{eq:theta_def1} ensures that $\hat u_K^\texttt{c}(x_{e,\nu}) \in G$ for all $e\in \mathcal{E}_K$ and $1\le\nu\le N_q$, which implies that condition \eqref{eq:915} holds.
    When $\sum_{e}\lambda_{K,e}=1$, condition \eqref{eq:921} holds vacuously; thus, we focus on the case $\sum_{e}\lambda_{K,e}<1$. Since the CAD is feasible for $\mathbb W_K$ and $\hat u_K^\texttt{c}\in\mathbb W_K$, the convex decomposition identity \eqref{eq:gcd_point} applies to $\hat u_K^\texttt{c}$:
	\[
	\bar u^n_K = \frac1{|K|}\int_K \hat u_K^\texttt{c}(x)\,\textrm{d}x
	=
	\sum_{e\in\mathcal E_K}\lambda_{K,e}\sum_{\nu=1}^{N_q}\omega_{e,\nu}\hat u_K^\texttt{c}(x_{e,\nu})
	+\sum_{s=1}^{S_K}\beta_{K,s}\hat u_K^\texttt{c}(x_{K,s}^\ast).
	\]
	Therefore,
	\[
	\bar u_K^n-\sum_{e\in\mathcal E_K}\lambda_{K,e}\sum_{\nu=1}^{N_q}\omega_{e,\nu}\hat u_K^\texttt{c}(x_{e,\nu})
	=
	\sum_{s=1}^{S_K}\beta_{K,s}\hat u_K^\texttt{c}(x_{K,s}^\ast).
	\]
	Dividing by $1-\sum_{e\in\mathcal E_K}\lambda_{K,e}>0$ and recalling that $\sum_{e\in\mathcal{E}_K}\lambda_{K,e}+\sum_{s=1}^{S_K}\beta_{K,s}=1$, we obtain
	\[
	\hat{u}_K^*  = 
	\frac{\bar u_K^n-\sum_{e\in\mathcal E_K}\lambda_{K,e}\sum_{\nu=1}^{N_q}\omega_{e,\nu}\hat u_K^\texttt{c}(x_{e,\nu})}{1-\sum_{e\in\mathcal E_K}\lambda_{K,e}}
	=
	\sum_{s=1}^{S_K}\frac{\beta_{K,s}\,\hat u_K^\texttt{c}(x_{K,s}^\ast)}{1-\sum_{e\in\mathcal E_K}\lambda_{K,e}} \in G.
	\]
	Thus, $\hat{u}_K^*$ is a convex combination of the control values $\{\hat u_K^\texttt{c}(x_{K,s}^\ast)\} \subset G$, and therefore belongs to $G$, which verifies \eqref{eq:921}.
	
	The proof of~(iii) is provided in Appendix~\ref{sec:PiK_classic_accuracy}.
\end{proof}

\begin{lemma}[Local Lipschitz continuity]\label{lem:PiK_Lipschitz1}
	Assume that the invariant domain $G$ is defined by finitely many concave $C^1$ constraints
	$\{g_j\}_{j\in\mathcal{J}}$ as in \eqref{eq:G_def_g}. For a given cell $K$, and consider two reconstructions
	$u_K,v_K\in\mathbb{W}_K  \supseteq (\mathbb P^k(K))^m$ sharing the same control set $\mathcal{X}_K$.
	Let $a:=\bar u_K$, $a':=\bar v_K$ and $b_x:=u_K(x)$, $b'_x:=v_K(x)$ for $x\in\mathcal{X}_K$.
	Assume that $a,a'\in G$ and that there exists a slack parameter $s_0>0$ such that
	\begin{equation}\label{eq:PiK_slack_assumption}
		g_j(a)\ge s_0,\qquad g_j(a')\ge s_0 \qquad \forall j\in\mathcal{J}.
                 	\end{equation}
	Assume further that the values $a,a'$ and $\{b_x,b'_x\}_{x\in\mathcal{X}_K}$ remain in a bounded convex set
	$\mathcal{U}\subset\mathbb{R}^m$ on which each $g_j$ is Lipschitz continuous with constant $L_{g,j}$.
	Then the limited values at the control points satisfy the Lipschitz estimate
	\begin{equation}\label{eq:PiK_Lipschitz_est}
		\max_{x\in\mathcal{X}_K}\bigl\|(\Pi^\texttt{c}_K u_K)(x)-(\Pi^\texttt{c}_K v_K)(x)\bigr\|_2
		\le
		C_{\Pi}\Bigl(
		\|a-a'\|_2+\max_{x\in\mathcal{X}_K}\|b_x-b'_x\|_2
		\Bigr),
	\end{equation}
	where one may take $C_{\Pi}=1+2M\,\max_{j\in\mathcal{J}}\frac{L_{g,j}}{s_0}$, with
	$$M:=\max\{\|a\|_2,\|a'\|_2,\max_{x\in\mathcal{X}_K}\|b_x\|_2,\max_{x\in\mathcal{X}_K}\|b'_x\|_2\}.$$
	In particular, on sets where the slack condition \eqref{eq:PiK_slack_assumption} holds uniformly,
	the trace map $u_h\mapsto \hat u_{K,e,\nu}=(\Pi^\texttt{c}_K u_K)(x_{e,\nu})$ is locally Lipschitz continuous.
\end{lemma}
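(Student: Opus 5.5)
The plan is to write the limited control values explicitly, isolate the dependence on the scaling parameters $\theta:=\theta_K^\texttt{c}(u_K)$ and $\theta':=\theta_K^\texttt{c}(v_K)$, and then prove a Lipschitz bound for $\theta$ as a function of the data $(a,\{b_x\})$. Set $\Delta:=\|a-a'\|_2+\max_{x\in\mathcal X_K}\|b_x-b'_x\|_2$. For each $x\in\mathcal X_K$ we have $(\Pi^\texttt{c}_K u_K)(x)=a+\theta(b_x-a)$, so
\[
(\Pi^\texttt{c}_K u_K)(x)-(\Pi^\texttt{c}_K v_K)(x)=(1-\theta)(a-a')+\theta(b_x-b'_x)+(\theta-\theta')(b'_x-a').
\]
The first two terms are bounded by $\Delta$ because $\theta\in[0,1]$. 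The last term is bounded by $2M|\theta-\theta'|$. Hence \eqref{eq:PiK_Lipschitz_est} with the stated $C_\Pi$ follows once we show
\[
|\theta-\theta'|\le \max_j \frac{L_{g,j}}{s_0}\,\Delta .
\]

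To get this bound, I will decompose $\theta=\min_{x\in\mathcal X_K,\,j\in\mathcal J}\theta_{x,j}$, where
\[
\theta_{x,j}:=\sup\{t\in[0,1]:\ g_j(a+s(b_x-a))\ge0\ \ \forall s\in[0,t]\}.
\]
Since $g_j$ is concave and $g_j(a)\ge s_0>0$, the set $\{s\in[0,1]: g_j(a+s(b_x-a))\ge 0\}$ is an interval containing $0$. This shows that the decomposition agrees with \eqref{eq:theta_def1}. A minimum of finitely many functions with a common Lipschitz bound inherits that bound. It therefore suffices to treat one fixed pair $(x,j)$, and by symmetry to prove $\theta'_{x,j}\ge\theta_{x,j}-(L_{g,j}/s_0)\Delta$.

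The key step is a concavity argument along the segment. Let $t:=\theta_{x,j}$; we may assume $t>0$. Closedness gives $g_j(a+t(b_x-a))\ge 0$, and this also covers the case $t=1$. For $s\in[0,t]$, concavity of $g_j$ gives
\[
g_j(a+s(b_x-a))\ge (1-s/t)\,s_0 .
\]
The perturbed point satisfies
\[
a'+s(b'_x-a')-\bigl(a+s(b_x-a)\bigr)=(1-s)(a'-a)+s(b'_x-b_x),
\]
whose norm is at most $\Delta$. Both points lie in the convex set $\mathcal U$, so the Lipschitz continuity of $g_j$ on $\mathcal U$ yields
\[
g_j(a'+s(b'_x-a'))\ge (1-s/t)s_0-L_{g,j}\Delta .
\]
If we choose $s=t(1-L_{g,j}\Delta/s_0)$ (nonnegative, otherwise the claim is trivial), this lower bound is nonnegative. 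The interval property for the primed data, which uses $g_j(a')\ge s_0$, then gives $\theta'_{x,j}\ge s\ge t-L_{g,j}\Delta/s_0$, using $t\le1$.

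The main obstacle I expect is exactly this step. One must ensure that the feasible parameter set is an interval, so that the supremum is controlled from below by a single feasible point, and that every point used in the argument stays inside $\mathcal U$. The interval property comes from concavity together with $g_j>0$ at the cell average. Membership in $\mathcal U$ comes from the convexity of $\mathcal U$ and the assumption that all $a,a',b_x,b'_x$ lie in it.

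Once the bound on $|\theta-\theta'|$ is in hand, the final assertion follows directly. The trace map $u_h\mapsto(\Pi^\texttt{c}_K u_K)(x_{e,\nu})$ is the composition of the linear, hence Lipschitz, maps $u_h\mapsto(\bar u_K,\{u_K(x)\}_{x\in\mathcal X_K})$ with the estimate just proved. This gives local Lipschitz continuity wherever the slack condition holds uniformly.
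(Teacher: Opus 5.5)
Your proof is correct, and its outer structure matches the paper's. You use the same three-term identity $(1-\theta)(a-a')+\theta(b_x-b'_x)+(\theta-\theta')(b'_x-a')$, the same reduction of $\theta_K^{\texttt{c}}$ to a minimum of per-constraint, per-point parameters, and the same per-pair constant $L_{g,j}/s_0$, which gives exactly $C_\Pi$.

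\textbf{Where you differ.} The difference is in the key per-pair estimate.
\begin{itemize}
\item The paper first proves a two-sided concave-root perturbation inequality, $|\theta^\ast-s|\le |h(s)|/h(0)$ for every $s\in[0,1]$, using the secant-slope inequality on the branch $s\ge\theta^\ast$. It then splits into three cases: both constraints inactive, both active, and mixed. In the active case it evaluates $h$ at the primed root. In the mixed case it uses $0=h(\theta)\ge(1-\theta)s_0+\theta g_j(b_x)$.
\item You prove only the one-sided bound $\theta'_{x,j}\ge\theta_{x,j}-(L_{g,j}/s_0)\Delta$, and then symmetrize. You combine the concavity lower bound $(1-s/t)s_0$ on $[0,t]$, a Lipschitz transfer to the matching point of the primed segment, and the single choice $s=t(1-L_{g,j}\Delta/s_0)$.
\end{itemize}

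\textbf{What each route buys.} Yours handles all activity patterns, including $t=1$, uniformly, with no case split. It also only invokes concavity on the part of each segment where the $j$-th constraint holds. The paper's secant branch and mixed case use concavity across the infeasible part as well. In return, the paper's root inequality is a reusable quantitative tool, and it is cited again in the proof for the alternative limiter.

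\textbf{A small caveat.} Your last paragraph calls $u_h\mapsto(\bar u_K,\{u_K(x)\}_{x\in\mathcal X_K})$ linear. The paper never asserts that $\mathcal R_K$ is linear; it only imposes the compatibility constraints. So the final clause actually requires this map to be (locally) Lipschitz. That holds for the linear reconstructions used in practice, and the paper relies on the same point tacitly without proving the clause.
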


The proof of \Cref{lem:PiK_Lipschitz1} is provided in Appendix~\ref{sec:PiK_Lipschitz1}.

\subsubsection{Alternative limiter}\label{sec:2226}

Let $\{x_{e,\nu}\}_{e,\nu}$, $\{x_{K,s}^\ast\}_s$, $\{\lambda_{K,e}\}_e$, and $\{\beta_{K,s}\}_s$ denote the CAD nodes and weights associated with $(\mathbb{P}^{k}(K))^m$ on $K$.

We define the alternative limited reconstruction by
\begin{equation}\label{eq:PiK_def2}
	\hat{u}^\texttt{a}_K:=(\Pi_K^\texttt{a} u_K^n)=\bar u_K^n+\theta^\texttt{a}_K\big(u_K^n-\bar u_K^n\big),\qquad x\in K,
\end{equation}
with
\begin{equation}\label{eq:theta_def2}
\theta_K^\texttt{a}:=
\begin{cases}
\sup\Bigl\{\theta\in[0,1]:\ \bar u_K^n + \theta\bigl(u_K^n(x_{e,\nu})-\bar u_K^n\bigr) \in G\ \forall e\in\mathcal E_K,\ \forall \nu, \\
\hspace{7.2em}\text{and } \bar u_K^n + \theta\bigl(u_K^*-\bar u_K^n\bigr)\in G\Bigr\}, & \text{if } \sum_{e\in\mathcal E_K}\lambda_{K,e}<1,\\[0.4em]
\sup\Bigl\{\theta\in[0,1]:\ \bar u_K^n + \theta\bigl(u_K^n(x_{e,\nu})-\bar u_K^n\bigr)\in G\ \forall e\in\mathcal E_K,\ \forall \nu\Bigr\}, & \text{if } \sum_{e\in\mathcal E_K}\lambda_{K,e}=1.
\end{cases}
\end{equation}
where
\begin{equation}\label{eq:920}
	u_K^*:= \frac{\bar{u}_K^n-\sum_{e\in\mathcal{E}_K}\lambda_{K,e}\bar{u}^n_{K,e}}{1-\sum_{e\in\mathcal{E}_K} \lambda_{K,e}}.
\end{equation}
Here, $\bar{u}_{K,e}^n:=\frac{1}{|e|}\int_e {u}^n_K \, \textrm{d}s$ denotes the average of ${u}^n_K$ over the edge $e\in\mathcal{E}_K$ of $K$.
Moreover, $u_K^n$ belongs to the (possibly enriched) polynomial reconstruction space $\mathbb{W}_K$, where $u_K^n = (\mathcal{R}_K u_h^n)|_K$ satisfies \eqref{eq:recon_constraints}.
By the edge quadrature exactness condition in Assumption~(A3), it follows that $\bar{u}_{K,e}^n = \sum_{\nu=1}^{N_q} \omega_{e,\nu} u_K^n(x_{e,\nu}).$

\begin{lemma}\label{lem:PiK_props2}
    Let $u_K^n \in \mathbb{W}_K \supseteq (\mathbb P^k(K))^m$.
    Assume that $\bar u_K^n\in \Gint$ and that there exists a slack parameter $s_0>0$ such that
	\[
	g_j(\bar{u}^n_K)\ge s_0 \qquad \forall j\in\mathcal{J},
	\] 
	Then the alternative limiter in \eqref{eq:PiK_def2}--\eqref{eq:theta_def2} satisfies the following properties:
	\begin{itemize}
		\item(i) (\emph{Conservation}) $\frac1{|K|}\int_K \hat{u}^\texttt{a}_K \,\textrm{d}x=\bar u_K^n$.
		\item(ii) (\emph{$G$-admissibility}) $\hat{u}^\texttt{a}_K$ is $G$-admissible.
        \item(iii) (\emph{Accuracy}) Assume that the exact solution $u(\cdot,t^n)$ is smooth and takes values within $\Gint$, and $\|u^n_K - u(\cdot,t^n)\|_{L^\infty(K)} \to 0$ as $h \to 0$. Then for sufficiently small $h$, 
        \[
		\|\hat{u}^\texttt{a}_K - u^n_K\|_{L^\infty(K)} \le \frac{C}{s_0} \left(\|u^n_K-u(\cdot,t^n)\|_{L^\infty(K)}+C_q h^{k+1}\|u(\cdot,t^n)\|_{W^{k+1,\infty}(K)}\right),
		\]
        where $k+1$ is the formal spatial accuracy order, and the constants $C$ and $C_q$ are independent of $h$ and $K$ across a shape-regular mesh family, provided that either $\beta_K = 0$ or $\beta_K \ge \beta_0 > 0$. Here,
        $
            \beta_K:=1-\sum_{e\in\mathcal E_K}\lambda_{K,e}\in[0,1].
        $
	\end{itemize}
\end{lemma}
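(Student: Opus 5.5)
Parts (i) and (ii) follow the pattern of \Cref{lem:PiK_props1}; part (iii) needs more care because $u_K^*$ is not a point value of $u_K^n$.

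\textbf{Conservation (i).} The limiter \eqref{eq:PiK_def2} is an affine scaling about $\bar u_K^n$. Since $\int_K(u_K^n-\bar u_K^n)\,\mathrm dx=0$, conservation holds for every $\theta^\texttt{a}_K$.

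\textbf{Admissibility (ii).} I first check that the admissible $\theta$-set in \eqref{eq:theta_def2} is a closed interval containing $0$. Each condition has the form $\bar u_K^n+\theta(b-\bar u_K^n)\in G$. The set $G$ is convex and closed, and $\bar u_K^n\in G$. So each condition defines a closed interval $[0,\theta_b]$, and the supremum is attained. Hence all edge-node values of $\hat u^\texttt{a}_K$ lie in $G$, which gives \eqref{eq:915}.

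For \eqref{eq:921}, assume $\beta_K>0$. I use linearity of the map $\phi\mapsto(\bar\phi_K-\sum_e\lambda_{K,e}\sum_\nu\omega_{e,\nu}\phi(x_{e,\nu}))/\beta_K$, together with the fact that the limited and unlimited functions share the cell average. This gives
\[
\hat u_K^*=\bar u_K^n+\theta^\texttt{a}_K\bigl(u_K^*-\bar u_K^n\bigr),
\]
where I use that the constant $\bar u_K^n$ is reproduced because $\sum_e\lambda_{K,e}+\beta_K=1$. This state lies in $G$ by the definition of $\theta^\texttt{a}_K$. Note that no CAD exactness on $\mathbb W_K$ is needed here, only edge-quadrature exactness to identify $\bar u^n_{K,e}$.

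\textbf{Accuracy (iii).} I write $\|\hat u^\texttt{a}_K-u^n_K\|_{L^\infty}=(1-\theta^\texttt{a}_K)\|u^n_K-\bar u^n_K\|_{L^\infty}$. It then suffices to bound $1-\theta^\texttt{a}_K$ by (deficit)$/s_0$, with the deficit measured against the distance from $u$. I mimic the classic-limiter argument of Appendix~\ref{sec:PiK_classic_accuracy}.

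For a control value $b$ with $b\notin G$, the active constraint $g_j$ satisfies $g_j(\bar u_K^n)\ge s_0$ and $g_j(b)<0$. Concavity gives $g_j(\bar u+\theta(b-\bar u))\ge(1-\theta)s_0+\theta g_j(b)$, so
\[
1-\theta_b\le\frac{-g_j(b)}{s_0-g_j(b)}\le\frac{L_g\,\operatorname{dist}(b,G)}{s_0}.
\]
The smooth exact solution takes values in $\Gint$, so $\operatorname{dist}(b,G)\le|b-u(x)|$ for any $x$. For edge nodes this yields $\operatorname{dist}\le\|u^n_K-u\|_{L^\infty}$.

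\textbf{Main obstacle: the auxiliary state.} For $u_K^*$ I compare with the same functional applied to the exact solution $u$. Let $\pi u\in\mathbb P^k$ be a Taylor polynomial. Because the CAD is exact on $\mathbb P^k$, the functional applied to $\pi u$ equals $\sum_s\frac{\beta_{K,s}}{\beta_K}\pi u(x^*_{K,s})$, a convex combination of near-exact values. Replacing $\pi u$ by $u$ costs $C_q h^{k+1}\|u\|_{W^{k+1,\infty}}$. Replacing $u$ by $u_K^n$ costs $\|u^n_K-u\|_{L^\infty}$, amplified by $(1+\sum_e\lambda_{K,e})/\beta_K\le 2/\beta_0$; the condition $\beta_K\ge\beta_0$ is used exactly here, while the case $\beta_K=0$ has no such condition. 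The convex combination of exact values lies in a compact subset of $\Gint$, hence in $G$ (convexity). So $\operatorname{dist}(u_K^*,G)$ is bounded by the bracketed quantity in (iii).

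Finally, I take the minimum over conditions and multiply by $\|u^n_K-\bar u^n_K\|_{L^\infty}\le C$, which is bounded for small $h$ since $u^n_K\to u$. This yields the claimed estimate with constants uniform over the shape-regular family.
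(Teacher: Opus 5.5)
Your proposal is correct and follows essentially the same route as the paper. You use the affine scaling for (i), and linearity of the residual functional with edge-quadrature exactness for (ii). For (iii) you combine the concavity/slack bound $1-\theta\le (-g_j)_+/s_0$ with a $\mathbb P^k$ approximant, on which the CAD turns the auxiliary functional into the convex combination $\sum_s\eta_{K,s}(\cdot)(x^*_{K,s})$, and you pay the $2/\beta_0$ amplification. The only differences are cosmetic: you measure deficits through $\operatorname{dist}(\cdot,G)$ and compare $u_K^*$ with a convex combination of exact values using convexity of $G$, where the paper compares with $q_K^*$ and uses concavity of $g_j$. Also, where you speak of ``the active constraint,'' you should take the maximum over all $j\in\mathcal J$.
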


\begin{proof}
     Let
	\[
	E_K:=\|u_K^n-u(\cdot,t^n)\|_{L^\infty(K)}.
	\]
	
	Property~(i) follows from linearity and $\int_K (u_K^n-\bar u_K^n)\,\textrm{d}x=0$.
	
	For Property~(ii), condition \eqref{eq:915} follows directly from the definition of $\theta_K^\texttt{a}$ in \eqref{eq:theta_def2}. 
    When $\sum_{e}\lambda_{K,e}=1$, condition \eqref{eq:921} holds vacuously. 
    When $\sum_{e}\lambda_{K,e}<1$, we have
    \[
        \bar{\hat u}^{\texttt{a}}_{K,e} := \frac{1}{|e|} \int_e \hat u_K^\texttt{a} \, \textrm{d}s = \bar{u}_K^n + \theta_K^\texttt{a} \left(\frac{1}{|e|} \int_e u_K^n \, \textrm{d}s -\bar{u}_K^n\right) = \bar{u}_K^n + \theta_K^\texttt{a} (\bar{u}^n_{K,e}-\bar{u}_K^n).
    \]
    Since the limited reconstruction \(\hat u_K^\texttt{a}\) also belongs to \(\mathbb W_K\), Assumption~(A3) yields
    \[
        \bar{\hat u}^{\texttt a}_{K,e}
        =
        \sum_{\nu=1}^{N_q}\omega_{e,\nu}
        \hat u_K^\texttt a(x_{e,\nu}).
    \]
    Hence, 
    \[
        u_K^{\texttt{a},*}:= \frac{\bar{u}_K^n-\sum_{e\in\mathcal{E}_K}\lambda_{K,e}\bar{\hat u}^{\texttt{a}}_{K,e}}{1-\sum_{e\in\mathcal{E}_K} \lambda_{K,e}} = \bar{u}_K^n + \theta_K^\texttt{a} (u_K^*-\bar{u}_K^n) \in G,
    \]
    and therefore condition \eqref{eq:921} is satisfied.
	
	For Property~(iii), since the exact solution $u(\cdot,t^n)$ is smooth and satisfies $u(\cdot,t^n)\in G$, for sufficiently small $h$ all states
	\[
	\bar u_K^n,\ \{u_K^n(x_{e,\nu})\}_{e,\nu},\ \text{and (if $\beta_K>0$) }u_K^*
	\]
	remain in a bounded convex set $\mathcal U\subset\mathbb R^m$ on which each $g_j$ is of class $C^1$.
	Hence, each $g_j$ is Lipschitz continuous on $\mathcal U$; let $L_{g,j}$ denote the Lipschitz constant of $g_j$ on $\mathcal U$ and set
	\[
	L_g:=\max_{j\in\mathcal J}L_{g,j}.
	\]
    Let
    \begin{equation*}
        \mathcal Y_K:=
        \begin{cases}
            \{u_K^n(x_{e,\nu}):e\in\mathcal E_K,\ 1\le \nu\le N_q\}\cup\{u_K^*\}, & \beta_K>0, \\
            \{u_K^n(x_{e,\nu}):e\in\mathcal E_K,\ 1\le \nu\le N_q\}, & \beta_K=0.
        \end{cases}
    \end{equation*}
    For each $z \in \mathcal{Y}_K$ and $j \in \mathcal{J}$, we introduce the scaling parameter
    $$\theta_{j,z} := \sup\{\theta \in [0, 1] : (1 - \theta)s_0 + \theta g_j(z) \ge 0\}.$$
    A direct computation yields $\theta_{j,z} = 1$ if $g_j(z) \ge 0$, and $\theta_{j,z} = \frac{s_0}{s_0 - g_j(z)}$ otherwise. 
    In either case, the deviation from unity can be bounded as follows:
    $$1 - \theta_{j,z} \le \frac{(-g_j(z))_+}{s_0},$$
    where $(y)_+ := \max(0, y)$. 
    Set $\theta^* := \min_{z \in \mathcal{Y}_K} \min_{j \in \mathcal{J}} \theta_{j,z}$. By the concavity of each $g_j$ and the slack condition $g_j(\bar{u}_K^n) \ge s_0$, it holds that for all $z \in \mathcal{Y}_K$ and $j \in \mathcal{J}$,
    $$g_j(\bar{u}_K^n + \theta^*(z - \bar{u}_K^n)) \ge (1 - \theta^*)s_0 + \theta^* g_j(z) \ge 0.$$
    which implies that $\bar{u}_K^n + \theta^*(z - \bar{u}_K^n) \in G$ for all $z \in \mathcal{Y}_K$. Consequently, the definition of $\theta_K^{\mathtt{a}}$ in \eqref{eq:theta_def2} guarantees that $\theta_K^{\mathtt{a}} \ge \theta^*$. Thus, we obtain
    \begin{equation}\label{eq:lemPiK2_theta_basic}
        1 - \theta_K^{\mathtt{a}} \le 1 - \theta^* = \max_{z \in \mathcal{Y}_K} \max_{j \in \mathcal{J}} (1 - \theta_{j,z}) \le \frac{1}{s_0} \max_{z \in \mathcal{Y}_K} \max_{j \in \mathcal{J}} (-g_j(z))_+.
    \end{equation}
    Thus, it remains to bound the constraint violations at the control values in $\mathcal{Y}_K$.
	
	For each edge quadrature point $x_{e,\nu}$, since the exact solution belongs to $G$, it holds that $g_j(u(x_{e,\nu},t^n))\ge 0$.
	Therefore,
	\[
	\bigl(-g_j(u_K^n(x_{e,\nu}))\bigr)_+
	\le
	\bigl|g_j(u_K^n(x_{e,\nu}))-g_j(u(x_{e,\nu},t^n))\bigr|
	\le L_g\,E_K,
	\]
	and hence
	\begin{equation}\label{eq:lemPiK2_edge_violation}
		\max_{e,\nu}\max_{j\in\mathcal J}\bigl(-g_j(u_K^n(x_{e,\nu}))\bigr)_+ \le L_g\,E_K .
	\end{equation}
	
	When $\beta_K=0$, i.e., $\sum_{e\in\mathcal{E}_K} \lambda_{K,e} = 1$, combining \eqref{eq:lemPiK2_theta_basic} and \eqref{eq:lemPiK2_edge_violation} immediately yields the desired estimate. Hence, it remains to consider the case $\beta_K>0$.
	
	From \cite[Chapter~3]{Ciarlet2002}, there exists a componentwise polynomial approximation $q_K\in (\mathbb{P}^{k}(K))^m$ of the exact solution such that, for a smooth solution 
	$u(\cdot,t^n)$,
	\begin{equation}\label{eq:lemPiK2_q_approx}
		E_K^q:=\|q_K-u(\cdot,t^n)\|_{L^\infty(K)}
		\le C_q h^{k+1}\|u(\cdot,t^n)\|_{W^{k+1,\infty}(K)}.
	\end{equation}
	Define
	\begin{equation}\label{eq:lemPiK2_qstar_def}
		q_K^*:=
		\frac{\bar q_K-\sum_{e\in\mathcal E_K}\lambda_{K,e}\bar q_{K,e}}{\beta_K}.
	\end{equation}
	Because the CAD is feasible for $(\mathbb{P}^{k}(K))^m$, applying the CAD identity \eqref{eq:CAD} to $q_K$ yields
	\[
	\bar q_K=\sum_{e\in\mathcal E_K}\lambda_{K,e}\bar q_{K,e}
	+\sum_{s=1}^{S_K}\beta_{K,s}q_K(x_{K,s}^*),
	\]
	which implies that
	\begin{equation}\label{eq:lemPiK2_qstar_convex_combo}
		q_K^*=\sum_{s=1}^{S_K}\eta_{K,s}\,q_K(x_{K,s}^*),
		\qquad
		\eta_{K,s}:=\frac{\beta_{K,s}}{\beta_K}>0,
		\qquad
		\sum_{s=1}^{S_K}\eta_{K,s}=1.
	\end{equation}
	For all $j\in\mathcal J$ and all interior nodes $x_{K,s}^*$, since $g_j(u(x_{K,s}^*,t^n))\ge 0$ and $g_j$ is Lipschitz continuous on $\mathcal U$,
	\[
	\bigl(-g_j(q_K(x_{K,s}^*))\bigr)_+
	\le
	\bigl|g_j(q_K(x_{K,s}^*))-g_j(u(x_{K,s}^*,t^n))\bigr|
	\le L_g\,E_K^q.
	\]
	Using the concavity of $g_j$ together with \eqref{eq:lemPiK2_qstar_convex_combo}, we have
	\[
	g_j(q_K^*)
	=g_j\Bigl(\sum_{s=1}^{S_K}\eta_{K,s}\,q_K(x_{K,s}^*)\Bigr)
	\ge \sum_{s=1}^{S_K}\eta_{K,s}\,g_j(q_K(x_{K,s}^*)),
	\]
	and therefore
	\begin{equation}\label{eq:lemPiK2_qstar_violation}
		\max_{j\in\mathcal J}\bigl(-g_j(q_K^*)\bigr)_+ \le L_g\,E_K^q.
	\end{equation}
	
	Next, we compare the auxiliary states $u_K^*$ and $q_K^*$. By \eqref{eq:920} and \eqref{eq:lemPiK2_qstar_def},
	\[
	u_K^*-q_K^*
	=
	\frac{
		(\bar u_K^n-\bar q_K)-\sum_{e\in\mathcal E_K}\lambda_{K,e}(\bar u_{K,e}^n-\bar q_{K,e})
	}{\beta_K}.
	\]
    Thus, by the triangle inequality and the definitions of $E_K$ and $E_K^q$, along with the assumption $\beta_K \ge \beta_0 > 0$, we have
	\begin{equation}\label{eq:lemPiK2_ustar_qstar_diff}
		\|u_K^*-q_K^*\|_2 \le \frac{1+\sum_{e\in\mathcal E_K}\lambda_{K,e}}{\beta_K} (E_K+E_K^q)
		\le \frac{2}{\beta_0}\,(E_K+E_K^q).
	\end{equation}
    For sufficiently small $h$, the auxiliary state $q_K^*$ also remains in the bounded convex set $\mathcal{U}$.
	Using the Lipschitz continuity of $g_j$ on $\mathcal U$, it follows that
	\begin{align}
		\max_{j\in\mathcal J}\bigl(-g_j(u_K^*)\bigr)_+
		&\le
		\max_{j\in\mathcal J}\bigl(-g_j(q_K^*)\bigr)_+
		+\max_{j\in\mathcal J}\bigl|g_j(u_K^*)-g_j(q_K^*)\bigr| \notag\\
		&\le
		L_g\,E_K^q + L_g\|u_K^*-q_K^*\|_2 
		\le C_K\,(E_K+E_K^q),
		\label{eq:lemPiK2_ustar_violation}
	\end{align}
	with $C_K$ independent of $h$ and $K$.
	
	Combining \eqref{eq:lemPiK2_theta_basic}, \eqref{eq:lemPiK2_edge_violation}, and \eqref{eq:lemPiK2_ustar_violation}, it follows that
	\begin{equation}\label{eq:lemPiK2_theta_est}
		1-\theta_K^\texttt{a}\le \frac{C}{s_0}\,(E_K+E_K^q),
	\end{equation}
	where $C$ is independent of $h$ (for $h$ sufficiently small).
	
	Finally, from \eqref{eq:PiK_def2},
	\[
	\hat u_K^\texttt{a}-u_K^n=(1-\theta_K^\texttt{a})(\bar u_K^n-u_K^n),
	\]
	hence
	\[
	\|\hat u_K^\texttt{a}-u_K^n\|_{L^\infty(K)}
	\le
	(1-\theta_K^\texttt{a})\|u_K^n-\bar u_K^n\|_{L^\infty(K)}.
	\]
	Since $u(\cdot,t^n)$ is smooth and $u_K^n$ approximates it, $\|u_K^n-\bar u_K^n\|_{L^\infty(K)}$ is uniformly bounded for sufficiently small $h$.
	Therefore, together with \eqref{eq:lemPiK2_theta_est},
	\begin{equation}\label{eq:lemPiK2_pre_final}
		\|\hat u_K^\texttt{a}-u_K^n\|_{L^\infty(K)}
		\le \frac{C}{s_0}\,(E_K+E_K^q).
	\end{equation}
	Substituting \eqref{eq:lemPiK2_q_approx} into \eqref{eq:lemPiK2_pre_final} completes the proof of Property~(iii).
\end{proof}

\begin{corollary}[Asymptotically negligible limiter perturbation on smooth interior states]\label{cor:limiter_small_smooth}
Let $u_K^n \in \mathbb{W}_K \supseteq (\mathbb P^k(K))^m$.
Assume that the hypotheses of \Cref{lem:PiK_props1} or \Cref{lem:PiK_props2} hold, and in addition that the reconstruction satisfies
\[
\|u_K^n-u(\cdot,t^n)\|_{L^\infty(K)}=O(h^{k+1})
\]
on a shape-regular mesh family while the exact solution remains a positive distance away from $\partial G$. Then
\[
\|\Pi_K u_K^n-u_K^n\|_{L^\infty(K)}=O(h^{k+1})
\]
for both the classic and the alternative limiter. In particular, the limiter perturbation is asymptotically negligible on smooth interior solutions.
\end{corollary}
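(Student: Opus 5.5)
The plan is to derive the corollary directly from the accuracy estimates (iii) of \Cref{lem:PiK_props1} and \Cref{lem:PiK_props2}, after two preliminary checks: that their hypotheses hold uniformly in $h$, and that every constant involved is $h$-independent.

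\emph{Step 1: a uniform slack parameter.} Since the exact solution stays a positive distance from $\partial G$, I will pick a compact convex neighborhood $\mathcal U_0\subset\Gint$ of its range with $\min_{j}\min_{\mathcal U_0} g_j=:2s_0>0$. This uses continuity of the $g_j$ and compactness. The cell average satisfies
\[
\bar u_K^n=\frac1{|K|}\int_K u_K^n\,\mathrm dx,
\qquad
\bigl\|\bar u_K^n-\tfrac1{|K|}\textstyle\int_K u(\cdot,t^n)\,\mathrm dx\bigr\|_2\le \|u_K^n-u(\cdot,t^n)\|_{L^\infty(K)}=O(h^{k+1}).
\]
The exact average lies in the convex hull of the solution values, so it lies in $\mathcal U_0$. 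By concavity, $g_j$ of the exact average is at least $2s_0$. Lipschitz continuity of $g_j$ on a slightly enlarged bounded convex set $\mathcal U$ then gives $g_j(\bar u_K^n)\ge s_0$ for all $j$, all $K$, and all sufficiently small $h$. Hence the hypotheses of both lemmas hold with an $s_0$ that does not depend on $h$ or $K$. Also $\|u_K^n-u\|_{L^\infty(K)}\to0$.

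\emph{Step 2: apply the estimates.} For the classic limiter, \Cref{lem:PiK_props1}(iii) gives
\[
\|\Pi_K^{\texttt c}u_K^n-u_K^n\|_{L^\infty(K)}\le \frac{C}{s_0}\,\|u_K^n-u(\cdot,t^n)\|_{L^\infty(K)}=O(h^{k+1}).
\]
For the alternative limiter, \Cref{lem:PiK_props2}(iii) adds the term $C_q h^{k+1}\|u(\cdot,t^n)\|_{W^{k+1,\infty}(K)}$. This is $O(h^{k+1})$ because the solution is smooth. That lemma requires the dichotomy $\beta_K=0$ or $\beta_K\ge\beta_0>0$ on the shape-regular family. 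I will invoke it through the standing CAD assumption (A3): the concrete CAD weights are fixed on reference geometries, so under shape regularity $\beta_K$ either vanishes or is bounded below. I would state this explicitly as part of the hypothesis inherited from \Cref{lem:PiK_props2}.

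\emph{Main obstacle: uniformity of the constants.} The constant $C$ depends on the Lipschitz constants $L_g$ on $\mathcal U$ and on the bound for $\|u_K^n-\bar u_K^n\|_{L^\infty(K)}$. I will choose $\mathcal U$ once, as a fixed bounded neighborhood of the solution range, valid for all small $h$, so that $L_g$ is uniform. The second quantity is in fact $O(h)$, hence bounded, by the smoothness of $u$ and the reconstruction error. With these choices, and with $s_0$ from Step 1, the bound holds uniformly over all cells, which proves the corollary.
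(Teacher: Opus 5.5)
Your proposal is correct and follows essentially the same route as the paper, which simply invokes part~(iii) of \Cref{lem:PiK_props1} and \Cref{lem:PiK_props2} and observes that the extra $C_q h^{k+1}\|u(\cdot,t^n)\|_{W^{k+1,\infty}(K)}$ term is of the same order; your Step~1 and your discussion of the constants make explicit what the paper leaves implicit, namely an $h$- and $K$-independent slack $s_0$ and Lipschitz constant $L_g$. One imprecision: assumption (A3) gives only cellwise positivity of the weights, not a uniform bound $\beta_K\ge\beta_0$ (for example, the triangular OCAD weights depend on edge lengths rather than being ``fixed on reference geometries''), so your fallback of carrying this bound as an inherited hypothesis of \Cref{lem:PiK_props2} is the right one — and the bound does hold with $\beta_0=1/3$ for every realization in \Cref{sec:geom}.
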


\begin{proof}
For the classic limiter, the estimate follows directly from \Cref{lem:PiK_props1}(iii). For the alternative limiter, \Cref{lem:PiK_props2}(iii) gives
\[
\|\Pi_K^\texttt{a}u_K^n-u_K^n\|_{L^\infty(K)}\le \frac{C}{s_0}\|u_K^n-u(\cdot,t^n)\|_{L^\infty(K)}+C_q h^{k+1}\|u(\cdot,t^n)\|_{W^{k+1,\infty}(K)},
\]
which is again $O(h^{k+1})$ under the stated smooth-regime approximation assumption, which completes the proof.
\end{proof}

\begin{lemma}[Local Lipschitz continuity of the alternative limiter]\label{lem:PiK_Lipschitz2}
	Assume that the invariant domain $G$ is defined by finitely many concave $C^1$ constraints
	$\{g_j\}_{j\in\mathcal J}$ as in \eqref{eq:G_def_g}. 
	For a given cell $K$, consider two reconstructions $u_K,v_K\in\mathbb W_K \supseteq (\mathbb P^k(K))^m$. Let
	\[
	a:=\bar u_K,\qquad a':=\bar v_K,\qquad
	b_{e,\nu}:=u_K(x_{e,\nu}),\qquad b'_{e,\nu}:=v_K(x_{e,\nu}).
	\]
	When $\beta_K>0$, we define the auxiliary states
	\begin{equation}\label{eq:PiKa_ustar_def_u}
		u_K^*:=\frac{a-\sum_{e\in\mathcal E_K}\lambda_{K,e}\bar u_{K,e}}{\beta_K},
		\qquad
		v_K^*:=\frac{a'-\sum_{e\in\mathcal E_K}\lambda_{K,e}\bar v_{K,e}}{\beta_K},
	\end{equation}
	where $\bar u_{K,e},\bar v_{K,e}$ are the edge averages. Assume that $a,a'\in G$ and that there exists a slack parameter $s_0>0$ such that
	\begin{equation}\label{eq:PiKa_slack_assumption}
		g_j(a)\ge s_0,\qquad g_j(a')\ge s_0
		\qquad \forall j\in\mathcal J.
	\end{equation}
	Assume further that all states
	\[
	a,\ a',\ \{b_{e,\nu}\}_{e,\nu},\ \{b'_{e,\nu}\}_{e,\nu},
	\quad\text{and (if $\beta_K>0$) }u_K^*,v_K^*
	\]
	remain in a bounded convex set $\mathcal U\subset\mathbb R^m$ on which each $g_j$ is Lipschitz continuous 
	with constant $L_{g,j}$. Then the alternative limiter \eqref{eq:PiK_def2}--\eqref{eq:theta_def2} is locally Lipschitz continuous on the edge control values:
	\begin{equation}\label{eq:PiKa_Lipschitz_est}
		\max_{e\in\mathcal E_K}\max_{1\le \nu\le N_q}
		\bigl\|(\Pi_K^\texttt{a}u_K)(x_{e,\nu})-(\Pi_K^\texttt{a}v_K)(x_{e,\nu})\bigr\|_2
		\le
		C_{\Pi,\texttt{a}}
		\Bigl(
		\|a-a'\|_2
		+
		\max_{e,\nu}\|b_{e,\nu}-b'_{e,\nu}\|_2
		\Bigr),
	\end{equation}
	where one may take
	\[
	C_{\Pi,\texttt{a}}
	=
	1+2M_{\texttt{a}}
	\Bigl(\max_{j\in\mathcal J}\frac{L_{g,j}}{s_0}\Bigr)\Gamma_K,
	\]
	with
	\[
	\Gamma_K:=
	\begin{cases}
		1, & \beta_K=0,\\
		1+\beta_K^{-1}, & \beta_K>0,
	\end{cases}
	\]
	and
	\[
	M_{\texttt{a}}:=\max\Bigl\{
	\|a\|_2,\|a'\|_2,\max_{e,\nu}\|b_{e,\nu}\|_2,\max_{e,\nu}\|b'_{e,\nu}\|_2,
	\mathbf 1_{\{\beta_K>0\}}\|u_K^*\|_2,\mathbf 1_{\{\beta_K>0\}}\|v_K^*\|_2
	\Bigr\}.
	\]
	In particular, on sets where \eqref{eq:PiKa_slack_assumption} holds uniformly,
	the trace map $u_h\mapsto \{(\Pi_K^\texttt{a}u_K)(x_{e,\nu})\}_{e,\nu}$ is locally Lipschitz continuous. 
	Furthermore, provided the mesh family is shape-regular and satisfies the uniform interior-mass bound $\beta_K \ge \beta_0 > 0$, the factor $\Gamma_K$ is bounded by $1+\beta_0^{-1}$. Consequently, the local Lipschitz constant $C_{\Pi,\texttt{a}}$ is independent of the mesh size $h$ and the cell $K$.
\end{lemma}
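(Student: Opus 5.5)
The approach mirrors the argument for \Cref{lem:PiK_Lipschitz1}, applied to the reduced control set $\mathcal Y_K$ of the alternative limiter. The set consists of the edge values $b_{e,\nu}$ and, when $\beta_K>0$, the auxiliary state $u_K^*$. Write $\theta:=\theta_K^\texttt{a}(u_K)$ and $\theta':=\theta_K^\texttt{a}(v_K)$. At every edge node we have
\[
(\Pi_K^\texttt{a}u_K)(x_{e,\nu})-(\Pi_K^\texttt{a}v_K)(x_{e,\nu})
=(1-\theta)(a-a')+\theta(b_{e,\nu}-b'_{e,\nu})+(\theta-\theta')(b'_{e,\nu}-a').
\]
The first two terms are bounded by $\|a-a'\|_2+\max_{e,\nu}\|b_{e,\nu}-b'_{e,\nu}\|_2$, since $\theta\in[0,1]$. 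The third is bounded by $2M_\texttt{a}|\theta-\theta'|$. The whole proof therefore reduces to a Lipschitz bound on $|\theta-\theta'|$. This produces the leading $1$ and the factor $2M_\texttt{a}$ in $C_{\Pi,\texttt{a}}$.

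\textbf{Step 1: reduce $\theta$ to single-state scalings.} For each control state $z$ and each constraint $j$, I would define the one-dimensional scaling
\[
\theta_{j}(a,z):=\sup\{t\in[0,1]:\ g_j(a+t(z-a))\ge0\}.
\]
By concavity of $g_j$ and $g_j(a)\ge s_0>0$, the function $t\mapsto g_j(a+t(z-a))$ is concave and positive at $t=0$. Hence the admissible set of $t$ is an interval $[0,\theta_j]$. Since $G$ is the intersection of the $G_j$, it follows that $\theta=\min_{z\in\mathcal Y_K}\min_j\theta_j(a,z)$. A minimum of finitely many functions is Lipschitz with the largest of their constants. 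So it suffices to prove
\[
|\theta_j(a,z)-\theta_j(a',z')|\le \frac{L_{g,j}}{s_0}\bigl(\|a-a'\|_2+\|z-z'\|_2\bigr).
\]
Each such pair has the same index $j$ and corresponding control states: $z=b_{e,\nu}$ with $z'=b'_{e,\nu}$, or $z=u_K^*$ with $z'=v_K^*$.

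\textbf{Step 2: the scalar estimate, which I expect to be the main obstacle.} Suppose $\theta_j(a,z)=t_0<1$, so that $g_j(a+t_0(z-a))=0$. The concave function $\varphi(t)=g_j(a+t(z-a))$ satisfies $\varphi(0)\ge s_0$ and $\varphi(t_0)=0$. Concavity then gives a decay rate beyond $t_0$ of at least $s_0/t_0\ge s_0$, that is, $\varphi(t)\le -s_0(t-t_0)$ for $t\ge t_0$.

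Now compare with the perturbed path $\varphi'(t)=g_j(a'+t(z'-a'))$. By the Lipschitz continuity of $g_j$ on the convex set $\mathcal U$, which contains both segments,
\[
|\varphi(t)-\varphi'(t)|\le L_{g,j}\bigl((1-t)\|a-a'\|_2+t\|z-z'\|_2\bigr)\le L_{g,j}\,\delta,\qquad \delta:=\|a-a'\|_2+\|z-z'\|_2 .
\]
Hence $\varphi'(t)<0$ as soon as $t>t_0+L_{g,j}\delta/s_0$, which gives $\theta_j(a',z')\le\theta_j(a,z)+L_{g,j}\delta/s_0$. The case $t_0=1$ is trivial in this direction, and exchanging the roles of the two paths gives the symmetric inequality. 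The delicate points are to keep the argument inside $\mathcal U$ and to handle the case where one scaling equals $1$ and the other does not; the concavity-slope argument still applies from the side that is $<1$.

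\textbf{Step 3: control of the auxiliary state.} When $\beta_K>0$, the pair $(u_K^*,v_K^*)$ enters Step 2 with
\[
\|u_K^*-v_K^*\|_2\le \beta_K^{-1}\Bigl(\|a-a'\|_2+\sum_e\lambda_{K,e}\max_\nu\|b_{e,\nu}-b'_{e,\nu}\|_2\Bigr).
\]
Here I use $\bar u_{K,e}=\sum_\nu\omega_{e,\nu}b_{e,\nu}$, which follows from the quadrature exactness in (A3), together with $\sum_e\lambda_{K,e}\le1$. Combining this with Step 2 yields
\[
|\theta-\theta'|\le \max_j\frac{L_{g,j}}{s_0}\,\Gamma_K\Bigl(\|a-a'\|_2+\max_{e,\nu}\|b_{e,\nu}-b'_{e,\nu}\|_2\Bigr),
\]
where $\Gamma_K=1+\beta_K^{-1}$, and $\Gamma_K=1$ when $\beta_K=0$ because only edge states are present. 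Inserting this into the opening decomposition gives \eqref{eq:PiKa_Lipschitz_est}.

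\textbf{Consequences.} The local Lipschitz property of the trace map follows because $a$ and the values $b_{e,\nu}$ depend linearly on $u_h$ through $\mathcal R_K$. Under $\beta_K\ge\beta_0$, the bound $\Gamma_K\le1+\beta_0^{-1}$ gives a constant $C_{\Pi,\texttt{a}}$ independent of $h$ and $K$.
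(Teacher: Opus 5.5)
Your proposal is correct and follows essentially the same route as the paper. The shared steps are the three-term decomposition at the edge nodes, the reduction of $\theta_K^\texttt{a}$ to a minimum of per-constraint, per-candidate scalings, and the bound $\|u_K^*-v_K^*\|_2\le\beta_K^{-1}\bigl(\|a-a'\|_2+\max_{e,\nu}\|b_{e,\nu}-b'_{e,\nu}\|_2\bigr)$ that produces $\Gamma_K$. The only difference is in the scalar estimate: you use one one-sided slope bound beyond the root, applied symmetrically (valid since both $g_j(a)\ge s_0$ and $g_j(a')\ge s_0$), and this covers in one argument the three activity cases (both inactive, both active, mixed) that the paper treats separately through its concave-root perturbation bound.
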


The proof of \Cref{lem:PiK_Lipschitz2} is provided in Appendix~\ref{sec:PiK_Lipschitz2}.

\subsection{Geometric IDP property and numerical fluxes}
\label{sec:avg:flux}

\begin{corollary}[Geometric IDP property]\label{cor:idp_flux}
Under Assumption~(A5), we obtain the following geometric inclusion property: for any $u_L,u_R\in G$ and any unit normal vector $n$, let $\alpha$ satisfy
\[
\alpha\ge \max\{\tilde\alpha(u_L,n),\tilde\alpha(u_R,n)\}.
\]
Then the intermediate split states
\begin{equation}\label{eq:weakLF}
	u^{\mathrm{LF},\pm}(u_L,u_R;n)
	:= \frac12\bigl(u_L+u_R\bigr)\mp \frac{1}{2\alpha}\bigl(F(u_R)-F(u_L)\bigr)\cdot n
\end{equation}
belong to the invariant domain $G$. Consequently, the associated first-order Lax--Friedrichs scheme is IDP on $G$.
\end{corollary}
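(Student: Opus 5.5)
By GQL (\Cref{prop:GQL}, under (A2)), $G=G^\ast$. It therefore suffices to show that for every $j\in\mathcal J$ and every $u_j^\ast\in S_j$,
\[
(u^{\mathrm{LF},\pm}-u_j^\ast)\cdot n_j^\ast\ge 0 .
\]
The plan is to split each split state into two one-sided contributions and apply the weak Lax--Friedrichs inequality \eqref{eq:weakLF2} to each, with the boundary terms $\zeta$ cancelling.

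For $u^{\mathrm{LF},+}$, multiply by $2\alpha>0$ and use $u_j^\ast=\frac12(u_j^\ast+u_j^\ast)$ to write
\[
2\alpha\,(u^{\mathrm{LF},+}-u_j^\ast)\cdot n_j^\ast
=\Bigl[\alpha(u_L-u_j^\ast)\cdot n_j^\ast+(F(u_L)\cdot n)\cdot n_j^\ast\Bigr]
+\Bigl[\alpha(u_R-u_j^\ast)\cdot n_j^\ast-(F(u_R)\cdot n)\cdot n_j^\ast\Bigr].
\]
Since $\alpha\ge\tilde\alpha(u_L,n)$, \eqref{eq:weakLF2} with the upper sign at $u=u_L$ bounds the first bracket below by $-\zeta(u_j^\ast)\cdot n$. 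Since $\alpha\ge\tilde\alpha(u_R,n)$, \eqref{eq:weakLF2} with the lower sign at $u=u_R$ bounds the second bracket below by $+\zeta(u_j^\ast)\cdot n$. Adding the two bounds gives nonnegativity.

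The case $u^{\mathrm{LF},-}$ is symmetric: the sign choices swap, with $u_L$ paired with the lower sign and $u_R$ with the upper sign. In both cases $\zeta$ cancels, and this cancellation is the key step. It works because $\zeta$ depends only on $u_j^\ast$ and is linear in $n$, so each one-sided inequality uses the same normal $n$ and contributes $\mp\zeta\cdot n$ with opposite signs.

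For the consequence, write the first-order LF update of a cell average as a convex combination. With $\alpha$ chosen as above on each edge and $\Delta t$ under the usual CFL restriction, the LF-flux update
\[
\bar u_K-\frac{\Delta t}{|K|}\sum_e|e|\,\widehat F^{\mathrm{LF}}
\]
can be rewritten, using $\sum_e|e|\,n_{K,e}=0$ to insert $F(\bar u_K)$ terms, as
\[
\Bigl(1-\sum_e\tfrac{\Delta t|e|\alpha_e}{|K|}\Bigr)\bar u_K+\sum_e\tfrac{\Delta t|e|\alpha_e}{|K|}\,u^{\mathrm{LF},\pm}(\bar u_K,\bar u_{K_e};n_{K,e}).
\]
Each $u^{\mathrm{LF},\pm}$ lies in $G$ by the first part, and the coefficients are nonnegative and sum to one, so convexity of $G$ gives the IDP property.

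The only delicate point is bookkeeping of the $\pm$ signs so that the $\zeta$ terms cancel exactly. If $G$ is replaced by the open interior $\Gint$, as in \Cref{prop:A5_Euler}, the same argument goes through because (A5) is stated there for states in $\Gint$.
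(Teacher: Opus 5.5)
Your proof of the inclusion is the paper's argument: reduce to the GQL half-space inequalities via $G=G^\ast$, split $2\alpha\,(u^{\mathrm{LF},\pm}-u_j^\ast)\cdot n_j^\ast$ into a $u_L$-bracket and a $u_R$-bracket, and apply \eqref{eq:weakLF2} with opposite signs so that the $\zeta(u_j^\ast)\cdot n$ terms cancel. Your convex-combination rewriting of the first-order Lax--Friedrichs update is correct and fills in the ``consequently'' clause, which the paper asserts without proof. In the paper's sign convention the state appearing there is $u^{\mathrm{LF},+}(\bar u_K,\bar u_{K_e};n_{K,e})$, and the argument requires the CFL condition $\Delta t\sum_{e}|e|\,\alpha_e\le|K|$.
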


\begin{proof}
	By Assumption~(A2), the invariant domain satisfies $G=G^\ast$ in the sense of \Cref{prop:GQL}. Hence, verifying that $u^{\mathrm{LF}, \pm} \in G$ is equivalent to verifying the linear half-space inequalities in the GQL characterization \eqref{eq:G_star}. Using the definition \eqref{eq:weakLF} together with Assumption~(A5) in \Cref{sec:assumptions}, for any $j \in \mathcal{J}$ and $u_j^\ast \in S_j$, it follows that
	\begin{align*}
		\big( u^{\mathrm{LF}, \pm} (u_L, u_R; n) - u_j^\ast \big) \cdot n_j^\ast 
		&= \frac{1}{2\alpha} \Big[ \alpha (u_L - u_j^\ast) \cdot n_j^\ast \pm \big(F(u_L) \cdot n\big) \cdot n_j^\ast \Big] \\
		& + \frac{1}{2\alpha} \Big[ \alpha (u_R - u_j^\ast) \cdot n_j^\ast \mp \big(F(u_R) \cdot n\big) \cdot n_j^\ast \Big] \\
		&\ge \frac{1}{2\alpha} \Big( \mp \zeta(u_j^\ast) \cdot n \pm \zeta(u_j^\ast) \cdot n \Big) = 0.
	\end{align*}
	Thus, we conclude that $u^{\mathrm{LF}, \pm} \in G^\ast = G$, which completes the proof.
\end{proof}

Let $\widehat F(\cdot,\cdot;n)$ be a conservative and consistent numerical flux satisfying:
\[
\widehat F(u,u;n)=F(u)\cdot n,\qquad
\widehat F(u^-,u^+;n)=-\widehat F(u^+,u^-;-n).
\]
Throughout the subsequent analysis, we employ the local Lax--Friedrichs (Rusanov) numerical flux family
\begin{equation}\label{eq:LF_flux}
	\widehat F_{\alpha}(u^-,u^+;n)
	:=\frac12\big[F(u^-)+F(u^+)\big]\cdot n - \frac{\alpha}{2}(u^+-u^-),
\end{equation}
with the viscosity parameter $\alpha\ge \alpha(u^-,u^+,n)$ specified in \Cref{cor:idp_flux}.
More generally, any numerical flux that is one-dimensionally invariant-domain preserving (e.g., Godunov or HLL-type fluxes) can be directly accommodated by replacing the constant $1$ in the CFL condition with the corresponding one-dimensional IDP CFL constant $c_0\in(0,1]$  (see \Cref{rem:idp_flux_general}).

In the subsequent analysis, for each interface quadrature node $(e,\nu)$, we choose the local numerical viscosity parameter $\alpha_{K,e,\nu}$ satisfying
\[
\alpha_{K,e,\nu}\ \ge\
\alpha(\hat u_{K,e,\nu}^n,\hat u_{K_e,e,\nu}^n,n_{K,e})
=
\max\Big\{
\tilde\alpha(\hat u_{K,e,\nu}^n,n_{K,e}),\ 
\tilde\alpha(\hat u_{K_e,e,\nu}^n,n_{K,e})
\Big\}.
\]
This choice guarantees both the geometric IDP property of the numerical flux and the validity of the algebraic half-space condition \eqref{eq:weakLF2} (cf.~Assumption~(A5) in \Cref{sec:assumptions}) for the prepared trace states on both sides of the interface.

\begin{remark}[Beyond Lax--Friedrichs numerical fluxes]\label{rem:idp_flux_general}
	When $\widehat F(\cdot,\cdot;n)$ is an arbitrary 1D IDP numerical flux with maximal allowable CFL number $c_0\in(0,1]$, the resulting conservative CFL condition in \Cref{thm:B} modifies to
	$\max_{e,\nu} \frac{\alpha_{K,e,\nu} \Delta t\,|e|}{|K| \lambda_{K,e}}=\max_{e,\nu}\frac{\alpha_{K,e,\nu}\Delta t}{h_{K,e}}\le c_0$.
    The underlying analytical framework remains unchanged, with only the upper bound in the CFL constraint adjusted by the factor $c_0$.
\end{remark}

\subsection{Proof of the conservative cell-average theorem}
\label{sec:avg:proofB}

We now establish the conditional IDP property for the conservative cell-average update formulated in \Cref{thm:B}.

Consider an arbitrary forward Euler step (or an individual stage within an SSP Runge--Kutta integration). For each cell $K$, let
\[
\hat u_K := (\widehat{\mathcal{R}}_h u_h)|_K = (\Pi_K\circ \mathcal{R}_K)(u_h),
\]
and let the interior and exterior prepared trace states evaluated at the edge quadrature nodes be denoted by
\[
\hat u_{K,e,\nu} := \hat u_K(x_{e,\nu}),\qquad
\hat u_{K_e,e,\nu} := \hat u_{K_e}(x_{e,\nu}),
\]
where $K_e$ denotes the adjacent cell sharing the edge $e$. Here, $\Pi_K$ represents either the classic limiter $\Pi_K^\texttt{c}$ or the alternative limiter $\Pi_K^\texttt{a}$. By the hypotheses of \Cref{thm:B}, the prepared reconstruction $\hat{u}_K$ is $G$-admissible in the sense of \eqref{eq:admissible_recon}, as established in \Cref{lem:PiK_props1} and \Cref{lem:PiK_props2} for the classic and alternative limiters, respectively.

\begin{proof} [Proof of \Cref{thm:B}] We first focus on the general case where $\sum_{e\in\mathcal{E}_K} \lambda_{K,e} < 1$.
	
	\noindent\emph{Step 1: Substitution into the forward Euler update.}
	By the CAD in Assumption (A3), we have
	\begin{equation}\label{eq:gcd_hat}
\begin{aligned}
\bar u_K^n
=&\sum_{e\in\mathcal{E}_K} \lambda_{K,e}\sum_{\nu=1}^{N_q}\omega_{e,\nu}\,\hat u_{K,e,\nu}^n
+\left( \bar u_K^n - \sum_{e\in\mathcal{E}_K} \lambda_{K,e}\sum_{\nu=1}^{N_q}\omega_{e,\nu}\,\hat u_{K,e,\nu}^n\right)\\
=&\sum_{e\in\mathcal{E}_K} \lambda_{K,e}\sum_{\nu=1}^{N_q}\omega_{e,\nu}\,\hat u_{K,e,\nu}^n
+\left( 1 - \sum_{e\in\mathcal{E}_K} \lambda_{K,e} \right)
\underbrace{\left( \dfrac{\bar u_K^n - \sum_{e\in\mathcal{E}_K} \lambda_{K,e}\sum_{\nu=1}^{N_q}\omega_{e,\nu}\,\hat u_{K,e,\nu}^n}{1 - \sum_{e\in\mathcal{E}_K} \lambda_{K,e}}\right)}_{=:\hat u_K^*}.
\end{aligned}
\end{equation}
	Since $\hat u_K^n$ is $G$-admissible, it holds that $\hat u_{K,e,\nu}^n \in G$ and $\hat u_{K_e,e,\nu}^n \in G$; hence, the numerical flux is well-defined on $G\times G$. 
    Furthermore, \Cref{lem:PiK_props1}(ii) or \Cref{lem:PiK_props2}(ii) guarantees that $\hat u^*_K \in G$.
	We introduce the edge ratios $c_{K,e}=\lambda_{K,e}/|e|$ as in \eqref{eq:edge_ratio}.
    Substituting the identity \eqref{eq:gcd_hat} into the conservative forward Euler update \eqref{eq:avg_update_FE}, we obtain
	\begin{equation}\label{eq:avg_after_subst}
		\begin{aligned}
			\bar u_K^{n+1}
			= 
			\sum_{e\in\mathcal{E}_K} |e|\sum_{\nu=1}^{N_q}\omega_{e,\nu} U_{e,\nu}
			+
			\left( 1 - \sum_{e\in\mathcal{E}_K} \lambda_{K,e} \right) \hat u^*_K,
		\end{aligned}
	\end{equation}
    with 
    $U_{e,\nu} := c_{K,e}\hat u_{K,e,\nu}^n
			-\frac{\Delta t}{|K|}
			\widehat F_{\alpha_{K,e,\nu}}(\hat u_{K,e,\nu}^n,\hat u_{K_e,e,\nu}^n,n_{K,e}).$
	
	\noindent\emph{Step 2: Lax--Friedrichs flux splitting.}
	Using \eqref{eq:LF_flux} and introducing the flux-split sub-states
    \begin{align}
		U_{K,e,\nu}
		&:=
		\left(c_{K,e}-\frac{\alpha_{K,e,\nu}\Delta t}{2|K|}\right)\hat u_{K,e,\nu}^n
		-\frac{\Delta t}{2|K|}\big(F(\hat u_{K,e,\nu}^n)\cdot n_{K,e}\big),
		\label{eq:U_in}\\
		U_{K_e,e,\nu}
		&:=
		\frac{\alpha_{K,e,\nu}\Delta t}{2|K|}\hat u_{K_e,e,\nu}^n
		-\frac{\Delta t}{2|K|}\big(F(\hat u_{K_e,e,\nu}^n)\cdot n_{K,e}\big),
		\label{eq:U_out}
	\end{align}
    we obtain for each $(e,\nu)$ the additive decomposition 
    $
        U_{e,\nu}=U_{K,e,\nu} + U_{K_e,e,\nu},
    $
    which recasts \eqref{eq:avg_after_subst} into
	\begin{equation}\label{eq:avg_convex_form}
		\bar u_K^{n+1}
		=
		\sum_{e\in\mathcal{E}_K} |e|\sum_{\nu=1}^{N_q}\omega_{e,\nu}
		\big(U_{K,e,\nu}+U_{K_e,e,\nu}\big)
		+
		\left( 1 - \sum_{e\in\mathcal{E}_K} \lambda_{K,e} \right) \hat u^*_K.
	\end{equation}
    
	\noindent\emph{Step 3: Verification of the GQL inequalities.}
	Fix an arbitrary constraint index $j\in \mathcal{J}$ and any supporting state $u_j^\ast\in S_j$.
	Our goal is to establish the GQL inequality $(\bar u_K^{n+1}-u_j^\ast)\cdot n_j^\ast \ge 0$, which will be verified via the complete boundary summation in Step~4.
	
	First, the conservative CFL condition \eqref{eq:CFL_abstract} implies that
	\begin{equation}\label{eq:c1_lower}
		c_{1}:=c_{K,e}-\frac{\alpha_{K,e,\nu}\Delta t}{2|K|}
		\ \ge\ \frac{\alpha_{K,e,\nu}\Delta t}{2|K|} =: c_2.
	\end{equation}
	For the interior contribution $U_{K,e,\nu}$ in \eqref{eq:U_in}, we compute
	\begin{align*}
		\big(U_{K,e,\nu}-c_{1}u_j^\ast\big)\cdot n_j^\ast
		&=
		c_{1}\,(\hat u_{K,e,\nu}^n-u_j^\ast)\cdot n_j^\ast
		-\frac{\Delta t}{2|K|}\big(F(\hat u_{K,e,\nu}^n)\cdot n_{K,e}\big)\cdot n_j^\ast\\
		&\overset{\eqref{eq:c1_lower}}{\ge}
		c_{2}\,(\hat u_{K,e,\nu}^n-u_j^\ast)\cdot n_j^\ast
		-\frac{\Delta t}{2|K|}\big(F(\hat u_{K,e,\nu}^n)\cdot n_{K,e}\big)\cdot n_j^\ast
	\end{align*}
	Since $\hat u_{K,e,\nu}^n\in G$ and $\alpha_{K,e,\nu}\ge \tilde\alpha(\hat u_{K,e,\nu}^n,n_{K,e})$,
	Assumption~(A5) ensures that
	\begin{equation}\label{eq:ineq_Uin}
		\big(U_{K,e,\nu}-c_{1}u_j^\ast\big)\cdot n_j^\ast
		\ \ge\
		\frac{\Delta t}{2|K|}\,\zeta(u_j^\ast)\cdot n_{K,e}.
	\end{equation}
	Similarly, for the exterior contribution $U_{K_e,e,\nu}$ in \eqref{eq:U_out}, 
	Assumption~(A5) applied to $\hat u_{K_e,e,\nu}^n\in G$ yields
	\begin{equation}\label{eq:ineq_Uout}
		\big(U_{K_e,e,\nu}-c_{2}u_j^\ast\big)\cdot n_j^\ast
		\ \ge\
		\frac{\Delta t}{2|K|}\,\zeta(u_j^\ast)\cdot n_{K,e}.
	\end{equation}
	Finally, \Cref{lem:PiK_props1}(ii) and \Cref{lem:PiK_props2}(ii) yield $\hat u_K^* \in G$, which guarantees that
	\begin{equation}\label{eq:ineq_internal}
		(\hat u_K^*-u_j^\ast)\cdot n_j^\ast \ \ge\ 0.
	\end{equation}
	
	\noindent\emph{Step 4: Summation over edges and polygon closure identity.}
	From \eqref{eq:avg_convex_form} and the relation that
	\[
	(c_{1}+c_{2})|e| = c_{K,e}|e|=\lambda_{K,e},
	\]
	we can write
	\begin{align*}
		(\bar u_K^{n+1}-u_j^\ast)\cdot n_j^\ast
		&=
		\sum_{e\in\mathcal{E}_K}|e|\sum_{\nu=1}^{N_q}\omega_{e,\nu}
		\Big[
		\big(U_{K,e,\nu}-c_1u_j^\ast\big)\cdot n_j^\ast
		+
		\big(U_{K_e,e,\nu}-c_2u_j^\ast\big)\cdot n_j^\ast
		\Big]\\
		&\quad 
		+ 
		\left( 1 - \sum_{e\in\mathcal{E}_K} \lambda_{K,e} \right) \,(\hat u_{K}^{\ast}-u_j^\ast)\cdot n_j^\ast.
	\end{align*}
	Substituting \eqref{eq:ineq_Uin}--\eqref{eq:ineq_internal} into the above identity yields
	\[
	(\bar u_K^{n+1}-u_j^\ast)\cdot n_j^\ast
	\ \ge\
	\frac{\Delta t}{2|K|}
	\sum_{e\in\mathcal{E}_K}|e|\sum_{\nu=1}^{N_q}\omega_{e,\nu}
	\Big( \zeta(u_j^\ast)\cdot n_{K,e} + \zeta(u_j^\ast)\cdot n_{K,e}\Big).
	\]
	Since $\sum_{\nu}\omega_{e,\nu}=1$, this simplifies to
	\[
	(\bar u_K^{n+1}-u_j^\ast)\cdot n_j^\ast
	\ \ge\
	\frac{\Delta t}{|K|}\,\zeta(u_j^\ast)\cdot
	\Big(\sum_{e\in\mathcal{E}_K}|e|\,n_{K,e}\Big).
	\]
	For any (convex) polygonal cell, the boundary-normal closure identity holds:
	\[
	\sum_{e\in\mathcal{E}_K}|e|\,n_{K,e} = 0,
	\]
	which follows directly from the divergence theorem applied to a constant vector field.
	Therefore, $(\bar u_K^{n+1}-u_j^\ast)\cdot n_j^\ast \ge 0$ for every
	$j\in \mathcal{J}$ and $u_j^\ast\in S_j$, which implies that $\bar u_K^{n+1}\in G^\ast=G$.
	Finally, when $\sum_{e\in\mathcal{E}_K}\lambda_{K,e}=1$, the result follows analogously by omitting the internal auxiliary term involving $\hat u_K^*$, which completes the proof.
\end{proof}

\begin{proof}[Proof of \Cref{thm:hybrid_stage_IDP}]
By \Cref{prop:closed_interface}(i), the point stage computed with the exact inverse mapping $\Psi^{-1}$ satisfies $u_\sigma^{n+1}\in \Gint$ for all $\sigma\in\Sigma_h$. Under \Cref{thm:B}, the conservative CFL condition \eqref{eq:CFL_abstract} guarantees that $\bar u_K^{n+1}\in G$ for all $K\in\mathcal T_h$ because the prepared reconstruction is $G$-admissible in the sense of \eqref{eq:admissible_recon}. Combining the two componentwise conclusions yields $u_h^{n+1}\in G_h(G,\Gint)$, which completes the proof.
\end{proof}

\begin{proof}[Proof of \Cref{cor:ssp_hybrid_stage}]
An SSP Runge--Kutta method can be expressed stage by stage as a convex combination of forward Euler steps. By \Cref{thm:hybrid_stage_IDP} or \Cref{cor:hybrid_stage_practical}, each such forward Euler step preserves the hybrid target set $G_h(G, G_{\mathrm{pt}})$.
For the cell averages, the convex combination is performed on the conservative variables; since $G$ is convex, the resulting cell average values remain in $G$. For the point values, the convex combination occurs in the transformed stage $w$-variables according to \eqref{eq:point_update_w}. Since the final point values are recovered via the admissibility-preserving map $\mathcal{B}$ whose codomain is $G_{\mathrm{pt}}$, the point-value admissibility is guaranteed by the mapping property itself, regardless of the convexity of $G_{\mathrm{pt}}$.
Consequently, each intermediate SSP stage satisfies $u_h^{n+1}\in G_h(G,G_{\mathrm{pt}})$.
For the classical third-order SSP Runge--Kutta method, the SSP coefficient equals unity, ensuring that the forward Euler conservative CFL condition \eqref{eq:CFL_abstract} remains sufficient, which completes the proof.
\end{proof}

\section{Geometric realizations and CFL conditions}\label{sec:geom}

This section constructs explicit third-order geometric realizations of the CAD in Assumption~(A3) on triangular elements, rectangular Cartesian cells, convex quadrilateral elements, and general convex polygons, together with their associated CFL bounds and limiter parameters.

\subsection{IDP CFL conditions formulated via CAD weights}\label{subsec:cfl-gcd}

The cell-average invariance analysis established in \Cref{sec:avg:proofB} relies on a CAD of the form \eqref{eq:gcd_point}, expressed as 
\begin{equation}\label{eq:CD-general-geom}
	\bar u_K^n \;=\;
	\sum_{e\in \mathcal{E}_K}\;\sum_{\nu=1}^{N_q}\gamma_{K,e,\nu}\,\hat u_{K,e,\nu}^n
	\;+\;
	\sum_{s=1}^{S_K}\beta_{K,s}\,\hat u_{K,s}^{n,\ast},	
\end{equation}
with 
\[
\gamma_{K,e,\nu}:=\lambda_{K,e} \omega_{e,\nu}\ge 0,\;\beta_{K,s}\ge 0,\qquad
\sum_{e,\nu}\gamma_{K,e,\nu}+\sum_s\beta_{K,s}
:=\sum_{e\in\mathcal{E}_K} \lambda_{K,e}\sum_{\nu=1}^{N_q}\omega_{e,\nu}+\sum_s\beta_{K,s}=1.
\]

Based on the discrete representation \eqref{eq:CD-general-geom}, \Cref{thm:B} ensures that the updated cell average satisfies $\bar u_K^{n+1} \in G$, provided that the time step satisfies
\begin{equation}\label{eq:CFL-gamma}
	\frac{\Delta t}{|K|}
	\;\le\;
    \min_{1\le \nu\le N_q} \frac{\lambda_{K,e}}{|e|\alpha_{K,e,\nu}}
    \;=\;
	\min_{1\le \nu\le N_q}\frac{\gamma_{K,e,\nu}}{|e|\,\omega_{e,\nu} \alpha_{K,e,\nu}},
	\qquad\forall e\in \mathcal{E}_K,
\end{equation}
where $\lambda_{K,e}$ in \eqref{eq:CFL_abstract} corresponds to $\gamma_{K,e,\nu} / \omega_{e,\nu}$.
Equivalently, by introducing the \emph{local IDP CFL indicator}
\begin{equation}\label{eq:CFL-indicator}
	\mathsf{CFL}_K^{\mathrm{IDP}}
    \;=\; \min_{e\in \mathcal{E}_K}
	\frac{\lambda_{K,e}}{|e|\, \max_{1\le \nu\le N_q}\alpha_{K,e,\nu}}
	\;:=\;
	\min_{e\in \mathcal{E}_K}\;\min_{1\le\nu\le N_q}
	\frac{\gamma_{K,e,\nu}}{|e|\,\omega_{e,\nu}\,\alpha_{K,e,\nu}} ,
\end{equation}
the time-step restriction \eqref{eq:CFL-gamma} can be recast as
\begin{equation}\label{eq:CFL-dt}
	\Delta t \;\le\; |K|\,\mathsf{CFL}_K^{\mathrm{IDP}}.
\end{equation}

\subsection{Triangular element realization}\label{subsec:tri-ocad}

Let $K$ be a nondegenerate triangular element with edges $\{e_i\}_{i=1}^3$, edge lengths
$l_K^{(i)}:=|e_i|$, and outward unit normals $n_{K,i}$.
On each edge $e_i$, we employ the three-point Gauss--Lobatto (GL) quadrature rule for the edge average:
\[
\omega^{\mathrm{GL}}_1=\omega^{\mathrm{GL}}_3=\frac16,\qquad
\omega^{\mathrm{GL}}_2=\frac23,\qquad
\sum_{\nu=1}^3\omega_\nu^{\mathrm{GL}}=1,
\]
with nodes $\{x_{K,i,\nu}\}_{\nu=1}^3$ corresponding to the two vertices and the midpoint of $e_i$.

\subsubsection{Optimal CAD (OCAD) boundary weights}\label{sec:OCADTri}

We adopt the separable edgewise decomposition
\begin{equation}\label{eq:tri-separable}
	\gamma_{K,i,\nu} \;=\; \lambda_{K,i}\,\omega_\nu^{\mathrm{GL}},\qquad i=1,2,3,\;\nu=1,2,3,
\end{equation}
where $\lambda_{K,i}>0$ are the edge weights, $\sum_{i=1}^3 \lambda_{K,i}<1$, and the remaining
interior weight $1-\sum_i \lambda_{K,i}$ is assigned to a set of internal quadrature points
$\{x^\star_{K,s}\}_{s=1}^{S_K}$ (e.g.,  $S_K=2$) with positive weights $\{\beta_{K,s}\}$.

On $\mathbb{P}^2(K)$ with GL boundary nodes, we determine the edge weights $\lambda_{K,i}$ according to
\begin{equation}\label{eq:tri-ocad-w}
	\lambda_{K,i}
	\;=\;
	\frac{2\,|e_i|}{9\,\bar l_K + 3\,\hat l_K},
	\qquad i=1,2,3,
\end{equation}
where
\begin{equation}\label{eq:tri-lbar-lhat}
	\begin{aligned}
	\bar l_K &:= \frac{|e_1|+|e_2|+|e_3|}{3},
	\\
	\hat l_K &:= \sqrt{|e_1|^2+|e_2|^2+|e_3|^2-\frac{2}{3}\Big(|e_1||e_2|+|e_2||e_3|+|e_3||e_1|\Big)}.
	\end{aligned}
\end{equation}
We select the interior nodes $\{x^\star_{K,s}\}$ and weights $\{\beta_{K,s}\}$ to ensure exact integration on the polynomial space $(\mathbb{P}^2(K))^m$ following \cite{ding2025TriOCAD}.

\begin{proposition}[Triangular realization of Assumption~(A3)]\label{prop:tri-A3}
For every triangular element $K$, the decomposition defined by \eqref{eq:tri-separable} together with the weights \eqref{eq:tri-ocad-w} and the interior completion from \cite{ding2025TriOCAD} is a feasible cell average decomposition on $\mathbb{P}^2(K)$ in the sense of Assumption~(A3). In particular, the coefficients are strictly positive, sum to unity, and the pointwise convex decomposition \eqref{eq:gcd_point} holds on the prescribed Gauss--Lobatto edge nodes.
\end{proposition}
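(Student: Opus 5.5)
The plan is to reduce the statement to a moment problem for a single linear functional on $\mathbb P^2(K)$ and then use the interior completion of \cite{ding2025TriOCAD} only for the node-location part. First, since the weights \eqref{eq:tri-ocad-w} are proportional to the edge lengths, set $c_K:=2/(9\bar l_K+3\hat l_K)$. Then $\sum_i\lambda_{K,i}\bar\phi_{K,e_i}=c_K\int_{\partial K}\phi\,\mathrm ds$. Positivity $\lambda_{K,i}>0$ is immediate, because $\hat l_K$ is real ($\hat l_K^2=\tfrac13\sum_{i<j}(|e_i|-|e_j|)^2+\tfrac13\sum_{i<j}|e_i||e_j|\ge0$) and $\bar l_K>0$. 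The edge masses satisfy
\[
\sum_{i=1}^3\lambda_{K,i}=\frac{6\bar l_K}{9\bar l_K+3\hat l_K}\le\frac23<1 .
\]
Hence the interior mass $\beta_K:=1-\sum_i\lambda_{K,i}\ge\tfrac13>0$, which also gives the uniform bound $\beta_K\ge\beta_0$ used in \Cref{lem:PiK_props2}.

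Second, define the residual functional on $\mathbb P^2(K)$ by
\[
L_K(\phi):=\frac{1}{\beta_K}\Bigl(\bar\phi_K-c_K\int_{\partial K}\phi\,\mathrm ds\Bigr).
\]
Then $L_K(1)=1$. Feasibility of the CAD in the sense of (A3) is equivalent to $L_K$ being represented by a positive quadrature $L_K(\phi)=\sum_s\eta_{K,s}\phi(x^\star_{K,s})$ with $\eta_{K,s}>0$ and nodes $x^\star_{K,s}\in\overline K$, setting $\beta_{K,s}:=\beta_K\eta_{K,s}$. The steps are as follows.

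\begin{enumerate}[(a)]
\item Map $K$ affinely to a reference configuration. Both $\bar\phi_K$ and $|e|^{-1}\int_e\phi$ are affine invariant, but $c_K\int_{\partial K}$ is not, so the edge lengths must be tracked explicitly.
\item Compute the degree-$\le1$ moments $L_K(x)$. Since $\int_{\partial K}x\,\mathrm ds$ is $|\partial K|$ times the perimeter centroid, these give a mean point $m_K$.
\item Compute the degree-2 moments, i.e. the $3\times3$ moment matrix $M_K=[L_K(p_ap_b)]$ with $p\in\{1,x-m_{K,1},y-m_{K,2}\}$.
\item Show that $M_K\succeq0$. The choice of $\hat l_K$ (the largest $\sum\lambda$, i.e. the optimality of OCAD) should make the reduced $2\times2$ covariance block singular, with rank one.
\end{enumerate}

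A rank-one covariance means $L_K$ is supported on a line through $m_K$. It is then represented by a two-point Gauss-type rule on that line, namely $S_K=2$ nodes $m_K\pm t_\pm d_K$ with explicit positive weights. This is exactly the completion described in \cite{ding2025TriOCAD}. Once the representation holds, the identity \eqref{eq:CAD} on $\mathbb P^2(K)$ follows by construction. Finally, \eqref{eq:gcd_point} on the Gauss--Lobatto nodes follows because three-point GL is exact for quadratic traces, so $\bar\phi_{K,e_i}=\sum_\nu\omega^{\mathrm{GL}}_\nu\phi(x_{K,i,\nu})$.

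The expected main obstacle is the last geometric requirement: showing that both nodes lie in $\overline K$ for every triangle shape, and not just for equilateral or right triangles. My approach would be to verify positivity of $L_K$ on the nonnegative quadratics $\ell_i\ell_j$ and $\ell_i^2$, where $\ell_i$ are the barycentric coordinates. Since $L_K(\ell_i)\ge0$ and $L_K$ is positive on these products, the support line meets $K$ and the representing measure is carried by $\overline K$. I would check these inequalities using the barycentric moment formulas, for which $\int_K\ell_i\ell_j$ and $\int_{e}\ell_i\ell_j$ are explicit, reducing them to inequalities in $|e_1|,|e_2|,|e_3|$ that should follow from the definition of $\hat l_K$ as the square root of a discriminant. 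If this direct check becomes unwieldy, I would fall back on citing the explicit node formulas of \cite{ding2025TriOCAD} and verifying only positivity of the weights.
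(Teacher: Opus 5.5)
Your route is correct, but it differs from the paper's: the paper proves the proposition in one line by citing the OCAD construction of \cite{ding2025TriOCAD}, whereas you re-derive feasibility as a degree-two truncated moment problem. The structure you predict in step (d) does occur. Label the vertices so that $e_k$ is opposite vertex $k$ and set $\Lambda:=\sum_k\lambda_{K,k}$. The unnormalized remainder $R(\phi):=\bar\phi_K-\sum_k\lambda_{K,k}\bar\phi_{K,e_k}$ then satisfies $R(\ell_i^2)=\frac16-\frac13(\Lambda-\lambda_{K,i})$, $R(\ell_i\ell_j)=\frac1{12}-\frac16\lambda_{K,m}$ for $\{i,j,m\}=\{1,2,3\}$, and $R(\ell_i)=\frac13-\frac12(\Lambda-\lambda_{K,i})$. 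In particular $R\bigl((\ell_i-\ell_j)^2\bigr)=\frac16-\frac13\Lambda$ for every pair, so $R$ is isotropic on differences of barycentric coordinates. The covariance block of $L_K$ is therefore positive semidefinite iff $\Lambda\le\frac12$ and
\[
(1-\Lambda)(1-2\Lambda)\ \ge\ 3\sum_{i=1}^{3}\Bigl(\lambda_{K,i}-\tfrac13\Lambda\Bigr)^2 .
\]
For $\lambda_{K,i}=c|e_i|$, the smaller root of the equality case is exactly $c=2/(9\bar l_K+3\hat l_K)$, because $\hat l_K^2=\bar l_K^2+\frac43\sum_i(|e_i|-\bar l_K)^2$. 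So \eqref{eq:tri-ocad-w} is precisely the rank-degeneracy point. This also shows it is the largest admissible edge-proportional choice, and it gives $\Lambda\le\frac12$, sharper than your $\frac23$. The paper's citation buys brevity and explicit node formulas. Your argument buys a self-contained explanation of $\hat l_K$ and of optimality. Since (A3) only asks for some positive completion, you do not need to identify your nodes with those of \cite{ding2025TriOCAD}.

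A few details need fixing.
\begin{itemize}
\item Your identity for $\hat l_K^2$ has the wrong first coefficient. The correct form is $\frac12\sum_{i<j}(|e_i|-|e_j|)^2+\frac13\sum_{i<j}|e_i||e_j|$.
\item The covariance has rank exactly one only for non-equilateral triangles. For the equilateral triangle it vanishes, and the completion is the single centroid node with weight $\frac12$, matching \Cref{rem:aux_cad_state}.
\item Your ``main obstacle'' is easy. Positivity on $\ell_i^2$ is automatic from semidefiniteness. $R(\ell_i)>0$ follows from $\Lambda\le\frac12$, and $R(\ell_i\ell_j)>0$ follows from $\lambda_{K,m}<\frac12\Lambda\le\frac14$ (triangle inequality). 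No discriminant argument is needed.
\item Node placement needs more care than ``the representing measure is carried by $\overline K$'', because representing measures are not unique. Fix the support line $\mathcal L$ through the interior mean point and parametrize it by arclength $\xi$. Write $\mathcal L\cap\overline K=[\xi_-,\xi_+]$, with endpoints on $\{\ell_i=0\}$ and $\{\ell_j=0\}$, $i\neq j$. If $\xi_0$ and $\sigma^2$ are the mean and variance, then $R(\ell_i\ell_j)\ge0$ is exactly $\sigma^2\le(\xi_0-\xi_-)(\xi_+-\xi_0)$. You must then choose $t_\pm$ to fit inside the segment, for example with one node at an endpoint if the symmetric pair $\xi_0\pm\sigma$ does not fit.
\item Alternatively, a doubly nonnegative $3\times 3$ barycentric moment matrix is completely positive, which gives nodes in $\overline K$ directly.
\end{itemize}
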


\begin{proof}
The assertion follows directly from the OCAD construction of \cite{ding2025TriOCAD}, which provides a feasible CAD on $\mathbb P^2(K)$ with the stated boundary weights and positive interior completion, thereby verifying Assumption~(A3).
\end{proof}

\subsubsection{Associated CFL condition for triangular elements}\label{sec:tri-cfl-sub}
According to \eqref{eq:tri-separable}, the CFL indicator \eqref{eq:CFL-indicator} reduces to
\begin{equation}\label{eq:tri-cfl}
	\mathsf{CFL}^{\mathrm{IDP}}_K
	\;=\;
	\min_{1\le i\le 3}\frac{\lambda_{K,i}}{|e_i|\,\alpha_{K,e_i}},
	\qquad
	\text{since}\quad
	\frac{\gamma_{K,i,\nu}}{|e_i|\omega^{\mathrm{GL}}_\nu}
	=\frac{\lambda_{K,i}}{|e_i|}\;\;\text{for all }\nu,
\end{equation}
{where $\alpha_{K,e_i}=\max_{1\leq \nu \leq N_q} \alpha_{K,e_i, \nu}$ denotes the maximum wave speed on the edge $e_i$.}
Hence, \Cref{thm:B} guarantees that $\bar u_K^{n+1}\in G$ under the time-step condition
\begin{equation}\label{eq:tri-dt}
	\Delta t
	\;\le\;
	|K|\,
	\min_{1\le i\le 3}\frac{\lambda_{K,i}}{|e_i|\,\alpha_{K,e_i}}.
\end{equation}
In particular, when adopting the optimal weights \eqref{eq:tri-ocad-w} , the resulting CFL condition reduces to
\[
\Delta t
\;\le\; \frac{2 |K|}{9\,\bar l_K + 3\,\hat l_K}
\,
\frac{1}{\max_{1\le i\le 3} \alpha_{K,e_i}} .
\]

\begin{corollary}[Instantiation of \Cref{thm:B} on triangular meshes]\label{cor:tri-realization}
Assume (A1), (A2), and (A5). For the third-order triangular realization described above, Assumption~(A3) is established by \Cref{prop:tri-A3}. Consequently, the conservative update on triangular meshes satisfies the cell-average IDP property of \Cref{thm:B} under the triangular CFL condition \eqref{eq:tri-dt}.
\end{corollary}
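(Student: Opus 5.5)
This corollary is a direct instantiation of \Cref{thm:B}, so the plan is to verify its hypotheses for the triangular realization. Then I will check that the abstract CFL condition \eqref{eq:CFL_abstract}, or rather its sharper edgewise form \eqref{eq:CFL-gamma} actually used in Step~3 of the proof of \Cref{thm:B}, reduces to \eqref{eq:tri-dt}.

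\emph{Hypotheses of \Cref{thm:B}.} Assumptions (A1), (A2), and (A5) are assumed. For (A1) I will also note that the three-point Gauss--Lobatto rule has positive weights $1/6,2/3,1/6$. It is exact for quintic traces on each edge, hence exact for traces of $\mathbb P^2(K)$; this is the exactness required in (A3). Assumption (A3) itself, with $\mathbb V_K=(\mathbb P^2(K))^m$, is supplied by \Cref{prop:tri-A3}: positive $\lambda_{K,i}$ and $\beta_{K,s}$ summing to one, with the pointwise identity \eqref{eq:gcd_point} on the GL nodes. For the edge ratio $\mu_K>0$ I will use \eqref{eq:tri-ocad-w}. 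It gives $\lambda_{K,i}|K|/|e_i| = 2|K|/(9\bar l_K+3\hat l_K)$, independent of $i$ and strictly positive. Here I must check that $\hat l_K$ in \eqref{eq:tri-lbar-lhat} is real. The radicand equals $\tfrac13\sum_i |e_i|^2+\tfrac13\sum_{i<j}(|e_i|-|e_j|)^2\ge 0$.

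\emph{Admissible reconstruction.} \Cref{thm:B} takes as input a $G$-admissible prepared reconstruction $\hat u_K^n=\Pi_K(u_K^n;\bar u_K^n)$. I will invoke \Cref{lem:PiK_props1}(ii) for the classic limiter or \Cref{lem:PiK_props2}(ii) for the alternative limiter, with the CAD nodes from \Cref{prop:tri-A3}. Both lemmas rely only on (A3), so they apply verbatim. I will also note that the strict case $\sum_i\lambda_{K,i}<1$ holds, since the interior completion carries positive weight. Consequently Step~1 of the proof of \Cref{thm:B}, including the auxiliary state $\hat u_K^*\in G$, is used as written.

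\emph{CFL reduction.} The only point needing care is that the proof of \Cref{thm:B} uses only the edgewise inequality \eqref{eq:c1_lower}, namely $\lambda_{K,e}/|e|\ge \alpha_{K,e,\nu}\Delta t/|K|$ for every $(e,\nu)$. This is exactly \eqref{eq:CFL-gamma}. Under the separable structure \eqref{eq:tri-separable}, the ratio $\gamma_{K,i,\nu}/(|e_i|\omega_\nu^{\mathrm{GL}})=\lambda_{K,i}/|e_i|$ does not depend on $\nu$. Minimizing over $\nu$ therefore replaces $\alpha_{K,e_i,\nu}$ by $\alpha_{K,e_i}$, which gives \eqref{eq:tri-cfl}, and hence \eqref{eq:tri-dt} via \eqref{eq:CFL-dt}. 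This step is the main, though mild, obstacle. \eqref{eq:tri-dt} is stated with $\alpha_{K,e,\nu}$, the maximum over both traces, rather than the one-sided $\tilde\alpha$ appearing in \eqref{eq:CFL_abstract}. So I must argue directly through \eqref{eq:c1_lower} and not by literally matching \eqref{eq:CFL_abstract}. Since $\alpha_{K,e,\nu}$ is exactly the viscosity used in the flux, \eqref{eq:c1_lower} follows from \eqref{eq:tri-dt}. Steps~2--4 of the proof of \Cref{thm:B}, namely the Lax--Friedrichs splitting, (A5), and the closure identity $\sum_i|e_i|n_{K,i}=0$, then give $\bar u_K^{n+1}\in G$. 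Boundary cells are handled by the final clause of \Cref{thm:B}.
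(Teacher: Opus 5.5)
Your proposal is correct and follows the paper's own route. \Cref{prop:tri-A3} supplies (A3), and the separable structure \eqref{eq:tri-separable} reduces the edgewise condition \eqref{eq:CFL-gamma} to \eqref{eq:tri-cfl} and hence \eqref{eq:tri-dt}. Condition \eqref{eq:CFL-gamma} is exactly the inequality \eqref{eq:c1_lower} used in Step~3 of the proof of \Cref{thm:B}, so you are right to argue through \eqref{eq:c1_lower} with the two-sided viscosity $\alpha_{K,e,\nu}$ instead of matching \eqref{eq:CFL_abstract} literally. One harmless slip: the three-point Gauss--Lobatto (Simpson) rule is exact only up to cubics, not quintics, but (A3) only needs exactness for quadratic edge traces.
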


\subsection{Rectangular (Cartesian) element realization}\label{sec:OCADCar}
For structured Cartesian meshes, the OCAD construction provides an \emph{optimal} IDP CFL condition, as established in \cite{CuiDingWu2024}. Let $|e_1|=|e_3|=\dy$ and $|e_2|=|e_4|=\dx$. Here, $\bar u_K^{(i)}$ denotes the average over the edge $e_i$, $\hat u_K^{*,i}$ represents the midpoint value on $e_i$, and $\hat u_K^{\mathrm{ctr}}$ the cell-center value; see \Cref{fig:mesh2}. The OCAD on the tensor-product polynomial space $\mathbb{Q}^2(K)$ is formulated as
\begin{equation}\label{Car:OCAD}
	\begin{aligned}
		\bar{u}_{K}^n&=
		\frac{1+\Theta}{12} \big( \bar{u}_K^{(1)} + \bar{u}_K^{(3)} \big) + \frac{1-\Theta}{12} \big( \bar{u}_K^{(2)} + \bar{u}_K^{(4)} \big)  \\
		&+ \frac23 \omega_{2}^{\rm GL} \hat{u}_{K}^{\mathrm{ctr}} +
		\frac{1-\Theta}{3} 
		\big(\omega_{1}^{\rm GL} (\hat{u}_{K}^{*,1} + \hat{u}_{K}^{*,3})\big)
		+
		\frac{1+\Theta}{3} \big(\omega_{1}^{\rm GL} (\hat{ u}_{K}^{*,2} + \hat{u}_{K}^{*,4})\big)
	\end{aligned}
\end{equation}
where
\begin{equation}\label{Theta}
	\Theta=\frac{\alpha_{x}/\dx - \alpha_{y}/\dy}{\alpha_{x}/\dx + \alpha_{y}/\dy}
	\in (-1,1)
\end{equation}
Here, $\alpha_{x} = \max\{\alpha_{K,e_1}, \alpha_{K,e_3}\}$ and $\alpha_{y} = \max\{\alpha_{K,e_2}, \alpha_{K,e_4}\}$ denote the directional wave-speed bounds in the $x$- and $y$-directions, respectively. We also denote $\alpha_{K,e_i}=\max_{1\leq \nu \leq N_q} \alpha_{K,e_i, \nu}$ as the maximum wave speed along the edge $e_i$.
\begin{corollary}[Rectangular OCAD realization of the cell-average IDP property]\label{thm:Car0}
	Assume that Assumptions (A1), (A2), and (A5) in \Cref{sec:assumptions} hold, and let $\mathcal{T}_h$ be a rectangular Cartesian mesh with spatial grid spacings $\dx$ and $\dy$. The rectangular OCAD \eqref{Car:OCAD} realizes Assumption~(A3) on the tensor-product polynomial space $\mathbb{Q}^2(K)$.
	Let $\hat{u}_K^n = \Pi_K(u_K^n; \bar{u}_K^n)$ be a $G$-admissible reconstruction in the sense of \eqref{eq:admissible_recon}. If the time step satisfies the explicit CFL condition
	\begin{equation}\label{eq:1668}
		\dt \Big(\frac{\alpha_x}{\dx} + \frac{\alpha_y}{\dy}\Big) \leq \frac16,
	\end{equation}
	then the cell averages updated by the conservative forward-Euler step \eqref{eq:avg_update_FE} satisfy
	\[
	\bar{u}_K^{n+1} \in G \qquad \forall K \in \mathcal{T}_h.
	\]
\end{corollary}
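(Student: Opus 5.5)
The plan is to verify that \eqref{Car:OCAD} is a feasible CAD in the sense of Assumption~(A3), and then read off the CFL condition of \Cref{thm:B} (in the form \eqref{eq:CFL-gamma}) for the resulting edge weights.

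\emph{Step 1 (positivity and normalization).} In \eqref{Car:OCAD} the boundary weights are $\lambda_{K,1}=\lambda_{K,3}=\frac{1+\Theta}{12}$ on the edges $e_1,e_3$ (length $\dy$, normal $\pm x$) and $\lambda_{K,2}=\lambda_{K,4}=\frac{1-\Theta}{12}$ on $e_2,e_4$. The internal control nodes are the cell center, with weight $\frac23\omega_2^{\rm GL}=\frac49$, and the four edge midpoints, with weights $\frac{1\mp\Theta}{3}\omega_1^{\rm GL}=\frac{1\mp\Theta}{18}$; these nodes lie in $\overline K$, as (A3) permits. If $\alpha_x,\alpha_y>0$, then $\Theta\in(-1,1)$, so all weights are strictly positive. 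If one of them vanishes, the corresponding flux terms are absent and a limiting argument (or any admissible $\Theta$) applies. The weights sum to $\frac13+\frac49+\frac29=1$, and $\sum_e\lambda_{K,e}=\frac13<1$.

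\emph{Step 2 (exactness on $\mathbb Q^2(K)$).} Substitute the three-point Gauss--Lobatto (Simpson) rule for each edge average. This turns \eqref{Car:OCAD} into a pointwise formula on the $3\times3$ tensor Simpson grid. The formula is affine in $\Theta$, so I check the two parts separately.
\begin{itemize}
\item The $\Theta$-independent part gives corner weight $2\cdot\frac1{12}\cdot\frac16=\frac1{36}$, midpoint weight $\frac1{12}\cdot\frac23+\frac1{18}=\frac19$, and center weight $\frac49$. This is exactly the tensor Simpson rule, which is exact for the cell average of every $\phi\in\mathbb Q^2(K)$ (indeed $\mathbb Q^3$).
\item In the $\Theta$-coefficient, the corner contributions of $e_1,e_3$ and $e_2,e_4$ cancel. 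At the midpoints the coefficient is $\frac1{12}\cdot\frac23-\frac1{18}=0$ with alternating signs, so this part vanishes identically.
\end{itemize}
Hence \eqref{eq:gcd_point}, and thus \eqref{eq:CAD}, holds for every $\Theta$. The nodes are independent of $\Theta$, so the $G$-admissibility of $\hat u_K^n$ required by \eqref{eq:admissible_recon} does not depend on the wave-speed-dependent choice of $\Theta$. There is a subtlety here: $\Theta$ is computed from the limited traces. It only reweights the fixed nodes, so \Cref{lem:PiK_props1}(ii) still gives $\hat u_K^*\in G$ as a convex combination of the admissible control values.

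\emph{Step 3 (CFL).} The proof of \Cref{thm:B} only uses \eqref{eq:c1_lower}, that is, $\alpha_{K,e,\nu}\Delta t\,|e|/(|K|\lambda_{K,e})\le1$. With $|K|=\dx\dy$ this reads:
\begin{itemize}
\item on $e_1,e_3$: $\Delta t\,\alpha_x/\dx\le\frac{1+\Theta}{12}$;
\item on $e_2,e_4$: $\Delta t\,\alpha_y/\dy\le\frac{1-\Theta}{12}$.
\end{itemize}
Let $S=\alpha_x/\dx+\alpha_y/\dy$. From \eqref{Theta}, $1+\Theta=2(\alpha_x/\dx)/S$ and $1-\Theta=2(\alpha_y/\dy)/S$, so both inequalities reduce to $\Delta t\,S\le\frac16$, which is \eqref{eq:1668}. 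Steps~1--4 of the proof of \Cref{thm:B} then apply verbatim, including boundary cells with exterior states in $G$, and give $\bar u_K^{n+1}\in G$.

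The main obstacle is the exactness check in Step~2. It must hold uniformly in $\Theta$, which is why I split the formula into its $\Theta$-free and $\Theta$-linear parts rather than testing against monomials.
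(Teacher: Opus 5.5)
Your proposal is correct and follows the paper's argument. The paper's proof is exactly your Step~3: it reads off the edgewise condition $\Delta t/|K|\le\lambda_{K,e}/(|e|\,\alpha_{K,e,\nu})$ from \Cref{thm:B} (that is, \eqref{eq:c1_lower}, without the factor $2$ appearing in \eqref{eq:CFL_abstract}) and reduces it via \eqref{Theta} to \eqref{eq:1668}. Your Steps~1--2 additionally verify Assumption~(A3) for \eqref{Car:OCAD}, which the paper simply takes from the cited OCAD construction, and your observation that the Simpson-substituted formula collapses to the tensor Simpson rule for every $\Theta$ is a clean and valid way to do this.
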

\begin{proof}
	According to \Cref{thm:B}, the updated cell average satisfies $\bar{u}_{K}^{n+1} \in G$, provided that 
	\begin{equation*}
		\frac{\dt}{|K|} \leq 
		\min \Big\{ \frac{1+\Theta}{12 \dy \alpha_x}, \frac{1-\Theta}{12 \dx \alpha_y}\Big\}
		=
		\frac{1/6}{ \dx \dy \big(\alpha_x/\dx + \alpha_y/\dy\big)}.
	\end{equation*}
	Substituting $|K| = \dx \dy$, this bound reduces precisely to \eqref{eq:1668}, which concludes the proof.
\end{proof}

\subsection{Convex quadrilateral element realization}\label{subsec:quad-tensor}

Let $K$ be a shape-regular convex quadrilateral element with vertices $\{x_K^{i}\}_{i=1}^4$ ordered counterclockwise, and let 
\[
x_K^{\mathrm{ctr}}:=\frac{x_K^{1}+x_K^{2}+x_K^{3}+x_K^{4}}{4}, \qquad 
x_K^{*,i} := \frac{x_K^{i-1}+x_K^{i}}{2}, \quad i = 1,2,3,4,
\]
be the geometric center of element $K$ and the corresponding edge midpoints, respectively, with the cyclic convention $x_K^{0}:=x_K^{4}$.
The cell average over $K$ admits the representation (see \Cref{fig:mesh2})
\[
	\bar{u}_K = \sum_{i=1}^4 \frac{\Delta_i}{9S_\Delta} u(x_K^{i}) + \sum_{i=1}^{4} \frac{2(\Delta_i + \Delta_{i+1})}{9S_\Delta} u(x_K^{*,i}) + \frac{4}{9} u(x_K^{\mathrm{ctr}}),
\]
where $S_\Delta:=\Delta_1+\Delta_2+\Delta_3+\Delta_4$ and
\[
\Delta_1:=(x^{(2)}_K-x^{(1)}_K) \times (x^{(4)}_K-x^{(1)}_K),\qquad
\Delta_2:=(x^{(2)}_K-x^{(1)}_K) \times (x^{(3)}_K-x^{(2)}_K),
\]
\[
\Delta_3:=(x^{(3)}_K-x^{(4)}_K) \times (x^{(3)}_K-x^{(2)}_K),\qquad
\Delta_4:=(x^{(3)}_K-x^{(4)}_K) \times (x^{(4)}_K-x^{(1)}_K).
\]
We define the balancing parameter
\[
\Theta_K
:=
\frac12\left(
\max\!\left\{0,\frac{2(\Delta_2-\Delta_3)}{3S_\Delta}\right\}
+
\min\!\left\{\frac{2\Delta_1}{3S_\Delta},\frac{2\Delta_2}{3S_\Delta},\frac{\Delta_1+\Delta_2}{3S_\Delta}\right\}
\right).
\]
Consequently, we obtain the edgewise cell average decomposition
\[
\bar u_K
=
\sum_{i=1}^4 \lambda_{K,i}\,\bar u_K^{(i)}
+
\sum_{i=1}^4 \beta_{K,i}\,u(x_K^{*,i})
+
\frac49\,u(x_K^{\mathrm{ctr}}),
\]
where
\[
\lambda_{K,1}=\Theta_K,\quad
\lambda_{K,2}=\frac{2\Delta_2}{3S_\Delta}-\Theta_K,\quad
\lambda_{K,3}=\frac{2(\Delta_3-\Delta_2)}{3S_\Delta}+\Theta_K,\quad
\lambda_{K,4}=\frac{2\Delta_1}{3S_\Delta}-\Theta_K,
\]
\[
\beta_{K,1}=\beta_{K,3}
=
\frac{2(\Delta_1+\Delta_2)}{9S_\Delta}-\frac23\,\Theta_K,
\qquad
\beta_{K,2}=\beta_{K,4}
=
\frac{2(\Delta_3-\Delta_2)}{9S_\Delta}+\frac23\,\Theta_K.
\]
All coefficients above are strictly positive. Moreover, when $K$ reduces to a parallelogram, the weights simplify to
\[
\lambda_{K,1}=\lambda_{K,2}=\lambda_{K,3}=\lambda_{K,4}=\frac1{12},
\qquad
\beta_{K,1}=\beta_{K,2}=\beta_{K,3}=\beta_{K,4}=\frac1{18},
\]
and consequently
\[
\bar u_K
=
\frac1{12}\sum_{i=1}^4 \bar u_K^{(i)}
+
\frac1{18}\sum_{i=1}^4 u(x_K^{*,i})
+
\frac49\,u(x_K^{\mathrm{ctr}}).
\]

\begin{figure}[!htb]
	\centering
	\begin{tikzpicture}[scale=1.4]
		
		\coordinate (A) at (0,0);     
		\coordinate (B) at (3,0.2);   
		\coordinate (C) at (2.7,2.2); 
		\coordinate (D) at (0.2,2);   
		
		\draw[thick] (A)--(B)--(C)--(D)--cycle;
		
		\coordinate (u31) at (A);
		\coordinate (u33) at (B);
		\coordinate (u13) at (C);
		\coordinate (u11) at (D);
		
		\coordinate (u21) at ($(A)!0.5!(D)$); 
		\coordinate (u23) at ($(B)!0.5!(C)$); 
		\coordinate (u12) at ($(D)!0.5!(C)$); 
		\coordinate (u32) at ($(A)!0.5!(B)$); 
		
		\coordinate (u22) at ($(u21)!0.5!(u23)$); 
		
		\draw[dashed] (u21) -- (u23);  
		\draw[dashed] (u12) -- (u32);  
		
		\node[above left]  at (u11) {$\hat{u}(x_{K}^4)$};
		\node[above right] at (u13) {$\hat{u}(x_{K}^3)$};
		\node[below left]        at (u31) {$\hat{u}(x_{K}^1)$};
		\node[below right] at (u33) {$\hat{u}(x_{K}^2)$};
		
		\node[above]       at (u12) {$\hat{u}(x_{K}^{*,4})$};
		\node[right]       at (u23) {$\hat{u}(x_{K}^{*,3})$};
		\node[left]       at (u21) {$\hat{u}(x_{K}^{*,1})$};
		\node[below]       at (u32) {$\hat{u}(x_{K}^{*,2})$};
		
		\node[above right]       at (u22) {$\hat{u}(x_{K}^{\mathrm{ctr}})$};
		
		
		\draw[decorate,decoration={brace,raise=3pt}]
		($(u31)+(-1cm,0)$) -- ($(u11)+(-0.9cm,0)$)
		node[midway,xshift=-0.6cm] {$\bar u_K^{(1)}$};
		
		\draw[decorate,decoration={brace,raise=3pt}]
		($(u11)+(0,0.5cm)$) -- ($(u13)+(0,0.5cm)$)
		node[midway, yshift=0.6cm] {$\bar u_K^{(4)}$};
		
		\draw[decorate,decoration={brace,raise=3pt}]
		($(u33)+(0,-0.5cm)$) -- ($(u31)+(0,-0.5cm)$)
		node[midway, yshift=-0.6cm] {$\bar u_K^{(2)}$};
		
		\draw[decorate,decoration={brace,raise=3pt}]
		($(u13)+(0.9cm,0)$) -- ($(u33)+(1cm,0)$)
		node[midway, xshift=0.6cm] {$\bar u_K^{(3)}$};
		
	\end{tikzpicture}
	\caption{Quadrilateral element $K$ with centroid, edge-midpoint, and edge-average degrees of freedom.}
	\label{fig:mesh2}
\end{figure}

\begin{proposition}[Quadrilateral realization of Assumption~(A3)]\label{prop:quad-A3}
The decomposition formulated above defines a feasible cell average decomposition on shape-regular convex quadrilateral elements for the third-order polynomial space.
\end{proposition}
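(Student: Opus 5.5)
The plan is to prove feasibility in three stages: exactness of the underlying nine-point vertex/midpoint/center rule, exact replacement of vertex and midpoint values by Gauss--Lobatto edge averages, and positivity of all resulting weights. The space is $(\mathbb P^2(K))^m$, componentwise.

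\textbf{Step 1: exactness of the nine-point rule.} Map the reference square $\hat K=[-1,1]^2$ onto $K$ by the bilinear map $x=\mathcal F_K(\xi,\eta)$ that sends reference vertices to $x_K^i$. The Jacobian determinant $J(\xi,\eta)$ has the form $a+b\xi+c\eta$. Its values at the four reference corners are proportional to the cross products $\Delta_i$, and $S_\Delta$ is proportional to $|K|$. For $\phi\in\mathbb P^2(K)$, the composition $\phi\circ\mathcal F_K$ has degree at most $2$ in each of $\xi$ and $\eta$ separately. Hence $(\phi\circ\mathcal F_K)\,J$ has degree at most $3$ in each variable. The tensor Simpson rule on $\hat K$, with nodes at corners, edge midpoints and center and weights $\frac{1}{36},\frac{4}{36},\frac{16}{36}$ up to normalization, is exact for such integrands. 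Evaluating $J$ at the nodes gives the weights:
\begin{itemize}
\item at vertex $i$, a weight proportional to $\Delta_i$;
\item at the midpoint between vertices $i-1$ and $i$, the average of the two adjacent corner Jacobians;
\item at the center, the mean of all four corner Jacobians, so the normalized center weight is $\frac49$.
\end{itemize}
Dividing by $|K|$ and using $\int_{\hat K}J=|K|$ should reproduce the stated representation. I expect this also to require the affine identity $\Delta_1+\Delta_3=\Delta_2+\Delta_4$, which holds because $J$ is affine on $\hat K$; I would verify it explicitly from the cross-product definitions. This bookkeeping step must be checked carefully against the paper's vertex/edge labeling.

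\textbf{Step 2: substitution of edge averages.} The three-point Gauss--Lobatto rule on each edge is exact for traces of degree $\le 3$. It therefore expresses $\bar u_K^{(i)}$ as $\frac16$ times each endpoint value plus $\frac23$ times the midpoint value. Inserting $\lambda_{K,i}\bar u_K^{(i)}$ for all four edges, the vertex values must be fully absorbed. That is, for each vertex, $\frac16$ times the sum of the $\lambda$'s of its two incident edges must equal the vertex weight $\Delta_i/(9S_\Delta)$. This is a $4\times4$ cyclic linear system whose solution set is one-dimensional, parametrized by $\Theta_K$. Solvability is guaranteed by the identity from Step 1. I would check directly that the stated $\lambda_{K,i}$ satisfy all four vertex equations. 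The leftover midpoint coefficients are the nine-point midpoint weights minus $\frac23\lambda_{K,i}$, which should reproduce the stated $\beta_{K,i}$. Summation to unity is then inherited from exactness of the nine-point rule on constants. The pointwise form \eqref{eq:gcd_point} follows because the edge rule is exact on traces.

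\textbf{Step 3: strict positivity (main obstacle).} Convexity of $K$ gives $\Delta_i>0$ for all $i$. Each of $\lambda_{K,1},\dots,\lambda_{K,4},\beta_{K,1},\dots,\beta_{K,4}$ is affine in $\Theta_K$, so positivity is equivalent to $L<\Theta_K<U$ for explicit bounds. The lower bound is $L=\max\{0,\,2(\Delta_2-\Delta_3)/(3S_\Delta)\}$. The upper bound is $U=\min\{2\Delta_1/(3S_\Delta),\,2\Delta_2/(3S_\Delta),\,(\Delta_1+\Delta_2)/(3S_\Delta)\}$; the third entry comes from the $\beta$ constraints. Since $\Theta_K=\frac12(L+U)$, it suffices to show $L<U$. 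Comparing each lower bound with each upper bound reduces this to inequalities such as $\Delta_2-\Delta_3<\Delta_1$ and $\Delta_2-\Delta_3<\frac12(\Delta_1+\Delta_2)$. I would derive these from $\Delta_i>0$ together with $\Delta_1+\Delta_3=\Delta_2+\Delta_4$. For example, $\Delta_2-\Delta_3=\Delta_1-\Delta_4<\Delta_1$.

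Finally, I would specialize to parallelograms as a sanity check. There all $\Delta_i$ are equal, and the formulas should reduce to $\lambda=\frac1{12}$ and $\beta=\frac1{18}$ as stated.
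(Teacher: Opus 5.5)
Your proposal is correct and follows the paper's proof. Positivity comes from $L_K<\Theta_K=\tfrac12(L_K+U_K)<U_K$ via $\Delta_2-\Delta_3=\Delta_1-\Delta_4<\Delta_1$, and exactness from inserting the Simpson (three-point Gauss--Lobatto) edge averages into the nine-point vertex/midpoint/center rule and matching the coefficients $\tfrac16(\lambda_{K,i}+\lambda_{K,i+1})$ and $\tfrac23\lambda_{K,i}+\beta_{K,i}$.

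Your Step~1 goes further than the paper. Using the bilinear map, its affine Jacobian, and tensor Simpson exactness on $\mathbb Q^3$, it proves the nine-point rule that the paper only asserts. It also settles the bookkeeping you flagged: the midpoint of $[x_K^{i-1},x_K^i]$ carries weight $2(\Delta_{i-1}+\Delta_i)/(9S_\Delta)$. Hence the stated $\lambda_{K,i},\beta_{K,i}$ must be attached to $e_i=[x_K^i,x_K^{i+1}]$ and its midpoint, which is the cyclic relabeling the paper's appendix alludes to.
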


\begin{proof}
For a shape-regular convex quadrilateral element, each diagonal-area quantity $\Delta_i$ defined above is strictly positive. Moreover, the diagonal-area identities $\Delta_1+\Delta_3=\Delta_2+\Delta_4=2|K|$ yield $S_\Delta=4|K| >0.$
We define the lower and upper bounds
\[
L_K := 
\max\!\left\{0,\frac{2(\Delta_2-\Delta_3)}{3S_\Delta}\right\}, \qquad
U_K:=\min\!\left\{\frac{2\Delta_1}{3S_\Delta},\frac{2\Delta_2}{3S_\Delta},\frac{\Delta_1+\Delta_2}{3S_\Delta}\right\}=\frac{2}{3S_\Delta} \min\left\{\Delta_1,\Delta_2\right\}.
\]
We first verify that $0 \le L_K < U_K$.
The condition $\Delta_2 \le \Delta_3$ trivially yields $L_K = 0 < U_K$. For $\Delta_2 > \Delta_3$, the diagonal-area identities imply $\Delta_2 - \Delta_3 = \Delta_1 - \Delta_4 < \Delta_1$. Consequently, $\max\{0, \Delta_2 - \Delta_3\} < \min\{\Delta_1, \Delta_2\}$, which ensures that $L_K < U_K$. 
By the definition of $\Theta_K=\frac12(L_K+U_K)$, it follows that $0 \le L_K < \Theta_K < U_K$. Thus, the bounds built into the formulas for $\lambda_{K,i}$ and $\beta_{K,i}$ ensure $\lambda_{K,i} > 0$ and $\beta_{K,i} > 0$ for $i=1,\dots,4$.
Furthermore, using $S_\Delta=4|K|$, direct algebraic evaluation yields
\[
\sum_{i=1}^4 \lambda_{K,i}=\frac13,\qquad \sum_{i=1}^4 \beta_{K,i}=\frac29,\qquad \frac13+\frac29+\frac49=1.
\]
Hence, all CAD weights are strictly positive and sum to unity. Exactness follows by expanding the edge averages through Simpson's rule on each quadratic edge trace and comparing coefficients with the displayed point-value decomposition; the algebraic verification is detailed in Appendix~\ref{sec:quad_cad_appendix}. The vector-valued statement holds componentwise, which completes the proof.
\end{proof}

\begin{corollary}[Quadrilateral realization of the cell-average IDP property]\label{thm:1662}
	Assume that Assumptions (A1), (A2), and (A5) in \Cref{sec:assumptions} hold, and let $\mathcal{T}_h$ be a convex quadrilateral mesh. By \Cref{prop:quad-A3}, Assumption~(A3) is satisfied via the decomposition above.
	Let $\hat{u}_K^n=\Pi_K(u_K^n;\bar{u}_K^n)$ be a $G$-admissible reconstruction in the sense of \eqref{eq:admissible_recon}. 
	For the conservative forward-Euler update \eqref{eq:avg_update_FE}, 
	if the time step satisfies the explicit CFL condition
	\begin{equation}\label{eq:1667}
		\alpha_{K,e_i}\,\frac{\Delta t\,|e_i|}{|K|}\ \le\ \lambda_{K,i}
		\qquad i=1,2,3,4, \, \forall K\in\mathcal{T}_h,
	\end{equation}
	where $\alpha_{K,e_i} := \max_{1\leq \nu \leq N_q} \alpha_{K,e_i, \nu}$, then the updated cell averages satisfy
	\[
	\bar{u}_K^{n+1}\in G\qquad \forall K\in\mathcal{T}_h.
	\]
\end{corollary}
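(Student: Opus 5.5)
The plan is to obtain the statement as a direct instantiation of \Cref{thm:B}, in the same way as \Cref{thm:Car0}. \Cref{prop:quad-A3} already provides Assumption~(A3) on each convex quadrilateral. Its edge weights are $\lambda_{K,i}>0$, its interior weights are $\beta_{K,i}$ at the edge midpoints together with $4/9$ at $x_K^{\mathrm{ctr}}$, and all weights sum to one. With (A1), (A2), and (A5) assumed, every hypothesis of \Cref{thm:B} holds except the CFL restriction. It therefore remains to check that \eqref{eq:1667} implies the restriction actually used in the proof of \Cref{thm:B}.

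The key point is that the proof of \Cref{thm:B} does not need the global form \eqref{eq:CFL_abstract}. It needs only the nodewise inequality \eqref{eq:c1_lower}, namely $c_{K,e}-\frac{\alpha_{K,e,\nu}\Delta t}{2|K|}\ge \frac{\alpha_{K,e,\nu}\Delta t}{2|K|}$ with $c_{K,e}=\lambda_{K,e}/|e|$. This is equivalent to $\alpha_{K,e,\nu}\Delta t\,|e|/|K|\le\lambda_{K,e}$. Since $\alpha_{K,e_i}=\max_\nu\alpha_{K,e_i,\nu}$, condition \eqref{eq:1667} gives this inequality for every $\nu$ on every edge $e_i$, $i=1,\dots,4$. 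I would then repeat Steps~1--4 of the proof of \Cref{thm:B} verbatim, with $\sum_i\lambda_{K,i}=1/3<1$.

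In Step~1, the auxiliary state $\hat u_K^*$ lies in $G$ by \Cref{lem:PiK_props1}(ii) or \Cref{lem:PiK_props2}(ii). Step~3 applies Assumption~(A5) to both trace states at each node. Step~4 uses the closure identity $\sum_{e}|e|n_{K,e}=0$, which holds for any convex quadrilateral. Together these give $(\bar u_K^{n+1}-u_j^*)\cdot n_j^*\ge0$ for all $j$ and $u_j^*\in S_j$, hence $\bar u_K^{n+1}\in G^*=G$ by \Cref{prop:GQL}. Boundary cells follow in the same way, provided the exterior states lie in $G$.

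The only step needing care is that \eqref{eq:1667} is edgewise rather than the uniform bound $\Delta t\le\mu_K/(2\alpha_K^n)$ in \eqref{eq:CFL_abstract}. I must therefore point out that the factor $1/2$ is absorbed in the Lax--Friedrichs splitting, where $c_1+c_2=c_{K,e}$. I must also confirm that the argument is local to each node $(e,\nu)$, exactly as in \eqref{eq:CFL-gamma}. I expect no further obstacle beyond this.
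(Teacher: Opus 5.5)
Your proposal is correct and follows the paper's route: the corollary is an instantiation of \Cref{thm:B} with the quadrilateral CAD of \Cref{prop:quad-A3}. The paper likewise relies on the nodewise form \eqref{eq:CFL-gamma}, which is exactly the inequality \eqref{eq:c1_lower}, rather than on the literal bound \eqref{eq:CFL_abstract}, and your observation that \eqref{eq:1667} is a factor of two less restrictive than \eqref{eq:CFL_abstract} (so one must appeal to the proof of \Cref{thm:B}, not just its statement) is accurate and makes explicit a step the paper leaves implicit; note also that $\hat u_K^*\in G$ is already part of the assumed $G$-admissibility, so the limiter lemmas are not needed.
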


\subsection{General convex polygonal element realization with explicit CAD weights}\label{subsec:poly}

For a general convex polygonal element $K$ with vertices $v_1,\dots,v_{N_K}$ ordered counterclockwise and boundary edges $e_i=[v_i,v_{i+1}]$ (with the cyclic convention $v_{N_K+1}:=v_1$), we construct explicit boundary and interior CAD weights as follows.

Let $x_K^\star$ denote the centroid of $K$, and introduce the fan triangulation
\[
K_i := \mathrm{conv}\{x_K^\star,v_i,v_{i+1}\},\qquad i=1,\dots,N_K,
\qquad
K=\bigcup_{i=1}^{N_K}K_i,\quad \text{with disjoint interiors}.
\]
As illustrated in \Cref{fig:1761}. Let $n_i$ denote the outward unit normal to $e_i$, and let $h_i$ denote the perpendicular distance from $x_K^\star$ to the supporting line of $e_i$. With $\xi:=x-x_K^\star$, it holds that
\[
\xi\cdot n_i=h_i\quad \text{on } e_i,
\qquad
|K_i|=\frac12 |e_i|h_i.
\]

\begin{figure}[!htb]
	\centering
	\begin{center}
		
		\begin{tikzpicture}[scale=1.6]
			\draw [ thick] (0,0) -- (1.5,0.2);
			\draw [ thick] (1.5,0.2) -- (0.4,1.5) -- (0,0);
			\draw [ thick] (0,0) -- (-1.2,0.8) -- (0.4,1.5);
			\draw [ thick] (0,0) -- (-1.0,-0.9) -- (-1.2,0.8);
			\draw [ thick] (0,0) -- (0.5,-1.3) -- (-1.0,-0.9);
			\draw [ thick] (1.5,0.2) -- (0.5,-1.3);
			
			\filldraw (0,0) circle (1.2pt);
			\node[below right] at (0.1,0) {$x^\star_K$};
			
			\filldraw (1.5,0.2)   circle (1.2pt);
			\filldraw (0.4,1.5)   circle (1.2pt);
			\filldraw (-1.2,0.8)  circle (1.2pt);
			\filldraw (-1.0,-0.9) circle (1.2pt);
			\filldraw (0.5,-1.3)  circle (1.2pt);
			
			\node[right]      at (1.5,0.2)   {$v_1$};
			\node[above right]at (0.4,1.5)   {$v_2$};
			\node[above left] at (-1.2,0.8)  {$v_3$};
			\node[left]       at (-1.0,-0.9) {$v_4$};
			\node[below right]at (0.5,-1.3)  {$v_5$};
			
			\node at (0.8,0.7)   {$K_1$}; 
			\node at (-0.4,0.9)  {$K_2$}; 
			\node at (-0.8,0.0)  {$K_3$}; 
			\node at (-0.1,-0.8) {$K_4$}; 
			\node at (0.8,-0.4)  {$K_5$}; 
			
		\end{tikzpicture}
		
	\end{center}
	\caption{Fan triangulation of a pentagonal ($N_K=5$) cell $K$.}
	\label{fig:1761}
\end{figure}

For any $\phi\in\mathbb{P}^2(K)$, we expand it about the centroid $x_K^\star$:
\[
\phi(x_K^\star+\xi)=\phi_0+\phi_1(\xi)+\phi_2(\xi),
\]
where $\phi_j$ is a homogeneous polynomial of degree $j$ in $\xi$, and $\phi_0=\phi(x_K^\star)$. For a homogeneous polynomial $q_j$ of degree $j$, Euler's identity yields $\nabla_\xi\cdot\bigl(\xi q_j(\xi)\bigr)=(j+2)q_j(\xi).$ Hence, applying the divergence theorem yields,
\[
\int_{\partial K}(\xi\cdot n)q_j\,ds
=
(j+2)\int_K q_j\,dx .
\]
Since $x_K^\star$ is the area centroid,
\[
\int_K \xi\,dx=0,
\qquad
\text{which implies}
\qquad
\int_K \phi_1\,dx=0 .
\]
Consequently,
\[
\sum_{i=1}^{N_K} h_i\int_{e_i}\phi\,ds
=
\int_{\partial K}(\xi\cdot n)\phi\,ds
=
2|K|\phi_0+4\int_K\phi_2\,dx .
\]
Meanwhile,
\[
\int_K\phi\,dx
=
|K|\phi_0+\int_K\phi_2\,dx .
\]
Eliminating $\int_K\phi_2\,dx$ leads to the exact identity
\begin{equation}
\label{eq:edge-focused-tri}
\bar\phi_K
=
\frac{1}{4|K|}
\sum_{i=1}^{N_K} h_i\int_{e_i}\phi\,ds
+
\frac12 \phi(x_K^\star)
=
\sum_{i=1}^{N_K}
\frac{|K_i|}{2|K|}\,\bar\phi_{e_i}
+
\frac12 \phi(x_K^\star).
\end{equation}

Applying the three-point GL quadrature along each edge $e_i$ yields
\[
\bar \phi_{e_i}
=
\sum_{\nu=1}^3\omega^{\mathrm{GL}}_\nu\,\phi(x_{K,i,\nu}),
\qquad
x_{K,i,1}=v_i,\; x_{K,i,2}=\tfrac12(v_i+v_{i+1}),\; x_{K,i,3}=v_{i+1}.
\]
Thus, for any $\phi\in \mathbb{P}^2(K)$, we arrive at the discrete decomposition
\begin{equation}\label{eq:poly-CD-final}
	\bar \phi_K
	=
	\sum_{i=1}^{N_K}\sum_{\nu=1}^3
	\underbrace{\left(\frac{|K_i|}{|K|}\cdot\frac{1}{2}\,\omega^{\mathrm{GL}}_\nu\right)}_{=:~\gamma_{K,i,\nu}}
	\,\phi(x_{K,i,\nu})
	\;+\;
	\underbrace{\frac12}_{=:~\beta_{K}}
	\,\phi(x_K^\star).
\end{equation}
Equivalently, this construction satisfies Assumption~(A3) with a single interior quadrature node:
\[
S_K=1,
\qquad
x_{K,1}^\star=x_K^\star,
\qquad
\beta_{K,1} = \beta_K =\frac12,
\qquad
\lambda_{K,e_i}:=\sum_{\nu=1}^3\gamma_{K,i,\nu}
=
\frac{|K_i|}{2|K|}.
\]

\begin{proposition}[Convex-polygon realization of Assumption~(A3)]\label{prop:poly-A3}
	For every convex polygonal element $K$, the construction \eqref{eq:poly-CD-final} defines a feasible cell average decomposition for the third-order polynomial space. The weights $\gamma_{K,i,\nu}$ and $\beta_{K}$ are strictly positive, sum to unity, and realize the cell average through boundary quadrature values and a single explicitly constructed interior node $x_K^\star$.
\end{proposition}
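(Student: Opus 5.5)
The proof reduces to three checks: positivity of the weights, their summation to unity, and exactness of \eqref{eq:poly-CD-final} on $\mathbb P^2(K)$. Most of the identity has already been derived before the statement, so the main task is to make the geometric hypotheses explicit.

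\emph{Positivity.} Since $K$ is convex and the area centroid $x_K^\star$ lies in the interior of $K$, the distance $h_i$ from $x_K^\star$ to the supporting line of $e_i$ is strictly positive for every $i$. Hence each fan triangle $K_i$ is nondegenerate, with $|K_i|=\tfrac12|e_i|h_i>0$. Because $\omega^{\mathrm{GL}}_\nu\in\{1/6,2/3\}$, every $\gamma_{K,i,\nu}=\frac{|K_i|}{2|K|}\omega^{\mathrm{GL}}_\nu$ is strictly positive, and $\beta_K=\tfrac12>0$. Likewise $\lambda_{K,e_i}=|K_i|/(2|K|)>0$, which gives the edge ratio $\lambda_{K,e_i}|K|/|e_i|=h_i/4>0$ required in \eqref{eq:edge_ratio}.

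\emph{Sum to unity.} The fan triangles have disjoint interiors and cover $K$, so $\sum_i|K_i|=|K|$. Since $\sum_\nu\omega^{\mathrm{GL}}_\nu=1$, we obtain $\sum_{i,\nu}\gamma_{K,i,\nu}=\tfrac12$, and adding $\beta_K=\tfrac12$ gives total weight one. The same conclusion also follows by applying \eqref{eq:edge-focused-tri} to $\phi\equiv1$.

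\emph{Exactness.} The identity \eqref{eq:edge-focused-tri} has been derived for all $\phi\in\mathbb P^2(K)$ by expanding about $x_K^\star$ and then using two facts: Euler's identity with the divergence theorem, $\int_{\partial K}(\xi\cdot n)q_j\,ds=(j+2)\int_Kq_j\,dx$, and the centroid property $\int_K\phi_1\,dx=0$. The only step to verify carefully is that $\xi\cdot n$ equals the constant $h_i$ on each edge, which holds because $e_i$ lies on its supporting line and $x_K^\star$ is interior. The three-point Gauss--Lobatto (Simpson) rule is exact for quadratics on each edge, and the trace of $\phi$ on an edge is a univariate quadratic. Therefore the edge averages equal the nodal sums, which gives \eqref{eq:poly-CD-final}, i.e.\ \eqref{eq:gcd_point} with $S_K=1$. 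The vector-valued statement follows componentwise.

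The main obstacle is only a mild one: confirming that the centroid is strictly interior, so that every $h_i>0$. This follows from convexity, since the centroid is an average of interior points. The remaining requirements of (A3), including quadrature exactness on $\mathbb W_K$ when $\mathbb W_K=\mathbb P^2(K)$, follow from the observations above.
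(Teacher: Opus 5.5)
Your proposal is correct and follows the paper's proof essentially step for step. Positivity comes from $|K_i|>0$ and $\beta_K=\tfrac12$, unity from $\sum_i|K_i|=|K|$ and $\sum_\nu\omega^{\mathrm{GL}}_\nu=1$, and exactness from \eqref{eq:edge-focused-tri} together with the exactness of the three-point Gauss--Lobatto rule on quadratic edge traces; your explicit check that the centroid is strictly interior (so that every $h_i>0$ and $\xi\cdot n_i=h_i$ on $e_i$) fills in a detail the paper leaves implicit.
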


\begin{proof}
	The strict positivity of the weights follows directly from the areas $|K_i|>0$ of the fan triangles and $\beta_{K} = 1/2 > 0$. The partition-of-unity property follows from $\sum_{\nu=1}^3 \omega^{\mathrm{GL}}_\nu = 1$ and $\sum_{i=1}^{N_K} |K_i| = |K|$, which yields
	$$ \sum_{i=1}^{N_K}\sum_{\nu=1}^3 \gamma_{K,i,\nu} + \beta_{K} = \sum_{i=1}^{N_K} \frac{|K_i|}{2|K|} + \frac{1}{2} = \frac{1}{2} + \frac{1}{2} = 1. $$
    The exactness is guaranteed by the identity \eqref{eq:edge-focused-tri} and the exactness of the three-point GL rule for quadratic polynomials on each straight edge, completing the proof.
\end{proof}

\begin{proposition}[CFL condition for general convex polygonal elements]\label{prop:poly-cfl}
	Let $\gamma_{K,i,\nu}$ be given by \eqref{eq:poly-CD-final} and let $\alpha_{K,e_i}=\max_{1\leq \nu \leq 3} \alpha_{K,e_i, \nu}$. Then the CFL condition
	\eqref{eq:CFL-gamma} reduces to
	\begin{equation}\label{eq:poly-cfl}
		\frac{\Delta t}{|K|}\,\alpha_{K,e_i}
		\;\le\;
		\frac{|K_i|}{2|K|\,|e_i|},
		\qquad i=1,\dots,N_K,
	\end{equation}
	for each polygonal boundary edge $e_i$, which implies the explicit time-step restriction
	\begin{equation}\label{eq:poly-dt}
		\Delta t
		\;\le\;
		\min_{1\le i\le N_K}\frac{|K_i|}{2\,|e_i|\,\alpha_{K,e_i}}.
	\end{equation}
\end{proposition}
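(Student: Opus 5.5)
The plan is to substitute the explicit polygonal CAD weights of \eqref{eq:poly-CD-final} into the general CAD-based CFL condition \eqref{eq:CFL-gamma}, which \Cref{thm:B} guarantees is sufficient. Nothing beyond \Cref{prop:poly-A3} and the separable structure of the weights should be needed.

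First, I note the structure of the weights. By \eqref{eq:poly-CD-final}, $\gamma_{K,i,\nu}=\lambda_{K,e_i}\,\omega^{\mathrm{GL}}_\nu$ with $\lambda_{K,e_i}=|K_i|/(2|K|)$. Thus the decomposition is separable in exactly the sense of \eqref{eq:tri-separable}, and
\[
\frac{\gamma_{K,i,\nu}}{|e_i|\,\omega^{\mathrm{GL}}_\nu}=\frac{|K_i|}{2|K|\,|e_i|}\qquad\text{for every }\nu=1,2,3.
\]
Second, I insert this into \eqref{eq:CFL-gamma}. The minimum over $\nu$ then acts only on $1/\alpha_{K,e_i,\nu}$, which gives $1/\alpha_{K,e_i}$ with $\alpha_{K,e_i}=\max_\nu\alpha_{K,e_i,\nu}$. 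The condition \eqref{eq:CFL-gamma} for edge $e_i$ therefore reads
\[
\frac{\Delta t}{|K|}\le \frac{|K_i|}{2|K|\,|e_i|\,\alpha_{K,e_i}},
\]
and multiplying by $\alpha_{K,e_i}>0$ gives exactly \eqref{eq:poly-cfl}.

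Third, I multiply by $|K|$ and take the minimum over $i=1,\dots,N_K$, which yields \eqref{eq:poly-dt}. Equivalently, \eqref{eq:poly-dt} is $\Delta t\le |K|\,\mathsf{CFL}_K^{\mathrm{IDP}}$ from \eqref{eq:CFL-dt}. The factor $|K|$ cancels against the $|K|$ in the denominator of $\lambda_{K,e_i}$.

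There is no genuine obstacle here. The only point requiring care is that the CAD hypothesis (A3) holds, so that \eqref{eq:CFL-gamma} is a legitimate sufficient condition; this is supplied by \Cref{prop:poly-A3}. One should also remark that the case $\alpha_{K,e_i}=0$ imposes no restriction on $\Delta t$ and is read as the trivial bound.
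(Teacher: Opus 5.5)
Your proposal is correct and matches the paper's (implicit) argument, which mirrors the triangular reduction \eqref{eq:tri-cfl}. The separable weights $\gamma_{K,i,\nu}=\frac{|K_i|}{2|K|}\,\omega^{\mathrm{GL}}_\nu$ make $\gamma_{K,i,\nu}/(|e_i|\,\omega^{\mathrm{GL}}_\nu)$ independent of $\nu$, so \eqref{eq:CFL-gamma} collapses to \eqref{eq:poly-cfl}, and multiplying by $|K|$ and minimizing over $i$ yields \eqref{eq:poly-dt}.
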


\begin{remark}[Alternative positive quadratic quadrature on polygons]
    If a polygon has many edges, the fan construction may involve a large number of sub-triangles. This motivates the study of alternative positive-weight quadrature rules that are exact for quadratic polynomials on general polygonal cells. Such rules were constructed in \cite{abgrall2025some} using a Virtual Element Method (VEM) interpretation. They require only one additional interior point, chosen as the maximizer of a positive bubble function obtained from an elliptic problem. Since this point depends on the geometry of the polygon, no simple analytic expression is available in general; nevertheless, it can be accurately approximated by standard finite element techniques.
\end{remark}

\section{Numerical experiments}\label{sec:numerics}

\subsection{Numerical setup}

The experiments serve two objectives. \revblue{Smooth scalar tests assess whether the \emph{a priori} scaling remains inactive, or, when it activates, whether the designed high-order accuracy is retained.} Discontinuous scalar and Euler tests validate the IDP property in regimes where loss of scalar bounds or \revblue{loss of strict positivity of density or pressure} would be immediately apparent. We report final-time admissibility, all-stage minima, and limiter activation fractions to distinguish invariant domain preservation from the extent of active numerical stabilization.

In our simulations, the triangular meshes are generated using EasyMesh~\cite{Niceno2002easymesh}, while the Cartesian meshes are uniform. We denote by $N$ the number of cells and by $h_{\mathcal T}$ the characteristic mesh size, taken as the uniform subdivision length along cell edges. Point values are advanced with $\mathcal D^{\mathrm{pt}}$ in \eqref{eq:pt_rhs_vertex}--\eqref{eq:pt_midpoint_scheme}. For the scalar tests, we use \eqref{u:scalar} and \eqref{w:scalar} in the discontinuous cases, and \eqref{u:scalar2}--\eqref{w:scalar2} in the smooth cases on $(U_{\min}-\delta,U_{\max}+\delta)$ with $\delta=0.01$. The Euler tests use \eqref{w:euler1}--\eqref{w:euler2}. We employ $\Pi_K^\texttt{a}$ on triangular meshes and $\Pi_K^\texttt{c}$ on rectangular meshes; implementation details for $\Pi_K$ are deferred to Appendix~\ref{sec:IDP_imple}. Furthermore, we deactivate the oscillation-eliminating (OE) procedure $\mathcal M_K$ from Appendix~\ref{sec:OE} exclusively in \Cref{Ex:Smooth,Ex:BurgSmooth}.  \Cref{tab:numerics-purpose} summarizes the numerical configurations and diagnostics. 
Enabling the $\mathcal M_K$ operator is referred to as ``OE'' in \Cref{tab:numerics-purpose}.

The numerical interface flux is chosen as the local Lax--Friedrichs flux \eqref{eq:LF_flux} with numerical dissipation coefficient
\begin{equation}\label{7.alpha}
	\alpha^{\mathrm{LF}}_{K,e,\nu}:=
	\max\Big\{\varrho\big(A_{n_{K,e}}(\hat u^n_{K,e,\nu})\big),\ 
	\varrho\big(A_{n_{K,e}}(\hat u^n_{K_e,e,\nu})\big)\Big\},
\end{equation}
where $\alpha^{\mathrm{LF}}_{K,e}:=\max_{\nu}\alpha^{\mathrm{LF}}_{K,e,\nu}$.

We adopt the computable proxy CFL condition
\begin{equation}\label{eq:CFL-used}
	\Delta t=\min_{K\in\mathcal{T}_h}\Delta t_K^{\mathrm{IDP}},
	\qquad
	\Delta t_K^{\mathrm{IDP}}
	:=c_0\,|K|\,
	\min_{e\in\mathcal{E}_K}
	\frac{\lambda_{K,e}}{|e|\,\tilde{\alpha}^{\,n}_{\max}},
\end{equation}
where $\tilde{\alpha}^{\,n}_{\max}$ denotes the proxy wave speed, $\{\lambda_{K,e}\}$ are the boundary weights from Section~\ref{sec:geom}, and $c_0=1$ for the local Lax--Friedrichs flux. These cell-average proxies are analytically compatible with trace-based wave speeds on smooth solutions, as justified in Appendix~\ref{sec:proxy_bridge}. 
Nevertheless, the proxy CFL employed in computations is not the rigorous trace-based sufficient condition of \Cref{thm:B} in nonsmooth regimes; admissibility is validated \emph{a posteriori} by the reported diagnostics.
\begin{itemize}
	\item On triangular meshes,
	\[
	\Delta t_K^{\mathrm{IDP}} = \frac{2 |K|}{\tilde{\alpha}^{\,n}_{\max} (9 \bar l_K + 3 \hat l_K)}, 
	\qquad
	\tilde{\alpha}^{\,n}_{\max} = \max_{K, e} \varrho\big(A_{n_{K,e}}(\bar u^n_{K})\big);
	\]
	\item On Cartesian meshes,
	\[
	\Delta t = \frac{C_{\mathrm{CFL}}}{\tilde{\alpha}^{\,n}_{1,\max}/\dx + \tilde{\alpha}^{\,n}_{2,\max}/\dy},
	\qquad
	\tilde{\alpha}^{\,n}_{\ell,\max} = \max_{K} \alpha_\ell(\bar{u}_K^n), \ \ell = 1,2,
	\]
	with $C_{\mathrm{CFL}}=0.15<1/6$.
\end{itemize}
Temporal discretization is performed using the classical three-stage third-order SSP Runge--Kutta method.

For scalar problems with the invariant domain $G=[U_{\min},U_{\max}]$, the discrete invariant domain preservation is quantified by
\[
\delta_{\max}(\bar u_h)=U_{\max}-\max_{K \in \mathcal{T}_h}\bar u_K, \qquad
\delta_{\min}(\bar u_h)=\min_{K \in \mathcal{T}_h}\bar u_K-U_{\min}.
\]

\begin{table}[!htb]
\centering
\caption{{Numerical tests, mesh families, and primary diagnostics.}}
\label{tab:numerics-purpose}
\scriptsize
\setlength{\tabcolsep}{3pt}
\begin{tabularx}{\textwidth}{>{\raggedright\arraybackslash}p{0.15\textwidth}>{\raggedright\arraybackslash}p{0.11\textwidth}>{\raggedright\arraybackslash}p{0.12\textwidth}>{\centering\arraybackslash}p{0.06\textwidth}>{\raggedright\arraybackslash}p{0.21\textwidth}>{\raggedright\arraybackslash}X}
\toprule
Test & PDE and regime & Mesh family & OE & Focus & Main diagnostics \\
\midrule
Smooth linear convection & scalar smooth & triangular, Cartesian & off & smooth accuracy & $L^1$ errors, observed orders, and average limiter fraction $\Lambda$. \\
Discontinuous linear convection & scalar discontinuous & triangular, Cartesian & on & scalar admissibility diagnostics & Cell-average admissibility diagnostics with and without the \emph{a priori} limiter, and profiles. \\
Smooth Burgers & scalar smooth nonlinear & triangular, Cartesian & off & smooth nonlinear accuracy & $L^1$ errors, observed orders, and average limiter fraction $\Lambda$. \\
Oblique Burgers Riemann & scalar discontinuous nonlinear & triangular, Cartesian & on & nonsmooth scalar diagnostics & Cell-average admissibility diagnostics and profiles. \\
Sedov blast & Euler near vacuum & Cartesian & on & near-vacuum Euler diagnostics & final-time minima; {per-stage minima}; limiter diagnostics; contours; diagonal cuts. \\
Mach 80 and Mach 2000 jets & Euler high-Mach jets & Cartesian & on & high-Mach jet diagnostics & final-time minima; {per-stage minima}; limiter diagnostics; density and pressure contours. \\
{Shock diffraction; shock reflection and diffraction} & Euler irregular-domain benchmarks & triangular & on & irregular-domain Euler diagnostics & final-time minima; {per-stage minima}; limiter diagnostics; density contours. \\
\bottomrule
\end{tabularx}
\end{table}

\subsection{Accuracy-preservation and scalar diagnostics}\label{subsec:conv}
\subsubsection{Smooth linear convection: accuracy-preservation test} \label{Ex:Smooth}
We first examine the two-dimensional linear advection equation
\begin{equation}\label{eq:linear}
	u_t+ u_x + u_y=0,\qquad (x,y) \in \Omega = [0,1]^2,
\end{equation}
subject to periodic boundary conditions.
The initial condition is prescribed by
$
u(x, y, 0) = \sin^4(2\pi(x+y)),
$
which gives the exact solution
$
u(x, y, t) = \sin^4(2\pi(x+y-2t)).
$
This exact solution inherently satisfies the maximum principle, i.e., $u(x, y, t) \in G=[0, 1]$ for all $t \ge 0$.
The computation is carried out up to the final time $t=1$.
The convergence study is performed on uniform Cartesian meshes and six unstructured triangular meshes successively refined as in \cite[Example~5.1]{ding2025TriOCAD}.

Table~\ref{tab:Ex-LinSmooth} lists the $L^1$ errors for the cell averages $\bar u_h$ and the point values $u_\Sigma$, together with the average limiter activation fraction 
$
\Lambda := \frac{N_{\tt IDP}}{3NN_t}\times 100\%,
$ 
where $N_{\tt IDP}$ denotes the total number of limiter activations, $N$ is the number of cells, and $N_t$ denotes the number of time steps. 
These results confirm that the limiter $\Pi_h$ preserves the formal design order of accuracy; the reported activation fractions remain very small in the smooth regime, in agreement with the asymptotic estimates in \Cref{cor:limiter_small_smooth}.

\begin{table}[!htb] 
	\centering
	\belowrulesep=0pt
	\aboverulesep=0pt
	\caption{Smooth linear convection: errors and limiter fraction.}
	\label{tab:Ex-LinSmooth}
	\setlength{\tabcolsep}{3mm}{
		\begin{tabular}{c|clclcc}
			\toprule[1.5pt]
			\multirow{2}{*}{ Mesh } &
			\multirow{2}{*}{$ N $} &
			\multicolumn{2}{c}{cell averages} &
			\multicolumn{2}{c}{point values}  &
			\multirow{2}{*}{$ \Lambda (\%)$} \\
			\cmidrule(r){3-4} \cmidrule(r){5-6}
			&
			& $ L^{1} $ error & order & $ L^{1} $ error & order & \\

			\midrule[1.5pt]
			\multirow{6}{*}{ Triangular}
			& 176    & 4.09e-2& -    & 5.38e-2& -    & 1.9808 \\
			& 704    & 2.16e-2& 0.92 & 2.59e-2& 1.06 & 2.6697 \\
			& 2816   & 8.07e-3& 1.42 & 8.77e-3& 1.56 & 7.6703 \\
			& 11264  & 2.04e-3& 1.99 & 2.17e-3& 2.01 & 6.2568 \\
			& 45056  & 3.34e-4& 2.61 & 3.48e-4& 2.64 & 3.7128 \\
			& 180224 & 3.92e-5& 3.09 & 4.01e-5& 3.12 & 1.9330 \\
			
			\midrule[1.5pt]
			\multirow{6}{*}{ Cartesian  }
			&$20^2$     & 1.82e-02& -    & 2.65e-02& -    & 1.2235 \\
			&$40^2$     & 1.34e-02& 0.44 & 1.51e-02& 0.81 & 6.6479 \\
			&$80^2$     & 3.27e-03& 2.04 & 3.53e-03& 2.10 & 8.5372 \\
			&$160^2$    & 6.13e-04& 2.42 & 6.47e-04& 2.45 & 6.4732 \\
			&$320^2$    & 9.07e-05& 2.76 & 9.27e-05& 2.80 & 3.2843 \\
			&$640^2$    & 1.19e-05& 2.93 & 1.20e-05& 2.95 & 1.6537 \\

			\bottomrule[1.5pt]
		\end{tabular}
	}
\end{table}

\subsubsection{Discontinuous linear convection: IDP diagnostics} \label{Ex:LinDis}
We next consider a discontinuous initial condition on $\Omega=[-1,1]^2$
\begin{equation*}
	u(x,y,0) =
	\begin{cases}
		1, & \sqrt{x^2+y^2} \leq r_\star(\theta), \\
		0, & \text{otherwise},
	\end{cases}
	\quad
	\theta =
	\begin{cases}
		\arccos{\frac{x}{\sqrt{x^2+y^2}}}, & y \geq 0, \\
		2\pi - \arccos{\frac{x}{\sqrt{x^2+y^2}}}, & y < 0,
	\end{cases}
\end{equation*}
where $r_\star(\theta):=\frac{1}{8}\bigl(3+3^{\sin(5\theta)}\bigr)$.
The profile is advected according to \eqref{eq:linear} with periodic boundary conditions. The invariant domain is $G=[0,1]$, and the simulation is carried out to the final time $t=1.8$. Computations are performed on both a uniform Cartesian mesh with $200 \times 200$ cells and an unstructured triangular mesh with $N=92{,}474$ cells and characteristic mesh size $h_{\mathcal T}=0.01$.

\Cref{tab:Ex-LinDis} summarizes the cell-average admissibility diagnostics $\delta_{\max}(\bar u_h)$ and $\delta_{\min}(\bar u_h)$ with and without the \emph{a priori} limiter $\Pi_h$. With $\Pi_h$ active, the diagnostics remain strictly nonnegative up to machine roundoff; without $\Pi_h$, admissibility violations are observed on both meshes.

\begin{table}[!htbp] 
	\centering
	\belowrulesep=0pt
	\aboverulesep=0pt
	\caption{Discontinuous linear convection: cell-average diagnostics.}
	\label{tab:Ex-LinDis}
	\setlength{\tabcolsep}{2mm}{
		\begin{tabular}{c|cccc}
			\toprule[1.5pt]
			\multirow{2}{*}{Mesh} & \multicolumn{2}{c}{With limiter $\Pi_h$} & \multicolumn{2}{c}{Without limiter $\Pi_h$}  \\
			\cmidrule(r){2-3} \cmidrule(r){4-5}
			& $\delta_{\max}(\bar u_h)$ & $\delta_{\min}(\bar u_h)$ & $\delta_{\max}(\bar u_h)$ & $\delta_{\min}(\bar u_h)$ \\
			\midrule[1.5pt]
			Cartesian & 1.1102e-14 & 7.2393e-36 & -8.8193e-8 & -1.2370e-7 \\
			Triangular & 8.8819e-16 & 6.2748e-34 & -2.1716e-7 & -3.6127e-7 \\
			\bottomrule[1.5pt]
		\end{tabular}
	}
\end{table}

\subsubsection{Burgers' equation: smooth nonlinear accuracy test} \label{Ex:BurgSmooth}
We solve the 2D smooth nonlinear Burgers' equation
$
u_t+\frac{1}{2}(u^2)_x+\frac{1}{2}(u^2)_y=0$ on $(x,y)\in[0,1]^2,
$
with periodic boundary conditions and the smooth initial condition $u(x,y,0)=2+0.5\sin(2\pi(x+y))$. The invariant domain is $G=[1.5,2.5]$. Because the exact solution remains smooth until $t=(2\pi)^{-1}\approx0.159$, the simulation is performed up to the final time $t=0.1$. The unstructured triangular meshes are identical to those in \Cref{Ex:Smooth}. 

\Cref{tab:Ex-BurSmooth} lists the $L^1$ errors, observed orders, and the average limiter activation fraction $\Lambda$.  This smooth nonlinear test confirms that the a priori limiter remains essentially inactive away from the boundary of the invariant domain.

\begin{table}[!htbp] 
	\centering
	\belowrulesep=0pt
	\aboverulesep=0pt
	\caption{Smooth Burgers: errors and limiter fraction.}
	\label{tab:Ex-BurSmooth}
	\setlength{\tabcolsep}{3mm}{
		\begin{tabular}{c|cccccc}
			\toprule[1.5pt]
			\multirow{2}{*}{Mesh} &
			\multirow{2}{*}{$N$} &
			\multicolumn{2}{c}{cell averages} &
			\multicolumn{2}{c}{point values} & \multirow{2}{*}{$\Lambda (\%)$} \\
			\cmidrule(r){3-4} \cmidrule(r){5-6}
			& & $L^1$ error & order & $L^1$ error & order & \\
			\midrule[1.5pt]
			\multirow{6}{*}{ Triangular}
			& 176    & 9.89e-3& -    & 1.32e-2& -    & 0.0303 \\
			& 704    & 3.60e-3& 1.46 & 4.08e-3& 1.69 & 0.0169 \\
			& 2816   & 8.31e-4& 2.12 & 9.43e-4& 2.11 & 0.0083 \\
			& 11264  & 1.35e-4& 2.62 & 1.44e-4& 2.71 & 0.0050 \\
			& 45056  & 1.98e-5& 2.77 & 2.06e-5& 2.81 & 0.0047 \\
			& 180224 & 2.62e-6& 2.92 & 2.68e-6& 2.94 & 0.0036 \\
			
			\midrule[1.5pt]
			\multirow{6}{*}{ Cartesian  }
			& $20^2$  & 2.38e-3& -    & 3.50e-3& -    & 0.0299 \\
			& $40^2$  & 4.78e-4& 2.32 & 5.73e-4& 2.66 & 0.0137 \\
			& $80^2$  & 5.69e-5& 3.07 & 6.30e-5& 3.21 & 0.0131 \\
			& $160^2$ & 6.31e-6& 3.17 & 6.69e-6& 3.25 & 0.0196 \\
			& $320^2$ & 7.55e-7& 3.06 & 8.17e-7& 3.04 & 0.0199 \\
			& $640^2$ & 9.28e-8& 3.02 & 1.17e-7& 2.81 & 0.0167 \\
			\bottomrule[1.5pt]
		\end{tabular}
	}
\end{table}

\subsubsection{Burgers' equation: oblique Riemann problem} \label{Ex:BurgRM}
We consider  the oblique Riemann problem proposed in \cite{ding2025TriOCAD} for the inviscid Burgers' equation with the piecewise constant initial data
\begin{equation*}
	u(x,y,0) = 
	\begin{cases}
		-0.2, & x < 0.5, \, y \ge 0.5, \\
		-1,   & x \ge 0.5, \, y \ge 0.5, \\
		0.5,  & x < 0.5, \, y < 0.5, \\
		0.8,  & x \ge 0.5, \, y < 0.5.
	\end{cases}
\end{equation*} 
The invariant domain is $G=[-1,0.8]$. For $t \ge 0$, outflow boundary conditions are imposed along the segments $\{x = 0, \, y \geq 0.5 + 0.15t\}$ and $\{x = 1, \, y \leq 0.5 - 0.1t\}$, while inflow conditions are prescribed on the remainder of $\partial\Omega$. The simulation is advanced to the final time $t = 0.5$. The computational domain is discretized using a $256 \times 256$ uniform Cartesian mesh and an unstructured triangular mesh consisting of $N = 151{,}742$ cells with characteristic mesh size $h_{\mathcal{T}} = 1/256$.

\Cref{fig:Ex_BurgRM} displays the numerical solutions together with 1D profiles along the diagonal cut $y = 1 - x$. \Cref{tab:Ex-Burg} lists the cell-average admissibility diagnostics $\delta_{\max}(\bar u_h)$ and $\delta_{\min}(\bar u_h)$ with and without the \emph{a priori} limiter $\Pi_h$. With $\Pi_h$ active, the diagnostics remain strictly nonnegative; without $\Pi_h$, admissibility violations occur on both meshes.

\begin{figure}[!htb]
	\centering
	\begin{subfigure}{0.42\textwidth}
		\centering
		\includegraphics[width=\textwidth]{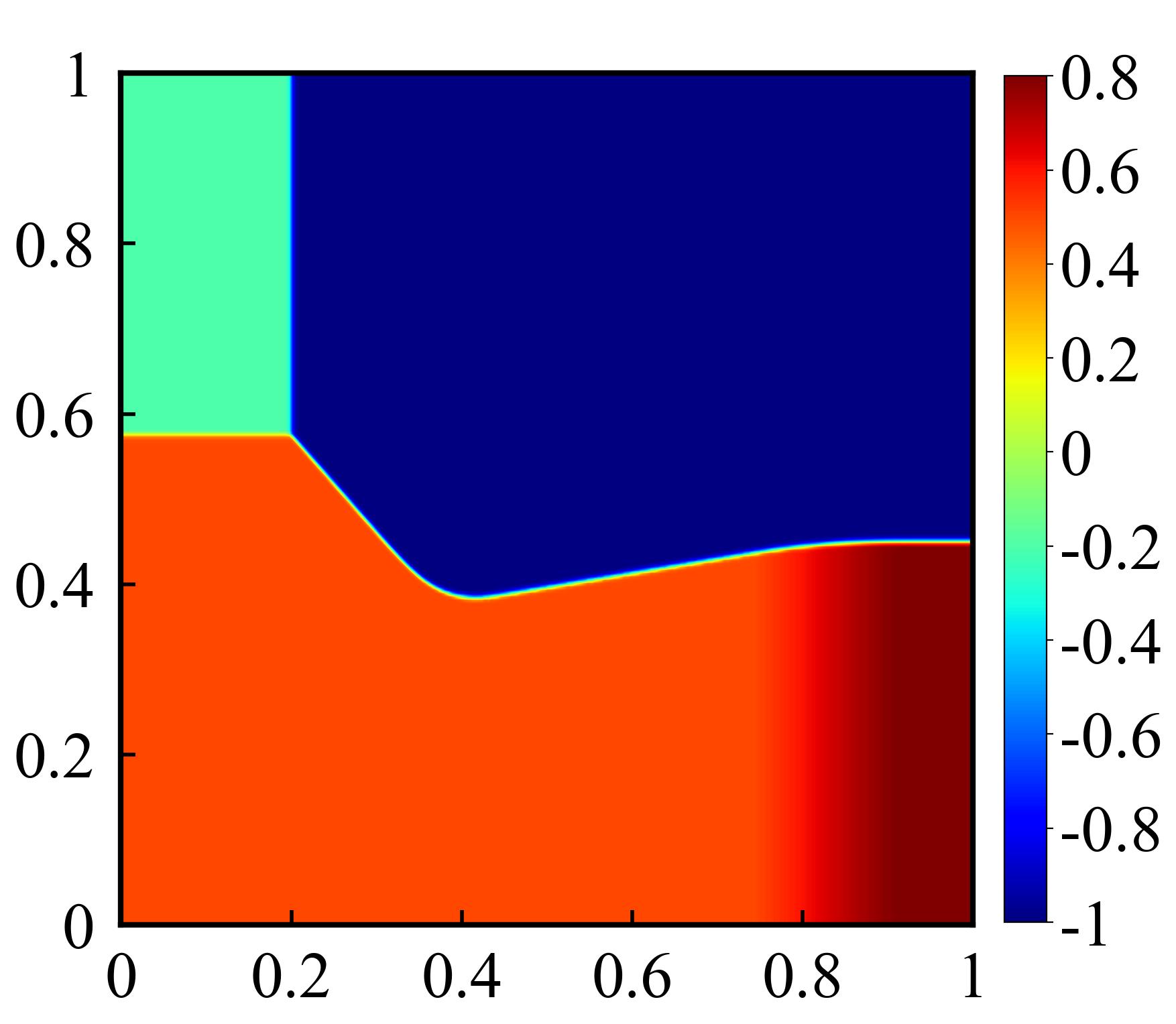}
	\end{subfigure}
	\qquad
	\begin{subfigure}{0.42\textwidth}
		\centering
		\includegraphics[width=\textwidth]{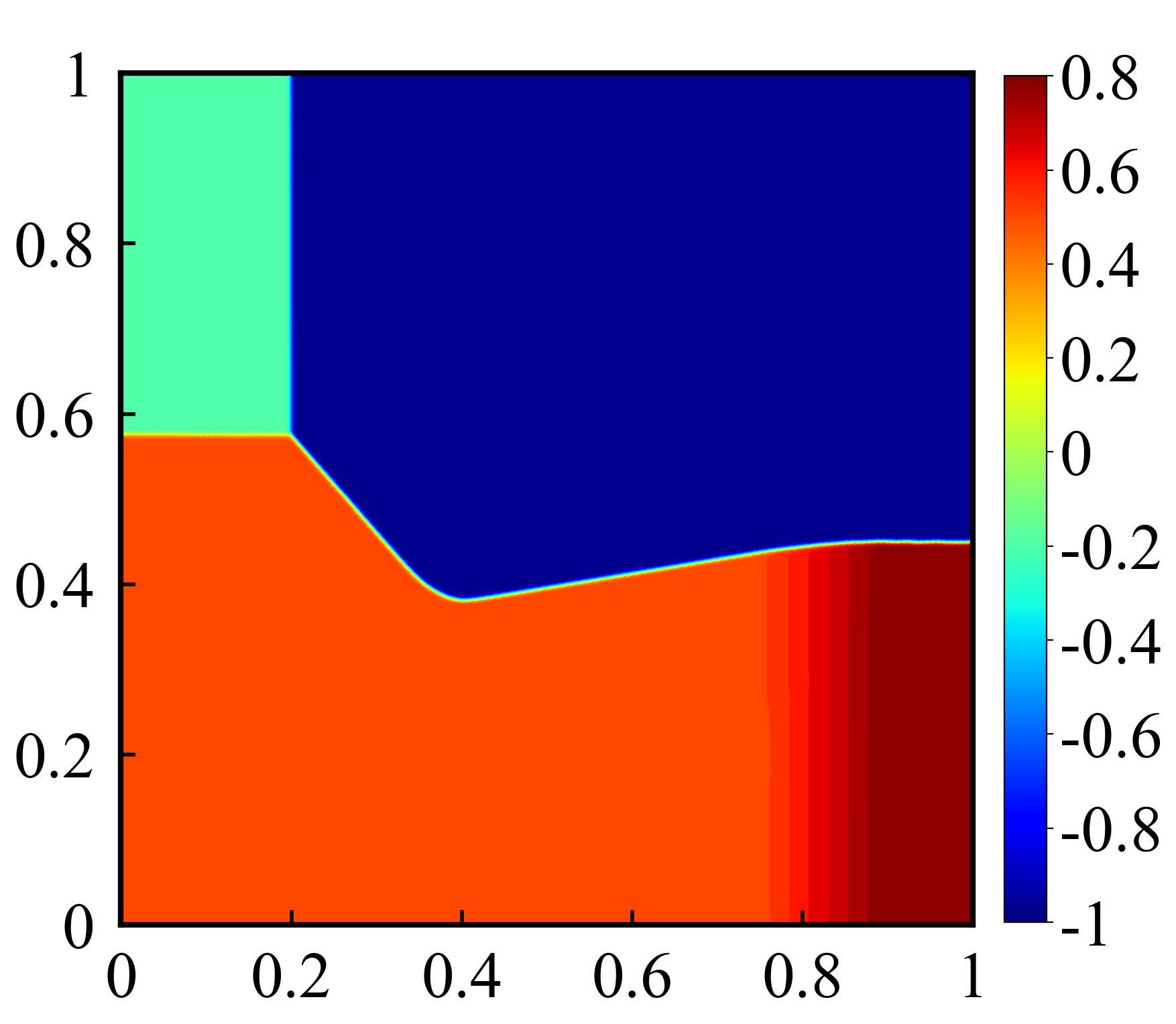}
	\end{subfigure}

	\begin{subfigure}{0.42\textwidth}
		\centering
		\includegraphics[width=\textwidth]{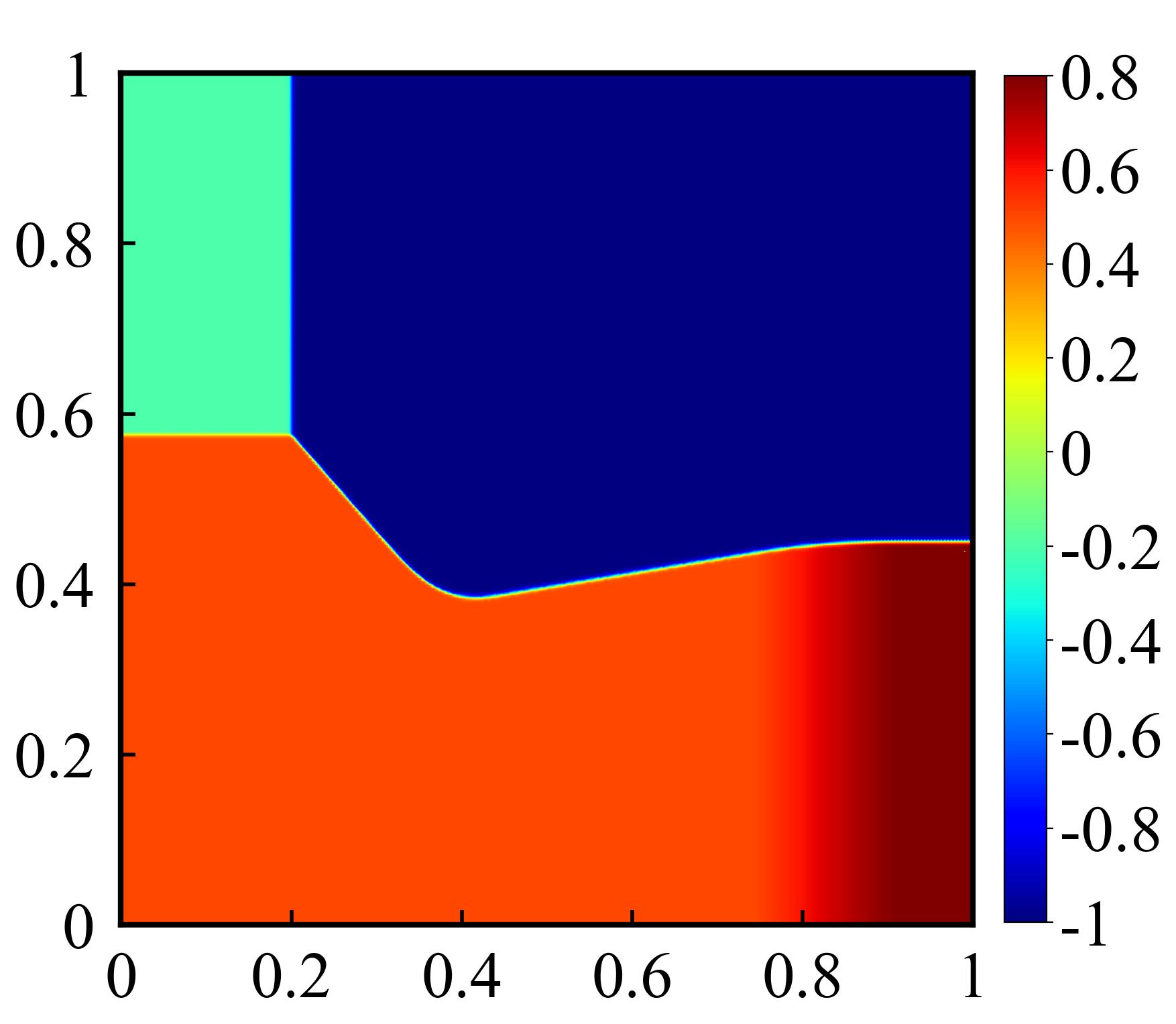}
	\end{subfigure}
	\qquad
	\begin{subfigure}{0.42\textwidth}
		\centering
		\includegraphics[width=\textwidth]{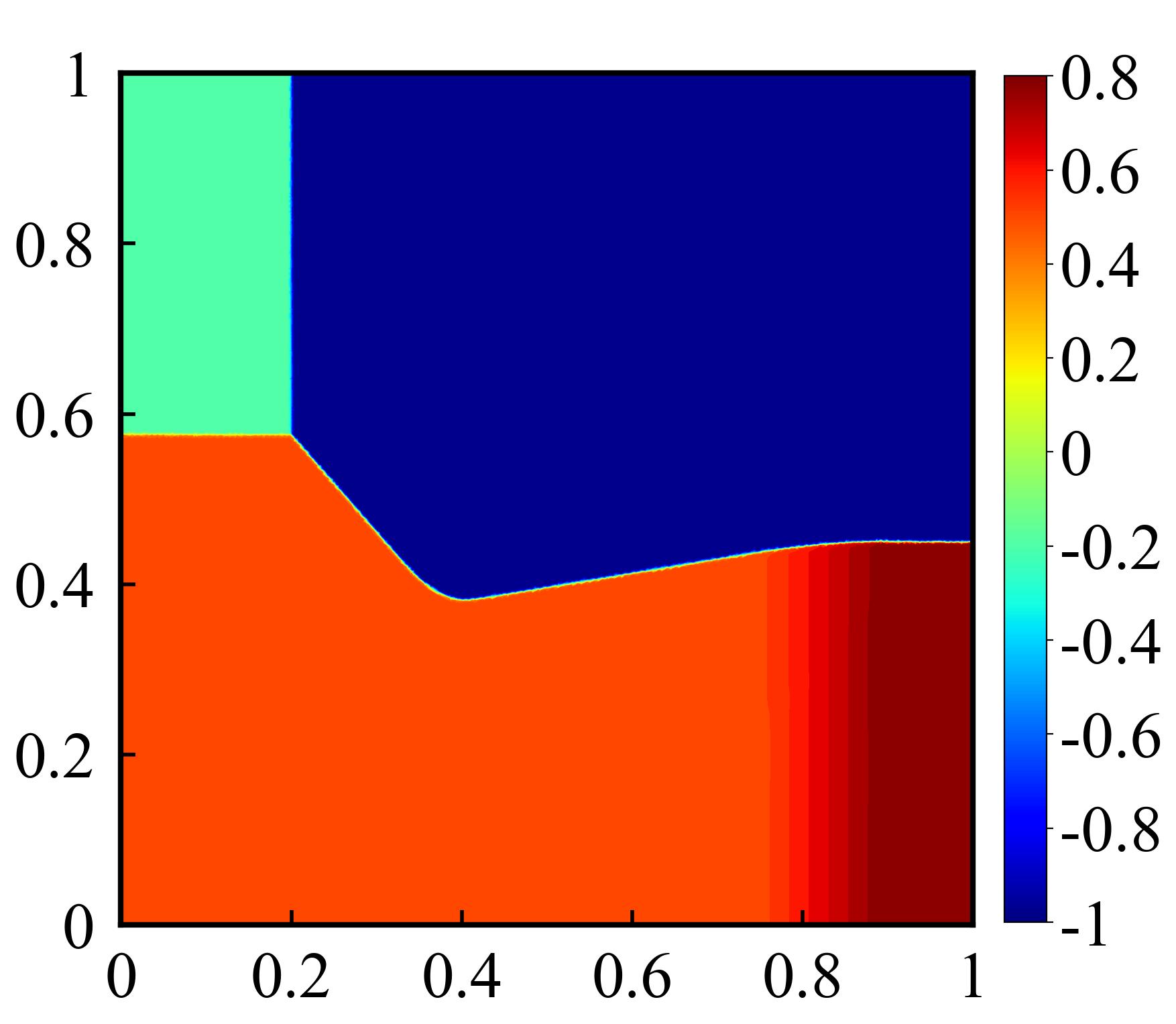}
	\end{subfigure}

	\begin{subfigure}{0.42\textwidth}
		\centering
		\includegraphics[width=\textwidth]{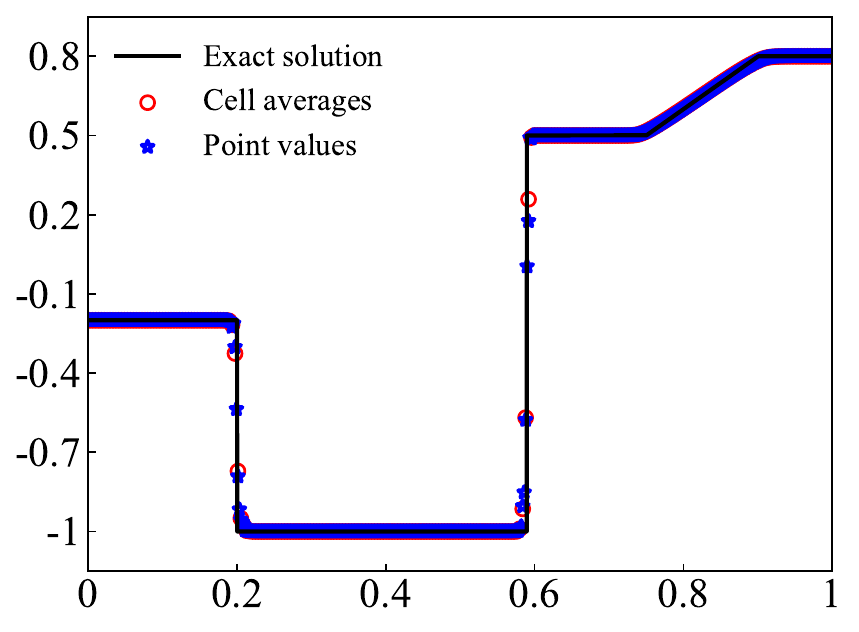}
	\end{subfigure}
	\qquad
	\begin{subfigure}{0.42\textwidth}
		\centering
		\includegraphics[width=\textwidth]{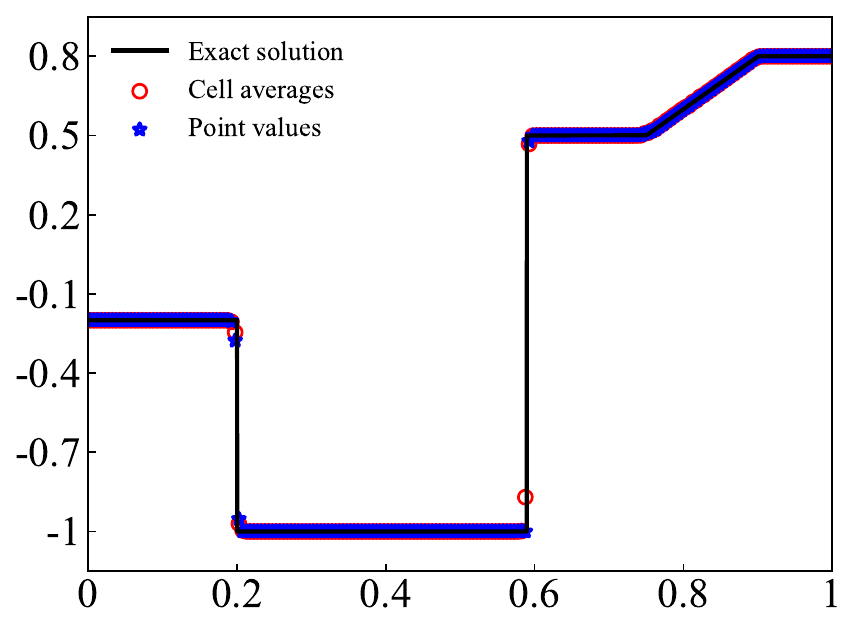}
	\end{subfigure}
	
	\caption{Burgers' oblique Riemann problem:  Cartesian (left) and triangular (right) numerical solutions. Top: cell averages; middle: point values; bottom: diagonal cut along $y = 1 - x$.}
	\label{fig:Ex_BurgRM}
\end{figure}

\begin{table}[!htb] 
	\centering
	\belowrulesep=0pt
	\aboverulesep=0pt
	\caption{Burgers' oblique Riemann problem: cell-average diagnostics.}
	\label{tab:Ex-Burg}
	\setlength{\tabcolsep}{2mm}{
		\begin{tabular}{c|cccc}
			\toprule[1.5pt]
			\multirow{2}{*}{Mesh} & \multicolumn{2}{c}{With limiter $\Pi_h$} & \multicolumn{2}{c}{Without limiter $\Pi_h$}  \\
			\cmidrule(r){2-3} \cmidrule(r){4-5}
			& $\delta_{\max}(\bar u_h)$ & $\delta_{\min}(\bar u_h)$ & $\delta_{\max}(\bar u_h)$ & $\delta_{\min}(\bar u_h)$ \\
			\midrule[1.5pt]
			Cartesian & 1.9915e-11 & 0 & -3.6098e-7 & -5.9203e-7 \\
			Triangular & 0 & 0 & -4.3381e-8 & -2.5536e-7 \\
			\bottomrule[1.5pt]
		\end{tabular}
	}
\end{table}

\subsection{Compressible Euler benchmarks and admissibility diagnostics}\label{subsec:euler}

The two-dimensional compressible Euler benchmarks are performed using the triangular and Cartesian geometric discretizations developed in Section~\ref{sec:geom}. Unless specified otherwise, the adiabatic index is set to $\gamma = 1.4$. 
We refer to \Cref{tab:euler_overview} for the computational configurations and boundary conditions.
Table~\ref{tab:Ex-Euler-Min} reports the final-time density and pressure minima, while \Cref{tab:Ex-Euler-StageMin,tab:Ex-Euler-StageLimiter} list the all-stage minima and the per-stage IDP-limiter activation diagnostics.

\begin{table}[!htb]
\centering
\caption{Euler benchmark configurations.}
\label{tab:euler_overview}
\small
\setlength{\tabcolsep}{3.5pt}
\begin{tabularx}{\textwidth}{>{\raggedright\arraybackslash}p{0.19\textwidth}>{\raggedright\arraybackslash}p{0.16\textwidth}>{\centering\arraybackslash}p{0.14\textwidth}>{\centering\arraybackslash}p{0.11\textwidth}>{\raggedright\arraybackslash}X}
\toprule
Example & Geometry and mesh & Final time & $\gamma$ & Boundary treatment \\
\midrule
Sedov blast & Cartesian, $161\times 161$ & $1$ & $1.4$ & outflow (right and top), reflective (left and bottom) \\
Mach 80 jet & Cartesian, $550\times 132$ on $\Omega^+$ & $0.07$ & $5/3$ & inflow jet on left inlet, reflective symmetry at $y=0$, outflow elsewhere \\
Mach 2000 jet & Cartesian, $475\times 114$ on $\Omega^+$ & $0.001$ & $5/3$ & same as Mach 80 jet \\
Shock diffraction & Triangular, $N=123{,}253$ & $0.9$ & $1.4$ & inflow, outflow, exact-data top boundary, reflective on remaining walls \\
Shock reflection and diffraction & Triangular, $N=303{,}116$ & $0.245$ & $1.4$ & inflow, outflow, exact-data segments, reflective on remaining walls \\
\bottomrule
\end{tabularx}
\end{table}

To rigorously monitor discrete physical admissibility throughout the temporal evolution, we evaluate the all-stage minima
\[
\begin{aligned}
\rho_{\min,\mathrm{avg}}^{\mathrm{stg}}&:=\min_{n,s,K}\rho(\bar u_K^{(n,s)}),
&\qquad
 p_{\min,\mathrm{avg}}^{\mathrm{stg}}&:=\min_{n,s,K}p(\bar u_K^{(n,s)}),\\
\rho_{\min,\mathrm{pt}}^{\mathrm{stg}}&:=\min_{n,s,\sigma}\rho(u_\sigma^{(n,s)}),
&\qquad
 p_{\min,\mathrm{pt}}^{\mathrm{stg}}&:=\min_{n,s,\sigma}p(u_\sigma^{(n,s)}),
\end{aligned}
\]
where $s$ ranges over the intermediate SSP stages. Furthermore, the per-stage IDP-limiter activity is quantified by
\[
\begin{aligned}
\eta_{\max}^{\mathrm{act}}&:=\max_{n,s}\frac{\#\{K: \theta_K^{(n,s)}<1\}}{\#\mathcal T_h}, \qquad
\theta_{\min}^{\mathrm{stg}}&:=\min_{n,s,K}\theta_K^{(n,s)},
\end{aligned}
\]
where $\theta_K^{(n,s)}$ denotes the scaling parameter of the IDP limiter $\Pi_K$ after any optional OE preprocessing has been executed.

\begin{table}[!htb] 
	\centering
	\belowrulesep=0pt
	\aboverulesep=0pt
\caption{Euler benchmarks: final-time minima.}
	\label{tab:Ex-Euler-Min}
	\setlength{\tabcolsep}{1mm}{
		\begin{tabular}{c|cccc}
			\toprule[1.5pt]
			\multirow{2}{*}{ Examples } &
			\multicolumn{2}{c}{cell averages} &
			\multicolumn{2}{c}{point values} \\
			\cmidrule(r){2-3} \cmidrule(r){4-5}
			& $\rho_{\rm min}$ & $p_{\rm min}$ & $\rho_{\rm min}$ & $p_{\rm min}$ \\

			\midrule[1.5pt]

			{Sedov blast} & 1.7919e-3 & 4e-13 &1.7914e-3 & 3.6883e-13 \\
			{Mach 80 jet}  & 6.1694e-2 &4.0618e-1 & 6.0763e-2& 6.1912e-2 \\
			{Mach 2000 jet}  & 6.7829e-2 & 3.4775e-1 & 6.6421e-2 & 1.6657e-6 \\
			{Shock diffraction}  & 1.3326e-1 & 9.4016e-1 & 1.2638e-1 & 8.4504e-1 \\
			{Shock reflection and diffraction}  & 6.9e-2 & 5.4808e-1 & 6.8487e-2 & 5.4220e-1 \\

			\bottomrule[1.5pt]
		\end{tabular}
	}
\end{table}

\begin{table}[!htb] 
	\centering
	\belowrulesep=0pt
	\aboverulesep=0pt
\caption{Euler benchmarks: all-stage minima.}
	\label{tab:Ex-Euler-StageMin}
	\setlength{\tabcolsep}{1mm}{
		\begin{tabular}{c|cccc}
			\toprule[1.5pt]
			\multirow{2}{*}{Examples} & \multicolumn{2}{c}{cell averages} & \multicolumn{2}{c}{point values} \\
			\cmidrule(r){2-3} \cmidrule(r){4-5}
			& $\rho_{\min,\mathrm{avg}}^{\mathrm{stg}}$ & $p_{\min,\mathrm{avg}}^{\mathrm{stg}}$ & $\rho_{\min,\mathrm{pt}}^{\mathrm{stg}}$ & $p_{\min,\mathrm{pt}}^{\mathrm{stg}}$ \\
			\midrule[1.5pt]
			Sedov blast & 1.7919e-3 & 4e-13 & 1.7914e-3 & 3.6719e-13 \\
			Mach 80 jet & 6.1693e-2 & 4.0614e-1 & 6.0763e-2 & 2.1051e-2 \\
			Mach 2000 jet & 6.7829e-2 & 3.4772e-1 & 6.6421e-2 & 1.6031e-6 \\
			Shock diffraction &1.3326e-1 & 9.4016e-1 & 1.2638e-1 & 7.6237e-1 \\
			Shock reflection and diffraction & 6.9e-2 & 5.4807e-1 & 6.8485e-2 & 5.4219e-1 \\
			\bottomrule[1.5pt]
		\end{tabular}
	}
\end{table}

\begin{table}[!htb]
	\centering
	\belowrulesep=0pt
	\aboverulesep=0pt
\caption{Euler benchmarks: per-stage limiter diagnostics.}
	\label{tab:Ex-Euler-StageLimiter}
	\setlength{\tabcolsep}{1.2mm}{
		\begin{tabular}{c|cc}
			\toprule[1.5pt]
			Examples & $\eta_{\max}^{\mathrm{act}}$ & $\theta_{\min}^{\mathrm{stg}}$ \\
			\midrule[1.5pt]
			Sedov blast & 1.9052e-4 & 4.6336e-16 \\
			Mach 80 jet & 2.0279e-4 & 7.1886e-3 \\
			Mach 2000 jet & 4.5182e-4 & 1.0038e-5 \\
			Shock diffraction & 6.1662e-4 & 2.6739e-2 \\
			Shock reflection and diffraction & 3.8599e-4 & 2.4933e-2 \\
			\bottomrule[1.5pt]
		\end{tabular}
	}
\end{table}

\subsubsection{Sedov blast}\label{Ex:Sedov}
We first test the Sedov blast benchmark following the setup in \cite{zhang2010positivity,ZXSPP2012}. The computational domain is $[0,1.1]^2$, discretized using a uniform Cartesian mesh of $161\times161$ cells. The initial state is
\begin{equation*}\label{Ex:SedovIni}
	\rho=1, ~~ v_1=v_2=0, ~~ 
	E=\begin{cases}
		\frac{0.244816}{|I_{0,0}|}, & \text{if} ~ (x,y)\in I_{0,0} ~\text{(the bottom-left cell)}, \\
		10^{-12}, & \text{otherwise}.
	\end{cases}
\end{equation*}
where $I_{0,0}$ denotes the bottom-left computational cell, and the analytical reference solution is provided in \cite{Sedov2018}. Outflow boundary conditions are imposed on the right and top boundaries, while reflective wall conditions are prescribed along the bottom and left boundaries.
\Cref{fig:Sedov} presents the density contours and the 1D profile along the diagonal $y=x$ at the final time $t=1$, while \Cref{tab:Ex-Euler-Min,tab:Ex-Euler-StageMin,tab:Ex-Euler-StageLimiter} summarize the final-time minima, the all-stage minima, and the associated IDP-limiter diagnostics. 
The $10^{-13}$-level minimum pressure values reflect the floating-point safeguard described in Appendix~\ref{sec:IDP_imple}.

\begin{figure}[!htb]
	\centering
	\begin{subfigure}{0.32\textwidth}
		\centering
		\includegraphics[width=\textwidth]{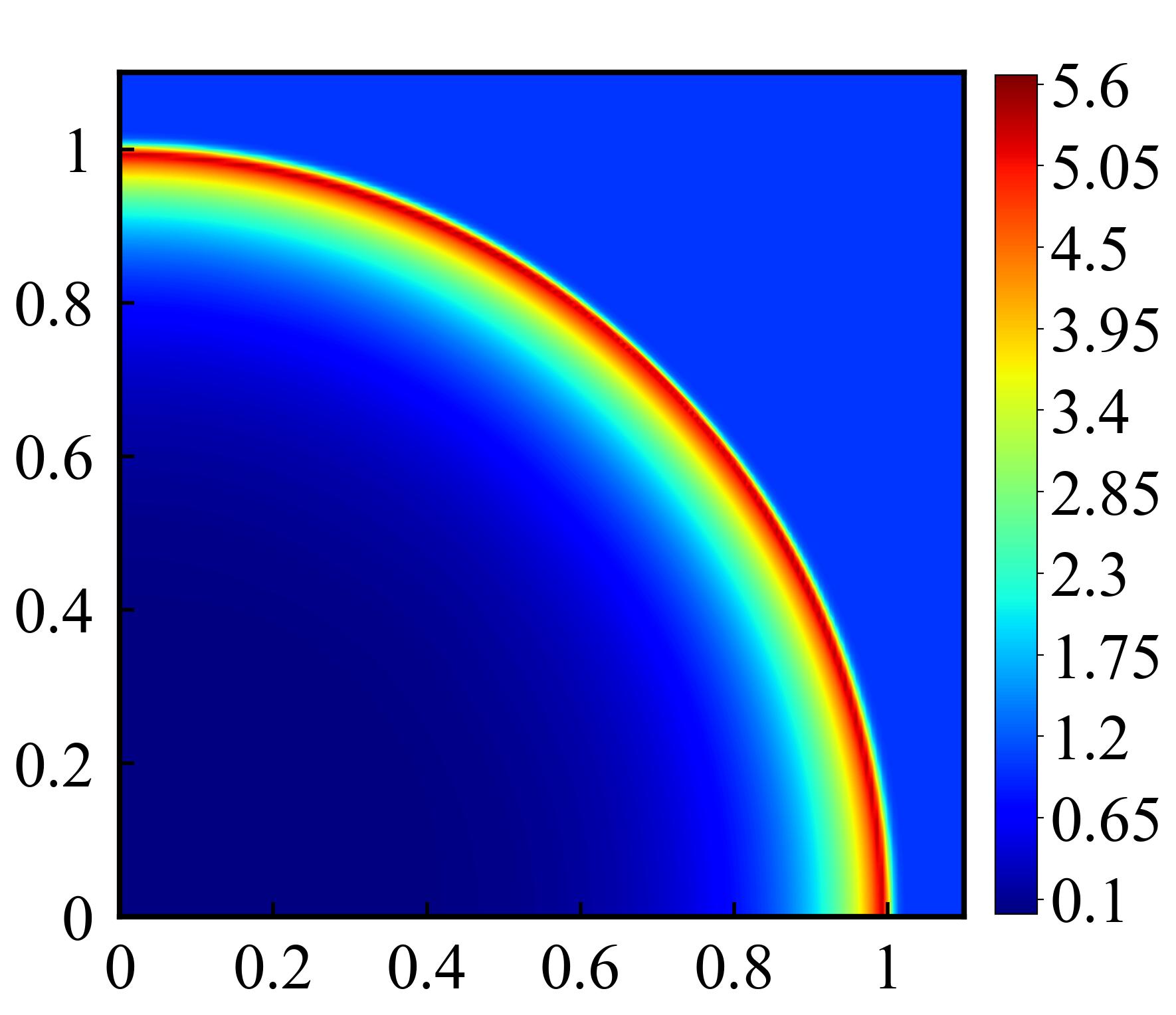}
		\caption{Cell averages.}
		\label{fig:Sedov1}
	\end{subfigure}
	\hfill
	\begin{subfigure}{0.32\textwidth}
		\centering
		\includegraphics[width=\textwidth]{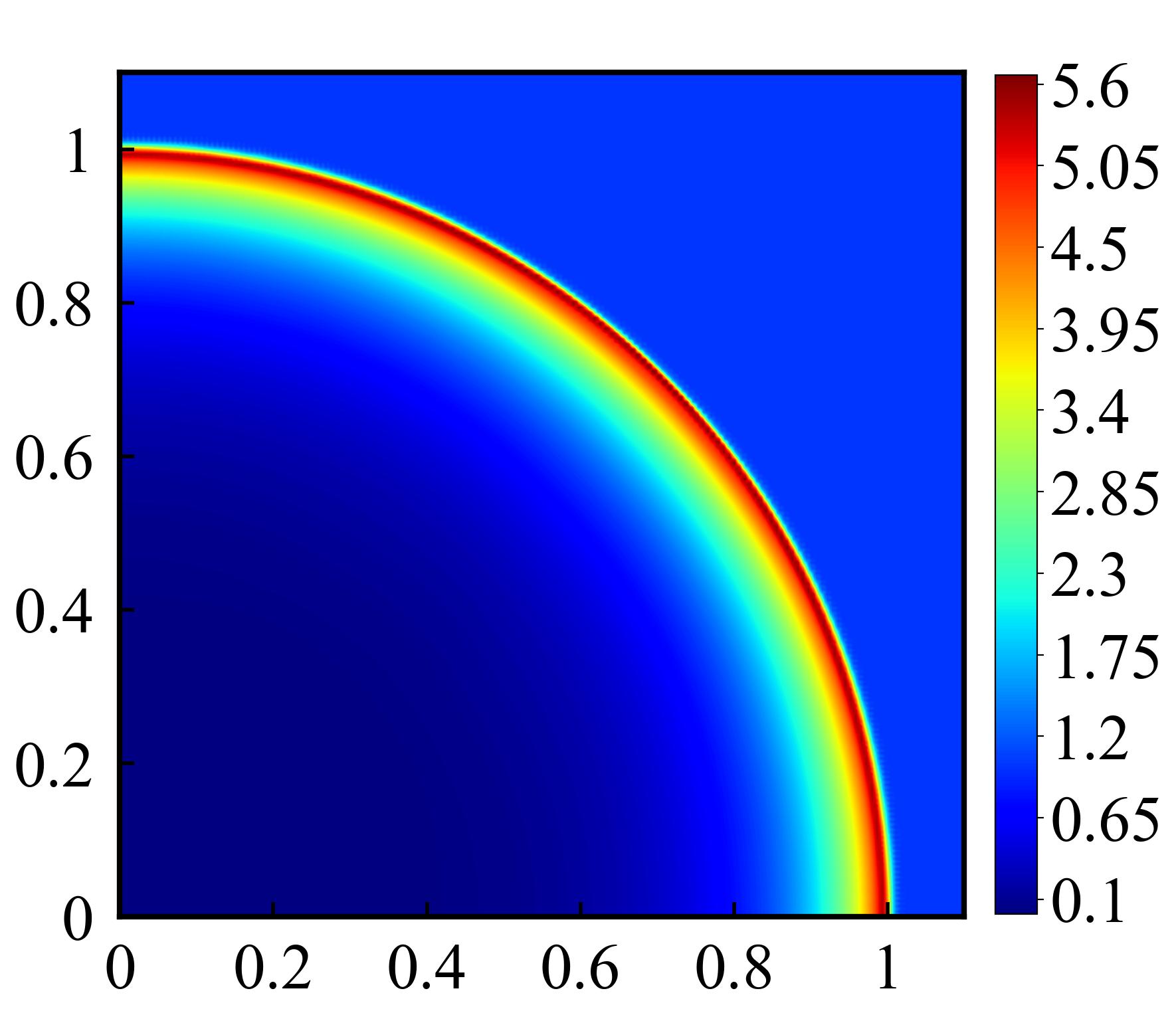}
		\caption{Point values.}
		\label{fig:Sedov2}
	\end{subfigure}
	\hfill
	\begin{subfigure}{0.32\textwidth}
		\includegraphics[width=\textwidth]{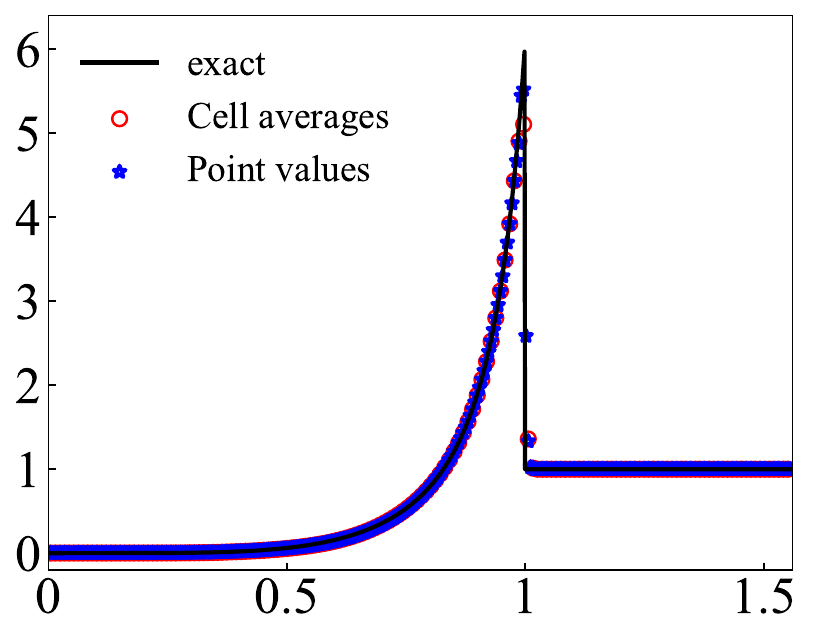}
		\caption{Cut along $y=x$.}
		\label{fig:Sedov3}
	\end{subfigure}
	
	\caption{Sedov blast: density contours and 1D diagonal cut along $y=x$.}
	\label{fig:Sedov}
\end{figure}

\subsubsection{High-Mach jet problems} \label{Ex:Jet}
We next investigate the Mach 80 and Mach 2000 astrophysical jet benchmarks from \cite{CDWOCAD2023,peng2025oedg}. For both configurations, the adiabatic index is $\gamma=5/3$, and the ambient state in $\Omega$ is $(\rho,v_1,v_2,p)=(0.5,0,0,0.4127)$. A high-speed jet is injected through the left boundary segment $y\in[-0.05,0.05]$ along the positive $x$-direction.

\emph{Mach 80 jet.}
We first consider the physical domain $\Omega=[0,2]\times[-0.48,0.48]$. Exploiting symmetry, the simulation is carried out on the upper half-domain $\Omega^+=[0,2]\times[0,0.48]$, discretized using a $550\times132$ uniform Cartesian mesh. On the inlet segment $\Gamma_{\mathrm{in}}=\{(x,y):x=0,\ 0\le y\le0.05\}$, the inflow state is $(\rho,v_1,v_2,p)=(5,30,0,0.4127)$. A reflective symmetry condition is prescribed along $y=0$, while outflow conditions are applied on the remaining boundaries. The simulation is advanced to the final time $t=0.07$.

\emph{Mach 2000 jet.}
We next consider the domain $\Omega=[0,1]\times[-0.24,0.24]$. The simulation is similarly performed on the half-domain $\Omega^+=[0,1]\times[0,0.24]$, discretized using a $475\times114$ uniform Cartesian mesh. The inflow state on $\Gamma_{\mathrm{in}}$ is $(\rho,v_1,v_2,p)=(5,800,0,0.4127)$. The boundary conditions are identical to those in the Mach 80 benchmark, and the simulation is run until $t=0.001$.

\Cref{fig:Ex-Jet1,fig:Ex-Jet2} display the logarithmic density and pressure distributions for the Mach 80 and Mach 2000 jets, while \Cref{tab:Ex-Euler-Min,tab:Ex-Euler-StageMin,tab:Ex-Euler-StageLimiter} report the corresponding discrete admissibility diagnostics.

\begin{figure}[!htb]
	\centering	
	\begin{subfigure}{0.48\textwidth}
		\includegraphics[width=\textwidth]{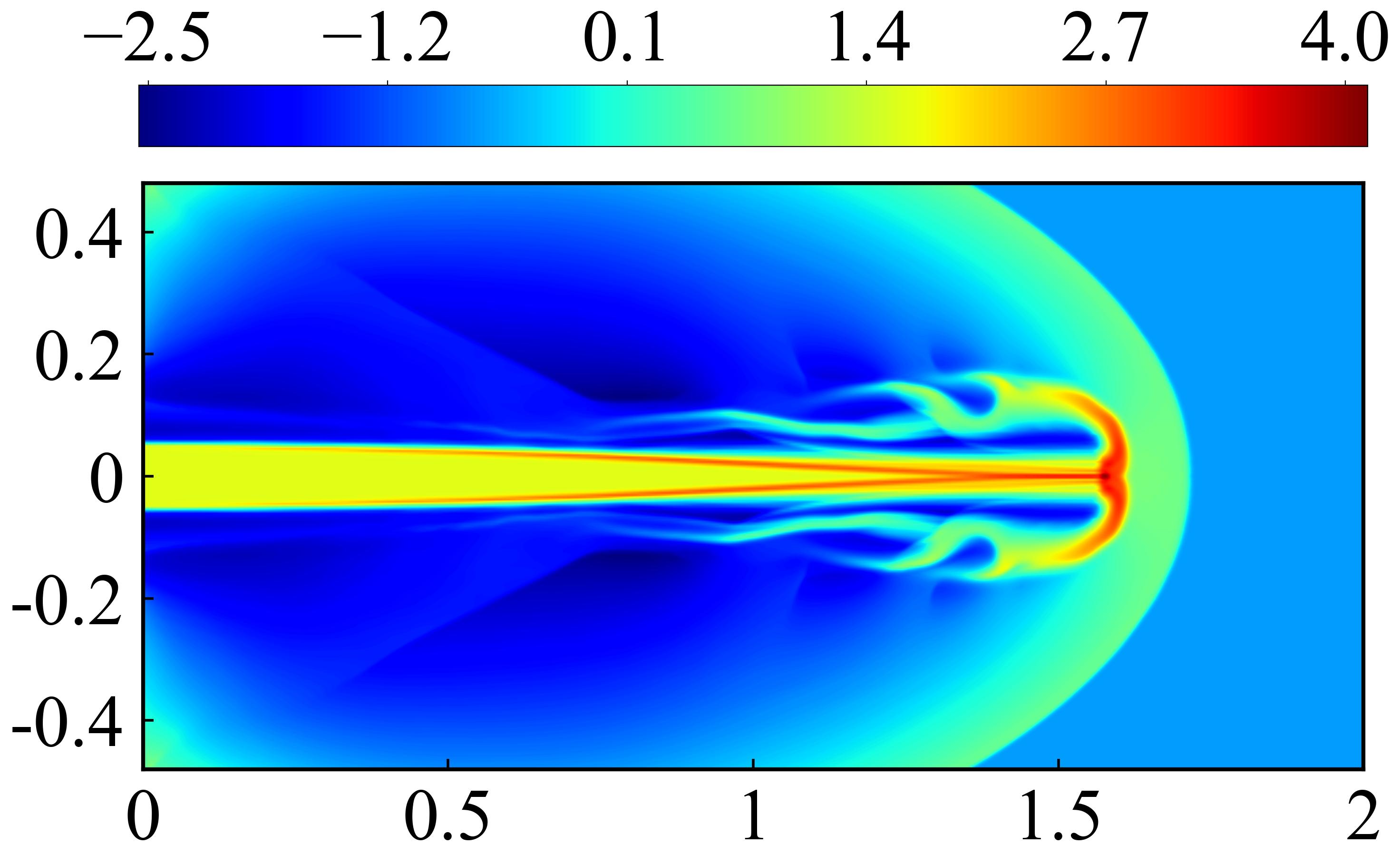}
	\end{subfigure}
	\quad
	\begin{subfigure}{0.48\textwidth}
		\includegraphics[width=\textwidth]{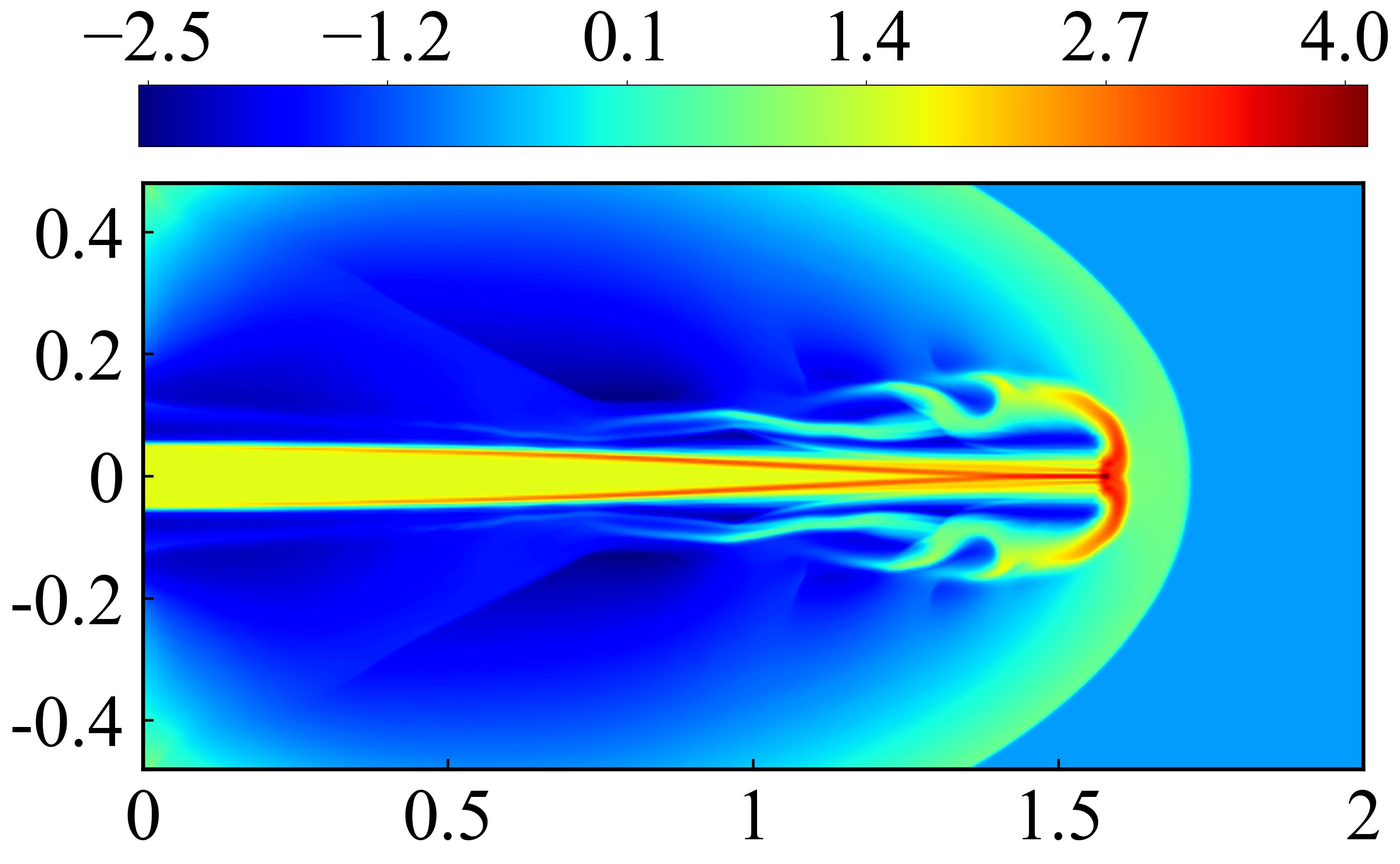}
	\end{subfigure}
	
	\begin{subfigure}{0.48\textwidth}
		\includegraphics[width=\textwidth]{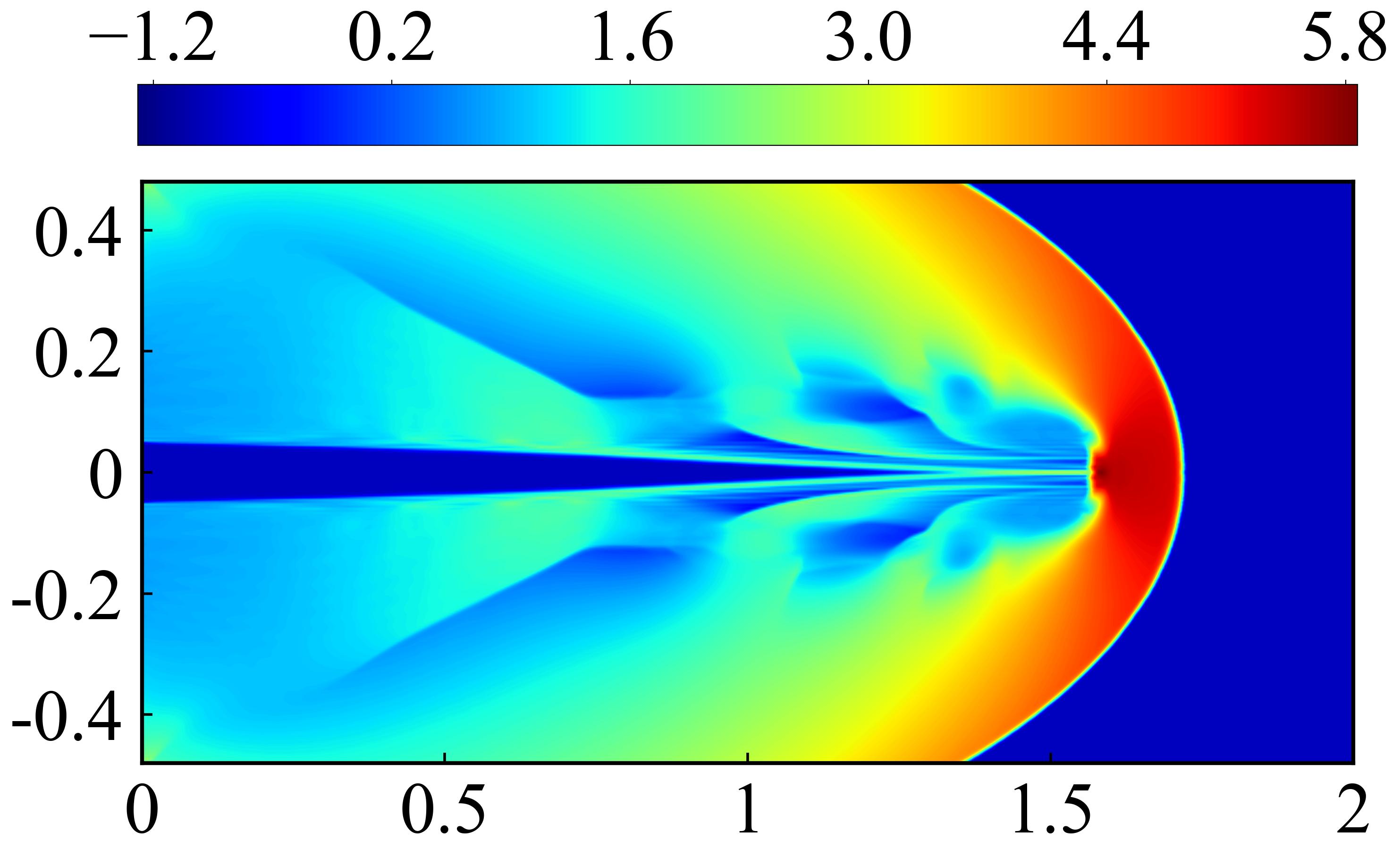}
	\end{subfigure}
	\quad
	\begin{subfigure}{0.48\textwidth}
		\includegraphics[width=\textwidth]{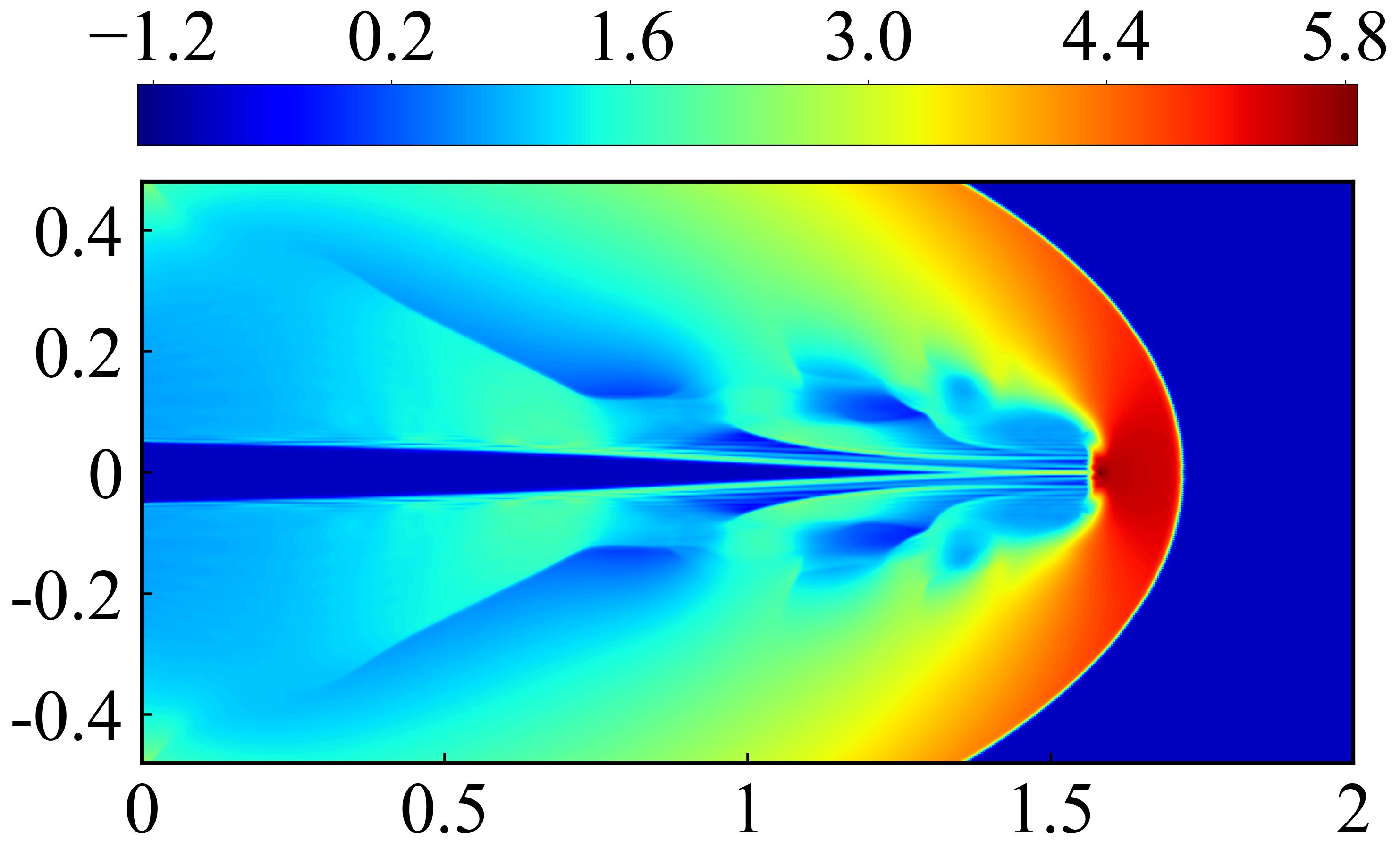}
	\end{subfigure}
	
	\caption{Mach 80 jet: logarithmic density (top) and logarithmic pressure (bottom). Left: cell averages; right: point values.}
	\label{fig:Ex-Jet1}
\end{figure} 

\begin{figure}[!htb]
	\centering	
	\begin{subfigure}{0.48\textwidth}
		\includegraphics[width=\textwidth]{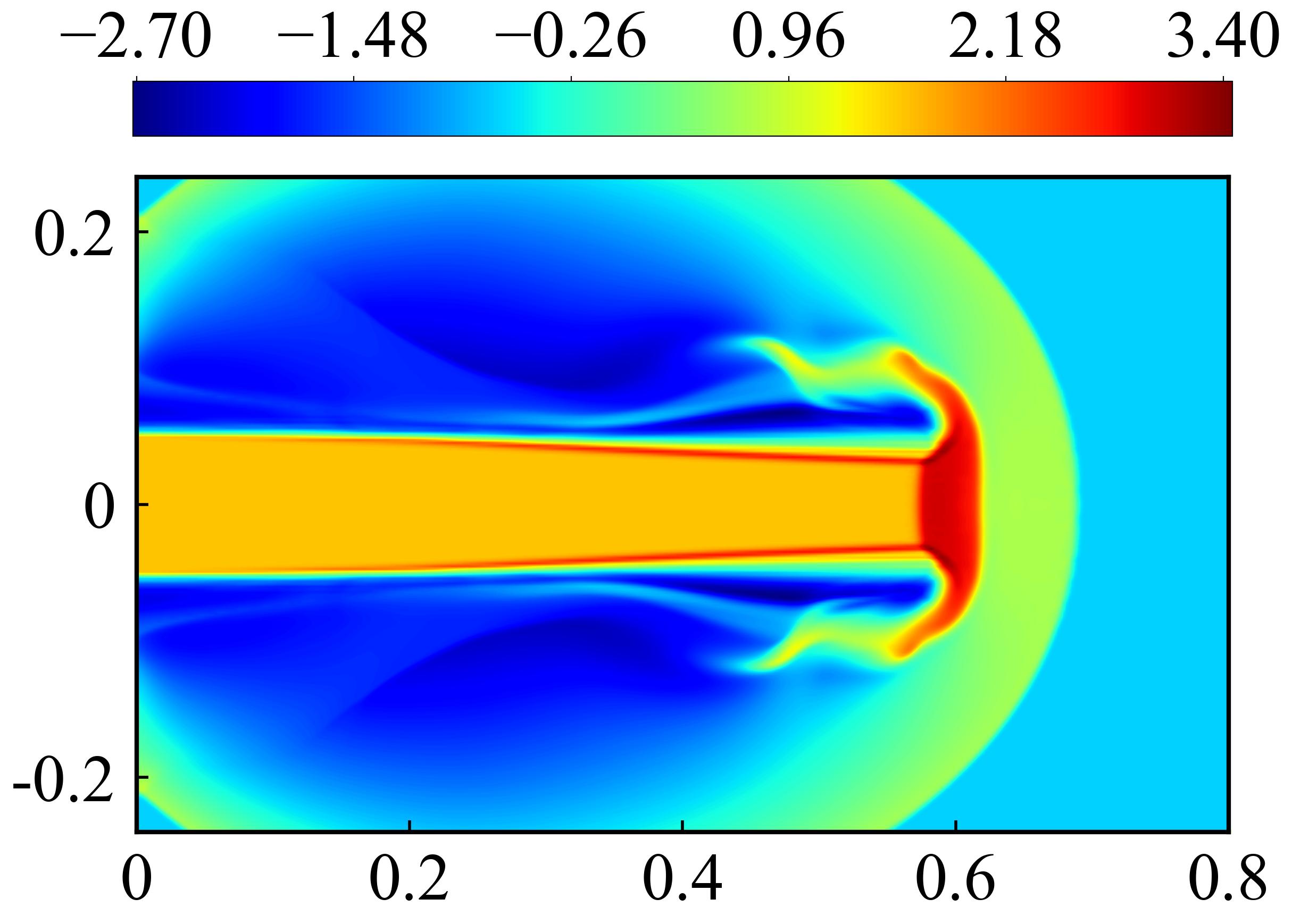}
	\end{subfigure}
	\quad
	\begin{subfigure}{0.48\textwidth}
		\includegraphics[width=\textwidth]{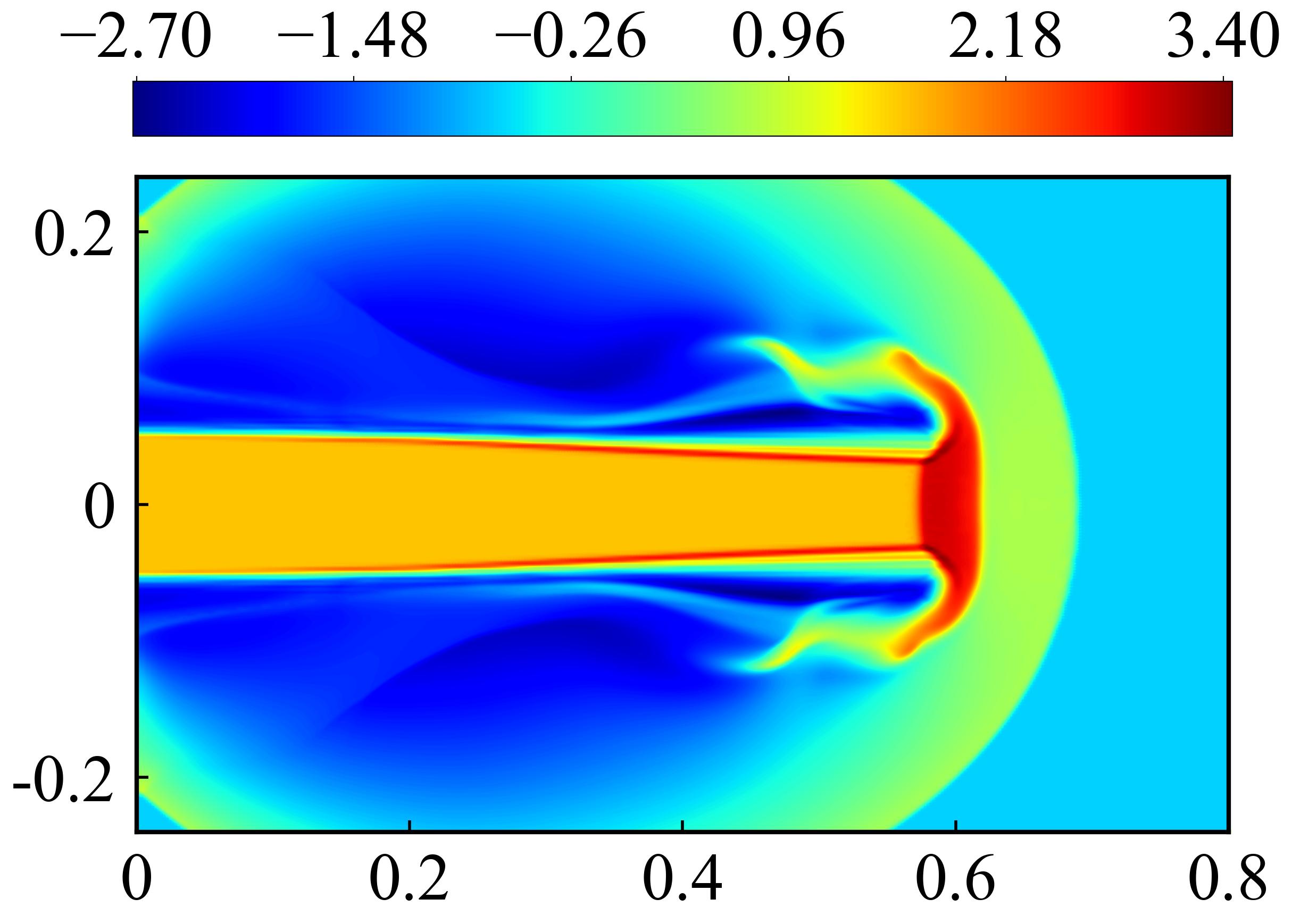}
	\end{subfigure}
	
	\begin{subfigure}{0.48\textwidth}
		\includegraphics[width=\textwidth]{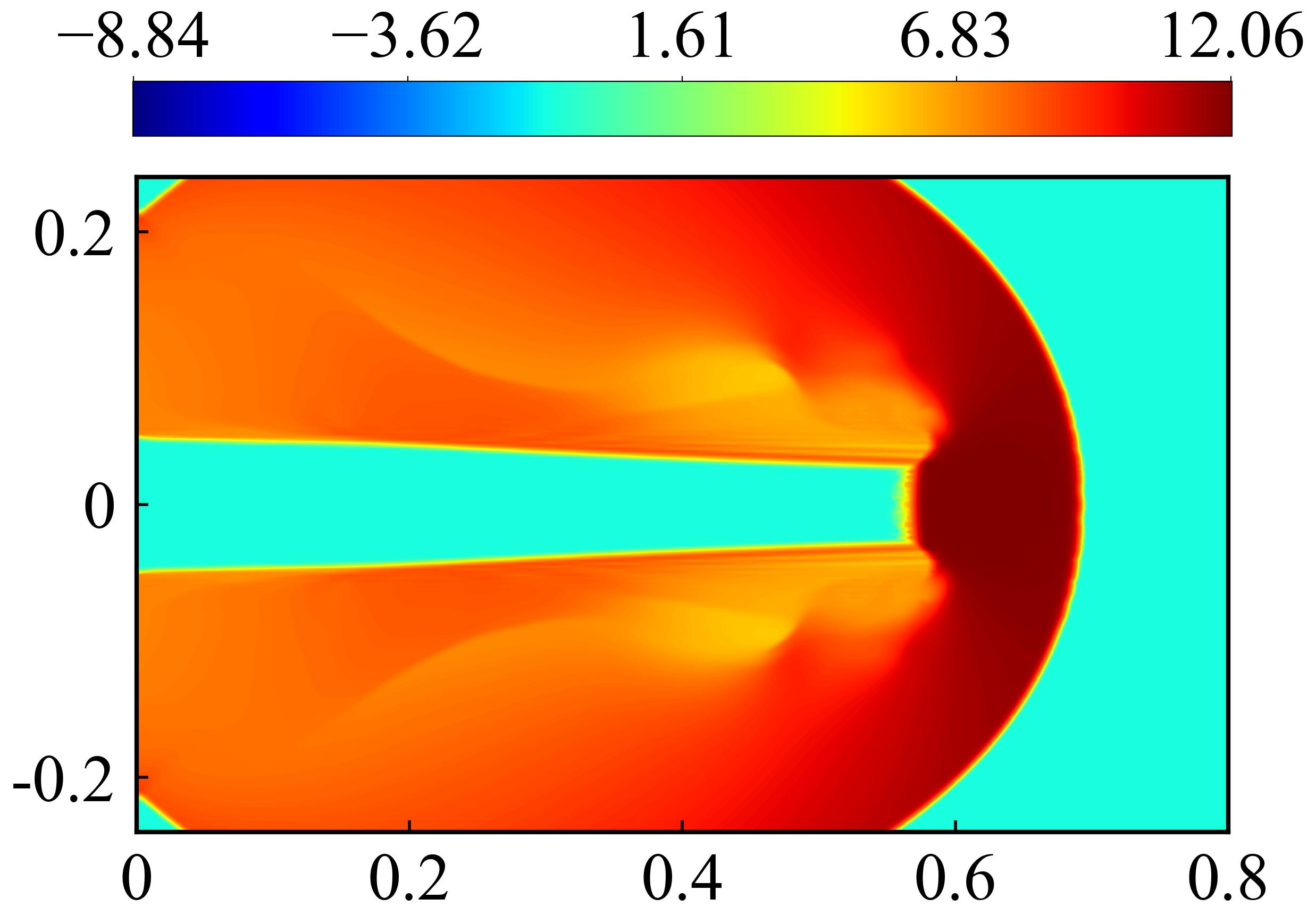}
	\end{subfigure}
	\quad
	\begin{subfigure}{0.48\textwidth}
		\includegraphics[width=\textwidth]{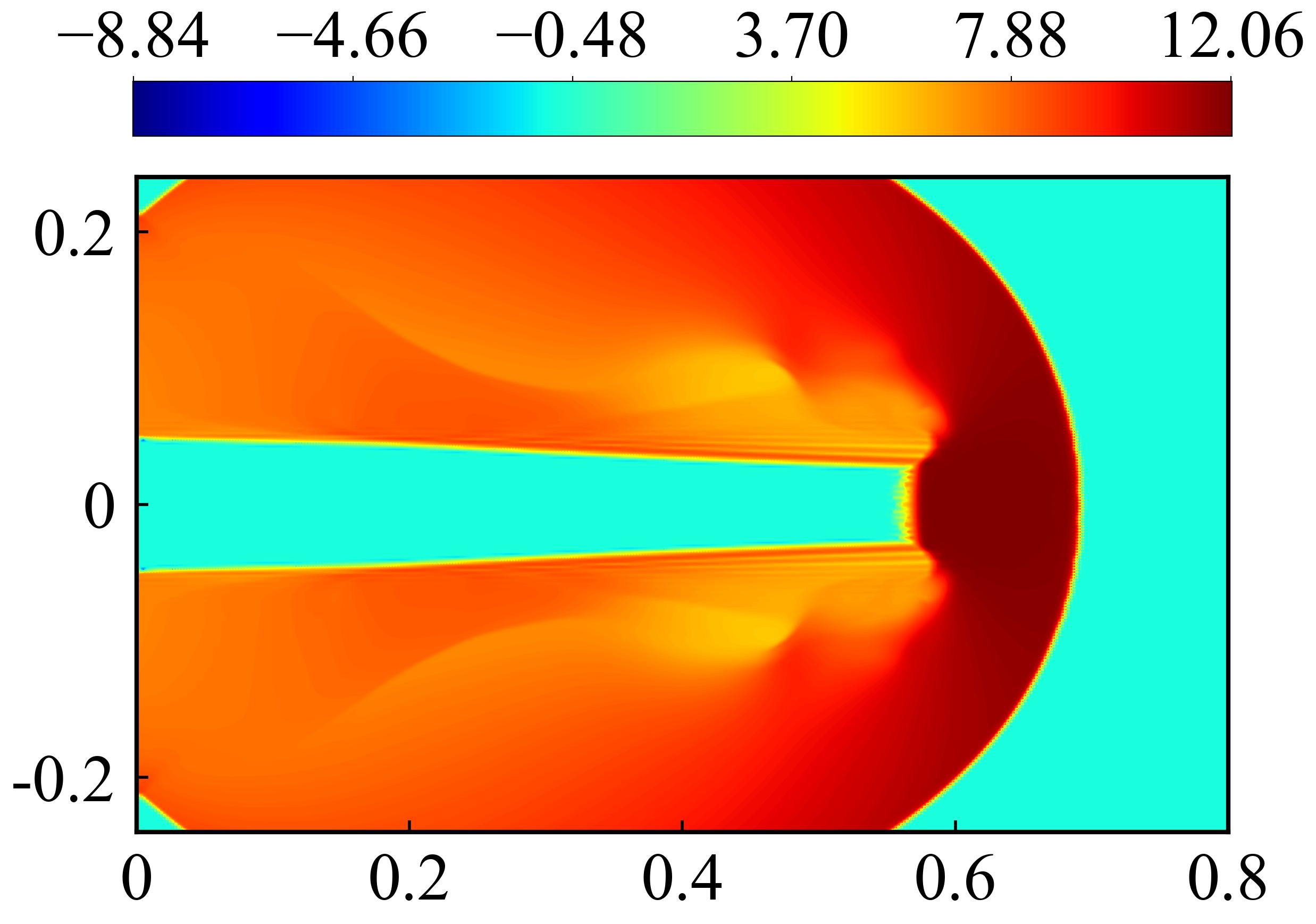}
	\end{subfigure}
	
	\caption{Mach 2000 jet: logarithmic density (top) and logarithmic pressure (bottom). Left: cell averages; right: point values.}
	\label{fig:Ex-Jet2}
\end{figure}

\subsubsection{Shock diffraction at a convex corner (irregular domain)} \label{Ex:SDW}
We consider the Mach 10 shock-diffraction benchmark on an irregular polygonal domain with vertices $(0,0)$, $(13,0)$, $(13,11)$, $(0,11)$, $(0,6)$, and $(2\sqrt{3},6)$. The domain is discretized using an unstructured triangular mesh consisting of $N=123{,}253$ cells and characteristic mesh size $h_{\mathcal T}=0.05$. A coarsened sample mesh ($h=0.5$) is illustrated in \Cref{fig:Ex_SDWMesh}. Initially, a planar right-moving shock located at $x=3.4$ with $y\in[6,11]$ separates the unperturbed states
\begin{equation*}
	(\rho, v_1, v_2, p) = 
	\begin{cases}
		(8, 8.25, 0, 116.5), & x\leq 3.4, 6 \leq y \leq 11, \\
		(1.4, 0, 0, 1), & \text{otherwise.}
	\end{cases}
\end{equation*}
The left boundary segment $\{x=0,\ 6\le y\le 11\}$ is prescribed with inflow data; the right boundary $\{x=13,\ 0\le y\le 11\}$ and the bottom boundary $\{0\le x\le 13,\ y=0\}$ are equipped with outflow conditions; the top boundary $\{0\le x\le 13,\ y=11\}$ imposes the exact-data boundary treatment; and the remaining solid walls enforce the reflective slip-wall condition. 
\Cref{fig:Ex_SDW1,fig:Ex_SDW2} display the density contours at the final time $t=0.9$, while \Cref{tab:Ex-Euler-Min,tab:Ex-Euler-StageMin,tab:Ex-Euler-StageLimiter} report the final-time minima, the all-stage minima, and the associated IDP-limiter activation diagnostics.

\begin{figure}[!htb]
	\centering	
    \begin{subfigure}{0.32\textwidth}
		\includegraphics[width=\textwidth]{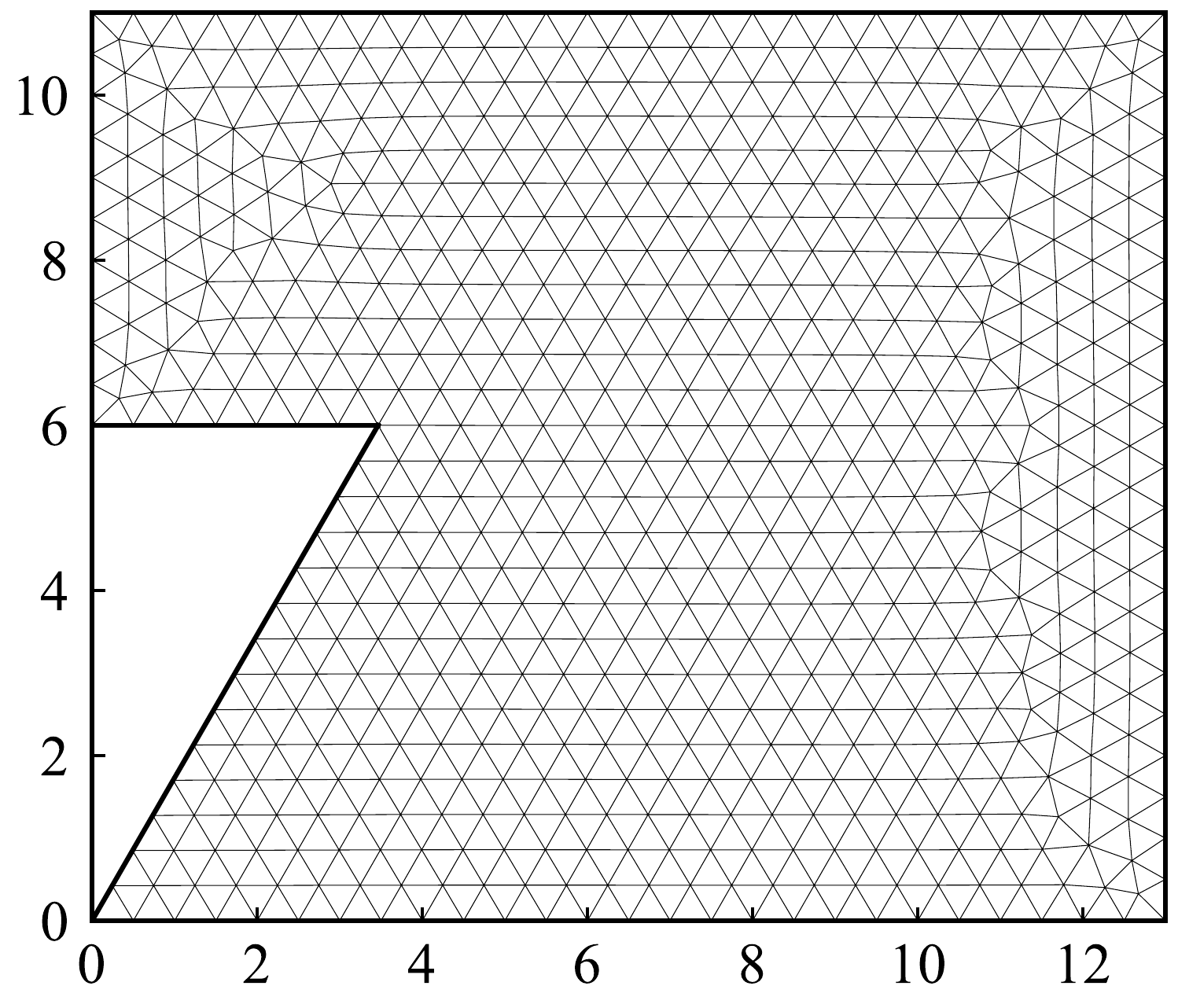}
		\caption{Coarsened sample mesh.}
		\label{fig:Ex_SDWMesh}
	\end{subfigure}
	\hfill
	\begin{subfigure}{0.32\textwidth}
		\includegraphics[width=\textwidth]{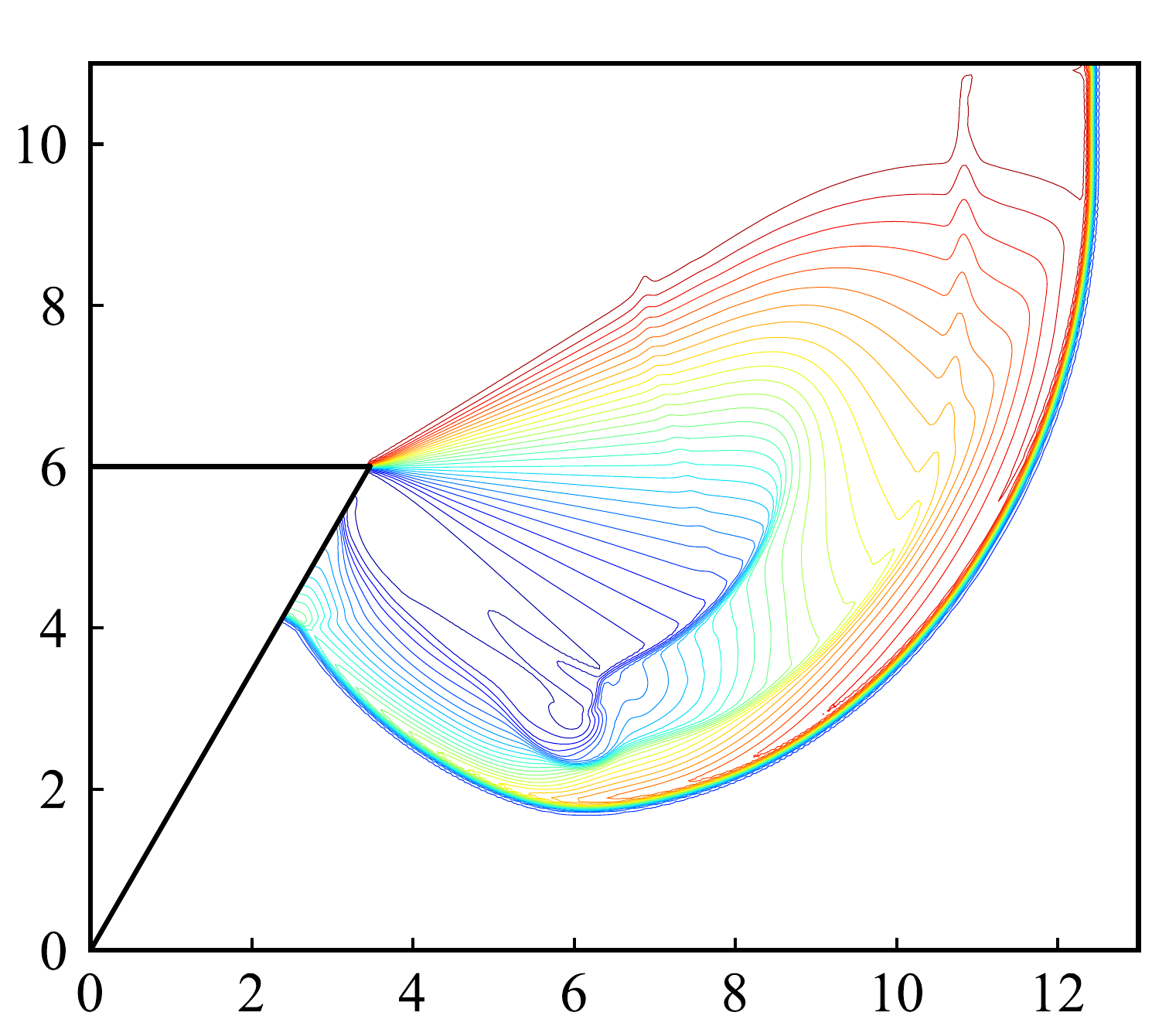}
		\caption{Cell averages.}
		\label{fig:Ex_SDW1}
	\end{subfigure}
	\hfill
	\begin{subfigure}{0.32\textwidth}
		\includegraphics[width=\textwidth]{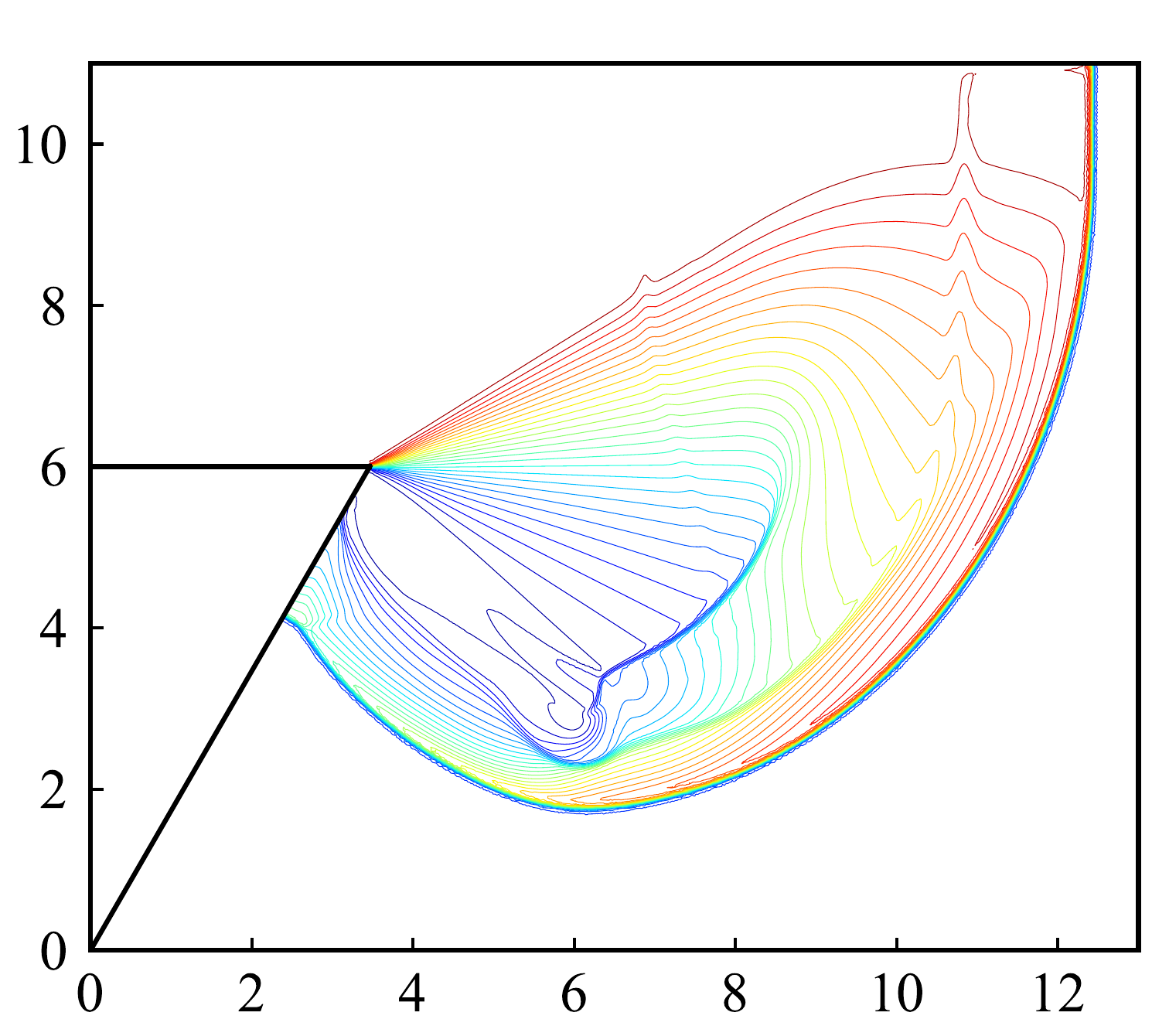}
		\caption{Point values.}
		\label{fig:Ex_SDW2}
	\end{subfigure}
	
	\caption{Shock diffraction: (a) sample mesh; (b--c) density contours (30 levels from 0.12 to 8.05).}
	\label{fig:Ex_SDW}
\end{figure}

\subsubsection{Shock reflection and diffraction with Kelvin--Helmholtz instability}\label{Ex:SRDW}  
We finally investigate the shock reflection and diffraction benchmark on the polygonal domain with vertices 
\[
(0.1,0),\ (0.1,2),\ (2.8,2),\ (2.8,0),\ (1.2,0),\ (1.2,\sqrt{3}/3),\ (0.2,0).
\]
The domain is discretized using an unstructured triangular mesh consisting of $N=303{,}116$ cells and characteristic mesh size $h=1/160$. \Cref{fig:Ex_SRDWMesh} illustrates a coarsened sample mesh ($h=0.1$). Initially, a planar right-moving shock located at $x=0.2$ separates the unperturbed states
\[
(\rho,v_1,v_2,p)=
\begin{cases}
	(8,8.25,0,116.5), & x\le 0.2,\\
	(1.4,0,0,1), & \text{otherwise}.
\end{cases}
\]
An inflow condition is imposed along $x=0.1$, an outflow condition along $x=2.8$, exact data prescribed on $\{0.1\leq x \leq 0.2, y=0\}$ and on the top boundary $y=2$, while reflective slip-wall conditions are enforced along the remaining boundaries. The simulation is carried out to the final time $t=0.245$, with the resulting density contours and discrete admissibility diagnostics reported in \Cref{fig:Ex_SRDW1,fig:Ex_SRDW2} and \Cref{tab:Ex-Euler-Min,tab:Ex-Euler-StageMin,tab:Ex-Euler-StageLimiter}, respectively.

\begin{figure}[!htb]
	\centering	
    \begin{subfigure}{0.3\textwidth}
		\includegraphics[width=\textwidth]{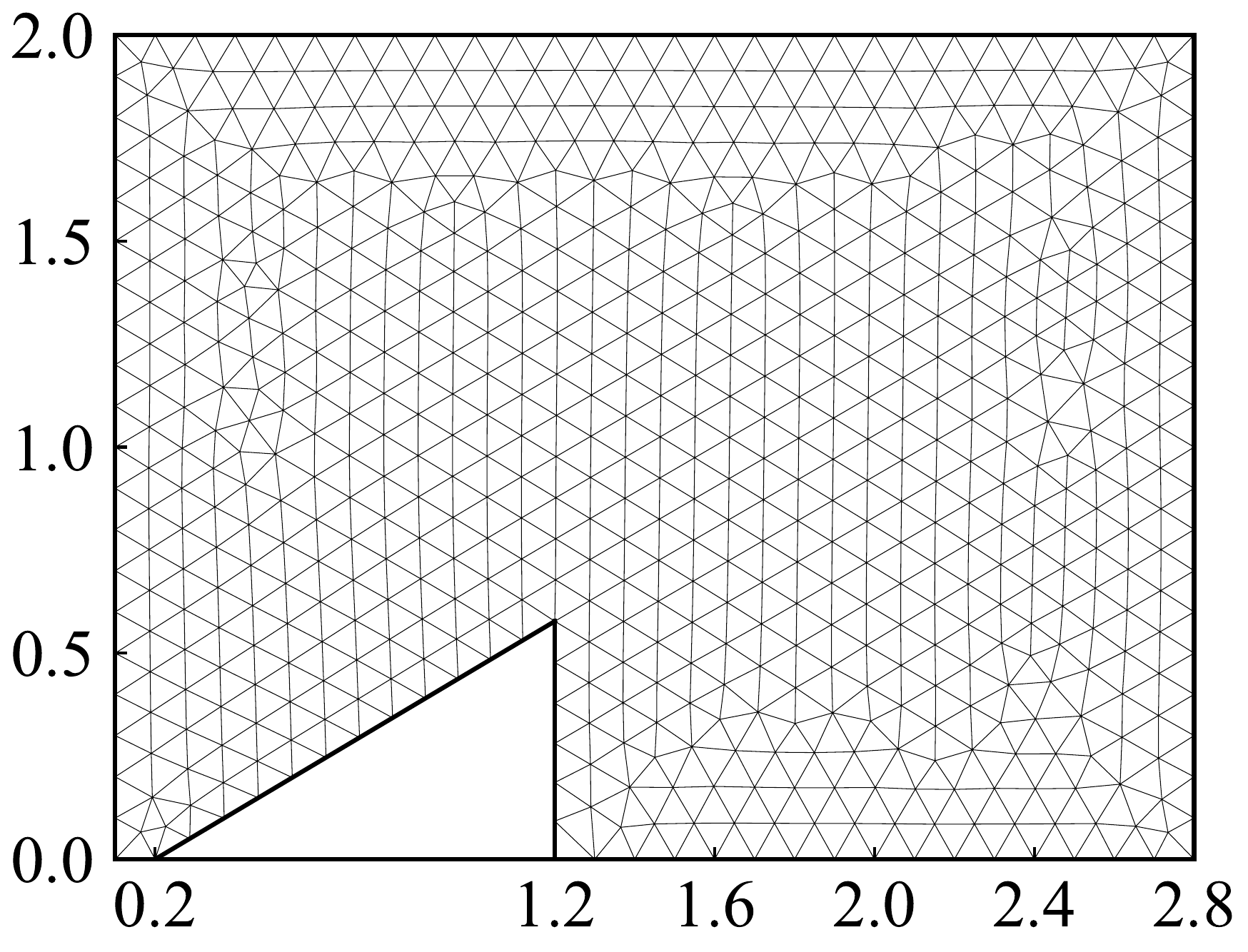}
		\caption{Coarsened sample mesh.}
		\label{fig:Ex_SRDWMesh}
	\end{subfigure}
    \hfill
	\begin{subfigure}{0.33\textwidth}
		\includegraphics[width=\textwidth]{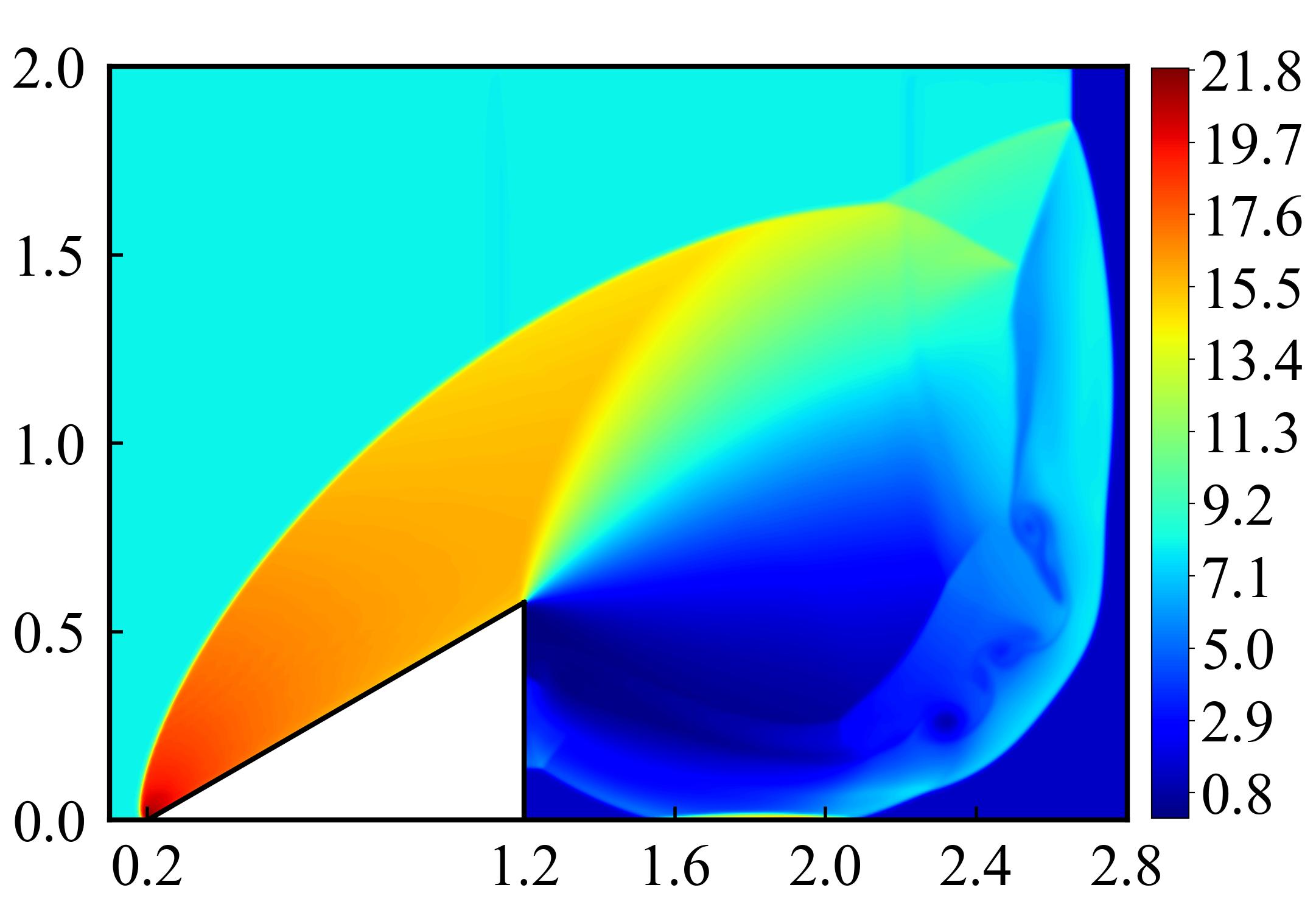}
		\caption{Cell averages.}
		\label{fig:Ex_SRDW1}
	\end{subfigure}
	\hfill
	\begin{subfigure}{0.33\textwidth}
		\includegraphics[width=\textwidth]{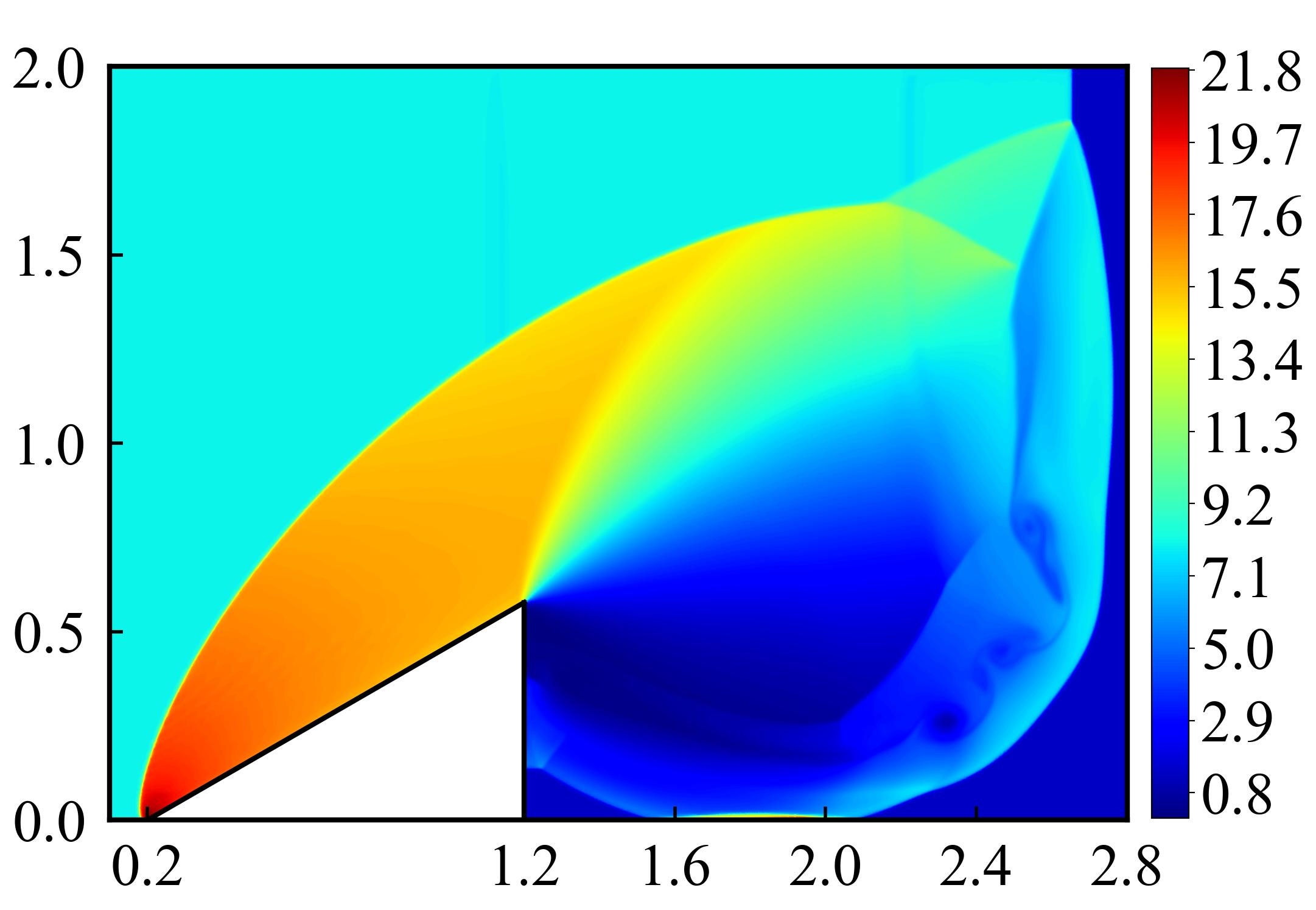}
		\caption{Point values.}
		\label{fig:Ex_SRDW2}
	\end{subfigure}
	
	\caption{Shock reflection and diffraction: (a) coarsened sample mesh; (b--c) density contours. }
	\label{fig:Ex_SRDW}
\end{figure}

\section{Conclusions}\label{sec:conclusion}

We have developed an \emph{a priori} invariant-domain-preserving framework for hybrid discretizations of hyperbolic conservation laws that couple conservative cell averages with cell-boundary point values. The analysis addresses the mismatch between two different update mechanisms within a single hybrid step: the cell average is evolved conservatively, whereas the point values may evolve under a non-conservative operator. The point-value update is handled via transformed variables, using either the globally defined barrier--Legendre inverse or a practical admissibility-preserving map. The conservative cell-average update is controlled via cell average decomposition (CAD), geometric quasilinearization, local \emph{a priori} scaling, and admissible trace inputs. Under an explicit trace-based CFL condition, these two mechanisms combine to preserve the hybrid admissible set; the extension to high-order SSP Runge--Kutta methods follows from the standard convex-stage argument.

A central part of the analysis is the obstruction--construction mechanism. We have proved that admissible continuous boundary traces, used directly via the single-state physical flux while internal reconstruction values remain uncontrolled, cannot by themselves guarantee a conservative invariant-domain update under a prescribed CFL number. Since this obstruction already appears in scalar linear advection, it reflects a geometric-algebraic limitation of continuous trace information rather than a peculiarity of nonlinear systems. It identifies the missing local one-sided control, or numerical dissipation, needed by the conservative flux. The present admissible-trace construction supplies this control explicitly by adaptively using locally admissible, generally discontinuous flux inputs before the conservative update is performed; whereas convex limiting and flux blending provide the same type of IDP numerical-flux contribution implicitly when the blending is active.

The abstract assumptions are realized constructively for third-order hybrid schemes on triangular, Cartesian, convex quadrilateral, and general convex polygonal meshes. The corresponding formulas provide explicit CAD weights, trace-state inputs, local wave-speed bounds, and CFL indicators. The local scaling preserves the cell average, is inactive on already admissible CAD data, and has a quantified high-order accuracy cost when activated. Thus, the construction provides a local pre-flux limiting mechanism rather than a post-update repair.

The numerical experiments corroborate the theoretical analysis. Smooth scalar tests verify the designed third-order accuracy, while nonsmooth scalar and compressible Euler benchmarks confirm the preservation of scalar bounds and \revblue{the strict positivity of density and pressure}. The reported final-time admissibility, all-stage minima, and limiter-activation fractions clearly distinguish the intrinsic admissibility properties from the amount of active numerical stabilization used.  
Our future work includes the study of the fully discrete multi-entropy stability with the present IDP method under the recent EPO framework in \cite{wu2026epo}.

\section*{Statements and Declarations}
\smallskip\noindent\textbf{Competing interests.} The authors declare that they have no competing interests.

\smallskip\noindent\textbf{Data availability.} {No external datasets were used. The numerical configurations needed to reproduce the reported tests are specified in Section~\ref{sec:numerics}; implementation details for the limiters and \revblue{boundary treatments} are given in Appendix~\ref{sec:appendix-implementation}.}

\vspace{8mm}

\appendix
\section{Transform construction and selected point-block proofs}\label{sec:appendix-point}

\subsection{Constructive admissibility-transform theorem}\label{sec:proof_Psi_constructible}

\begin{theorem}[Constructive admissibility transform]\label{thm:Psi_constructible}
Assume that $\Gint$ and $G$ are given by \eqref{eq:G_open_strict}--\eqref{eq:G_def_g} and that each $g_j$ is concave and $C^2$ on an open neighborhood of $G$. For any $\mu>0$, define the barrier--Legendre potential
\begin{equation}\label{eq:Phi_barrier}
\Phi_\mu(u):=\frac12\,\|u\|_2^2-\mu\sum_{j\in\mathcal{J}}\log\bigl(g_j(u)\bigr),\qquad u\in \Gint,
\end{equation}
and the associated transform
\begin{equation}\label{eq:Psi_barrier_def}
\Psi_\mu(u):=\nabla\Phi_\mu(u)=u-\mu\sum_{j\in\mathcal{J}}\frac{\nabla g_j(u)}{g_j(u)},\qquad u\in \Gint.
\end{equation}
Then $\Psi=\Psi_\mu$ satisfies (A$\Psi$1)--(A$\Psi$3) on $\Gint$ in the following sense:
\begin{itemize}
\item[(i)] \emph{Global bijection.} $\Psi:\Gint\to\mathbb{R}^m$ is bijective. Moreover, for each $w\in\mathbb{R}^m$, the inverse is characterized by the unique minimizer
\begin{equation}\label{eq:Psi_inv_argmin}
\Psi^{-1}(w)=\arg\min_{u\in \Gint}\ \Bigl\{\Phi_\mu(u)-w\cdot u\Bigr\}.
\end{equation}
\item[(ii)] \emph{Regularity and Lipschitz inverse.} $\Psi\in C^1(\Gint)$ and $D\Psi(u)=\nabla^2\Phi_\mu(u)$ is symmetric positive definite for all $u\in \Gint$. In particular, $\Psi$ is a $C^1$ diffeomorphism. Furthermore, the inverse $\Psi^{-1}:\mathbb{R}^m\to \Gint$ is globally $1$-Lipschitz continuous:
\begin{equation}\label{eq:Psi_inv_Lipschitz}
\|\Psi^{-1}(w_1)-\Psi^{-1}(w_2)\|_2\le \|w_1-w_2\|_2,\qquad \forall w_1,w_2\in\mathbb{R}^m.
\end{equation}
\item[(iii)] \emph{PDE compatibility.} For any smooth solution $u(x,t)\in \Gint$ of \eqref{eq:cl}, the transformed variable $w=\Psi(u)$ satisfies \eqref{eq:w_quasilinear}.
\end{itemize}
\end{theorem}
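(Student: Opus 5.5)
The argument rests on one structural fact: $\Phi_\mu$ is strongly convex with modulus $1$ on the open convex set $\Gint$, and it blows up at $\partial\Gint$. Everything follows from this via standard convex analysis.

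\emph{Step 1 (Hessian).} Compute
\[
\nabla^2\Phi_\mu(u)=I+\mu\sum_{j\in\mathcal J}\Bigl(\frac{\nabla g_j\nabla g_j^\top}{g_j^2}-\frac{\nabla^2 g_j}{g_j}\Bigr).
\]
Each $g_j$ is $C^2$ and concave, so $-\nabla^2 g_j\succeq 0$. On $\Gint$ we have $g_j>0$, hence $\nabla^2\Phi_\mu\succeq I$. This gives symmetry and positive definiteness of $D\Psi$, and $\Psi\in C^1(\Gint)$. Also, $\Phi_\mu$ is $1$-strongly convex on $\Gint$, because $\Gint$ is convex as an intersection of superlevel sets of concave functions.

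\emph{Step 2 (bijectivity and the argmin).} Fix $w$ and set $\phi_w(u)=\Phi_\mu(u)-w\cdot u$ on $\Gint$.

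\begin{itemize}
\item \emph{Existence.} This is the main obstacle: we need coercivity of $\phi_w$ on the possibly unbounded set $\Gint$, together with barrier blow-up at the boundary.
  \begin{itemize}
  \item For large $\|u\|$, concave $g_j$ grows at most affinely, $g_j(u)\le g_j(u_0)+\nabla g_j(u_0)\cdot(u-u_0)$. Hence $-\mu\log g_j\ge -\mu\log(a+b\|u\|)$, which is dominated by $\tfrac12\|u\|^2-\|w\|\|u\|$.
  \item As $u\to\partial\Gint$ with $u$ bounded, some $g_j\to0^+$, so $-\mu\log g_j\to+\infty$. The other $\log g_i$ are bounded above on bounded sets, so $\phi_w\to+\infty$.
  \end{itemize}
  Thus the sublevel sets $\{\phi_w\le \phi_w(u_0)\}$ are closed in $\mathbb R^m$, bounded, and contained in $\Gint$, i.e., compact. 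A minimizer therefore exists.
\item \emph{Uniqueness.} This follows from strict convexity.
\item \emph{Characterization.} Since $\Gint$ is open, the minimizer satisfies $\nabla\phi_w=0$, i.e., $\Psi(u)=w$. So $\Psi$ is onto.
\item \emph{Injectivity.} This follows from strong monotonicity, $(\Psi(u)-\Psi(v))\cdot(u-v)\ge\|u-v\|^2$. This inequality is obtained by integrating the Hessian bound along the segment $[u,v]\subset\Gint$.
\end{itemize}

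This establishes (i) and \eqref{eq:Psi_inv_argmin}.

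\emph{Step 3 (Lipschitz inverse and diffeomorphism).} Apply the monotonicity inequality to $u_i=\Psi^{-1}(w_i)$ and use Cauchy--Schwarz:
\[
\|u_1-u_2\|^2\le (w_1-w_2)\cdot(u_1-u_2)\le\|w_1-w_2\|\,\|u_1-u_2\|.
\]
This gives \eqref{eq:Psi_inv_Lipschitz}. Since $D\Psi$ is invertible everywhere, the inverse function theorem makes $\Psi^{-1}$ a $C^1$ map. Hence $\Psi$ is a $C^1$ diffeomorphism, which completes (ii) and (A$\Psi$2); local Lipschitz continuity of $\Psi$ follows from $C^1$.

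\emph{Step 4 (PDE compatibility).} For a smooth solution $u$ with values in $\Gint$, the chain rule gives $\partial_t w=D\Psi\,\partial_t u$ and $\partial_{x_i}u=(D\Psi)^{-1}\partial_{x_i}w$. Substituting into $\partial_t u+\sum_i A_i\partial_{x_i}u=0$ and multiplying by $D\Psi$ yields \eqref{eq:w_quasilinear} with $J_i=D\Psi A_i(D\Psi)^{-1}$. This proves (iii).
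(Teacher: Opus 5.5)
Your proposal is correct and follows essentially the same route as the paper. Both arguments use the Hessian bound $\nabla^2\Phi_\mu\succeq I$, coercivity from the affine upper bound on the concave $g_j$ together with barrier blow-up at $\partial\Gint$, and existence and uniqueness of the minimizer of $\Phi_\mu-w\cdot u$ with first-order optimality giving surjectivity. They then use strong monotonicity for injectivity and the $1$-Lipschitz inverse, the inverse function theorem for the diffeomorphism, and the chain rule for (iii).

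The only cosmetic difference is how you get the minimizer: you use compactness of a sublevel set, whereas the paper extracts a convergent subsequence from a minimizing sequence and rules out a boundary limit. You also make explicit two facts the paper leaves tacit: that $\Gint$ is convex, which is needed to integrate the Hessian along segments, and that the remaining $\log g_i$ terms are bounded above near the boundary.
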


\begin{proof}
		We divide the proof into five steps.
		
		\smallskip
		\noindent\emph{Step 1: convexity, strong convexity, and barrier blow-up of
			$\Phi_\mu$.}
		Since $g_j\in C^2$ and $g_j(u)>0$ on $\Gint$, the function $-\log(g_j(u))$ is well
		defined and $C^2$ on $\Gint$.  Direct differentiation gives
		\begin{equation}\label{eq:hess_loggj}
			\nabla^2\bigl(-\log g_j(u)\bigr)=\frac{\nabla g_j(u)\nabla g_j(u)^\top}{g_j(u)^2}-\frac{\nabla^2 g_j(u)}{g_j(u)}.
		\end{equation}
		Because $g_j$ is concave, its Hessian satisfies $\nabla^2 g_j(u)\preceq 0$ on $\Gint$.
		Hence the second term in \eqref{eq:hess_loggj} is positive semidefinite, and the
		first term is also positive semidefinite.  Therefore,
		$\nabla^2(-\log g_j(u))\succeq 0$ for all $u\in \Gint$, i.e., $-\log g_j$ is convex.
		Consequently, $\Phi_\mu$ in \eqref{eq:Phi_barrier} is convex as the sum of the
		convex quadratic $\frac12\|u\|_2^2$ and convex barrier terms.
		
		Moreover, combining \eqref{eq:Phi_barrier} and \eqref{eq:hess_loggj} yields the
		Hessian of $\Phi_\mu$:
		\begin{equation}\label{eq:hess_Phi}
			\nabla^2\Phi_\mu(u)=I+\mu\sum_{j\in\mathcal{J}}\Bigl(\frac{\nabla g_j(u)\nabla g_j(u)^\top}{g_j(u)^2}-\frac{\nabla^2 g_j(u)}{g_j(u)}\Bigr)\succeq I.
		\end{equation}
		Thus $\Phi_\mu$ is \emph{$1$-strongly convex} on $\Gint$ (in the Euclidean norm).
		
		Next, we establish the barrier blow-up property needed below.
		Let $\{u_n\}\subset \Gint$ be any sequence that converges to a boundary point
		$u_\ast\in\partial \Gint$.
		Because $u_\ast \in \overline{\Gint} \setminus \Gint$,
		it follows from the representation \eqref{eq:G_open_strict} that $g_j(u_\ast)=0$ for at least one index $j\in\mathcal{J}$.
		By the continuity of $g_j$, we have
		$g_j(u_n)\to 0^+$, hence $-\log g_j(u_n)\to +\infty$, and therefore
		\begin{equation}\label{eq:Phi_blowup_boundary}
			\Phi_\mu(u_n)\to +\infty\qquad\text{whenever }u_n\to u_\ast\in\partial \Gint.
		\end{equation}

		\smallskip
		\noindent\emph{Step 2: surjectivity of $\Psi$ via a strictly convex minimization
			problem.}
		Fix an arbitrary $w\in\mathbb{R}^m$ and consider the function
		\begin{equation}\label{eq:Hw_def}
			H_w(u):=\Phi_\mu(u)-w\cdot u,\qquad u\in \Gint.
		\end{equation}
		Since $\Phi_\mu$ is $1$-strongly convex, $H_w$ is also $1$-strongly convex.
		We claim that $H_w$ admits a unique minimizer in $\Gint$.
		
		To prove existence, let $\{u_n\}\subset \Gint$ be any minimizing sequence for $H_w$.
		We first show that $H_w$ is coercive on $\Gint$, which implies that every minimizing sequence is
		bounded.
		
		Fix any reference point $u_0\in \Gint$.  By the concavity of $g_j$ and its differentiability,
		we have the global affine upper bound
		\begin{equation}\label{eq:gj_affine_upper}
			g_j(u)\le g_j(u_0)+\nabla g_j(u_0)\cdot(u-u_0),\qquad \forall u\in \Gint.
		\end{equation}
		In particular, for all $u\in \Gint$,
		\[
		g_j(u)\le g_j(u_0)+\|\nabla g_j(u_0)\|_2\,\|u-u_0\|_2
		\le C_j\,(1+\|u\|_2)
		\]
		for a constant $C_j>0$ depending only on $u_0$ and $g_j$.
		Therefore
		\[
		\log g_j(u)\le \log C_j + \log(1+\|u\|_2),\qquad u\in \Gint,
		\]
		and hence the barrier term admits the lower bound
		\[
		-\mu\sum_{j\in\mathcal{J}}\log g_j(u)
		\ge -\mu\sum_{j\in\mathcal{J}}\log C_j-\mu\,|\mathcal{J}|\,\log(1+\|u\|_2)
		=:-C_0-C_1\log(1+\|u\|_2).
		\]
		Combining with the quadratic term and using $-w\cdot u\ge -\|w\|_2\,\|u\|_2$,
		we obtain
		\[
		H_w(u)\ge \frac12\|u\|_2^2-\|w\|_2\,\|u\|_2 -C_0-C_1\log(1+\|u\|_2) \xrightarrow[]{\|u\|_2\to\infty}+\infty.
		\]
		Thus $H_w$ is coercive on $\Gint$, so any minimizing sequence is bounded.
		Since $\mathbb R^m$ is finite-dimensional, we may extract a convergent
		subsequence (still denoted by $u_n$) with $u_n\to u^*\in \overline{\Gint}$.
		If $u^*\in\partial \Gint$, then \eqref{eq:Phi_blowup_boundary} implies
		$H_w(u_n)\to +\infty$, contradicting the minimizing property.
		Hence $u^*\in \Gint$.
		By the continuity of $H_w$ on $\Gint$, we conclude that $u^*$ is a minimizer of $H_w$.
		
		Uniqueness follows from strict (in fact strong) convexity: if both $u^*$ and
		$v^*$ minimize $H_w$, then by strict convexity
		$H_w\bigl(\tfrac12(u^*+v^*)\bigr)<\tfrac12H_w(u^*)+\tfrac12H_w(v^*)$
		unless $u^*=v^*$.  Therefore, the minimizer is unique; we denote it by
		$u(w)\in \Gint$.
		
		Finally, the first-order optimality condition for the unconstrained minimization
		over the open set $\Gint$ is
		\[
		\nabla H_w\bigl(u(w)\bigr)=\nabla\Phi_\mu\bigl(u(w)\bigr)-w=0,
		\]
		which is equivalent to $\Psi\bigl(u(w)\bigr)=w$ by \eqref{eq:Psi_barrier_def}.
		Since $w\in\mathbb{R}^m$ was arbitrary, $\Psi$ is surjective and
		\eqref{eq:Psi_inv_argmin} holds.

		\smallskip
		\noindent\emph{Step 3: injectivity of $\Psi$.}
		Let $u_1,u_2\in \Gint$ satisfy $\Psi(u_1)=\Psi(u_2)$.  By the mean-value theorem,
		\[
		\bigl(\Psi(u_1)-\Psi(u_2)\bigr)\cdot (u_1-u_2)=\int_0^1 (u_1-u_2)^\top\nabla^2\Phi_\mu\bigl(u_2+\theta(u_1-u_2)\bigr)(u_1-u_2)\,d\theta.
		\]
		Using \eqref{eq:hess_Phi}, the integrand is bounded below by $\|u_1-u_2\|_2^2$.
		Therefore
		\[
		0=\bigl(\Psi(u_1)-\Psi(u_2)\bigr)\cdot (u_1-u_2)\ge \|u_1-u_2\|_2^2,
		\]
		which implies $u_1=u_2$.  Hence $\Psi$ is injective.
		
		\smallskip
		\noindent\emph{Step 4: $C^1$ diffeomorphism and Lipschitz inverse.}
		Since each $g_j$ is $C^2$ on a neighborhood of $\overline{\Gint}$, we have
		$\Psi=\nabla\Phi_\mu\in C^1(\Gint)$.  The Jacobian is
		$D\Psi(u)=\nabla^2\Phi_\mu(u)$, which is symmetric positive definite by
		\eqref{eq:hess_Phi}.  Hence $D\Psi(u)$ is invertible for all $u\in \Gint$.
		By Steps 2--3, $\Psi$ is a bijection between open sets.  The inverse function
		theorem then yields that $\Psi$ is a $C^1$ diffeomorphism and $\Psi^{-1}\in C^1$
		with $D\Psi^{-1}(w)=\bigl(D\Psi(\Psi^{-1}(w))\bigr)^{-1}$.
		
		To prove the global Lipschitz bound \eqref{eq:Psi_inv_Lipschitz}, take any
		$w_1,w_2\in\mathbb{R}^m$ and set $u_i=\Psi^{-1}(w_i)\in \Gint$.  Then
		$w_i=\Psi(u_i)=\nabla\Phi_\mu(u_i)$.  Using again the mean-value theorem and
		\eqref{eq:hess_Phi}, we obtain the \emph{strong monotonicity} estimate
		\[
		(w_1-w_2)\cdot (u_1-u_2)
		=\int_0^1 (u_1-u_2)^\top\nabla^2\Phi_\mu\bigl(u_2+\theta(u_1-u_2)\bigr)(u_1-u_2)\,d\theta
		\ge \|u_1-u_2\|_2^2.
		\]
		Applying the Cauchy--Schwarz inequality, $(w_1-w_2)\cdot(u_1-u_2)\le\|w_1-w_2\|_2\,\|u_1-u_2\|_2$,
		we conclude
		$\|u_1-u_2\|_2\le \|w_1-w_2\|_2$, which is \eqref{eq:Psi_inv_Lipschitz}.
		
		\smallskip
		\noindent\emph{Step 5: PDE compatibility (A$\Psi$3).}
		Let $u(x,t)\in \Gint$ be a smooth solution of \eqref{eq:cl} and set $w=\Psi(u)$.  By
		the chain rule,
		\begin{equation}\label{eq:chain_rule_wt}
			\partial_t w = D\Psi(u)\,\partial_t u,\qquad \partial_{x_i}w=D\Psi(u)\,\partial_{x_i}u.
		\end{equation}
		Since $u_t+\sum_i A_i(u)u_{x_i}=0$, multiplying by $D\Psi(u)$ gives
		$D\Psi(u)u_t+\sum_i D\Psi(u)A_i(u)u_{x_i}=0$.
		Substituting \eqref{eq:chain_rule_wt} and inserting
		$I=(D\Psi(u))^{-1}D\Psi(u)$ yields
		\[
		\partial_t w+\sum_{i=1}^d D\Psi(u)A_i(u)(D\Psi(u))^{-1}\,\partial_{x_i}w=0,
		\]
		which is exactly \eqref{eq:w_quasilinear}.
		This completes the proof.
	\end{proof}

\subsection{Point-layer transform and admissibility-preserving map pairs}\label{sec:transform_pairs}

This appendix details the concrete point-layer transform and admissibility-preserving map pairs used in theoretical proofs and numerical implementations; see \Cref{tab:transform-summary}.

\begin{table}[!htb]
\centering
\caption{Point-layer maps and targets.}
\label{tab:transform-summary}
\scriptsize
\setlength{\tabcolsep}{4pt}
\begin{tabularx}{\textwidth}{>{\raggedright\arraybackslash}p{0.16\textwidth}>{\raggedright\arraybackslash}p{0.26\textwidth}>{\raggedright\arraybackslash}p{0.23\textwidth}>{\raggedright\arraybackslash}p{0.14\textwidth}>{\raggedright\arraybackslash}X}
\toprule
Setting & Forward map or transformed variables & Inverse or admissibility-preserving map & Point target & Layer \\
\midrule
Exact smooth setting & $w=\Psi(u)$ on $\Gint_{\mathrm{sc}}$ & exact inverse $\Psi^{-1}:\mathbb R^m\to \Gint_{\mathrm{sc}}$ & $\Gint_{\mathrm{sc}}$ & exact smooth layer \\
Practical scalar setting (clipped) & affine map \eqref{u:scalar} & clipped admissibility-preserving map \eqref{w:scalar} & $G_{\mathrm{sc}}$ & scalar closed-target layer \\
Practical scalar setting (smooth) & smooth sigmoid and logit pair \eqref{u:scalar2} on an enlarged interval & corresponding smooth inverse \eqref{w:scalar2} & slightly enlarged convex interval & scalar enlarged-target layer \\
Practical Euler setting & smooth transform \eqref{w:euler1} & corresponding same smooth inverse \eqref{w:euler2} & open admissible interior $\Gint_{\rm Euler}$ & Euler point layer \\
\bottomrule
\end{tabularx}
\end{table}

For scalar conservation laws with invariant interval $G_{\mathrm{sc}}=[U_{\min},U_{\max}]$, we consider the following point-layer formulas:
\begin{itemize}
			\item \emph{Smooth diffeomorphism.}
			A smooth diffeomorphism is given by
			\begin{equation}\label{w:scalar2}
				u=\Psi^{-1}(w)=(U_{\max}-U_{\min}) \sigma(w) + U_{\min} \qquad \forall w\in \mathbb{R},
			\end{equation}
			where $\sigma(w)=(1+e^{-w})^{-1}$, with its inverse
			\begin{equation}\label{u:scalar2}
				w = \Psi(u) = \log\left( \frac{u - U_{\min}}{U_{\max} - u} \right).
			\end{equation}
			\item \emph{Affine and clipped pair.}
			We define the affine forward map
			\begin{equation}\label{u:scalar}
				w=\mathcal T(u)=\frac{u-U_{\min}}{U_{\max}-U_{\min}}, \qquad u\in G,
			\end{equation}
			paired with the clipped admissibility-preserving map
			\begin{equation}\label{w:scalar}
				u=\widetilde\Psi^{-1}(w)=(U_{\max}-U_{\min})\,\max\{0,\min\{1,w\}\}+U_{\min}, \qquad \forall w\in\mathbb{R}.
			\end{equation}
					\end{itemize}

		For the compressible Euler equations, the point-layer transform is given by
\begin{equation}\label{w:euler1}
	w=(q,v,s)^\top, ~~
	q=\log\Big(\exp\Big\{\frac{\rho}{\rho_{\mathrm{ref}}}\Big\}-1\Big), ~~ 
	s=\log \frac{p}{p_{\mathrm{ref}}} - \gamma \log \frac{\rho}{\rho_{\mathrm{ref}}},
\end{equation}
    with the velocity $v\in \mathbb{R}^d$.
	The corresponding inverse map yields
	\begin{equation}\label{w:euler2}
		\rho = \rho_{\mathrm{ref}} \log(e^q+1) >0, ~~~ p=p_{\mathrm{ref}} \Big(\frac{\rho}{\rho_{\mathrm{ref}}}\Big)^\gamma e^s >0,
	\end{equation}
	for any $w \in \mathbb{R}^m$. 
    In the numerical simulations, we set $p_{\mathrm{ref}} = \rho_{\mathrm{ref}} = 1$.
	
	The softplus-based mapping is $C^\infty$ and defines a global diffeomorphism between the open admissible interior $\Gint_{\mathrm{Euler}}=\{u=(\rho,\rho v,E)^\top:\ \rho>0,\ p(u)>0\}$ and $\mathbb{R}^m$.


\section{Geometric and CAD supporting material}\label{sec:appendix-geometry}

\subsection{Algebraic verification of the quadrilateral CAD}\label{sec:quad_cad_appendix}

Let $v_i$ denote the vertex shared by the two consecutive edges $e_i$ and $e_{i+1}$ (with the cyclic convention $e_5=e_1$ and $\Delta_5=\Delta_1$), and let $m_i:=x_K^{*,i}$ denote the midpoint of $e_i$. For every $\phi$ in the target third-order polynomial space, the restriction of $\phi$ to each edge is a univariate quadratic polynomial. Hence, Simpson's rule yields
\begin{equation}\label{eq:quad-simpson}
\bar \phi_K^{(i)}=\frac16\bigl(\phi(v_{i-1})+4\phi(m_i)+\phi(v_i)\bigr),\qquad i=1,\dots,4,
\end{equation}
with the cyclic convention $v_0=v_4$. Substituting the edge averages \eqref{eq:quad-simpson} into the CAD ansatz yields
\begin{equation}\label{eq:quad-expanded-cad}
\begin{aligned}
	&\sum_{i=1}^4 \lambda_{K,i}\,\bar \phi_K^{(i)}+\sum_{i=1}^4 \beta_{K,i}\,\phi(m_i)+\frac49 \phi(x_K^{\mathrm{ctr}})
	=\\
	& \qquad \qquad
	\sum_{i=1}^4 \frac{\lambda_{K,i}+\lambda_{K,i+1}}{6}\,\phi(v_i)
	+
	\sum_{i=1}^4 \left(\frac23\lambda_{K,i}+\beta_{K,i}\right)\phi(m_i)
	+
	\frac49 \phi(x_K^{\mathrm{ctr}}).
\end{aligned}
\end{equation}
Using the explicit definitions of $\lambda_{K,i}$ and $\beta_{K,i}$ along with the geometric relations $\Delta_1+\Delta_3=\Delta_2+\Delta_4=2|K|$, we obtain the coefficient identities
\begin{equation*}\label{eq:quad-coeff-identities}
\frac{\lambda_{K,i}+\lambda_{K,i+1}}{6}=\frac{\Delta_{i+1}}{9S_\Delta},\qquad
\frac23\lambda_{K,i}+\beta_{K,i}=\frac{2(\Delta_i+\Delta_{i+1})}{9S_\Delta},\qquad i=1,\dots,4,
\end{equation*}
where the indices are taken cyclically. Consequently, \eqref{eq:quad-expanded-cad} matches identically the point-value decomposition established in Section~\ref{subsec:quad-tensor}, up to the cyclic relabeling of the vertex terms induced by the edge-based numbering. This proves the exactness property of \Cref{prop:quad-A3}, completing the algebraic verification.

\section{Auxiliary proofs}

\subsection{Proof of the obstruction theorem}\label{sec:proof_cont_flux}

\begin{proof}[Proof of \Cref{thm:necessity_discont}]
	Consider an arbitrary cell $K$. 
    The space $\mathbb{W}_K$ possesses sufficient internal degrees of freedom to represent the state $u_K$ constructed below, independently of the prescribed boundary data and cell-average values.
    Since $\bm a\neq 0$, there exists at least one edge
	$e_{\mathrm{in}}\subset\partial K$ satisfying the inflow condition $\,\bm a\cdot n_{K,e_{\mathrm{in}}}<0$.
    Select an edge quadrature point $x_{e_{\mathrm{in}},\nu_{\mathrm{in}}}$ (such as the edge midpoint).
	We specify the boundary data according to
	\[
	u_{K,e_{\mathrm{in}},\nu_{\mathrm{in}}}^n=1,
	\qquad
	u_{K,e,\nu}^n=0 \quad \text{for all } (e,\nu)\neq(e_{\mathrm{in}},\nu_{\mathrm{in}}),
	\]
	ensuring that all boundary quadrature values belong to $G$.
	For an arbitrarily small $\varepsilon>0$, we prescribe the cell average
	\[
	\bar u_K^n = 1 - \beta_{K,s_0}\varepsilon \in (0,1)\subset G,
	\]
	and set all internal control values to zero except $u_{K,s_0}^{n,\ast}:={u}_K^n(x_{K,s_0}^\ast)$, which is uniquely determined by the CAD identity \eqref{eq:gcd_point}:
	\[
	\bar u_K^n
	=
	\sum_{e\in \mathcal{E}_K} \lambda_{K,e}\sum_{\nu=1}^{N_q}\omega_{e,\nu} u_{K,e,\nu}^n
	+
	\sum_{s=1}^{S_K} \beta_{K,s} u_{K,s}^{n,\ast},
	\]
	where ${u}_{K,e,\nu}^n:={u}_K^n(x_{e,\nu})$ and
	${u}_{K,s}^{n,\ast}:={u}_K^n(x_{K,s}^\ast)$.
	Substituting these prescribed values into the CAD identity yields
	\[
	u_{K,s_0}^{n,\ast}
	=
	\frac{\bar u_K^n-\sum_{e \in \mathcal{E}_K} \lambda_{K,e}\sum_{\nu}\omega_{e,\nu} u_{K,e,\nu}^n}{\beta_{K,s_0}}
	=
	\frac{1-\lambda_{K,e_{\mathrm{in}}}\omega_{e_{\mathrm{in}},\nu_{\mathrm{in}}}}{\beta_{K,s_0}} - \varepsilon.
	\]
	Since the CAD weights are strictly positive, it follows that $\lambda_{K,e_{\mathrm{in}}} < \sum_{e\in\mathcal{E}_K}\lambda_{K,e} = 1-\sum_{s=1}^{S_K}\beta_{K,s} \le 1-\beta_{K,s_0}$.
	Using the property that $\omega_{e_{\mathrm{in}},\nu_{\mathrm{in}}} \le 1$, we obtain $1-\lambda_{K,e_{\mathrm{in}}}\omega_{e_{\mathrm{in}},\nu_{\mathrm{in}}} > \beta_{K,s_0}$.
	Consequently, choosing $\varepsilon>0$ sufficiently small guarantees that $u_{K,s_0}^{n,\ast}>1$, and hence $u_{K,s_0}^{n,\ast}\notin G$, establishing the interior admissibility violation.

	Next, we consider the continuous-flux update \eqref{eq:avg_cont_FE}.
	For the linear flux $F(u)=\bm a\,u$, we have
	\[
	\bar u_{K}^{\mathrm{cont}}
	=
	\bar u_K^n
	-
	\frac{\Delta t}{|K|}
	\sum_{e\in \mathcal{E}_K} |e|
	\sum_{\nu=1}^{N_q}\omega_{e,\nu} \, u_{K,e,\nu}^n \,(\bm a\cdot n_{K,e}).
	\]
	Since all boundary values vanish except at $(e_{\mathrm{in}},\nu_{\mathrm{in}})$, we obtain
	\[
	\bar u_{K}^{\mathrm{cont}}
	=
	1-\beta_{K,s_0}\varepsilon
	+
	\Delta t\,
	\frac{|e_{\mathrm{in}}|}{|K|}\,
	\omega_{e_{\mathrm{in}},\nu_{\mathrm{in}}}\,
	\big|\bm a\cdot n_{K,e_{\mathrm{in}}}\big|.
	\]
	Therefore, $\bar u_{K}^{\mathrm{cont}}\le 1$ is satisfied if and only if 
	$
	\Delta t
	\le
	\beta_{K,s_0}\varepsilon\,
	\frac{|K|}{|e_{\mathrm{in}}|\omega_{e_{\mathrm{in}},\nu_{\mathrm{in}}}|\bm a\cdot n_{K,e_{\mathrm{in}}}|}.
	$ 
	Since $\varepsilon>0$ can be chosen arbitrarily small, there exists no fixed positive CFL number that guarantees $\bar u_{K}^{\mathrm{cont}}\in[0,1]$, which completes the proof.
\end{proof}

\subsection{Accuracy part of the classic limiter lemma}\label{sec:PiK_classic_accuracy}

\begin{proof}[Proof of \Cref{lem:PiK_props1}(iii)]
We define 
$
E_K:=\|u_K^n-u(\cdot,t^n)\|_{L^\infty(K)}.
$ 
Since $u(\cdot,t^n)$ is smooth and takes values within $\Gint$, and $u_K^n \to u(\cdot,t^n)$ uniformly, for sufficiently small $h$, both the cell average $\bar{u}_K^n$ and all control values of $u_K^n$ evaluated on $\mathcal X_K$ belong to a bounded convex set $\mathcal U\subset\mathbb R^m$ containing the range of the exact solution, on which each $g_j$ is Lipschitz continuous. Let $L_g:=\max_{j\in\mathcal J}L_{g,j}$ denote the common Lipschitz constant on $\mathcal U$.

For each $x\in\mathcal{X}_K$ and $j\in\mathcal{J}$, we introduce the auxiliary scaling parameter 
$\theta_{j,x}:=\sup\{\theta\in[0,1]:(1-\theta)s_0+\theta g_j(u_K^n(x))\ge 0\}.$
A direct computation yields $\theta_{j,x}=1$ if $g_j(u_K^n(x))\ge 0$, and $\theta_{j,x}=\frac{s_0}{s_0-g_j(u_K^n(x))}$ otherwise. 
In either case, the deviation from unity satisfies
$$1-\theta_{j,x}\le\frac{(-g_j(u_K^n(x)))_+}{s_0},$$
where $(y)_+:=\max(0,y)$. 
Set $\theta^*:=\min_{x\in\mathcal{X}_K}\min_{j\in\mathcal{J}}\theta_{j,x}$. By the concavity of each $g_j$ and the slack condition $g_j(\bar{u}_K^n)\ge s_0$, we have for all $x\in\mathcal{X}_K$ and $j\in\mathcal{J}$,
$$g_j\bigl(\bar{u}_K^n+\theta^*(u_K^n(x)-\bar{u}_K^n)\bigr)\ge(1-\theta^*)s_0+\theta^*g_j(u_K^n(x))\ge 0,$$
which implies that $\bar{u}_K^n+\theta^*(u_K^n(x)-\bar{u}_K^n)\in G$ for all $x\in\mathcal{X}_K$. 
Consequently, the definition of $\theta_K^\texttt{c}$ in \eqref{eq:theta_def1} guarantees that $\theta_K^\texttt{c}\ge\theta^*$, which yields
$$1-\theta_K^\texttt{c}\le 1-\theta^*=\max_{x\in\mathcal{X}_K}\max_{j\in\mathcal{J}}(1-\theta_{j,x})\le\frac{1}{s_0}\max_{x\in\mathcal{X}_K}\max_{j\in\mathcal{J}}(-g_j(u_K^n(x)))_+.$$

Since $u(x,t^n)\in G$ for all $x\in K$, it holds that $g_j(u(x,t^n))\ge 0$. Hence, for all $x\in\mathcal X_K$ and $j\in\mathcal J$, 
$
(-g_j(u_K^n(x)))_+
\le \bigl|g_j(u_K^n(x))-g_j(u(x,t^n))\bigr|
\le L_g E_K.
$ 
Consequently, 
$
1-\theta_K^{\mathrm c}\le \frac{L_g}{s_0}E_K.
$ 
Using the relation 
$
\hat u_K^{\mathrm c}-u_K^n=(1-\theta_K^{\mathrm c})(\bar u_K^n-u_K^n).
$ 
Because the exact solution $u(\cdot,t^n)$ is smooth, both $\|u(\cdot,t^n)\|_{L^\infty(K)}$ and $|\bar u_K^n|$ are uniformly bounded; for sufficiently small $h$, the approximation $u_K^n$ remains in the same bounded neighborhood. Hence, $\|u_K^n-\bar u_K^n\|_{L^\infty(K)}\le C_0$ for some constant $C_0$ independent of $h$. Combining this bound with the preceding estimate yields
\[
\|\hat u_K^{\mathrm c}-u_K^n\|_{L^\infty(K)}
\le (1-\theta_K^{\mathrm c})\|u_K^n-\bar u_K^n\|_{L^\infty(K)}
\le \frac{C}{s_0}E_K,
\]
which completes the proof.
\end{proof}

\subsection{Lipschitz continuity of the classic limiter}\label{sec:PiK_Lipschitz1}

\begin{proof}[Proof of \Cref{lem:PiK_Lipschitz1}]
	We first focus on the estimate for a single-constraint, single-point scaling parameter.
	For a fixed constraint index $j\in\mathcal{J}$ and a given control point $x\in\mathcal{X}_K$, we consider the concave scalar function
$
	h(\theta):=g_j\bigl(a+\theta(b_x-a)\bigr),$ with 
	$\theta\in[0,1].
	$
	By \eqref{eq:PiK_slack_assumption}, $h(0)=g_j(a)\ge s_0>0$.
	We define the maximal admissible scaling parameter associated with this constraint and point by
	\begin{equation}\label{eq:theta_jx_def}
		\theta_{j,x}(a,b_x):=\sup\Bigl\{\theta\in[0,1]:\ h(\theta)\ge 0\Bigr\}\in(0,1].
	\end{equation}
	Since $h$ is concave and continuous, the set $\{\theta:\ h(\theta)\ge 0\}$ is an interval; hence,
	$\theta_{j,x}(a,b_x)$ is well-defined.
	Moreover, if $h(1)=g_j(b_x)\ge 0$, then $\theta_{j,x}(a,b_x)=1$; otherwise, $h(1)<0$ and the
	concavity of $h$ ensures the existence of a unique $\theta_{j,x}(a,b_x)\in(0,1)$ satisfying
	$h(\theta_{j,x})=0$.
	
	\smallskip
	\noindent\emph{Step 1: Concave-root perturbation bound}.
	Let $h:[0,1]\to\mathbb{R}$ be a concave function with $h(0)=g_j(a)=c>0$, and set
	$\theta^\ast:=\sup\{\theta\in[0,1]:h(\theta)\ge 0\}$.
	Then, for all $s\in[0,1]$,
	\begin{equation}\label{eq:concave_root_bound}
		|\theta^\ast-s|
		\le \frac{|h(s)|}{c} = \frac{|h(s)|}{g_j(a)}.
	\end{equation}
	Indeed, if $s\le \theta^\ast$, then $h(s)\ge 0$ and the concavity yields
	$h(s)\ge (1-s/\theta^\ast)\,h(0) = c(\theta^\ast-s)/\theta^\ast \ge c(\theta^\ast-s)$, and thus
	$\theta^\ast-s\le h(s)/c$.
	Conversely, if $s\ge \theta^\ast$, then $h(s)\le 0$, and the secant inequality for concave functions yields
	\[
	\frac{h(s)-h(\theta^\ast)}{s-\theta^\ast}
	\le
	\frac{h(\theta^\ast)-h(0)}{\theta^\ast-0}
	=
	-\frac{c}{\theta^\ast}\le -c,
	\]
	which implies that $-h(s)\ge c(s-\theta^\ast)$, thereby establishing \eqref{eq:concave_root_bound}.
	
	\smallskip
	\noindent\emph{Step 2: Lipschitz bound for $\theta_{j,x}$.}
	Let $\theta_{j,x}:=\theta_{j,x}(a,b_x)$ and $\theta'_{j,x}:=\theta_{j,x}(a',b'_x)$.
	We establish the estimate
	\begin{equation}\label{eq:theta_jx_Lipschitz}
		|\theta_{j,x}-\theta'_{j,x}|
		\le
		\frac{L_{g,j}}{s_0}\bigl(\|a-a'\|_2+\|b_x-b'_x\|_2\bigr).
	\end{equation}
	To prove this, we distinguish three cases.
	
	\smallskip
	\noindent\underline{Case~1: Both constraints are inactive}.
	If $g_j(b_x)\ge 0$ and $g_j(b'_x)\ge 0$, then $\theta_{j,x}=\theta'_{j,x}=1$ and
	\eqref{eq:theta_jx_Lipschitz} holds trivially.
	
	\smallskip
	\noindent\underline{Case~2: Both constraints are active}.
	If $g_j(b_x)<0$ and $g_j(b'_x)<0$, then $\theta_{j,x},\theta'_{j,x}\in(0,1)$ and
	\[
	g_j\bigl(a+\theta_{j,x}(b_x-a)\bigr)=0,
	\qquad
	g_j\bigl(a'+\theta'_{j,x}(b'_x-a')\bigr)=0.
	\]
	Applying \eqref{eq:concave_root_bound} to $h$ with $s=\theta'_{j,x}$ yields
	\[
	\bigl|\theta_{j,x}(a,b_x)-\theta_{j,x}(a',b'_x)\bigr|
	\le
	\frac{\bigl|g_j\bigl((1-\theta'_{j,x})a+\theta'_{j,x}b_x\bigr)\bigr|}{g_j(a)}.
	\]
	Subtracting the vanishing value at the primed state and applying the Lipschitz continuity of $g_j$ on~$\mathcal{U}$,
	it follows that
	\begin{align*}
		\bigl|g_j\bigl((1-\theta'_{j,x})a+\theta'_{j,x}b_x\bigr)\bigr|
		&=
		\bigl|g_j\bigl((1-\theta'_{j,x})a+\theta'_{j,x}b_x\bigr)
		-
		g_j\bigl((1-\theta'_{j,x})a'+\theta'_{j,x}b'_x\bigr)\bigr|\\
		&\le
		L_{g,j}\Bigl(\|(1-\theta'_{j,x})(a-a')\|_2+\|\theta'_{j,x}(b_x-b'_x)\|_2\Bigr)\\
		&\le L_{g,j}\bigl(\|a-a'\|_2+\|b_x-b'_x\|_2\bigr).
	\end{align*}
	Combining this with the slack bound $g_j(a)\ge s_0$ establishes \eqref{eq:theta_jx_Lipschitz}.
	
	\smallskip
	\noindent\underline{Case~3: Mixed activity}.
	Assume without loss of generality that $g_j(b_x)<0$ and $g_j(b'_x)\ge 0$
	(the reverse case is completely symmetric). Then $\theta'_{j,x}=1$.
	Since $h$ is concave and $h(\theta_{j,x})=0$, we have
	\[
	0=h(\theta_{j,x})
	\ge (1-\theta_{j,x})h(0)+\theta_{j,x}h(1)
	\ge (1-\theta_{j,x})\,s_0+\theta_{j,x}g_j(b_x),
	\]
	which implies
	\[
	1-\theta_{j,x}
	\le
	\frac{|g_j(b_x)|}{s_0}
	\le
	\frac{|g_j(b_x)-g_j(b'_x)|}{s_0}
	\le
	\frac{L_{g,j}}{s_0}\,\|b_x-b'_x\|_2.
	\]
	Thus, $|\theta_{j,x}-\theta'_{j,x}|=1-\theta_{j,x}\le \frac{L_{g,j}}{s_0}\|b_x-b'_x\|_2$, which conforms to \eqref{eq:theta_jx_Lipschitz}.
	
	\smallskip
	\noindent\emph{Step 3: Lipschitz bound for the classic scaling parameter $\theta^\texttt{c}_K$.}
	Because $G=\bigcap_{j\in\mathcal{J}}G_j$ and the control set $\mathcal{X}_K$ is finite, the definition \eqref{eq:theta_def1}
	can be equivalently written as
	\[
	\theta_K=\min\Bigl\{\,1,\ \min_{j\in\mathcal{J}}\min_{x\in\mathcal{X}_K}\theta_{j,x}(a,b_x)\Bigr\}.
	\]
	Since the pointwise minimum of finitely many Lipschitz continuous functions is Lipschitz continuous, combining \eqref{eq:theta_jx_Lipschitz}
	over all $j\in\mathcal J$ and $x\in\mathcal X_K$ yields
	\begin{equation}\label{eq:thetaK_Lipschitz}
		|\theta_K-\theta'_K|
		\le
		\Bigl(\max_{j\in\mathcal{J}}\frac{L_{g,j}}{s_0}\Bigr)
		\Bigl(\|a-a'\|_2+\max_{x\in\mathcal{X}_K}\|b_x-b'_x\|_2\Bigr).
	\end{equation}
	
	\smallskip
	\noindent\emph{Step 4: Lipschitz bound for the limited control values.}
	For each control point $x\in\mathcal{X}_K$,
	\[
	(\Pi_K u_K)(x)=a+\theta_K(b_x-a),\qquad
	(\Pi_K v_K)(x)=a'+\theta'_K(b'_x-a').
	\]
	Using the identity
	\[
	a+\theta_K(b_x-a)-\bigl(a'+\theta'_K(b'_x-a')\bigr)
	=
	(1-\theta_K)(a-a')+\theta_K(b_x-b'_x)+(\theta_K-\theta'_K)(b'_x-a'),
	\]
	and the bound $\|b'_x-a'\|_2\le 2M = 2 \max\{\|a\|_2,\|a'\|_2,\max_{x\in\mathcal{X}_K}\|b_x\|_2,\max_{x\in\mathcal{X}_K}\|b'_x\|_2\}$, we obtain
	\begin{align*}
		\|(\Pi_K u_K)(x)-(\Pi_K v_K)(x)\|_2
		&\le
		\|a-a'\|_2+\|b_x-b'_x\|_2 + 2M\,|\theta_K-\theta'_K|\\
		&\le
		\Bigl(1+2M\,\max_{j\in\mathcal{J}}\frac{L_{g,j}}{s_0}\Bigr)
		\Bigl(\|a-a'\|_2+\max_{x\in\mathcal{X}_K}\|b_x-b'_x\|_2\Bigr),
	\end{align*}
	where the final inequality follows from \eqref{eq:thetaK_Lipschitz}.
	Taking the maximum over all $x\in\mathcal{X}_K$ establishes \eqref{eq:PiK_Lipschitz_est}, which completes the proof.
\end{proof}

\subsection{Lipschitz continuity of the alternative limiter}\label{sec:PiK_Lipschitz2}

\begin{proof}[Proof of \Cref{lem:PiK_Lipschitz2}]
	We define
	\[
	d:=\max_{e\in\mathcal E_K}\max_{1\le \nu\le N_q}\|b_{e,\nu}-b'_{e,\nu}\|_2.
	\]
    The proof follows the lines of \Cref{lem:PiK_Lipschitz1}, with the additional control required for the auxiliary state $u_K^*$ in \eqref{eq:theta_def2} when $\beta_K > 0$.
	
	\smallskip
	\noindent\emph{Step 0: Lipschitz bound for the auxiliary state $u_K^*$.}
	When $\beta_K=0$, no auxiliary constraint is present in \eqref{eq:theta_def2}, and this step is vacuous.
	For $\beta_K>0$,
    by the edge quadrature exactness condition in Assumption~(A3) and the positivity of the quadrature weights,
	\[
	\bar u_{K,e}=\sum_{\nu=1}^{N_q}\omega_{e,\nu} b_{e,\nu},
	\qquad
	\bar v_{K,e}=\sum_{\nu=1}^{N_q}\omega_{e,\nu} b'_{e,\nu},
	\]
	which yields
	\[
	\|\bar u_{K,e}-\bar v_{K,e}\|_2
	\le
	\sum_{\nu=1}^{N_q}\omega_{e,\nu}\|b_{e,\nu}-b'_{e,\nu}\|_2
	\le d.
	\]
	Using the definition of \eqref{eq:PiKa_ustar_def_u}, we obtain
	\begin{align}
		\|u_K^*-v_K^*\|_2
		&\le
		\frac{1}{\beta_K}
		\left(
		\|a-a'\|_2+\sum_{e\in\mathcal E_K}\lambda_{K,e}\|\bar u_{K,e}-\bar v_{K,e}\|_2
		\right)\notag\\
		&\le
		\frac{1}{\beta_K}\|a-a'\|_2+\frac{\sum_e\lambda_{K,e}}{\beta_K}d
		=
		\frac{1}{\beta_K}\|a-a'\|_2+\frac{1-\beta_K}{\beta_K}d\notag\\
		&\le
		\beta_K^{-1}\bigl(\|a-a'\|_2+d\bigr).
		\label{eq:PiKa_ustar_Lipschitz}
	\end{align}
	
	\smallskip
	\noindent\emph{Step 1: Single-constraint, single-candidate scaling parameter.}
	For each $j\in\mathcal J$, we introduce the candidate index set
	\[
	\mathcal I_K:=
	\begin{cases}
		\{(e,\nu): e\in\mathcal E_K,\ 1\le \nu\le N_q\}, & \beta_K=0,\\
		\{(e,\nu): e\in\mathcal E_K,\ 1\le \nu\le N_q\}\cup\{\ast\}, & \beta_K>0.
	\end{cases}
	\]
	For each $\alpha\in\mathcal I_K$, we define
	\[
	z_\alpha:=
	\begin{cases}
		b_{e,\nu}, & \alpha=(e,\nu),\\
		u_K^*, & \alpha=\ast,
	\end{cases}
	\qquad
	z'_\alpha:=
	\begin{cases}
		b'_{e,\nu}, & \alpha=(e,\nu),\\
		v_K^*, & \alpha=\ast.
	\end{cases}
	\]
	For a fixed pair $(j,\alpha)$, we define
	\[
	h(\theta):=g_j\bigl(a+\theta(z_\alpha-a)\bigr),\qquad \theta\in[0,1],
	\]
	and the maximal admissible scaling parameter
	\begin{equation}\label{eq:PiKa_theta_ja_def}
		\theta_{j,\alpha}(a,z_\alpha)
		:=
		\sup\{\theta\in[0,1]: h(\theta)\ge 0\}.
	\end{equation}
	As in the proof of \Cref{lem:PiK_Lipschitz1}, $h$ is concave and continuous, which ensures that $\theta_{j,\alpha}$ is well-defined.
	
	We recall the concave-root perturbation bound from \eqref{eq:concave_root_bound}: if $h$ is concave on $[0,1]$ with $h(0)=g_j(a)=c>0$ and $\theta^\ast:=\sup\{\theta\in[0,1]:h(\theta)\ge0\}$, then for any $s\in[0,1]$,
	\begin{equation}\label{eq:PiKa_concave_root_bound}
		|\theta^\ast-s|\le \frac{|h(s)|}{c}.
	\end{equation}
	
	\smallskip
	\noindent\emph{Step 2: Lipschitz bound for $\theta_{j,\alpha}$.}
	Set
$
	\theta_{j,\alpha}:=\theta_{j,\alpha}(a,z_\alpha)$ and $
	\theta'_{j,\alpha}:=\theta_{j,\alpha}(a',z'_\alpha).
	$ 
	Following the same argument as in the proof of \Cref{lem:PiK_Lipschitz1} (three cases: both inactive, both active, and mixed activity), we obtain
	\begin{equation}\label{eq:PiKa_theta_ja_local}
		|\theta_{j,\alpha}-\theta'_{j,\alpha}|
		\le
		\frac{L_{g,j}}{s_0}\bigl(\|a-a'\|_2+\|z_\alpha-z'_\alpha\|_2\bigr).
	\end{equation}
	We now bound $\|z_\alpha-z'_\alpha\|_2$ uniformly with respect to $\alpha$:
	\[
	\|z_\alpha-z'_\alpha\|_2\le d
	\quad\text{if }\alpha=(e,\nu),
	\]
	and, for $\beta_K>0$, \eqref{eq:PiKa_ustar_Lipschitz} yields
	\[
	\|z_\ast-z'_\ast\|_2=\|u_K^*-v_K^*\|_2
	\le \beta_K^{-1}(\|a-a'\|_2+d).
	\]
	Therefore, for all $\alpha\in\mathcal I_K$,
	\begin{equation}\label{eq:PiKa_zalpha_uniform}
		\|a-a'\|_2+\|z_\alpha-z'_\alpha\|_2
		\le
		\Gamma_K\,(\|a-a'\|_2+d),
	\end{equation}
	with $\Gamma_K$ as defined in the statement of \Cref{lem:PiK_Lipschitz2}. Combining \eqref{eq:PiKa_theta_ja_local} and
	\eqref{eq:PiKa_zalpha_uniform} yields
	\begin{equation}\label{eq:PiKa_theta_ja_Lipschitz}
		|\theta_{j,\alpha}-\theta'_{j,\alpha}|
		\le
		\frac{L_{g,j}}{s_0}\,\Gamma_K\,(\|a-a'\|_2+d).
	\end{equation}
	
	\smallskip
	\noindent\emph{Step 3: Lipschitz bound for the full scaling parameter $\theta_K^\texttt{a}$.}
	From the definition \eqref{eq:theta_def2}, we can write
	\[
	\theta_K^\texttt{a}
	=
	\min\Bigl\{1,\ \min_{j\in\mathcal J}\min_{\alpha\in\mathcal I_K}\theta_{j,\alpha}(a,z_\alpha)\Bigr\},
	\]
	and analogously for the primed reconstruction. Since the minimum of finitely many Lipschitz continuous functions preserves the Lipschitz property with the maximum Lipschitz constant, it follows from \eqref{eq:PiKa_theta_ja_Lipschitz} that
	\begin{equation}\label{eq:PiKa_thetaK_Lipschitz}
		|\theta_K^\texttt{a}-(\theta_K^\texttt{a})'|
		\le
		\Bigl(\max_{j\in\mathcal J}\frac{L_{g,j}}{s_0}\Bigr)\Gamma_K\,(\|a-a'\|_2+d).
	\end{equation}
	
	\smallskip
	\noindent\emph{Step 4: Lipschitz bound for the limited edge control values.}
	Let $x=x_{e,\nu}$ be an arbitrary edge quadrature point. Then, the limited reconstructions at $x$ satisfy
	\[
	(\Pi_K^\texttt{a}u_K)(x)=a+\theta_K^\texttt{a}(b_x-a),
	\qquad
	(\Pi_K^\texttt{a}v_K)(x)=a'+(\theta_K^\texttt{a})'(b'_x-a').
	\]
	Using the algebraic identity
	\[
	a+\theta(b_x-a)-\bigl(a'+\theta'(b'_x-a')\bigr)
	=
	(1-\theta)(a-a')+\theta(b_x-b'_x)+(\theta-\theta')(b'_x-a'),
	\]
	evaluated at $\theta=\theta_K^\texttt{a}$ and $\theta'=(\theta_K^\texttt{a})'$, together with the bound $\|b'_x-a'\|_2\le 2M_{\texttt{a}}$, it holds that
	\begin{align*}
		\bigl\|(\Pi_K^\texttt{a}u_K)(x)-(\Pi_K^\texttt{a}v_K)(x)\bigr\|_2
		&\le
		\|a-a'\|_2+\|b_x-b'_x\|_2+2M_{\texttt{a}}\,|\theta_K^\texttt{a}-(\theta_K^\texttt{a})'|\\
		&\le
		\Bigl[
		1+2M_{\texttt{a}}
		\Bigl(\max_{j\in\mathcal J}\frac{L_{g,j}}{s_0}\Bigr)\Gamma_K
		\Bigr](\|a-a'\|_2+d),
	\end{align*}
	where the final inequality follows from \eqref{eq:PiKa_thetaK_Lipschitz}. Taking the maximum over all edge quadrature points $x_{e,\nu}\in\mathcal X_K^{\mathrm{edg}}$ establishes \eqref{eq:PiKa_Lipschitz_est}, which completes the proof.
\end{proof}

\subsection{Comparison between trace speeds and proxy CFL surrogates for smooth solutions}\label{sec:proxy_bridge}

\begin{proposition}[Smooth-regime bridge between trace speeds and cell-average proxies]\label{prop:proxy_smooth_bridge}
Assume the hypotheses of \Cref{cor:limiter_small_smooth}. In addition, suppose that, on the time interval of interest, the exact solution takes values in a compact set $\mathcal K$ \revblue{such that $\mathcal K\subset\Gint$ and $\operatorname{dist}(\mathcal K,\partial G)>0$}, and that for every unit normal $n$ the wave-speed bound $u\mapsto \alpha(u,n)$ from Assumption~(A5) is locally Lipschitz on $\mathcal K$. Then, for every cell $K$, edge $e\in\mathcal E_K$, and edge quadrature node $x_{e,\nu}$,
\begin{align*}
\bigl|\alpha(\hat u_K^n(x_{e,\nu}),n_{K,e})-\alpha(u(x_{e,\nu},t^n),n_{K,e})\bigr| &\le C h^{k+1},\\
\bigl|\alpha(\hat u_K^n(x_{e,\nu}),n_{K,e})-\alpha(\bar u_K^n,n_{K,e})\bigr| &\le C h,
\end{align*}
where $C$ is independent of $h$ for sufficiently fine meshes. 
As a consequence, for solutions that are smooth in the interior of the domain, 
the practical pre-stage cell-average wave-speed proxies and the trace-based speeds exhibit the same asymptotic scaling.
\end{proposition}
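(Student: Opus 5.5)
The plan is to reduce both estimates to Lipschitz continuity of $\alpha(\cdot,n)$ on a fixed compact neighborhood of $\mathcal K$, combined with the limiter perturbation bound of \Cref{cor:limiter_small_smooth}. First, since $\operatorname{dist}(\mathcal K,\partial G)>0$, I will fix $\delta>0$ with $2\delta<\operatorname{dist}(\mathcal K,\partial G)$. I will then set $\mathcal K_\delta:=\{u:\operatorname{dist}(u,\mathcal K)\le\delta\}$, which is compact and contained in $\Gint$. Local Lipschitz continuity on a compact set gives a uniform Lipschitz constant $L_\alpha$ on $\mathcal K_\delta$. I would justify this by a finite-cover argument, which is standard, or simply by assuming local Lipschitz continuity on a neighborhood of $\mathcal K$. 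Uniformity in the unit normal $n$ also needs attention. For the explicit speeds of \Cref{prop:A5_scalar,prop:A5_Euler}, $\alpha$ is jointly Lipschitz in $(u,n)$, so $n$ ranges over the compact unit circle and the constant is uniform; in general I would state uniformity in $n$ as part of the hypothesis.

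Second, I will show that all relevant states lie in $\mathcal K_\delta$ for $h$ small. By \Cref{cor:limiter_small_smooth} and the reconstruction bound,
\[
\|\hat u_K^n-u(\cdot,t^n)\|_{L^\infty(K)}\le\|\Pi_Ku_K^n-u_K^n\|_{L^\infty(K)}+\|u_K^n-u(\cdot,t^n)\|_{L^\infty(K)}\le Ch^{k+1}.
\]
Hence $\hat u_K^n(x_{e,\nu})$ lies within distance $Ch^{k+1}\le\delta$ of $u(x_{e,\nu},t^n)\in\mathcal K$. For the cell average, I will use that $\bar u_K^n$ is the average of $u_K^n$, so that $\|\bar u_K^n-\frac1{|K|}\int_K u\|\le Ch^{k+1}$. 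Because $u$ is smooth, $\|\frac1{|K|}\int_K u-u(x,t^n)\|\le \|\nabla u\|_{L^\infty}h_K$ for every $x\in\bar K$. This places $\bar u_K^n$ within $Ch$ of $\mathcal K$. One subtlety is that $\frac1{|K|}\int_K u$ may not lie in $\mathcal K$ itself. That is fine, because it lies within $Ch\le\delta$ of the value $u(x_{e,\nu},t^n)\in\mathcal K$.

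Third, the estimates follow. The first bound is $|\alpha(\hat u_K^n(x_{e,\nu}),n)-\alpha(u(x_{e,\nu},t^n),n)|\le L_\alpha Ch^{k+1}$. For the second, I will use the triangle inequality through $u(x_{e,\nu},t^n)$:
\[
\|\hat u_K^n(x_{e,\nu})-\bar u_K^n\|\le Ch^{k+1}+\|u(x_{e,\nu},t^n)-\bar u_K^n\|\le Ch^{k+1}+\|\nabla u\|_{L^\infty}h+Ch^{k+1}\le C'h.
\]
Applying $L_\alpha$ to this gives the $O(h)$ bound. The concluding sentence is then read off directly. Both speeds equal $\alpha(u(x_{e,\nu},t^n),n)+O(h)$, and $\alpha$ is bounded above and below on $\mathcal K_\delta$ away from vacuum. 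So the cell-average proxy CFL time step in \eqref{eq:CFL-used} and the trace-based time step in \eqref{eq:CFL_abstract} differ by a factor $1+O(h)$, where the maximum over cells and nodes preserves the $O(h)$ difference.

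The main obstacle is the second step. I need every state (limited traces, raw traces, cell averages) to stay in a single compact interior set uniformly in $K$ and $h$, so that one Lipschitz constant applies. This is also where uniformity in $n$ and the slack hypothesis of \Cref{cor:limiter_small_smooth}, with $s_0$ bounded below, enter. The slack holds because $\bar u_K^n$ is $O(h)$-close to $\mathcal K$, and $g_j$ is bounded below by a positive constant on $\mathcal K_\delta$ by compactness.
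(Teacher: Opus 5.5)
Your proof is correct and takes essentially the same route as the paper. For the first bound you combine Lipschitz continuity of $\alpha(\cdot,n)$ with the $O(h^{k+1})$ limiter perturbation of \Cref{cor:limiter_small_smooth} and the reconstruction accuracy; for the second you use a triangle inequality through $u(x_{e,\nu},t^n)$ plus an $O(h)$ point-versus-average estimate. Your extra steps fill in points the paper's proof passes over: you enlarge $\mathcal K$ to a compact neighborhood $\mathcal K_\delta\subset\Gint$ so that one Lipschitz constant covers the numerical states, which need not lie in $\mathcal K$; you require uniformity in the normal $n$; and you separate the numerical average $\bar u_K^n$ from the exact one.
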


\begin{proof}
By the local Lipschitz continuity of $\alpha(\cdot,n_{K,e})$ on $\mathcal K$ and \Cref{cor:limiter_small_smooth},
\[
\bigl|\alpha(\hat u_K^n(x_{e,\nu}),n_{K,e})-\alpha(u_K^n(x_{e,\nu}),n_{K,e})\bigr|
\le L\,\bigl|\hat u_K^n(x_{e,\nu})-u_K^n(x_{e,\nu})\bigr|
\le C h^{k+1}.
\]
The assumed high-order reconstruction accuracy on smooth data yields
\[
\bigl|\alpha(u_K^n(x_{e,\nu}),n_{K,e})-\alpha(u(x_{e,\nu},t^n),n_{K,e})\bigr|\le C h^{k+1},
\]
and the first estimate follows from the triangle inequality.

For the second estimate, the local Lipschitz continuity yields
\[
\bigl|\alpha(\hat u_K^n(x_{e,\nu}),n_{K,e})-\alpha(\bar u_K^n,n_{K,e})\bigr|
\le L\,\bigl|\hat u_K^n(x_{e,\nu})-\bar u_K^n\bigr|.
\]
Adding and subtracting the exact state gives
\[
\bigl|\hat u_K^n(x_{e,\nu})-\bar u_K^n\bigr|
\le
\bigl|\hat u_K^n(x_{e,\nu})-u(x_{e,\nu},t^n)\bigr|
+
\bigl|u(x_{e,\nu},t^n)-\bar u_K^n\bigr|.
\]
The first term is $O(h^{k+1})$ owing to the first estimate. Since $u(\cdot,t^n)$ is smooth, the difference between a point value on $\partial K$ and the cell average over $K$ is $O(h)$ on a shape-regular mesh. Hence, the second estimate is $O(h)$, which completes the proof.
\end{proof}

\begin{corollary}[Uniform comparison of pre-stage proxies on smooth interior solutions]\label{cor:proxy_global_bridge}
Assume the hypotheses of \Cref{prop:proxy_smooth_bridge}. Let $\{a_i^h\}_{i\in I_h}$ and $\{b_i^h\}_{i\in I_h}$ be two finite families indexed by the same index set $I_h$, where $a_i^h$ denotes a trace-based speed and $b_i^h$ denotes the corresponding pre-stage cell-average proxy, and suppose that
\[
|a_i^h-b_i^h|\le Ch
\qquad \forall i\in I_h
\]
for sufficiently fine meshes. Then
\[
\Bigl|\max_{i\in I_h} a_i^h-\max_{i\in I_h} b_i^h\Bigr|\le Ch.
\]
In particular, if one defines in the triangular setting
\[
\alpha_{\max}^{\mathrm{tr},n}:=\max_{K,e,\nu}\alpha\bigl(\hat u_K^n(x_{e,\nu}),n_{K,e}\bigr),
\qquad
\tilde\alpha_{\max}^{\,n}:=\max_{K,e}\alpha(\bar u_K^n,n_{K,e}),
\]
and in the Cartesian setting
\[
\alpha_{\ell,\max}^{\mathrm{tr},n}:=\max_{K,\nu}\alpha_\ell\bigl(\hat u_{K,\ell,\nu}^n\bigr),
\qquad
\tilde\alpha_{\ell,\max}^{\,n}:=\max_K\alpha_\ell(\bar u_K^n),\qquad \ell=1,2,
\]
then the pre-stage proxies in \eqref{eq:CFL-used} exhibit the same asymptotic scaling as the corresponding trace-based maxima on smooth interior solutions.
\end{corollary}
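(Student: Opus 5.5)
The first claim is an elementary inequality about maxima, and the second follows by applying it to two families built from the per-node estimates of \Cref{prop:proxy_smooth_bridge}.

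\emph{Step 1: the elementary estimate.} Let $i^\ast\in I_h$ attain $\max_i a_i^h$; such an index exists because $I_h$ is finite. Then
\[
\max_i a_i^h=a_{i^\ast}^h\le b_{i^\ast}^h+Ch\le \max_i b_i^h+Ch .
\]
Exchanging the roles of $a$ and $b$, with a maximizer $j^\ast$ of $\{b_i^h\}$, gives the reverse inequality, and together they yield $|\max_i a_i^h-\max_i b_i^h|\le Ch$. Since the hypothesis is uniform in $i$, the constant does not grow with $\#I_h$.

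\emph{Step 2: the triangular setting.} Take the index set $I_h=\{(K,e,\nu)\}$ and define
\[
a_i^h=\alpha\bigl(\hat u_K^n(x_{e,\nu}),n_{K,e}\bigr),\qquad b_i^h=\alpha(\bar u_K^n,n_{K,e}),
\]
where $b_i^h$ is independent of $\nu$. The second estimate of \Cref{prop:proxy_smooth_bridge} gives $|a_i^h-b_i^h|\le Ch$ uniformly in $i$. Its constant is uniform in $K$ for two reasons: the exact solution stays in the compact set $\mathcal K$, on which $\alpha(\cdot,n)$ is Lipschitz, and $|u(x_{e,\nu},t^n)-\bar u_K^n|\le h_K\|\nabla u\|_\infty$ with $h_K\le h$ on the shape-regular family. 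Moreover, $\max_{i} b_i^h=\max_{K,e}\alpha(\bar u_K^n,n_{K,e})=\tilde\alpha^{\,n}_{\max}$, because taking the maximum over the dummy index $\nu$ changes nothing. Step 1 then gives $|\alpha^{\mathrm{tr},n}_{\max}-\tilde\alpha^{\,n}_{\max}|\le Ch$.

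\emph{Step 3: the Cartesian setting.} For each $\ell=1,2$, take $I_h=\{(K,\nu)\}$ with $a_i^h=\alpha_\ell(\hat u^n_{K,\ell,\nu})$ and $b_i^h=\alpha_\ell(\bar u_K^n)$. The same proposition applies, reading $\alpha_\ell(u)=\alpha(u,e_\ell)$ for the coordinate normals.

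\emph{Step 4: same asymptotic scaling.} Both maxima are bounded away from zero, by a constant $c_\ast>0$ depending only on the solution. For instance, in the Euler case $\alpha\ge c_s$, and $c_s$ is bounded below on $\mathcal K$. It follows that
\[
\frac{\tilde\alpha^{\,n}_{\max}}{\alpha^{\mathrm{tr},n}_{\max}}=1+O(h).
\]
Hence the proxy time step in \eqref{eq:CFL-used} and the trace-based step of \eqref{eq:CFL_abstract} agree up to a factor $1+O(h)$. If $\alpha$ could vanish, I would instead state the conclusion as the additive $O(h)$ closeness obtained in Step 2.

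The only delicate point is this lower bound needed for the ratio statement. The step I expect to be the main obstacle is keeping the constant uniform across all cells, which relies on the compactness of $\mathcal K$ and on shape regularity.
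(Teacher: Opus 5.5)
Your argument is correct and is essentially the paper's proof: you apply the elementary bound $\bigl|\max_i a_i^h-\max_i b_i^h\bigr|\le\max_i|a_i^h-b_i^h|$ (which your maximizer argument establishes) to the trace and proxy families, with the pointwise $O(h)$ input supplied by the second estimate of \Cref{prop:proxy_smooth_bridge}. The ratio refinement in Step~4 goes beyond what the corollary asserts, and its last sentence should compare the proxy step with $\min_K\mu_K/\alpha^{\mathrm{tr},n}_{\max}$ rather than with \eqref{eq:CFL_abstract}; the latter uses the cellwise maxima $\alpha_K^n$ and an extra factor $2$, so those two time steps do not literally agree up to a factor $1+O(h)$.
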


\begin{proof}
For any two finite families, there holds
\[
\Bigl|\max_{i\in I_h} a_i^h-\max_{i\in I_h} b_i^h\Bigr|\le \max_{i\in I_h}|a_i^h-b_i^h|.
\]
The assumed $O(h)$ bound therefore directly extends from the local trace and proxy pairs to their globalized maxima. Applying this observation to the families of trace speeds and cell-average proxies arising in the triangular and Cartesian discretizations yields the stated result.
\end{proof}
\label{sec:appendix-conservative}

\subsection{Verification of Assumption~(A5) for the scalar invariant domain}\label{sec:weakLF_scalar}
	To prove \Cref{prop:A5_scalar}, we verify condition \eqref{eq:weakLF2} for any state $u \in G_{\mathrm{sc}}$ and boundary states $u_j^* \in \{U_{\min}, U_{\max}\}$. 

    For any unit vector $n\in\mathbb S^{d-1}$ and any $\alpha\ge\tilde\alpha(u,n)$, we have
	\begin{equation*}
		\alpha |u - u_j^*| \ge \left| \left(F(u) - F(u_j^*)\right) \cdot n \right| \quad \text{for all } u \in G_{\mathrm{sc}},
	\end{equation*}
	which holds trivially when $u = u_j^*$ and follows directly from \eqref{eq:Roe_speed} otherwise. 
	
	Due to the interval structure of $G_{\mathrm{sc}}$, the inward normals $n_1^* = 1$ and $n_2^* = -1$ yield the identity $(u - u_j^*)n_j^* = |u - u_j^*|$ for any $u \in G_{\mathrm{sc}}$. 
    Hence, 
	\begin{equation*}
		\alpha (u - u_j^*)n_j^* \ge \left| \left(F(u) - F(u_j^*)\right) \cdot n \right| \ge \mp n_j^* \left( F(u) - F(u_j^*) \right) \cdot n .
	\end{equation*}
	Rearranging the terms and substituting $\zeta(u_j^*) = -n_j^*F(u_j^*)$ recovers the condition \eqref{eq:weakLF2}, which completes the proof.

\subsection{Verification of Assumption~(A5) for the Euler invariant domain}\label{sec:weakLF_Euler}

We now prove \Cref{prop:A5_Euler} for the compressible Euler equations, i.e., we verify condition \eqref{eq:weakLF2} of Assumption~(A5) in \Cref{sec:assumptions}.
Denote $v=(v_1,\dots, v_d)^\top$ and let $u=(\rho,\rho v,E)^\top\in \Gint_{\mathrm{Euler}}$.

The 2D case ($d=2$) is detailed below; the 3D case follows analogously.
For $j=1$, we take $n_1^* = (1,0,0,0)^\top$ and, for definiteness, $u_1^*=(0,0,0,0)^\top$. 
For any unit vector  $n\in\mathbb R^2$ and any $\alpha\ge\tilde\alpha(u,n)$, 
\[
\alpha (u-u_1^*)\cdot n_1^* \pm (F(u)\cdot n)\cdot n_1^*
= \rho\,(\alpha \pm v\cdot n)
\ge \rho\,(\alpha-|v\cdot n|)
\ge \rho\,c_s \,>\, 0 \,=\, \mp \zeta(u_1^*) \cdot n
\]
which follows from \eqref{eq:Euler_speed}.
Hence, \eqref{eq:weakLF2} holds for $j=1$.

Similarly, 
for $j=2$, let 
\[
u_2^*=\Bigl(\rho^*,\rho^*v^*,\rho^*\tfrac{|v^*|^2}{2}\Bigr)^\top,\qquad
n_2^*=\Bigl(\tfrac{|v^*|^2}{2},-v^*,1\Bigr)^\top,
\]
with $\rho^*>0$ and $v^*\in\mathbb R^d$.
Therefore, we have
\begin{align*}
	 \alpha\,(u-u^\ast_2)\cdot n^\ast_2 \pm \bigl(F(u)\cdot n\bigr)\cdot n^\ast_2 
	=&(\alpha \pm v\cdot n)\Big(\frac{p}{\gamma-1}+\frac{\rho}{2}|v^\ast-v|^2\Big) \mp p\,(v^\ast-v)\cdot n \\
	\ge &
	c_s\Big(\frac{p}{\gamma-1}+\frac{\rho}{2}|v^\ast-v|^2\Big)-p\,|v^\ast-v| \\
	\ge &
	2 \, c_s\sqrt{\frac{p}{\gamma-1}\cdot\frac{\rho}{2}|v^\ast-v|^2}-p\,|v^\ast-v|\\
	= &
	\left( \sqrt{\frac{2\gamma}{\gamma-1}}-1\right) p |v^\ast-v| \ge 0 = \mp \zeta(u_2^*) \cdot n .
\end{align*}
The inequality is strict unless $v^*=v$. In the case $v^*=v$, the above expression reduces to
\[
\alpha\,(u-u_2^\ast)\cdot n_2^\ast \pm \bigl(F(u)\cdot n\bigr)\cdot n_2^\ast = (\alpha \pm v\cdot n)\frac{p}{\gamma-1} \ge \frac{c_s p}{\gamma-1} > 0 = \mp \zeta(u_2^*) \cdot n .
\]
Hence, \eqref{eq:weakLF2} holds for $j=2$ as well, with $\tilde\alpha(u,n)=|v\cdot n|+c_s$.

\section{Implementation details and boundary treatments}\label{sec:appendix-implementation}

\subsection{Implementation details for the limiters}\label{sec:IDP_imple}

In the numerical implementation, the limiter is evaluated on a slightly tightened closed numerical admissible set. For scalar invariant intervals, no modification is needed, so the numerical admissible set coincides with $G$. For the compressible Euler simulations, the limiter checks
\[
\rho \ge \varepsilon_\rho,\qquad \mathcal E(u)\ge \varepsilon_{\mathcal E,K},
\]
with
\[
\varepsilon_\rho=10^{-13},\qquad \varepsilon_{\mathcal E,K}=\max\{1,\bar E_K\}\times 10^{-13},
\]
where $\bar E_K$ denotes the cell average of total energy in cell $K$. All other limiter formulations remain identical to those in the main text.

\begin{proposition}[Eventual coincidence of limiter decisions under numerical admissibility cutoffs]\label{prop:cutoff_coincide}
Assume that, on the time interval of interest, the exact solution and the exact states sampled by the limiter remain in a compact subset $\mathcal K$ \revblue{such that $\mathcal K\subset\Gint$ and $\operatorname{dist}(\mathcal K,\partial G)>0$}. Suppose further that the finite collection of cell averages, edge traces, and auxiliary states entering the limiter formulas converges uniformly to the corresponding exact interior states on a shape-regular mesh family. Let the implementation cutoffs be either global constants or cellwise values, provided they are all bounded above by a threshold smaller than the interior margin associated with $\mathcal K$. Then, for all sufficiently fine meshes, the practical admissibility checks with the numerical cutoffs yield identical outcomes to the untightened checks on the untightened closed target. In particular, the practical scaling coefficients computed in floating-point arithmetic eventually coincide with the untightened scaling coefficients in this smooth interior regime.
\end{proposition}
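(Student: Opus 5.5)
The idea is to turn the compactness hypothesis into a uniform interior margin and then show that every sampled state eventually lies inside that margin. Every limiter decision then reduces to the trivial ``inactive'' outcome, both with and without the cutoffs.

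\emph{Step 1: margin.} Since $\mathcal K\subset\Gint$ is compact and each $g_j$ is continuous, set
\[
\delta_{\mathcal K}:=\min_{j\in\mathcal J}\min_{u\in\mathcal K} g_j(u)>0 .
\]
By uniform continuity of the $g_j$ on a compact neighborhood $\mathcal K_r:=\{u:\operatorname{dist}(u,\mathcal K)\le r\}\subset\Gint$, we can choose $r>0$ with $g_j\ge \delta_{\mathcal K}/2$ on $\mathcal K_r$ for all $j$. We read the ``interior margin'' as $\delta_{\mathcal K}/2$, and assume all cutoffs (for example $\varepsilon_\rho$ and $\varepsilon_{\mathcal E,K}$) are bounded by some $\bar\varepsilon<\delta_{\mathcal K}/2$. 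For the cellwise cutoff $\varepsilon_{\mathcal E,K}=\max\{1,\bar E_K\}\times10^{-13}$, we note that $\bar E_K$ stays bounded because the cell averages converge to values in $\mathcal K$. This gives a uniform bound.

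\emph{Step 2: uniform convergence.} The limiter samples a finite collection: the cell average, the edge traces at $x_{e,\nu}$, and the interior control values, or $u_K^*$ for $\Pi_K^\texttt{a}$. By hypothesis this collection converges uniformly to exact states in $\mathcal K$. For $u_K^*$ this is also consistent with the estimate \eqref{eq:lemPiK2_ustar_qstar_diff} under $\beta_K\ge\beta_0$. Hence there is $h_0$ such that for $h<h_0$ every sampled state lies in $\mathcal K_r$. Consequently $g_j(z)\ge\delta_{\mathcal K}/2>\bar\varepsilon$ for every sampled state $z$ and every $j$.

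\emph{Step 3: coincidence of decisions.} For $\theta=1$, the candidate states $\bar u_K^n+\theta(z-\bar u_K^n)$ in \eqref{eq:theta_def1} and \eqref{eq:theta_def2} are exactly the sampled states $z$. Both the tightened test $g_j\ge\varepsilon$ and the untightened test $g_j\ge 0$ are therefore passed at $\theta=1$. The supremum in each definition is thus attained at $\theta=1$, so both scaling coefficients equal $1$ and the limiter acts as the identity in both versions. The checks ``$\theta_K<1$?'' consequently have identical outcomes.

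\emph{Step 4: floating point.} A floating-point evaluation of $g_j$ at states in the compact set $\mathcal K_r$ carries an error far below $\delta_{\mathcal K}/2-\bar\varepsilon$. The computed comparisons therefore still return ``admissible'', and the computed coefficient is exactly $1$, matching the untightened one.

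The main obstacle is Step 2 for the auxiliary CAD state $u_K^*$. It is a division by $\beta_K$ of a difference of averages, so its convergence to an exact interior state is only as good as the bound on $\beta_K^{-1}$. I would invoke the uniform lower bound $\beta_K\ge\beta_0$ used in \Cref{lem:PiK_props2}, or the case $\beta_K=0$ in which no auxiliary check occurs. Together with the assumed uniform convergence of the auxiliary states, this places $u_K^*$ inside $\mathcal K_r$ as well.
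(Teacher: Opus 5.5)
Your proposal is correct and follows essentially the same route as the paper: compactness of $\mathcal K$ gives a positive interior margin in the constraint values, uniform convergence then keeps every sampled cell average, edge trace, and auxiliary state at least half that margin inside the admissible set for sufficiently fine meshes, and cutoffs below that level make the tightened and untightened checks agree. Your Step~3 usefully makes explicit a point the paper leaves implicit: all candidates already pass at $\theta=1$, so both scaling coefficients equal $1$. Agreement of admissibility status alone would not force equal coefficients if the limiter were active.
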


\begin{proof}
Because $\mathcal K$ \revblue{is compact, lies in $\Gint$, and has positive distance from $\partial G$}, every exact state inspected by the limiter maintains a strictly positive distance from $\partial G$. Hence there exist positive margins $c_\rho,c_{\mathcal E}$ (or their scalar analogues) such that all exact limiter-check states remain at least these distances inside the admissible set. Uniform convergence of the finitely many sampled numerical states to the exact ones implies that, for sufficiently fine meshes, every cell average, trace value, and auxiliary state entering the limiter remains separated by at least half of that interior margin from $\partial G$. If all implementation cutoffs, whether global or cellwise, are bounded by thresholds smaller than these margins, then the inequalities defining the practical numerical target are equivalent to the untightened closed-target inequalities on all states actually inspected by the limiter. Therefore, the admissibility status of every sampled state is unchanged, and the maximizing scaling parameters in the practical and untightened limiter definitions coincide.
\end{proof}

The compressible Euler implementation in Section~\ref{sec:numerics} uses the global density cutoff $\varepsilon_\rho=10^{-13}$ together with the cellwise internal-energy cutoff $\varepsilon_{\mathcal E,K}=\max\{1,\bar E_K\}10^{-13}$. Whenever the sampled states remain in a compact interior subset of $G$, \Cref{prop:cutoff_coincide} applies to this choice.

\subsection{Boundary treatments used in the numerical experiments}\label{sec:boundary_closure}

{
On each boundary edge, the point operator requires an exterior trace and the directional derivatives entering \eqref{eq:pt_chain_rule}. The formulas below specify the boundary treatments employed in Section~\ref{sec:numerics}, providing the exterior states and boundary directional derivatives assumed in the point-value analysis.

\revblue{
Throughout this subsection, the exterior states denoted by \(u_e^{\mathrm{ext}}\) are point-operator extensions. They are distinct from the exterior states used in the conservative numerical fluxes. For the conservative update, the interior argument of the boundary flux is the limited interior trace provided by the a priori limiter in Section~\ref{sec:avg:limiter}, while the exterior argument is a boundary flux state prescribed by the corresponding physical boundary condition, or by an admissible approximation thereof, and is required to belong to \(G\). Hence, the states entering the conservative boundary flux are admissible, although the ghost extensions used for the point operator need not themselves be admissible.
}

Fix a boundary edge $e\subset\partial\Omega$ with outward unit normal $n_e$ and unit tangent $\tau_e$. Let $d_e(x)$ denote the signed distance to the supporting line of $e$, let $\pi_e(x):=x-d_e(x)n_e$ be the orthogonal projection onto that line, and let $x_e^{\mathrm{r}}:=x-2d_e(x)n_e$ be the reflected point. We write $u_e^{\mathrm{int}}$ for the interior reconstruction attached to the unique cell adjacent to $e$. \revblue{For this edge, the boundary treatment supplies a function $u_e^{\mathrm{ext}}$ in a one-sided exterior neighborhood. At corner points, the point operator is evaluated edge by edge; each incident boundary edge therefore contributes its own local treatment in its own frame $(n_e,\tau_e)$.}

\begin{enumerate}[(i)]
	\item \emph{Periodic boundary.} If $e$ is paired with an opposite boundary edge $e^\sharp$ through a translation $T_e$, we define
	\[
	u_e^{\mathrm{ext}}(x):=u_{e^\sharp}^{\mathrm{int}}(x+T_e),
	\qquad
	\nabla u_e^{\mathrm{ext}}(x):=\nabla u_{e^\sharp}^{\mathrm{int}}(x+T_e).
	\]
	This is exact for periodic data and locally Lipschitz continuous as a composition with a translation.

    \item \emph{Inflow and exact boundary data.} Let $g_e(x,t)$ be the prescribed boundary state and let $\widetilde g_e$ be a $C^1$ extension to a neighborhood of $e$. \revblue{We define the exterior state by the reflected extension}
	\[
	u_e^{\mathrm{ext}}(x):=2\widetilde g_e(x,t)-u_e^{\mathrm{int}}(x_e^{\mathrm{r}}).
	\]
	At a boundary point $x_\sigma\in e$, this yields the explicit formulas
	\[
	u_e^{\mathrm{ext}}(x_\sigma)=2g_e(x_\sigma,t)-u_e^{\mathrm{int}}(x_\sigma),
	\]
	\[
\partial_{\tau_e}u_e^{\mathrm{ext}}(x_\sigma)=2\partial_{\tau_e}\widetilde g_e(x_\sigma,t)-\partial_{\tau_e}u_e^{\mathrm{int}}(x_\sigma),\quad
\partial_{n_e}u_e^{\mathrm{ext}}(x_\sigma)=2\partial_{n_e}\widetilde g_e(x_\sigma,t)+\partial_{n_e}u_e^{\mathrm{int}}(x_\sigma).
\]
	\revblue{This treatment is consistent with the imposed boundary data and depends in a locally Lipschitz manner on the interior trace and gradient data.}
	
	\item \emph{Outflow boundary.} We impose zero normal extrapolation by
$
	u_e^{\mathrm{ext}}(x):=u_e^{\mathrm{int}}\bigl(\pi_e(x)\bigr).
	$ 
	At $x_\sigma\in e$, this yields
	\[
	u_e^{\mathrm{ext}}(x_\sigma)=u_e^{\mathrm{int}}(x_\sigma),
	\qquad
	\partial_{\tau_e}u_e^{\mathrm{ext}}(x_\sigma)=\partial_{\tau_e}u_e^{\mathrm{int}}(x_\sigma),
	\qquad
	\partial_{n_e}u_e^{\mathrm{ext}}(x_\sigma)=0.
	\]
	The map is again locally Lipschitz continuous as a composition with the projection $\pi_e$.
	
	\item \emph{Reflective boundary (Euler slip wall).} For the compressible Euler state $u=(\rho,m,E)^\top$ with momentum vector $m\in\mathbb{R}^2$, we define the linear reflection operator
	\[
	\mathcal R_e(\rho,m,E)^\top := \bigl(\rho,\, m-2(m\cdot n_e)n_e,\, E\bigr)^\top.
	\]
	We then set
$
	u_e^{\mathrm{ext}}(x):=\mathcal R_e\bigl(u_e^{\mathrm{int}}(x_e^{\mathrm{r}})\bigr).
$ 
	Since $\mathcal R_e$ is linear, the boundary derivatives at $x_\sigma\in e$ satisfy
	\[
	\partial_{\tau_e}u_e^{\mathrm{ext}}(x_\sigma)=\mathcal R_e\,\partial_{\tau_e}u_e^{\mathrm{int}}(x_\sigma),
	\qquad
	\partial_{n_e}u_e^{\mathrm{ext}}(x_\sigma)=-\mathcal R_e\,\partial_{n_e}u_e^{\mathrm{int}}(x_\sigma).
	\]
	\revblue{Local Lipschitz continuity follows from the linearity of $\mathcal R_e$.}
\end{enumerate}

\begin{proposition}[Regularity and consistency of the boundary treatments employed in the numerical simulations in Section~\ref{sec:numerics}]
	Assume that the interior reconstruction is $C^1$ in a neighborhood of each boundary edge and that the prescribed inflow and exact boundary data admit a $C^1$ extension $\widetilde g_e$ near that edge. Then the four boundary treatments above provide the exterior state and the directional derivatives required by the point operator, are consistent with the corresponding boundary conditions, and depend in a locally Lipschitz manner on the interior trace and gradient data.
\end{proposition}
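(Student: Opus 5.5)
The plan is to verify the three claims (well-definedness of exterior data and derivatives, consistency, local Lipschitz dependence) treatment by treatment. Each exterior function $u_e^{\mathrm{ext}}$ is obtained from $u_e^{\mathrm{int}}$ (or $u_{e^\sharp}^{\mathrm{int}}$) by composing with an affine map of $x$ (translation $T_e$, projection $\pi_e$, or reflection $x\mapsto x_e^{\mathrm r}$), and then applying either a fixed linear map ($\mathcal R_e$) or an affine map involving the prescribed datum $\widetilde g_e$. The first step is therefore to record the elementary geometric facts: on $e$ we have $d_e=0$, so $\pi_e(x_\sigma)=x_e^{\mathrm r}(x_\sigma)=x_\sigma$. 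The Jacobian of $\pi_e$ is $I-n_en_e^\top$, and that of $x\mapsto x_e^{\mathrm r}$ is the Householder matrix $I-2n_en_e^\top$. Each fixes $\tau_e$; the first annihilates $n_e$ and the second negates it. With the chain rule and the $C^1$ hypotheses on $u_e^{\mathrm{int}}$ and $\widetilde g_e$, this shows that $u_e^{\mathrm{ext}}$ is $C^1$ in a one-sided neighborhood of $e$. It also yields exactly the displayed point values and tangential and normal derivatives at $x_\sigma\in e$. Hence the data entering \eqref{eq:pt_chain_rule} are well defined.

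Next I would check consistency case by case, by inserting the exact solution in place of the interior reconstruction.
\begin{itemize}
\item \emph{Periodic:} the extension reproduces the periodic continuation exactly.
\item \emph{Inflow and exact data:} if $u^{\mathrm{int}}=\widetilde g_e$ near $e$, the extension gives $2\widetilde g_e-\widetilde g_e(x^{\mathrm r}_e)$. Its value at $x_\sigma$ is $g_e(x_\sigma,t)$. Its tangential derivative is $\partial_{\tau_e}g_e$, and its normal derivative is $2\partial_{n_e}\widetilde g_e-\partial_{n_e}\widetilde g_e\cdot(-1)$ reflected, so it matches the boundary data to first order. If the reconstruction is only $O(h^{p+1})$ and $O(h^p)$ accurate, the affine structure transfers these errors with constants $1$ and $2$, which gives the $O(h^p)$ boundary consistency used in \Cref{prop:pt_operator_consistency}.
\item \emph{Outflow:} the extension imposes zero normal gradient, which is the intended extrapolation condition.
\item \emph{Reflective wall:} the trace satisfies $\mathcal R_e u=u$ exactly when $m\cdot n_e=0$. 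Thus the ghost state reproduces the interior density, energy, and tangential momentum, and has zero average normal momentum, which is consistent with the slip condition.
\end{itemize}

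Finally, for local Lipschitz dependence, I would note that in every case the tuple $(u_e^{\mathrm{ext}}(x_\sigma),\partial_{\tau_e}u_e^{\mathrm{ext}}(x_\sigma),\partial_{n_e}u_e^{\mathrm{ext}}(x_\sigma))$ is an affine function of the interior tuple $(u_e^{\mathrm{int}}(x_\sigma),\nabla u_e^{\mathrm{int}}(x_\sigma))$. Its linear part has norm at most $2$ (built from $\pm I$, $\mathcal R_e$, and $0$), and the inhomogeneous part depends only on the data $g_e$. The maps are therefore globally, hence locally, Lipschitz, with constant independent of $h$. For the periodic case the dependence is on the partner cell's data, which is still affine with constant $1$.

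I expect the only delicate point to be the consistency of the inflow normal derivative. The reflected extension yields $2\partial_{n_e}\widetilde g_e+\partial_{n_e}u^{\mathrm{int}}$ rather than $\partial_{n_e}u$, so I must be careful about what consistency asserts there. I would state it as agreement with the value and tangential derivative of the prescribed data, together with smoothness of the ghost extension. If the proposition requires normal-derivative agreement, I would impose the compatibility choice $\partial_{n_e}\widetilde g_e=\partial_{n_e}u$ for the exact solution, which makes the ghost normal derivative exact.
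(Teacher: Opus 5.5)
Your overall route is the paper's. Each ghost extension composes the interior reconstruction (and $\widetilde g_e$) with a translation, the projection $\pi_e$, or the reflection $x\mapsto x_e^{\mathrm r}$, possibly followed by $\mathcal R_e$. Consistency is then read off case by case from the resulting formulas. Your observation that $\bigl(u_e^{\mathrm{ext}}(x_\sigma),\partial_{\tau_e}u_e^{\mathrm{ext}}(x_\sigma),\partial_{n_e}u_e^{\mathrm{ext}}(x_\sigma)\bigr)$ is an affine function of the interior value and gradient justifies the Lipschitz dependence on the data more cleanly than the paper's appeal to $C^1$ regularity of the geometric maps in $x$.

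There is, however, a concrete error in the inflow/exact-data case. Substitute $u_e^{\mathrm{int}}=\widetilde g_e$ and apply the chain rule with $D x_e^{\mathrm r}=I-2n_en_e^\top$:
\[
\partial_{n_e}u_e^{\mathrm{ext}}(x_\sigma)=2\partial_{n_e}\widetilde g_e(x_\sigma,t)+\partial_{n_e}\widetilde g_e(x_\sigma,t)=3\,\partial_{n_e}\widetilde g_e(x_\sigma,t).
\]
So your claim that the normal derivative ``matches the boundary data to first order'' is false for a general $C^1$ extension. The same goes for your claim that this yields the $O(h^p)$ boundary-derivative consistency assumed in \Cref{prop:pt_operator_consistency}. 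Your proposed remedy also fails: with $\partial_{n_e}\widetilde g_e=\partial_{n_e}u$ you get $\partial_{n_e}u_e^{\mathrm{ext}}(x_\sigma)=3\,\partial_{n_e}u(x_\sigma)+O(h^p)$, not $\partial_{n_e}u(x_\sigma)$. The choice that works is the normal-constant extension $\widetilde g_e(x,t):=g_e(\pi_e(x),t)$, i.e.\ $\partial_{n_e}\widetilde g_e=0$ on $e$. Then $\partial_{n_e}u_e^{\mathrm{ext}}(x_\sigma)=\partial_{n_e}u_e^{\mathrm{int}}(x_\sigma)=\partial_{n_e}u(x_\sigma)+O(h^p)$.

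This does not break your proof of the proposition itself. The paper only asserts that the inflow/exact-data ghost matches the prescribed boundary state. Your fallback proves exactly that: agreement of the value and tangential derivative, together with $C^1$ regularity and affine dependence on the data. So remove the normal-derivative claim from the inflow bullet, or restrict it to $\partial_{n_e}\widetilde g_e=0$. Also do not present this proposition as supplying the boundary hypothesis of \Cref{prop:pt_operator_consistency}; the paper keeps that as a separate assumption.
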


\begin{proof}
	Each boundary treatment is obtained by composing the interior reconstruction and the prescribed boundary data with elementary geometric maps (translation, orthogonal projection, reflection) and, in the reflective case, a linear state reflection. These operations are $C^1$ near the boundary and hence locally Lipschitz continuous. The explicit formulas above show consistency at the boundary: periodicity is enforced by translation, the inflow and exact-data treatment matches the prescribed boundary state, the outflow treatment imposes zero normal extrapolation, and the reflective treatment changes only the normal momentum component while preserving density and total energy. Thus, the boundary treatments used in the numerical simulations have the consistency and local Lipschitz properties required by the point-value analysis.
\end{proof}
}

\subsection{Optional oscillation-eliminating (OE) preprocessing}\label{sec:OE}
To suppress spurious oscillations near strong shocks, we adopt the cellwise contraction
\begin{equation}\label{eq:OE-scaling}
\mathcal M_K u_K:=\bar u_K+\theta_K^{\tt OE}(u_K-\bar u_K),\qquad 0\le \theta_K^{\tt OE}\le 1.
\end{equation}
We refer the reader to \cite{peng2025oedg} and \cite{Cao2025} for additional details and theoretical properties. 
When the OE preprocessing is applied prior to the IDP limiter,
\begin{equation}\label{eq:OE-combine-min}
\Pi_K(\mathcal M_Ku_K)=\bar u_K+\min\{\theta_K^{\tt OE},\theta_K\}(u_K-\bar u_K),
\end{equation}
where $\theta_K\in\{\theta_K^{\tt c},\theta_K^{\tt a}\}$ is the scaling parameter determined by the IDP limiter on the unscaled reconstruction $u_K$. The combined operator is again a contraction toward $\bar u_K$, thereby preserving the cell average and not expanding the admissible target enforced subsequently by $\Pi_K$.

\vspace{8mm}

\bibliographystyle{abbrv}
\bibliography{references}

\end{document}